\documentclass[a4paper,oneside,reqno]{amsbook}

\calclayout

\usepackage{etoolbox}

\usepackage[utf8]{inputenc}
\usepackage[T1]{fontenc}
\usepackage{lmodern}
\usepackage[english]{babel}
\usepackage{microtype} 

\usepackage{amsmath,amssymb,amsthm}
\usepackage{thmtools} 

\usepackage{mathtools} 
\usepackage{upgreek}
\usepackage{accents} 
\usepackage{stmaryrd} 

\usepackage{tikz} 
\usetikzlibrary{cd}
\usetikzlibrary{calc}

\usepackage{dsfont} 
\usepackage{pifont} 

\usepackage{ifthen}
\usepackage{enumitem}
\setlist[itemize]{labelindent=\parindent,leftmargin=*,itemsep=3pt,topsep=5pt}
\setlist[enumerate]{label=\textup{(\roman{*})},itemsep=3pt,labelindent=0pt,leftmargin=*}

\usepackage{nicematrix}

\usepackage{graphicx} 
\usepackage{subcaption}
\usepackage[hypertexnames=false]{hyperref} 
\hypersetup{hidelinks}
\usepackage[nameinlink,sort]{cleveref} 
\usepackage{orcidlink}

\usepackage{booktabs} 
\usepackage{esint} 

\usepackage{bbm}

\newcommand{\R}{\mathbb{R}} 
\newcommand{\N}{\mathbb{N}} 
\newcommand{\Z}{\mathbb{Z}} 
\newcommand{\C}{\mathbb{C}} 

\newcommand{\K}{\mathbb{K}} 

\newcommand{\Avscon}{A very short course on}

\newcommand{\Cc}[1][\infty]{\mathrm{C}_{\mathrm{c}}\ifthenelse{\equal{#1}{}}{}{^{#1}}}
\newcommand{\Lp}[2][]{\mathrm{L}_{#2\ifthenelse{\equal{#1}{}}{}{,#1}}} 
\newcommand{\Lb}{\mathcal{L}_{\mathrm{b}}} 
\newcommand{\sobH}{\mathrm{H}}
\newcommand{\cH}{\sobH_0}

\newcommand{\wlim}{\textnormal{w-}\lim}
\newcommand{\dd}{\mathrm{d}} 

\DeclareMathOperator{\diag}{diag}
\DeclareMathOperator{\ran}{ran}

\DeclareMathOperator{\supp}{spt}

\DeclareMathOperator{\dom}{dom}

\renewcommand{\div}{\operatorname{div}}
\DeclareMathOperator{\grad}{grad}
\DeclareMathOperator{\curl}{curl}
\newcommand{\divcon}{\mathop{\div_{-1}}}

\newcommand{\cgrad}{\operatorname{grad}_0}

\DeclarePairedDelimiter{\norm}{\lVert}{\rVert}
\DeclarePairedDelimiter{\abs}{\lvert}{\rvert}

\DeclarePairedDelimiterX{\dset}[2]{\{}{\}}{#1\,\delimsize\vert\,\mathopen{} #2}
\DeclarePairedDelimiterX{\scprod}[2]{\langle}{\rangle}{#1,#2}

\renewcommand{\Re}{\operatorname{Re}}

\newcommand{\weakto}{\rightharpoonup}

\newcommand{\Htopo}{\mathrm{H}}

\renewcommand{\leq}{\leqslant}

\renewcommand{\geq}{\geqslant}

\newtheoremstyle{myremark}{3pt}{3pt}{}{}{\itshape}{.}{ }{}
\newtheoremstyle{myplain}{3pt}{3pt}{\itshape}{}{\bfseries}{.}{ }{}
\newtheoremstyle{mydefinition}{3pt}{3pt}{}{}{\bfseries}{.}{ }{}

\declaretheorem[name=Definition,style=mydefinition,numberwithin=section,qed=$\sslash$]{defin}
\declaretheorem[name=Remark,style=myremark,numberlike=defin,qed=$\sslash$]{remark}
\declaretheorem[name=Example,numberlike=defin,style=myremark,qed=$\sslash$]{example}
\declaretheorem[name=Lemma,numberlike=defin,style=myplain]{lemma}
\declaretheorem[name=Corollary,numberlike=defin,style=myplain]{corollary}
\declaretheorem[name=Proposition,numberlike=defin,style=myplain]{proposition}
\declaretheorem[name=Theorem,numberlike=defin,style=myplain]{theorem}
\declaretheorem[name=Exercise,numberwithin=chapter,style=mydefinition]{exercise}
\declaretheorem[name=Take Homes,style=myremark]{takehomes}

\begin{document}

\title[the Schur topology]{\includegraphics{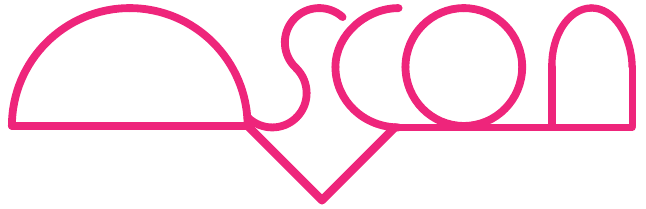}\\
 \Avscon \\ the Schur topology} 

\author[A.~Buchinger]{Andreas Buchinger\,\orcidlink{0009-0004-4203-5874}}
\email{andreas.buchinger@tuhh.de}

\address{Technische Universität Hamburg\\
  Institut für Mathematik\\
  Am Schwarzenberg-Campus 3\\
  D-21073 Hamburg\\
  Germany}

%
%

\author[M.~Waurick]{Marcus Waurick\,\orcidlink{0000-0003-4498-3574}}
\email{marcus.waurick@math.tu-freiberg.de}

\address{TU Bergakademie Freiberg \\
  Institute of Applied Analysis \\
  Akademiestrasse 6 \\
  D-09596 Freiberg \\
  Germany}

\date{\today}
\dedicatory{}

\keywords{Homogenisation, $\Htopo$-convergence, Schur topology}



\begin{abstract} The aim of the course is to lead to an understanding of homogenisation processes in an operator-theoretic sense. In fact, using solely operator-theoretic means not referring to the particular form of the coefficients, we will identify an operator topology on the level of coefficients that will fully capture the convergence involved in the context of homogenisation. One upshot of this perspective will be that we will obtain homogenisation results for time-dependent partial differential equations (almost) for free.
  \end{abstract}

\maketitle%

\section*{Extended Abstract and Preliminaries}

In the course of the following three lectures, we shall introduce the concept of $\Htopo$-convergence, which is tailored to discuss homogenisation problems very generally without having to resort to periodic settings or any structural assumptions on the local behaviour of the coefficients. We will eventually apply our findings to two different partial differential equations. Firstly, to the equations describing the interconnected effects of heat and elasticity, that is, the equations of thermoelasticity. Secondly, we shall consider homogenisation problems for Maxwell's equations. These two applications will be postponed until the end of this mini-course so that everyone stays excited and motivated along the seemingly abstract way to our destination. Even though the findings might seem abstract, they are overall rather elementary and only rarely require tedious computations. In the following, we shall therefore rely on some well-known functional analytic concepts such as Hilbert and Banach spaces. We will heavily use the concept of weak convergence on Hilbert spaces, the fact that the closed unit ball is weakly (sequentially) compact. Moreover, for bounded linear operators, we will need the concepts of convergence in the strong and the weak operator topology. Furthermore, we will occasionally have to use weak-* (sequential) compactness of the unit ball in $\Lp{\infty}(\Omega)$ for open subsets $\Omega\subseteq \R^d$. As a consequence of the mentioned topological notions, we will also directly depend on the definition of initial topologies. On the operator-theoretic side, we will use the adjoint of bounded (and also unbounded) linear operators on Hilbert spaces; necessary results will be recalled during the lectures. Finally, the notion of compact operators should ring a bell. Throughout this course, an \emph{embedding}, always denoted by $\mathcal{X}\hookrightarrow \mathcal{Y}$, is the identity map from $\mathcal{X}$ into a possibly larger space $\mathcal{Y}\supseteq\mathcal{X}$. Scalar products are linear in the second and anti-linear in the first component. Weak convergence is denoted by $\rightharpoonup$, strong/norm convergence by $\to$, and, throughout the whole course, $d$ stands for an arbitrary fixed element of $\N$. Finally, the superscript $(k)$ for any $1\leq k\leq d$ stands for the $k$-th entry of a $d$-dimensional
vector or, applied to a set of $d$-dimensional vectors, for the image under the projection onto the $k$-th entry.

After each lecture, there will be a short summary section called \emph{take homes} where the most important messages of the lecture will be recalled. The lectures will each conclude with a short collection of exercises that help understand the appearing concepts.

\chapter{Lecture 1}

\section{Introduction}\label{sec:intro}

A homogenisation problem in mathematical physics seeks to approximate the coefficients of a partial differential equation with simpler ones that preserve the essential physical behaviour of the original system. 
Usually, this approximation process is realised by a limit process. This limit process is characterised by the (weak) convergence of solutions of problems with increasingly more complex coefficients to the solution of a limit equation with (hopefully) less complex coefficients. On the level of solution operators for the respective PDEs, the underlying convergence is characterised by convergence with respect to the weak operator topology. This short course is devoted to the description of the topology on the level of coefficients. As we shall see, this will also involve the weak operator topology; however in a somewhat more convoluted sense. In order to properly set the stage, we will focus on examples in this lecture and generalise later on. For this purpose, we will have occasion to revisit the Lax--Milgram lemma and to provide a version with a more explicit solution operator than common. 

\section{Lax--Milgram Revisited}\label{sec:LMR}

This section states and proves a more explicit form of the Lax--Milgram lemma. To this end, let $\mathcal{H}_0$ and $\mathcal{H}_1$ be Hilbert spaces with the common underlying field $\K\in \{\R;\C\}$, and let
\[
    C\colon \dom(C)\subseteq \mathcal{H}_0 \to \mathcal{H}_1
\]
be a \emph{linear mapping} (or, equivalently, \emph{linear operator}), i.e., $\dom(C)\subseteq \mathcal{H}_0$, the domain of definition of $C$,  is a linear subspace of
$\mathcal{H}_0$ and $C(x+\lambda y)=Cx+\lambda Cy$ holds for all $x,y\in \dom(C)$ and $\lambda \in \K$.
For such $C$, we can endow $\dom(C)$ with the inner product
\begin{equation*}
\langle \cdot,\cdot\rangle_{\dom(C)}\colon
\begin{cases}
\hfill \dom(C)\times \dom(C)&\to\quad \K\\
 \hfill (x,y) &\mapsto\quad \langle x,y\rangle_{\mathcal{H}_0}+\langle Cx,Cy\rangle_{\mathcal{H}_1}
 \end{cases}\text{,}
\end{equation*}
called the \emph{graph scalar product}; the corresponding norm is called \emph{graph norm}.
 We further ask $C$ to be \emph{densely defined}, that is, $\dom(C)$ to be dense in $\mathcal{H}_0$. Finally $C$ is assumed to be \emph{closed}, that means, 
$(\dom(C), \langle \cdot,\cdot\rangle_{\dom(C)})$ is assumed to be a Hilbert space.

Before we come to the announced Lax--Milgram type result, we provide some examples for $C$ first. In the following, we will use that, for any open $\Omega\subseteq \R^d$, the space $\Cc(\Omega)$ of arbitrarily often differentiable functions with compact support in $\Omega$, is dense in $\Lp{2}(\Omega)$.
\begin{example}\label{ex:grad0}
Let $\Omega\subseteq \R^d$ be open.
\begin{enumerate}
 \item Consider $f \in \Lp{2}(\Omega)$ and $g_k \in \Lp{2}(\Omega)$ for a $k\in \{1,\ldots, d\}$, and assume that, for all $\phi\in \Cc(\Omega)$,
\[
     - \int_\Omega f \partial_k \phi = \int_\Omega g_k \phi\text{.}
\] If $\tilde{g}_k\in \Lp{2}(\Omega)$ has the same property, we obtain $\langle g_k-\tilde{g}_k,\phi\rangle_{\Lp{2}(\Omega)}=0$
for all $\phi\in \Cc(\Omega)$. Since $\Cc(\Omega)$ is dense in $\Lp{2}(\Omega)$, we infer $\tilde{g}_k=g_k$.

If such a unique $g_k$ exists for each $k\in \{1,\ldots, d\}$, we call $f$ \emph{weakly differentiable} and write $\partial_k f\coloneqq g_k$ for $k\in \{1,\ldots, d\}$. The set of all weakly differentiable $f\in\Lp{2}(\Omega)$ is denoted by $\sobH^1(\Omega)$. We easily see that $\sobH^1(\Omega)$ is a linear subspace of $\Lp{2}(\Omega)$ and 
that $\partial_k$ is linear on $\sobH^1(\Omega)$ for each $k\in \{1,\ldots, d\}$ (\Cref{exer:1.0}).

\item Define
\[
    \grad \colon \begin{cases}
\hfill \sobH^1(\Omega)\subseteq \Lp{2}(\Omega)&\to\quad \Lp{2}(\Omega)^d\\
 \hfill f &\mapsto\quad (\partial_k f)_{k\in \{1,\ldots, d\}}
 \end{cases}\text{.}
\]
$\grad$ is densely defined, since $\Cc(\Omega)\subseteq \sobH^1(\Omega)$. Moreover, $\grad$ is closed: Indeed, let $(f_n)_{n\in\N}$ be a Cauchy-sequence in $\sobH^1(\Omega)$ with respect to the norm induced by the graph scalar product of $\grad$. Then, both $(f_n)_{n\in\N}$ as well as $((\partial_k f_n)_{k\in \{1,\ldots,d\}})_{n\in\N}$ are Cauchy sequences in $\Lp{2}(\Omega)$ and $\Lp{2}(\Omega)^d$, respectively. Let $f\in\Lp{2}(\Omega)$ and $(g_k)_{k\in \{1,\ldots,d\}}\in\Lp{2}(\Omega)^d$ be their respective limits. For $\phi\in \Cc(\Omega)$,
$k\in \{1,\ldots,d\}$ and $n\in\N$, we get
\[
     - \int_\Omega f_n \partial_k \phi = \int \partial_kf_n  \phi\text{.}
\]
Letting $n\to\infty$, we infer 
\[
     - \int_\Omega f \partial_k \phi = \int g_k  \phi
\] for all $k\in \{1,\ldots,d\}$ and $\phi\in\Cc(\Omega)$, which yields the claim. In other words, $\sobH^1(\Omega)$ is a Hilbert space (endowed with the
graph scalar product arising from $\grad$).

\item We define $\cH^1(\Omega)$ to be the closure of $\Cc(\Omega)$ in $\sobH^1(\Omega)$. As a consequence, the restriction of $\grad$ to $\cH^1(\Omega)$,
\[
    \cgrad \colon \begin{cases}
\hfill \cH^1(\Omega)\subseteq \Lp{2}(\Omega)&\to\quad \Lp{2}(\Omega)^d\\
 \hfill f &\mapsto\quad (\partial_k f)_{k\in \{1,\ldots, d\}}
 \end{cases}\text{.}
\]
is also closed. In other words, $\cH^1(\Omega)$ is a Hilbert space (endowed with the
graph scalar product arising from $\grad$). Due to $\Cc(\Omega)\subseteq \cH^1(\Omega)$, $\cgrad$ is also densely defined.
\end{enumerate}
\end{example}

We return to our general setting with general Hilbert spaces $\mathcal{H}_0$ and $\mathcal{H}_1$. For a bounded linear operator $A \in \Lb(\mathcal{H}_0,\mathcal{H}_1)$ we recall that its \emph{adjoint} $A^* \colon\mathcal{H}_1\to \mathcal{H}_0$
is, by the Riesz-representation theorem, well-defined via assigning to $y\in \mathcal{H}_1$ the unique element $A^*y \in \mathcal{H}_0$ characterised by 
\[
   \langle A^*y , x \rangle_{\mathcal{H}_0}  = \langle y, Ax \rangle_{\mathcal{H}_1}
\]
for all $x\in \mathcal{H}_0$.

\begin{lemma}\label{lem:adj0} Let $A \in \Lb(\mathcal{H}_0,\mathcal{H}_1)$. Then the following statements hold:
\begin{enumerate}
\item $A^*\in\Lb(\mathcal{H}_1,\mathcal{H}_0)$ with $\norm{A^*}=\norm{A}$.
\item \label{lem:adj0it2} $A^{**}=A$.
\item $\ker(A)=\{x\in \mathcal{H}_0\mid Ax = 0\}=\ran(A^*)^\perp = \{ x\in \mathcal{H}_0\mid \forall y \in \ran(A^*)\colon \langle x,y\rangle_{\mathcal{H}_0}=0\}$.
\item \label{lem:adj0it4}$\ker(A^*)^{\perp}=\overline{\ran}(A)$.
\end{enumerate}
\end{lemma}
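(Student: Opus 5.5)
The plan is to argue directly from the defining identity $\langle A^*y,x\rangle_{\mathcal{H}_0}=\langle y,Ax\rangle_{\mathcal{H}_1}$, using only the Riesz representation theorem and elementary Hilbert space facts. The items are proved in order, each one feeding into the next.

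For (i), linearity of $A^*$ comes from uniqueness in the Riesz representation. For $y,z\in\mathcal{H}_1$ and $\lambda\in\K$, the element $A^*y+\lambda A^*z$ satisfies the characterising identity for $y+\lambda z$; this uses that the scalar product is anti-linear in the first slot, so the scalar passes through consistently. For the norm, I would take $x=A^*y$ in the identity and apply Cauchy--Schwarz:
\[
\norm{A^*y}^2=\langle y,AA^*y\rangle\leq \norm{y}\norm{A}\norm{A^*y}.
\]
This gives $\norm{A^*}\leq\norm{A}$, and in particular $A^*$ is bounded. The reverse inequality I would postpone until (ii) is available.

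For (ii), conjugating the defining identity gives $\langle x,A^*y\rangle=\langle Ax,y\rangle$ for all $x,y$. This is exactly the characterising identity of $A^{**}x$ as the adjoint of $A^*\in\Lb(\mathcal{H}_1,\mathcal{H}_0)$. Uniqueness in the Riesz representation then gives $A^{**}x=Ax$. Applying the inequality from (i) to $A^*$ yields $\norm{A}=\norm{A^{**}}\leq\norm{A^*}$, which completes (i).

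For (iii), I would show two inclusions.
\begin{itemize}
\item If $Ax=0$, then $\langle A^*y,x\rangle=\langle y,Ax\rangle=0$ for all $y$, so $x\in\ran(A^*)^\perp$.
\item Conversely, if $x\perp\ran(A^*)$, then $\langle y,Ax\rangle=0$ for all $y\in\mathcal{H}_1$. Choosing $y=Ax$ gives $Ax=0$.
\end{itemize}

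For (iv), which is the only step needing more than unwinding definitions, I would apply (iii) to $A^*$ in place of $A$ and use (ii). This gives $\ker(A^*)=\ran(A^{**})^\perp=\ran(A)^\perp$. It then remains to use the Hilbert space fact that $(M^\perp)^\perp=\overline{M}$ for every linear subspace $M$, applied to $M=\ran(A)$.

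I would justify that fact briefly, and this is the main obstacle. The inclusion $\overline{M}\subseteq M^{\perp\perp}$ holds because $M^{\perp\perp}$ is closed and contains $M$. For the converse, take the orthogonal decomposition $\mathcal{H}=\overline{M}\oplus\overline{M}^\perp$, which relies on the projection theorem for closed subspaces. Write $z\in M^{\perp\perp}$ as $z=m+w$ with $m\in\overline{M}$ and $w\in\overline{M}^\perp=M^\perp$. Then $w=z-m$ lies in $M^{\perp\perp}\cap M^\perp=\{0\}$, so $z=m\in\overline{M}$.
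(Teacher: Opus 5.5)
Your proof is correct and complete. Postponing $\norm{A}\leq\norm{A^*}$ until after $A^{**}=A$ is not circular, since (ii) only needs $A^*\in\Lb(\mathcal{H}_1,\mathcal{H}_0)$, which you established first. The paper gives no argument of its own here (it defers the lemma to \Cref{exer:1.1}), and your direct derivation from the Riesz characterisation, together with $M^{\perp\perp}=\overline{M}$, is the standard solution that exercise intends.
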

\begin{proof}
This is \Cref{exer:1.1}.
\end{proof}

Let $C\colon \dom(C)\subseteq \mathcal{H}_0 \to \mathcal{H}_1$ be densely defined and closed.
From now on, whenever we just write $\dom (C)$, e.g., as in \Cref{ex:grad0} (ii) or (iii) when talking about $C=\grad$, $\mathcal{H}_0= \Lp{2}(\Omega)$, $\mathcal{H}_1= \Lp{2}(\Omega)^d$ with $\dom(C)=\sobH^1(\Omega)$ or $\dom(C)=\cH^1(\Omega)$, we consider $\dom(C)$ as a Hilbert space endowed with $\langle \cdot,\cdot\rangle_{\dom(C)}$.
Furthermore, we define the \emph{dual}
\[
C^\diamond  \colon
\begin{cases}
\hfill \mathcal{H}_1&\to\quad \dom(C)'\coloneqq\Lb(\dom(C),\K)\\
 \hfill q &\mapsto\quad (\dom(C)\ni \phi \mapsto \langle q,C\phi\rangle_{\mathcal{H}_1})
 \end{cases}\text{.}
\]
\begin{proposition}\label{prop:Cdiam} Let $C\colon \dom(C)\subseteq \mathcal{H}_0 \to \mathcal{H}_1$ be densely defined and closed. Define $\tilde{C}\colon \dom(C)\to \mathcal{H}_1, x\mapsto Cx$\footnote{Note that, in the definition of $\tilde{C}$, $\dom(C)$ is considered as a Hilbert space on its own right, the operator $\tilde{C}$ is by construction \emph{always} bounded. In contrast, as $C$ considered to be an operator from $\mathcal{H}_0$ to $\mathcal{H}_1$, may be unbounded--all interesting applications consider unbounded $C$.} Then, $\tilde{C}\in\Lb(\dom(C),\mathcal{H}_1)$ with the adjoint
\[
    {\tilde{C}}^*=R_{\dom(C)} C^\diamond\in \Lb(\mathcal{H}_1,\dom(C)),
\]
where  $R_{\dom(C)}\colon \dom(C)'\to \dom(C), q\mapsto \tilde{q}$ is the bijective and norm-preserving Riesz mapping finding to each functional the corresponding Riesz-representative.
\end{proposition}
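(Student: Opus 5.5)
We first check that $\tilde{C}$ is bounded, then verify that $R_{\dom(C)} C^\diamond$ satisfies the defining identity of the adjoint; uniqueness of adjoints (via Riesz) then gives equality.

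\emph{Boundedness of $\tilde{C}$.} Linearity is inherited from $C$. For $x\in\dom(C)$ we have
\[
\norm{\tilde{C}x}_{\mathcal{H}_1}^2=\norm{Cx}_{\mathcal{H}_1}^2\leq \norm{x}_{\mathcal{H}_0}^2+\norm{Cx}_{\mathcal{H}_1}^2=\norm{x}_{\dom(C)}^2,
\]
so $\tilde{C}\in\Lb(\dom(C),\mathcal{H}_1)$ with $\norm{\tilde{C}}\leq 1$. Closedness of $C$ is used only to make $\dom(C)$ a Hilbert space, so that $\Lb(\dom(C),\mathcal{H}_1)$, the adjoint and the Riesz map $R_{\dom(C)}$ make sense. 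Consequently $\tilde{C}^*\in\Lb(\mathcal{H}_1,\dom(C))$ exists by \Cref{lem:adj0}.

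\emph{$C^\diamond$ is well defined and bounded.} For $q\in\mathcal{H}_1$, the map $\phi\mapsto\langle q,C\phi\rangle_{\mathcal{H}_1}$ is linear, since the scalar product is linear in the second argument. By Cauchy--Schwarz,
\[
\abs{\langle q,C\phi\rangle_{\mathcal{H}_1}}\leq\norm{q}_{\mathcal{H}_1}\norm{\phi}_{\dom(C)}.
\]
Hence $C^\diamond q\in\dom(C)'$, and $C^\diamond$ is bounded with norm at most $1$. The map $q\mapsto C^\diamond q$ is anti-linear. Since $R_{\dom(C)}$ is also anti-linear (this follows from our convention that scalar products are anti-linear in the first argument), the composition $R_{\dom(C)}C^\diamond$ is linear and bounded. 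This bookkeeping of (anti-)linearity is the only point requiring care.

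\emph{Adjoint identity.} By definition of the Riesz representative, $R_{\dom(C)}f$ is the unique element $\tilde f\in\dom(C)$ with $f(\phi)=\langle \tilde f,\phi\rangle_{\dom(C)}$ for all $\phi\in\dom(C)$. For $q\in\mathcal{H}_1$ and $x\in\dom(C)$ this gives
\[
\langle R_{\dom(C)}C^\diamond q, x\rangle_{\dom(C)}=(C^\diamond q)(x)=\langle q,Cx\rangle_{\mathcal{H}_1}=\langle q,\tilde{C}x\rangle_{\mathcal{H}_1}.
\]
This is exactly the characterising property of $\tilde{C}^*q$. Since the adjoint is unique, $\tilde{C}^*=R_{\dom(C)}C^\diamond$.
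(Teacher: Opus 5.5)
Your proof is correct and follows the paper's argument. The paper derives boundedness from the same graph-norm inequality and runs the identical chain $(C^\diamond q)(\phi)=\langle q,\tilde{C}\phi\rangle_{\mathcal{H}_1}=\langle \tilde{C}^*q,\phi\rangle_{\dom(C)}$, reading it as $C^\diamond q=R_{\dom(C)}^{-1}(\tilde{C}^*q)$ rather than, as you do, checking the adjoint identity for $R_{\dom(C)}C^\diamond q$; your bookkeeping that the two anti-linear maps compose to a bounded linear operator is a detail the paper leaves implicit.
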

\begin{proof}
$\tilde{C}\in\Lb(\dom(C),\mathcal{H}_1)$ immediately follows from $\norm{Cx}^2_{\mathcal{H}_1}\leq \norm{x}^2_{\mathcal{H}_0}+\norm{Cx}^2_{\mathcal{H}_1}$ for $x\in\dom(C)$.

 Let $q\in \mathcal{H}_1$. Then, we compute for $\phi\in \dom(C)$
 \[
     (C^\diamond q)(\phi) =  \langle q,C\phi\rangle_{\mathcal{H}_1}=\langle q,\tilde{C}\phi\rangle_{\mathcal{H}_1} = \langle \tilde{C}^* q, \phi\rangle_{\dom(C)} = (R_{\dom(C)}^{-1}( {\tilde{C}}^* q))(\phi). \qedhere
 \]
\end{proof}

The upshot of the latter proposition is that $C^\diamond$ modulo some Riesz mapping is the adjoint of \emph{bounded} linear operator. As a consiequence, results as in \Cref{lem:adj0} can be applied. A first instance of that is the following observation.

\begin{lemma}\label{lem:adj} Let $C\colon \dom(C)\subseteq \mathcal{H}_0 \to \mathcal{H}_1$ be densely defined and  closed. 
Then the following statements hold:
\begin{enumerate}
\item If $C$ has dense range, then $C^{\diamond}$ is one-to-one.
\item  If $C$ is one-to-one, then $C^\diamond$ has dense range.
\end{enumerate}
\end{lemma}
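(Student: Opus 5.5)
The plan is to reduce both claims to \Cref{lem:adj0} via \Cref{prop:Cdiam}. By \Cref{prop:Cdiam}, $\tilde{C}\in\Lb(\dom(C),\mathcal{H}_1)$ and $\tilde{C}^*=R_{\dom(C)}C^\diamond$. Since $R_{\dom(C)}$ is a bijective isometry, we can read off
\[
\ker(C^\diamond)=\ker(\tilde{C}^*), \qquad \ran(C^\diamond)=R_{\dom(C)}^{-1}\bigl(\ran(\tilde{C}^*)\bigr).
\]
Moreover, a subset of $\dom(C)$ is dense if and only if its image under $R_{\dom(C)}^{-1}$ is dense in $\dom(C)'$, because $R_{\dom(C)}^{-1}$ is an isometric (anti-linear, in the complex case) bijection and hence a homeomorphism. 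Also, $\ker(\tilde{C})=\ker(C)$ and $\ran(\tilde{C})=\ran(C)$, since $\tilde{C}$ and $C$ act identically and only the norm on the domain differs.

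For (i), suppose $C$ has dense range. Then $\overline{\ran}(\tilde{C})=\mathcal{H}_1$. Applying \Cref{lem:adj0} (iv) to $A=\tilde{C}$ gives $\ker(\tilde{C}^*)^\perp=\mathcal{H}_1$. Hence $\ker(\tilde{C}^*)\subseteq \ker(\tilde{C}^*)^{\perp\perp}=\mathcal{H}_1^\perp=\{0\}$. By the identity above, $\ker(C^\diamond)=\{0\}$, so $C^\diamond$ is one-to-one.

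For (ii), suppose $C$ is one-to-one, so $\ker(\tilde{C})=\{0\}$. Apply \Cref{lem:adj0} (iv) to $A=\tilde{C}^*$ and use $\tilde{C}^{**}=\tilde{C}$ from \Cref{lem:adj0} (ii). This gives
\[
\overline{\ran}(\tilde{C}^*)=\ker(\tilde{C})^\perp=\{0\}^\perp=\dom(C),
\]
where the orthogonal complement is taken in the Hilbert space $\dom(C)$ with the graph scalar product. So $\ran(\tilde{C}^*)$ is dense in $\dom(C)$. Transporting this through the homeomorphism $R_{\dom(C)}^{-1}$ shows that $\ran(C^\diamond)$ is dense in $\dom(C)'$.

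There is no real obstacle here. The only step needing care is (ii): the density is with respect to the graph norm topology of $\dom(C)$ (equivalently, of $\dom(C)'$), not that of $\mathcal{H}_0$. One must also check that the anti-linearity of the Riesz map in the complex case does not affect the transfer of density; it does not, since the map is still an isometric bijection.
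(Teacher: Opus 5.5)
Your proof is correct and follows the paper's route: the paper simply says the result follows from \Cref{lem:adj0} together with \Cref{prop:Cdiam}. You have filled in those details correctly, namely transferring kernel and density through the (anti-linear) isometric Riesz map and applying \Cref{lem:adj0} to $\tilde{C}$ and $\tilde{C}^*$, with orthogonal complements taken in the graph-norm Hilbert space $\dom(C)$.
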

\begin{proof}
The result follows from \Cref{lem:adj0} in conjunction with \Cref{prop:Cdiam}.
\end{proof}

Another consequence of \Cref{prop:Cdiam} can be found in the proof of the following statement, where closedness of $\ran(C^\diamond)$ is characterised by the same for $\ran(C)$. In fact, in the proof, this result is reduced to the bounded operator case.

\begin{theorem}\label{thm:crt} Let $C\colon \dom(C)\subseteq \mathcal{H}_0 \to \mathcal{H}_1$ be densely defined and  closed. Then, $\ran(C)\subseteq \mathcal{H}_1$ is closed if and only if $\ran(C^\diamond)\subseteq \dom(C)'$ is closed.
\end{theorem}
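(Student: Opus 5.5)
We reduce everything to the bounded operator $\tilde{C}\in\Lb(\dom(C),\mathcal{H}_1)$ from \Cref{prop:Cdiam}. Since $\ran(C)=\ran(\tilde{C})$, the range of $C$ is closed if and only if $\ran(\tilde C)$ is closed. On the other side, \Cref{prop:Cdiam} gives $\tilde{C}^*=R_{\dom(C)}C^\diamond$, where $R_{\dom(C)}$ is an isometric isomorphism from $\dom(C)'$ onto $\dom(C)$. Hence $\ran(C^\diamond)=R_{\dom(C)}^{-1}\ran(\tilde C^*)$, and $\ran(C^\diamond)$ is closed in $\dom(C)'$ if and only if $\ran(\tilde C^*)$ is closed in $\dom(C)$. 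The statement is therefore equivalent to the closed range theorem for a bounded operator $A\coloneqq\tilde C$ between Hilbert spaces $\mathcal{X}\coloneqq\dom(C)$ and $\mathcal{Y}\coloneqq\mathcal{H}_1$: $\ran(A)$ is closed if and only if $\ran(A^*)$ is closed. By \Cref{lem:adj0} \ref{lem:adj0it2}, $A^{**}=A$, so it suffices to prove one implication, say that closedness of $\ran(A)$ implies closedness of $\ran(A^*)$.

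For this implication we use the open mapping theorem. Assume $\ran(A)$ is closed. Then $A_0\colon \ker(A)^\perp\to\ran(A)$, $x\mapsto Ax$, is a bounded bijection between Hilbert spaces, so its inverse is bounded. Equivalently, there is $c>0$ with $\norm{Ax}\geq c\norm{x}$ for all $x\in\ker(A)^\perp$.

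Next we show $\ran(A^*)=\ker(A)^\perp$. This set is closed, and the inclusion $\ran(A^*)\subseteq\ker(A)^\perp$ follows from \Cref{lem:adj0}. For the reverse inclusion, take $z\in\ker(A)^\perp$ and define a functional on $\ran(A)$ by $Ax\mapsto\langle z,x\rangle$ for $x\in\ker(A)^\perp$. It is well defined and bounded by $c^{-1}\norm{z}$ by the estimate above. Since $\ran(A)$ is closed, it is a Hilbert space, and the Riesz representation theorem yields $y\in\ran(A)$ with $\langle y,Ax\rangle=\langle z,x\rangle$ for all $x\in\ker(A)^\perp$. This identity also holds trivially for $x\in\ker(A)$, because $z\perp\ker(A)$. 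Hence $A^*y=z$.

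The main obstacle is this reverse inclusion, which is where the lower bound from the open mapping theorem enters. The reductions via \Cref{prop:Cdiam} and the Riesz isometry are routine.
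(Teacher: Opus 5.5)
Your proof is correct. The reduction is exactly the paper's: you pass to $\tilde C$ and $\tilde C^{*}=R_{\dom(C)}C^\diamond$ via \Cref{prop:Cdiam}, then use $A^{**}=A$ to prove only one implication. The difference lies in the core step for a bounded operator $A$.

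The paper never identifies $\ran(A^*)$. It takes the lower bound $\gamma\norm{x}\leq\norm{Ax}$ on $\ker(A)^\perp$ from \Cref{exer:1.2} and observes that $\ker(A^*)^\perp=\ran(A)$. For $y=Ax$ with $x\in\ker(A)^\perp$, it tests $\norm{A^*y}=\sup_{\norm{w}\leq 1}\abs{\langle Ax,Aw\rangle}$ with $w=x/\norm{x}$, which gives $\norm{A^*y}\geq\gamma\norm{y}$ on $\ker(A^*)^\perp$. Closedness of $\ran(A^*)$ then follows from \Cref{exer:1.2} applied to $A^*$.

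You use the same lower bound, obtained via the open mapping theorem, to build a bounded functional on the Hilbert space $\ran(A)$. By Riesz you then show $\ran(A^*)=\ker(A)^\perp$.

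What each route buys:
\begin{itemize}
\item The paper's argument is a pure norm estimate and gives the quantitative bound $\gamma(A^*)\geq\gamma(A)$.
\item Yours gives the sharper qualitative identification of the range. Translated back through $R_{\dom(C)}$, it says that when $\ran(C)$ is closed, $\ran(C^\diamond)$ is precisely the set of $f\in\dom(C)'$ vanishing on $\ker(C)$.
\item In particular, your version yields surjectivity of $C^\diamond$ for one-to-one $C$ with closed range in a single step. That is exactly what \Cref{thm:TW14} later needs; the paper obtains it by combining \Cref{lem:adj} with the present theorem.
\end{itemize}

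A cosmetic remark: for $\K=\C$ the Riesz map $R_{\dom(C)}$ is conjugate-linear. It is therefore an isometric bijection rather than an isomorphism, but that is all you need to transfer closedness.
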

\begin{proof}
  By \Cref{prop:Cdiam}, we can equivalently show that $\tilde{C}\in\Lb(\dom(C),\mathcal{H}_1)$ has closed range if and only if ${\tilde{C}}^*\in\Lb(\mathcal{H}_1,\dom(C))$ has closed range.
  Recalling~\labelcref{lem:adj0it2} from \Cref{lem:adj0}, it suffices to prove that, for a bounded linear operator $A\in \Lb(\mathcal{H}_0,\mathcal{H}_1)$, a closed $\ran(A)\subseteq \mathcal{H}_1$ implies that $\ran(A^*)\subseteq \mathcal{H}_0$ is closed.
  
  By \Cref{exer:1.2}, $\ran(A)\subseteq \mathcal{H}_1$ is closed if and only if there exists $\gamma>0$ such that
  \[
      \gamma \norm{ x}_{\mathcal{H}_0}\leq \norm{Ax}_{\mathcal{H}_1}
  \]
  for all $x\in\ker(A)^\perp$, and a similar statement holds for $A^*$.
  By closedness of $\ran(A)\subseteq \mathcal{H}_1$,~\labelcref{lem:adj0it4} from \Cref{lem:adj0} reads 
  $\ker(A^*)^\perp={\ran(A)}$. We let $y = Ax$ for some $x\in \ker(A)^\bot$, $x\neq0$ and compute
  \begin{align*}
      \norm{ A^*y}_{\mathcal{H}_0} & = \sup_{\substack{w\in \mathcal{H}_0\\ \norm{w}_{\mathcal{H}_0}\leq 1}} \abs{\langle A^*y, w \rangle_{\mathcal{H}_0}} \\
       & =  \sup_{\substack{w\in \mathcal{H}_0\\ \norm{w}_{\mathcal{H}_0}\leq 1}} \abs{\langle y, A w \rangle_{\mathcal{H}_1}}      =  \sup_{\substack{w\in \ker(A)^\perp\\ \norm{w}_{\mathcal{H}_0}\leq 1}} \abs{\langle y, A w \rangle_{\mathcal{H}_1}}  \\
   &   =  \sup_{\substack{w\in \ker(A)^\perp\\ \norm{w}_{\mathcal{H}_0}\leq 1}} \abs{\langle Ax , A w \rangle_{\mathcal{H}_1}} \\
   & =  \frac{1}{\norm{x}_{\mathcal{H}_0}} \norm{Ax}_{\mathcal{H}_1}^2\geq \gamma \norm{Ax}_{\mathcal{H}_1} = \gamma \norm{y}_{\mathcal{H}_1}\text{,}
  \end{align*}
  proving the claim.  
\end{proof}
The solution theory for abstract elliptic type equations can now be provided as follows.

\begin{theorem}\label{thm:TW14} Let $C\colon \dom(C)\subseteq \mathcal{H}_0 \to \mathcal{H}_1$ be densely defined,  closed, one-to-one, and onto. Let $a\in \Lb(\mathcal{H}_1,\mathcal{H}_1)$. 

Then, the following conditions are equivalent:
\begin{enumerate}
 \item\label{thm:TW14i} For all $f\in \dom(C)'$, there exists a unique $u\in \dom(C)$ such that
 \[
     \forall\phi \in \dom(C):\langle a Cu, C\phi \rangle_{\mathcal{H}_1} = f(\phi)\text{,}
 \]
 \item $ a $ is continuously invertible.
\end{enumerate}
In case either of the above holds, the unique element $u$ in~\labelcref{thm:TW14i} is given by
\[
    u = C^{-1} a^{-1}(C^{\diamond})^{-1}f\text{.}
\]
\end{theorem}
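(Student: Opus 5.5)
The plan is to reduce the statement to the fact that $C$ and $C^\diamond$ are both bijections, so that the variational problem in \labelcref{thm:TW14i} becomes an operator equation $C^\diamond a C u = f$ in $\dom(C)'$.

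\emph{Step 1: $C$ and $C^\diamond$ are bijective.} Since $C$ is one-to-one and onto, $C\colon \dom(C)\to\mathcal{H}_1$ is a bijection. With $\tilde{C}$ from \Cref{prop:Cdiam}, this is a bounded bijection between Hilbert spaces, so $C^{-1}=\tilde{C}^{-1}\in\Lb(\mathcal{H}_1,\dom(C))$ by the bounded inverse theorem. For $C^\diamond$:
\begin{itemize}
\item \Cref{lem:adj}(i) gives injectivity, because $C$ has dense range.
\item \Cref{lem:adj}(ii) gives dense range, because $C$ is injective.
\item \Cref{thm:crt} gives closed range, because $\ran(C)=\mathcal{H}_1$ is closed.
\end{itemize}
Hence $C^\diamond\colon\mathcal{H}_1\to\dom(C)'$ is a bounded bijection, and $(C^\diamond)^{-1}$ is bounded. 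Alternatively, $C^\diamond=R_{\dom(C)}^{-1}\tilde{C}^*$ is invertible because $\tilde{C}$ is.

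\emph{Step 2: rewrite the equation.} For $u,\phi\in\dom(C)$ we have $\langle aCu,C\phi\rangle_{\mathcal{H}_1}=(C^\diamond aCu)(\phi)$. So \labelcref{thm:TW14i} says exactly that the map
\[
T\coloneqq C^\diamond a \tilde{C}\colon \dom(C)\to\dom(C)'
\]
is bijective. Since $C^\diamond$ and $\tilde{C}$ are bijections, $T$ is bijective if and only if $a$ is a bijection of $\mathcal{H}_1$. The forward direction uses $a=(C^\diamond)^{-1}T\tilde{C}^{-1}$. If $a$ is a bounded bijection, the bounded inverse theorem makes $a^{-1}$ continuous. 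Conversely, if $a$ is continuously invertible, then $T^{-1}=\tilde{C}^{-1}a^{-1}(C^\diamond)^{-1}$. This directly yields the formula $u=C^{-1}a^{-1}(C^\diamond)^{-1}f$.

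\emph{Main obstacle.} The only nontrivial point is the surjectivity of $C^\diamond$, which needs the closed range result \Cref{thm:crt} together with \Cref{lem:adj}. Everything else is the bounded inverse theorem and unwinding definitions.
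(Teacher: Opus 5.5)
Your proposal is correct and follows essentially the same route as the paper. You rewrite \labelcref{thm:TW14i} as unique solvability of $C^\diamond a\tilde{C}u=f$, and you use \Cref{prop:Cdiam}, \Cref{lem:adj}, \Cref{thm:crt} and the bounded inverse theorem to see that $\tilde{C}$ and $C^\diamond$ are topological isomorphisms; from this the equivalence with continuous invertibility of $a$ and the solution formula follow. The only cosmetic difference is that you invoke the bounded inverse theorem (or $C^\diamond=R_{\dom(C)}^{-1}\tilde{C}^*$) where the paper cites \Cref{exer:1.35} for the continuity of $C^\diamond$ and its inverse.
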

\begin{proof}
  Using the definition of $C^\diamond$,~\labelcref{thm:TW14i} is equivalent to
  \[
     \forall f\in \dom(C)' \exists ! u\in \dom(C)\colon C^\diamond a \tilde{C} u = f.
  \] Note that the statement in \labelcref{thm:TW14i} does not require considering $C$ as an unbounded operator. Only the action of this operator is needed and, hence, in the latter equation $ \tilde{C}$ and $C$ can be used interchangeably. By \Cref{prop:Cdiam}, $\tilde{C}$ is a bounded linear operator. 
  Considering the assumptions, the open mapping theorem yields that $\tilde{C}$ also has a bounded inverse.
  \Cref{lem:adj} and \Cref{thm:crt} show that $C^\diamond$ is bijective. By \Cref{exer:1.35}, both $C^\diamond$ and its inverse are continuous.
  Hence, as $\tilde{C}$ and $C^\diamond$ are both topological isomorphisms, unique existence of $u$ follows if and only if $a$ is continuously invertible. In this case, the solution formula 
  \[
  u = \tilde{C}^{-1} a^{-1}(C^{\diamond})^{-1}f
  \]follows. Again, using that $C$ and $\tilde{C}$ act the same way, one can again replace the one inverse by the other eventually leading to the claimed formula.
\end{proof} 

\begin{corollary}\label{cor:TW14} Let $C\colon \dom(C)\subseteq \mathcal{H}_0 \to \mathcal{H}_1$ be densely defined,  closed with closed range. Let $a\in \Lb(\mathcal{H}_1,\mathcal{H}_1)$. 

Then, the following conditions are equivalent:
\begin{enumerate}
 \item\label{cor:TW14i} for all $f\in (\dom(C)\cap \ker(C)^\bot)'$ there exists a unique $u\in \dom(C)\cap \ker(C)^\bot$ such that
 \[
    \forall\phi \in \dom(C)\cap \ker(C)^\perp: \langle a Cu, C\phi \rangle_{\mathcal{H}_1} = f(\phi)\text{,}
 \]
 \item $ \iota^* a \iota $ is continuously invertible, where $\iota\colon \ran(C)\hookrightarrow \mathcal{H}_1$ is the canonical embedding.
\end{enumerate}
In case either of the above holds, the unique element $u$ in~\labelcref{cor:TW14i} is given by
\[
    u = (\iota^*C\restriction_{\dom(C)\cap \ker(C)^\perp})^{-1} (\iota^*a\iota)^{-1}((C\restriction_{\dom(C)\cap \ker(C)^\perp})^{\diamond}\iota )^{-1}f\text{.}
\]
\end{corollary}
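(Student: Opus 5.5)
The plan is to reduce \Cref{cor:TW14} to \Cref{thm:TW14}, applied with a suitably restricted operator and restricted coefficient. Set $\mathcal{K}_0\coloneqq \overline{\dom(C)\cap\ker(C)^\perp}\subseteq\mathcal{H}_0$ (closure in $\mathcal{H}_0$) and $\mathcal{K}_1\coloneqq \ran(C)$. Since $\ran(C)$ is closed, $\mathcal{K}_1$ is a Hilbert space. Consider
\[
  D\coloneqq \iota^* C\restriction_{\dom(C)\cap\ker(C)^\perp}\colon \dom(C)\cap\ker(C)^\perp\subseteq \mathcal{K}_0\to\mathcal{K}_1 .
\]
Note that $\iota^*$ is the orthogonal projection onto $\ran(C)$ with codomain $\ran(C)$, so on $\ran(C)$ it acts as the identity. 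We then check the hypotheses of \Cref{thm:TW14} for $D$.

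\textbf{Verifying the hypotheses.}
\begin{itemize}
  \item $D$ is densely defined in $\mathcal{K}_0$ by construction.
  \item $D$ is closed. The kernel $\ker(C)$ is closed in $\mathcal{H}_0$, because $C$ is closed. Hence $\dom(C)\cap\ker(C)^\perp$ is a closed subspace of the Hilbert space $\dom(C)$ with respect to the graph norm. Moreover, the graph norm of $D$ coincides with that of $C$ on this set.
  \item $D$ is one-to-one: $\ker(C)\cap\ker(C)^\perp=\{0\}$.
  \item $D$ is onto $\ran(C)$: for $x\in\dom(C)$, decompose $x=x_0+x_1$ with $x_0\in\ker(C)\subseteq\dom(C)$. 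Then $x_1=x-x_0\in\dom(C)\cap\ker(C)^\perp$ and $Cx_1=Cx$.
\end{itemize}
With $b\coloneqq\iota^*a\iota\in\Lb(\mathcal{K}_1,\mathcal{K}_1)$, \Cref{thm:TW14} gives the equivalence of
\begin{itemize}
  \item unique solvability of $\langle bDu,D\phi\rangle_{\mathcal{K}_1}=f(\phi)$ for all $f\in\dom(D)'$, and
  \item continuous invertibility of $b$.
\end{itemize}
It also gives the solution formula $u=D^{-1}b^{-1}(D^\diamond)^{-1}f$.

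\textbf{Matching the variational problems.} For $u,\phi\in\dom(D)$ we have $Cu,C\phi\in\ran(C)$, so $\iota Du=Cu$ and $\iota D\phi=C\phi$. Therefore
\[
  \langle bDu,D\phi\rangle_{\mathcal{K}_1}=\langle a\iota Du,\iota D\phi\rangle_{\mathcal{H}_1}=\langle aCu,C\phi\rangle_{\mathcal{H}_1}.
\]
Since $\dom(D)=\dom(C)\cap\ker(C)^\perp$ carries the same graph scalar product, the dual spaces coincide. Thus \labelcref{cor:TW14i} is exactly condition \labelcref{thm:TW14i} of \Cref{thm:TW14} for $D$ and $b$.

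\textbf{The solution formula.} This is the step needing the most care, mainly bookkeeping of the notation. We must identify $D^\diamond$ with $(C\restriction_{\dom(C)\cap\ker(C)^\perp})^\diamond\iota$. For $q\in\ran(C)$ and $\phi\in\dom(D)$,
\[
  (D^\diamond q)(\phi)=\langle q,\iota^*C\phi\rangle_{\mathcal{K}_1}=\langle \iota q,C\phi\rangle_{\mathcal{H}_1}=\bigl((C\restriction_{\dom(C)\cap\ker(C)^\perp})^\diamond\iota q\bigr)(\phi),
\]
using that $\iota^*$ is the adjoint of $\iota$. Substituting this identification into $u=D^{-1}b^{-1}(D^\diamond)^{-1}f$ yields the stated formula.
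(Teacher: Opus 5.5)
Your proposal is correct and is essentially the paper's proof: you apply \Cref{thm:TW14} to $\iota^*C\restriction_{\dom(C)\cap\ker(C)^\perp}$ with coefficient $\iota^*a\iota$, match the two variational problems via $\iota\iota^*C=C$ on $\dom(C)$, and identify $(\iota^*C)^\diamond=C^\diamond\iota$ to obtain the solution formula. The only difference is that you explicitly verify the hypotheses the paper delegates to \Cref{exer:1.375}: closedness of $\dom(C)\cap\ker(C)^\perp$ in the graph norm, and bijectivity onto $\ran(C)$. Note that this closedness holds simply because $\ker(C)^\perp$ is closed in $\mathcal{H}_0$, whereas the closedness of $\ker(C)$ you invoke is what your surjectivity step actually uses.
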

\begin{proof}
Considering \Cref{exer:1.375}, we apply \Cref{thm:TW14} to $\iota^* C\restriction_{\dom(C)\cap \ker(C)^\perp}$, and use that 
 \[
     \forall\phi \in \dom(C)\cap \ker(C)^\perp :\langle a Cu, C\phi \rangle_{\mathcal{H}_1} = f(\phi)
 \]
 is equivalent to 
  \[
     \forall\phi \in \dom(C)\cap \ker(C)^\perp :\langle \iota^* a \iota \iota^*C\restriction_{\dom(C)\cap \ker(C)^\perp}u, \iota^*C\restriction_{\dom(C)\cap \ker(C)^\perp}\phi \rangle_{\mathcal{H}_1} = f(\phi)\text{,}
 \]
 and that $(\iota^*C)^{\diamond}=C^{\diamond}\iota$.
\end{proof}

\begin{remark}\label{rem:topiso}
Note that if, in addition to the assumptions in \Cref{cor:TW14} $C$ is one-to-one, then $\iota^*\tilde{C} \colon \dom(C)\to \ran(C)$ is a topological isomorphism. Moreover, since $(\iota^*C)^{\diamond}=C^{\diamond}\iota$ is then bijective (\Cref{lem:adj} and \Cref{thm:crt}), we deduce that $C^\diamond \iota \colon \ran(C)\to \dom(C)'$ is also a topological isomorphism.
\end{remark}

\section{The domain of the gradient}

The leading examples in most of the variational problems that will be discussed in this course will involve the gradient operator with Dirichlet boundary conditions, i.e., $\cgrad=\grad\restriction_{\cH^1(\Omega)}$. Thus, we shall now prove closedness of the range of this gradient operator such that we can obtain
well-posedness of the corresponding elliptic partial differential equation (\Cref{cor:TW14}). Indeed, the next statement does imply closedness of the range invoking \Cref{exer:1.2}.
\begin{proposition}\label{prop:Poincare}
Let $\Omega\subseteq \R^d$ be open and bounded. Then there exists $\gamma>0$ such that for all $u \in \cH^1(\Omega)$
\[ 
  \gamma \norm{ u}_{\Lp{2}(\Omega)}\leq \norm{\cgrad u}_{\Lp{2}(\Omega)^d}\text{.}
\]
\end{proposition}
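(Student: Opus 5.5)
The plan is the classical Poincaré inequality argument: prove the estimate for test functions $u\in\Cc(\Omega)$ using boundedness of $\Omega$ in one coordinate direction, then extend to all of $\cH^1(\Omega)$ by density, since $\cH^1(\Omega)$ is by definition the closure of $\Cc(\Omega)$ in the graph norm of $\grad$.

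\emph{Step 1 (reduction to one direction).} Since $\Omega$ is bounded, there are $a<b$ with $\Omega\subseteq\{x\in\R^d\mid a<x^{(1)}<b\}$. Set $L\coloneqq b-a$. Extend $u\in\Cc(\Omega)$ by zero to a smooth compactly supported function on $\R^d$. Then $u$ vanishes whenever $x^{(1)}\leq a$.

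\emph{Step 2 (estimate for test functions).} For $x\in\R^d$ with $a<x^{(1)}<b$, the fundamental theorem of calculus gives
\[
u(x)=\int_a^{x^{(1)}}\partial_1 u(t,x^{(2)},\ldots,x^{(d)})\,\dd t .
\]
By Cauchy--Schwarz,
\[
\abs{u(x)}^2\leq L\int_a^b\abs{\partial_1u(t,x^{(2)},\ldots,x^{(d)})}^2\,\dd t .
\]
Integrate over $x^{(1)}\in(a,b)$ and over the remaining variables, using Fubini. This yields
\[
\norm{u}_{\Lp{2}(\Omega)}^2\leq L^2\norm{\partial_1u}_{\Lp{2}(\Omega)}^2\leq L^2\norm{\cgrad u}_{\Lp{2}(\Omega)^d}^2,
\]
so $\gamma\coloneqq 1/L$ works for $u\in\Cc(\Omega)$. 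Here the classical derivative agrees with the weak one by integration by parts.

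\emph{Step 3 (density).} Take $u\in\cH^1(\Omega)$ and choose $u_n\in\Cc(\Omega)$ with $u_n\to u$ in the graph norm. Then $u_n\to u$ in $\Lp{2}(\Omega)$ and $\cgrad u_n\to\cgrad u$ in $\Lp{2}(\Omega)^d$. Both sides of the inequality are continuous with respect to the graph norm, so letting $n\to\infty$ in $\gamma\norm{u_n}\leq\norm{\cgrad u_n}$ gives the claim.

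The only step needing care is Step 2: handling the zero extension and applying Fubini properly. Boundedness is used only in the form that $\Omega$ lies in a slab of finite width, so the proof works under that weaker assumption. The main point is that the Dirichlet condition, encoded by density of $\Cc(\Omega)$, lets us integrate from the boundary of the slab, where $u$ vanishes. Without it the estimate fails, since constants lie in $\sobH^1(\Omega)$.
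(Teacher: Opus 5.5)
Your proposal is correct and matches the paper's proof. Both reduce to $\Cc(\Omega)$ by density, write the test function as the integral of $\partial_1$ starting from the edge of a slab containing $\Omega$, and then apply Cauchy--Schwarz and Fubini. The paper uses the slab $\abs{x_1}\leq R$ and obtains $\gamma=1/(2R)$, which is your $\gamma=1/L$ with $L=2R$.
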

\begin{proof}
It suffices to prove the statement for $\phi\in \Cc(\Omega)$ instead of $u$ since $C_c^\infty(\Omega)$ is, by definition, dense in $\cH^1(\Omega)$ with respect to
$(\norm{\cdot}^2_{\Lp{2}(\Omega)}+\norm{\grad\cdot}^2_{\Lp{2}(\Omega)^d})^{1/2}$. Thus, let $\phi\in \Cc(\Omega)$, extended by $0$ outside of $\Omega$. By the boundedness $\Omega$, there exists $R>0$ such that $\abs{x_1}\leq R$ whenever $(x_1,\ldots,x_d)\in \Omega$. Thus, the Cauchy--Schwarz inequality yields
\begin{align*}
  \int_{\Omega} |\phi(x)|^2\dd x & =  \int_\R \cdots \int_\R\int_{-R}^R |\phi(x_1, \ldots, x_d)|^2\dd x_1\ldots \dd x_d  \\
  & = \int_\R \cdots \int_\R\int_{-R}^R  \big|\int_{-R}^{x_1} \partial_1 \phi (s,x_2,\ldots,x_d)\dd s\big|^2\dd x_1\ldots \dd x_d \\
   & \leq \int_\R \cdots \int_\R\int_{-R}^R  \big(\int_{-R}^{R} |\partial_1 \phi (s,x_2,\ldots,x_d)|\dd s\big)^2\dd x_1\ldots \dd x_d \\
     & \leq 2R \int_\R \cdots \int_\R\int_{-R}^R  \int_{-R}^{R} |\partial_1 \phi (s,x_2,\ldots,x_d)|^2\dd s\dd x_1\ldots \dd x_d \\
& \leq (2R)^2 \int_{\Omega}|\grad \phi(x)|^2 \dd x\text{.}\qedhere
\end{align*}
\end{proof}

In the following lines, we shall specialise to one spatial dimension. In fact, the concept of $\Htopo$-convergence that will be introduced in the next section can be fully understood in one dimension using the results to follow. 
\begin{lemma}\label{lem:derOfTestFct}
Let $a<b$ and $\phi \in \Cc(a,b)$. Then, $\phi =\psi'$ for some $\psi\in \Cc(\Omega)$ if and only if $\int_a^b\phi =0$. If either case holds, $\psi$ is uniquely determined by
$\psi(x)\coloneqq \int_a^x \phi(y)\dd y$ for $x\in (a,b)$.
\end{lemma}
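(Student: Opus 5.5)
Note that $\Omega$ in the statement is meant to be $(a,b)$. I will prove the two directions separately, and then uniqueness, relying only on the fundamental theorem of calculus for smooth functions and on compactness of supports.

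\emph{Necessity.} Suppose $\phi=\psi'$ with $\psi\in\Cc(a,b)$. Since $\supp\psi$ is a compact subset of $(a,b)$, there are $a<\alpha<\beta<b$ with $\psi=0$ on $(a,\alpha]\cup[\beta,b)$. The fundamental theorem of calculus then gives
\[
\int_a^b\phi=\int_\alpha^\beta\psi'=\psi(\beta)-\psi(\alpha)=0.
\]

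\emph{Sufficiency.} Assume $\int_a^b\phi=0$ and define $\psi(x)\coloneqq\int_a^x\phi(y)\dd y$ for $x\in(a,b)$.
\begin{itemize}
\item Since $\phi$ is continuous, $\psi$ is differentiable with $\psi'=\phi$. Inductively, $\psi^{(k+1)}=\phi^{(k)}$, so $\psi$ is arbitrarily often differentiable.
\item Choose $\alpha<\beta$ in $(a,b)$ with $\supp\phi\subseteq[\alpha,\beta]$. For $x\le\alpha$ we have $\psi(x)=0$, because the integrand vanishes on $(a,x)$.
\item For $x\ge\beta$ we have $\psi(x)=\int_a^b\phi-\int_x^b\phi=0-0=0$.
\end{itemize}
Hence $\supp\psi\subseteq[\alpha,\beta]$, which is compact in $(a,b)$, and so $\psi\in\Cc(a,b)$. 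This is the only point where the hypothesis $\int_a^b\phi=0$ enters: it is exactly what forces the right tail of $\psi$ to vanish.

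\emph{Uniqueness.} Suppose $\psi_1,\psi_2\in\Cc(a,b)$ both satisfy $\psi_i'=\phi$. Then $(\psi_1-\psi_2)'=0$ on the interval $(a,b)$, so $\psi_1-\psi_2$ is constant. This constant is zero, since both functions vanish near $a$. Therefore $\psi$ equals the function $x\mapsto\int_a^x\phi$ defined above.

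There is no real obstacle; the only care needed is in the support bookkeeping of the sufficiency step.
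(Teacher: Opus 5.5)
Your proof is correct and follows the same route as the paper's: necessity via the fundamental theorem of calculus, and sufficiency by checking that $\psi(x)=\int_a^x\phi$ vanishes outside the support interval of $\phi$ because $\int_a^b\phi=0$. You spell out the support bookkeeping and the uniqueness argument (vanishing derivative plus vanishing near $a$), which the paper simply calls clear.
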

\begin{proof}
By the fundamental theorem of calculus, the necessity of $\int_a^b\phi =0$ is clear. On the other hand,
 let $a<c< d< b$ such that $\supp\phi\subseteq [c,d]$. Again, by the fundamental theorem, 
  $\int_c^d \phi =0$ implies $\psi'=\phi$ and $\psi\in \Cc(a,b)$ with $\supp\psi\subseteq [c,d]$ for $\psi$ given by $\psi(x)\coloneqq \int_a^x \phi(y)\dd y$, $x\in (a,b)$.
  The uniqueness of $\psi$ is clear.
\end{proof}
\begin{lemma}\label{lem:kerd} Let $a<b$ and $f\in \sobH^1(a,b)$ with $\partial f =0$. Then $f=c$ for some $c\in \K$.
\end{lemma}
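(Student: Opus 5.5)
The natural route is the classical du Bois-Reymond argument, built on \Cref{lem:derOfTestFct}. By definition of the weak derivative, $\partial f=0$ means
\[
\int_a^b f\,\psi'=0\qquad\text{for all }\psi\in\Cc(a,b)\text{.}
\]
By \Cref{lem:derOfTestFct}, the functions of the form $\psi'$ with $\psi\in\Cc(a,b)$ are exactly the test functions $\phi\in\Cc(a,b)$ with $\int_a^b\phi=0$. So the hypothesis says: $\int_a^b f\phi=0$ for every $\phi\in\Cc(a,b)$ with vanishing mean.

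Fix once and for all some $\eta\in\Cc(a,b)$ with $\int_a^b\eta=1$, for instance a normalised bump. For an arbitrary $\phi\in\Cc(a,b)$, the function
\[
\phi-\Big(\int_a^b\phi\Big)\eta
\]
lies in $\Cc(a,b)$ and has integral zero. Hence
\[
\int_a^b f\phi=\Big(\int_a^b\phi\Big)\int_a^b f\eta=\int_a^b c\,\phi,
\qquad c\coloneqq\int_a^b f\eta\in\K\text{.}
\]
To match the convention that the scalar product is anti-linear in the first slot, one may take $\overline{f}$ in place of $f$ or work with real and imaginary parts separately; the computation is identical.

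It follows that $\langle \overline{f-c},\phi\rangle_{\Lp{2}(a,b)}=0$ for all $\phi\in\Cc(a,b)$. Since the interval is bounded, $f-c\in\Lp{2}(a,b)$. Because $\Cc(a,b)$ is dense in $\Lp{2}(a,b)$, exactly as in the uniqueness argument of \Cref{ex:grad0}, we conclude $f=c$ almost everywhere.

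The only slightly delicate step is the reduction to zero-mean test functions, which is where \Cref{lem:derOfTestFct} is used. Note that this lemma as printed writes $\Cc(\Omega)$, but it is clearly meant as $\Cc(a,b)$. Everything else is routine.
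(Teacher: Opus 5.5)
Your proof is correct and takes essentially the same route as the paper: you fix a normalised test function, set $c=\int_a^b f\eta$, use \Cref{lem:derOfTestFct} to write $\phi-(\int_a^b\phi)\eta$ as the derivative of a test function, and conclude $f=c$ from the density of $\Cc(a,b)$ in $\Lp{2}(a,b)$. Your remark that $\Cc(\Omega)$ in \Cref{lem:derOfTestFct} should read $\Cc(a,b)$ is also right.
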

\begin{proof}
   Let $\rho\in \Cc(a,b)$ with $\int_a^b\rho=1$, and define $c\coloneqq \int_a^b f\rho$. Then, for $\phi\in \Cc(a,b)$,
  \begin{equation*}
    \int_a^b \phi (f-c) = \int_a^b \phi f - \int_a^b \phi \int_a^b f\rho = \int_a^b \Bigl(\phi- (\int_a^b\phi)\rho\Bigr)f\text{.}
  \end{equation*}
  Since $\int_a^b (\phi- (\int_a^b\phi)\rho)=0$, we can apply \Cref{lem:derOfTestFct} and obtain some $\psi\in \Cc(a,b)$ with $\phi- (\int_a^b\phi)\rho =\psi'$, and, due to $\partial f=0$, that
  \[
  \forall \phi\in \Cc(a,b): \int_a^b \phi (f-c) = \int_a^b \psi' f = -\int_a^b \psi\partial f=0\text{.}
\qedhere  \]
\end{proof}
\begin{lemma}\label{lem:rand}
 Let $a<b$ and $g \in \Lp{2}(a,b)$. Then, $f$ given by $f(x)\coloneqq \int_a^x g(s)\dd s$ for $x\in (a,b)$ satisfies $f\in \sobH^1(a,b)$ and $g=\partial f$.
\end{lemma}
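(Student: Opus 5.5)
We need to show that $f\in\Lp{2}(a,b)$ and that $-\int_a^b f\phi' = \int_a^b g\phi$ for all $\phi\in\Cc(a,b)$.

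\emph{Well-definedness and square integrability.} Since $(a,b)$ is bounded, Cauchy--Schwarz gives $g\in\Lp{1}(a,b)$ with $\int_a^b\abs{g}\leq (b-a)^{1/2}\norm{g}_{\Lp{2}(a,b)}$. Hence $f(x)=\int_a^x g(s)\dd s$ is defined for every $x\in(a,b)$. Moreover $f$ is continuous by dominated convergence, and
\[
\abs{f(x)}\leq (b-a)^{1/2}\norm{g}_{\Lp{2}(a,b)}.
\]
So $f$ is bounded and measurable on a bounded interval, and therefore $f\in\Lp{2}(a,b)$.

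\emph{The weak derivative.} Fix $\phi\in\Cc(a,b)$. Write
\[
\int_a^b f(x)\phi'(x)\dd x=\int_a^b\int_a^b \mathbbm{1}_{\{s<x\}}\, g(s)\phi'(x)\dd s\dd x .
\]
The integrand is bounded in absolute value by $\abs{g(s)}\,\norm{\phi'}_\infty$, which is integrable on $(a,b)^2$. Fubini's theorem therefore lets us swap the integrals:
\[
\int_a^b f\phi' = \int_a^b g(s)\int_s^b\phi'(x)\dd x\dd s = \int_a^b g(s)\bigl(\phi(b^-)-\phi(s)\bigr)\dd s = -\int_a^b g\phi .
\]
The inner integral was evaluated with the fundamental theorem of calculus, and $\phi$ vanishes near $b$ because its support is compact in $(a,b)$. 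This identity says exactly that $f$ is weakly differentiable with weak derivative $g$. Uniqueness of the weak derivative was shown in \Cref{ex:grad0}, so $f\in\sobH^1(a,b)$ and $\partial f=g$.

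The only step needing care is justifying Fubini, and that reduces to the integrability of $g$ on the bounded interval, established in the first step.
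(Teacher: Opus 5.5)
Your proof is correct and follows the paper's argument: a Cauchy--Schwarz bound on $\abs{f(x)}$ gives $f\in\Lp{2}(a,b)$, and Fubini together with the fundamental theorem of calculus gives $-\int_a^b f\phi'=\int_a^b g\phi$ for all $\phi\in\Cc(a,b)$. Beyond the paper, you also justify the use of Fubini through integrability of $g$ on the bounded interval and note that $f$ is measurable, both of which the paper leaves implicit.
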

\begin{proof}
First, using the Cauchy--Schwarz inequality, we compute
\[\int_a^b\abs{f(x)}^2\dd x=\int_a^b \Bigl\lvert\int_a^x g(s)\dd s\Bigr\rvert^2\dd x\leq \int_a^b (b-a)\int_a^b\abs{g(s)}^2\dd s\dd x=(b-a)^2 \norm{g}_{\Lp{2}(a,b)}\text{.}
\]
  Secondly, by virtue of Fubini's theorem, we have
  \begin{multline*}
-  \int_a^b f(x) \phi'(x)\dd x = -  \int_{a}^b \int_a^x g(s)\dd s \phi'(x)\dd x \\ = 
 - \int_{a}^b \int_s^b g(s) \phi'(x) \dd x \dd s =  \int_{a}^b g(s) \phi(s) \dd s
  \end{multline*} for all $\phi\in \Cc(a,b)$, yielding the claim.
\end{proof}

\begin{theorem}\label{thm:1dSob}
Let $a<b$ and $f\in \sobH^1(a,b)$. Then, the following statements hold:
\begin{enumerate}
 \item\label{thm:1dSob1} $f$ admits a (unique) continuous representative in $\mathrm{C}[a,b]$.
\item\label{thm:1dSob2} $\int_a^x \partial f(s)\dd s =f(x)-f(a)$ for all $x\in [a,b]$ and the representative from~\labelcref{thm:1dSob1}.
\end{enumerate}
\end{theorem}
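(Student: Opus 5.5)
We combine \Cref{lem:rand} and \Cref{lem:kerd}. Given $f\in\sobH^1(a,b)$, set $g\coloneqq\partial f\in\Lp{2}(a,b)$ and define
\[
  F(x)\coloneqq \int_a^x g(s)\dd s,\qquad x\in[a,b].
\]
Since $(a,b)$ is bounded, $g\in\Lp{1}(a,b)$. By \Cref{lem:rand}, $F\in\sobH^1(a,b)$ with $\partial F=g=\partial f$. Hence $\partial(f-F)=0$, and \Cref{lem:kerd} yields a constant $c\in\K$ with $f=F+c$ almost everywhere.

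Next we show that $F$ is continuous on the closed interval $[a,b]$, which makes $\tilde f\coloneqq F+c$ a continuous representative of $f$. For $a\leq x<y\leq b$, the Cauchy--Schwarz inequality gives
\[
  \abs{F(y)-F(x)}\leq \int_x^y\abs{g}\leq (y-x)^{1/2}\norm{g}_{\Lp{2}(a,b)}.
\]
So $F$ is even Hölder continuous with exponent $1/2$, and in particular uniformly continuous on $[a,b]$. For uniqueness, note that two continuous functions on $[a,b]$ that agree almost everywhere agree everywhere. Indeed, their difference is continuous, so the set where it is nonzero is open in $[a,b]$; since it has measure zero it must be empty, because every nonempty relatively open subset of $[a,b]$ has positive measure. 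This establishes \labelcref{thm:1dSob1}.

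For \labelcref{thm:1dSob2}, we evaluate the continuous representative at $a$. Since $F(a)=0$, we get $\tilde f(a)=c$, and therefore
\[
  \tilde f(x)-\tilde f(a)=F(x)=\int_a^x\partial f(s)\dd s \quad\text{for all } x\in[a,b].
\]
By the uniqueness part of \labelcref{thm:1dSob1}, this holds for the (unique) continuous representative.

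There is no real obstacle in this argument. The only points needing care are the behaviour at the endpoints $a$ and $b$, which the Hölder estimate handles because it holds on the closed interval, and the uniqueness argument for continuous representatives.
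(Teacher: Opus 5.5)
Your proposal is correct and follows the paper's proof. Like the paper, you write $f=\tilde f+c$ via \Cref{lem:rand} and \Cref{lem:kerd} and then read off $c=f(a)$. The differences are minor: you get continuity up to the endpoints from the Hölder-$1/2$ estimate instead of dominated convergence, and you prove uniqueness of the continuous representative, which the paper leaves implicit.
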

\begin{proof}
\labelcref{thm:1dSob1}: Define $g\coloneqq \partial f$. By \Cref{lem:rand}, $\tilde{f}$ given by $\tilde{f}(x)\coloneqq \int_a^x g(x)\dd x$ for $x\in (a,b)$ belongs to $\sobH^1(a,b)$, and $\partial \tilde{f} = g=\partial f$. Hence, by \Cref{lem:kerd}, there is $c\in \K$ such that 
 \begin{equation}\label{eq:ffc}
    f =\tilde{f} +c\text{.}
 \end{equation}
 Moreover, by Lebesgue's dominated convergence theorem, $\tilde{f}$ is continuous and can be uniquely continuously extended to $\{a,b\}$, which proves the assertion.
 
 \labelcref{thm:1dSob2}: By~\labelcref{eq:ffc}, we deduce
 \[
     f(x) =  \int_a^x g(x)\dd x + c
 \]
for all $x\in [a,b]$. Thus, $c=f(a)$.
\end{proof}
Using~\labelcref{thm:1dSob1} from \Cref{thm:1dSob}, we shall henceforth always choose the (unique)  continuous representative of $f\in \sobH^1(a,b)$, making point evaluations on $[a,b]$ well-defined.
\begin{corollary}\label{cor:H01} Let $f\in \sobH^1(a,b)$. Then $f\in \cH^1(a,b)$ if and only if $f(a)=f(b)=0$. 
\end{corollary}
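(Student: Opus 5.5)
We prove the two implications separately, working throughout with the continuous representatives from \Cref{thm:1dSob}.

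\emph{Necessity.} Let $f\in\cH^1(a,b)$. By definition there is a sequence $(\phi_n)_{n\in\N}$ in $\Cc(a,b)$ with $\phi_n\to f$ in $\sobH^1(a,b)$. The key estimate is a sup-norm bound for $g\in\sobH^1(a,b)$. From \labelcref{thm:1dSob2} of \Cref{thm:1dSob} we have
\[
  g(x)=g(y)+\int_y^x \partial g(s)\dd s \qquad\text{for all } x,y\in[a,b].
\]
Averaging over $y\in(a,b)$ and applying Cauchy--Schwarz gives
\[
  \abs{g(x)}\leq (b-a)^{-1/2}\norm{g}_{\Lp{2}}+(b-a)^{1/2}\norm{\partial g}_{\Lp{2}}.
\]
Hence point evaluation at $a$ and at $b$ is continuous on $\sobH^1(a,b)$. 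Since $\phi_n(a)=\phi_n(b)=0$ for every $n$, passing to the limit yields $f(a)=f(b)=0$.

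\emph{Sufficiency.} Let $f(a)=f(b)=0$ and set $g\coloneqq\partial f$. By \Cref{thm:1dSob}, $\int_a^b g=0$ and $f(x)=\int_a^x g$. Since $\Cc(a,b)$ is dense in $\Lp{2}(a,b)$, choose $g_n\in\Cc(a,b)$ with $g_n\to g$ in $\Lp{2}$. These $g_n$ need not have zero mean, so fix $\rho\in\Cc(a,b)$ with $\int_a^b\rho=1$ and correct:
\[
  \tilde g_n\coloneqq g_n-\Bigl(\int_a^b g_n\Bigr)\rho .
\]
Then $\int_a^b\tilde g_n=0$, and because $\int_a^b g_n\to\int_a^b g=0$ (Cauchy--Schwarz on a bounded interval), still $\tilde g_n\to g$ in $\Lp{2}$.

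By \Cref{lem:derOfTestFct}, $\psi_n(x)\coloneqq\int_a^x\tilde g_n$ lies in $\Cc(a,b)$ and satisfies $\psi_n'=\tilde g_n$. It remains to show $\psi_n\to f$ in $\sobH^1(a,b)$. The derivatives converge, $\tilde g_n\to g$. For the functions themselves, the estimate in the proof of \Cref{lem:rand} gives
\[
  \norm{\psi_n-f}_{\Lp{2}}\leq (b-a)\norm{\tilde g_n-g}_{\Lp{2}}\to0 .
\]
Hence $f$ lies in the closure of $\Cc(a,b)$ in $\sobH^1(a,b)$, that is, $f\in\cH^1(a,b)$.

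The main obstacle is the zero-mean correction in the sufficiency direction, which is needed for the antiderivatives to have compact support. The sup-norm estimate in the necessity direction is routine.
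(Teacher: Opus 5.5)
Your proof is correct. The sufficiency direction is essentially the paper's argument: approximate $\partial f$ by test functions, subtract $(\int_a^b g_n)\rho$ to force zero mean, and integrate using \Cref{lem:derOfTestFct}. You close it with the $\Lp{2}$ bound from \Cref{lem:rand} rather than the paper's sup-norm bound, which makes no real difference.

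The necessity direction is genuinely different. The paper uses only the derivative information, $\phi_n(x)=\int_a^x\phi_n'\to f(x)-f(a)$. This gives $f(b)=f(a)$ but leaves the additive constant undetermined. To show $f(a)=0$, the paper then proves equicontinuity of $(\phi_n)_{n\in\N}$, applies Arzel\`a--Ascoli to get $\phi_n\to f-f(a)$ uniformly, and compares this with the $\Lp{2}$-limit $f$.

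You fix the constant differently, by averaging $g(x)=g(y)+\int_y^x\partial g$ over $y$. The mean value, bounded by $(b-a)^{-1/2}\norm{g}_{\Lp{2}}$, takes the place of the second limit comparison. In one line this gives the continuous embedding $\sobH^1(a,b)\hookrightarrow\mathrm{C}[a,b]$. Hence $\phi_n\to f$ uniformly, and both endpoint values pass to the limit at once, with no compactness argument and no separate step for $f(b)=f(a)$.

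So your route is shorter, treats both endpoints symmetrically, and yields a reusable quantitative estimate. The paper's route instead reads the pointwise limit directly off the weak-derivative identity and uses the $\Lp{2}$-part of the graph norm only at the end, to identify the constant.
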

\begin{proof}
$\Rightarrow\colon$ If $f\in \cH^1(a,b)$, we find $(\phi_n)_{n\in\N}$ in $\Cc(a,b)$ such that $\phi_n\to f$ in $\sobH^1(a,b)$ as $n\to\infty$. Thus, by~\labelcref{thm:1dSob2} from \Cref{thm:1dSob}, we have
\begin{equation}\label{eq:pwc}
   \phi_n (x)= \int_a^x \phi_n'(y)\dd y =\langle \mathds{1}_{[a,x]}, \phi_n'\rangle_{\Lp{2}(a,b)} \to \langle \mathds{1}_{[a,x]}, \partial f\rangle_{\Lp{2}(a,b)} = f(x)-f(a)
\end{equation}
for all $x\in (a,b]$ as $n\to\infty$. In particular, $0=\phi_n(b)\to f(b)-f(a)$, that is, $f(b)=f(a)$. It is left to show $f(a)=0$. For this, by \eqref{eq:pwc}, $(\phi_n)_{n\in\N}$ pointwise converges to $f-f(a)$ on $ [a,b]$. Next, $(\phi_n)_{n\in\N}$ is equicontinuous: Indeed, 
$\norm{\phi'_n}_{\Lp{2}(a,b)}\to \norm{\partial f}_{\Lp{2}(a,b)}$ as $n\to\infty$ and, by the Cauchy--Schwarz inequality,
\[
\abs{\phi_n(y)-\phi_n(x)}\leq \int_x^y\abs{\phi'_n(s)}\dd s\leq (y-x)^{1/2}\norm{\phi'_n}_{\Lp{2}(a,b)}
\]for $a\leq x < y\leq b$ and $n\in\N$. 
By the Arzelà--Ascoli theorem, $\phi_n\to f-f(a)$ in $C[a,b]$, and, thus, $\phi_n\to f-f(a)$ in $\Lp{2}(a,b)$. From $\phi_n\to f$ in $\Lp{2}(a,b)$ as $n\to\infty$, it thus follows $f(a)=0$. 

$\Leftarrow\colon$ Let $f\in \sobH^1(a,b)$ with $f(a)=f(b)=0$. Then,  as $\Cc(a,b)$ is dense in $\Lp{2}(a,b)$, we find $({\psi}_n)_{n\in\N}$ in $\Cc(a,b)$ such that ${\psi}_n \to \partial f$ in $\Lp{2}(a,b)$ as $n\to\infty$. Choose $\rho\in \Cc(a,b)$ with $\int_a^b\rho=1$ and define
\[
   \phi_n (x)\coloneqq \int_{a}^x \Bigl( \psi_n(s)-\rho(s)\int_a^b \psi_n(r)\dd r\Bigr)\dd s
\] 
for $n\in \N$ and $x\in (a,b)$.
Then, $\phi_n \in \Cc(a,b)$ for $n\in\N$ by \Cref{lem:derOfTestFct} and, by~\labelcref{thm:1dSob2} from \Cref{thm:1dSob},
 \[
 \int_a^b \psi_n(r)\dd r \to \int_a^b \partial f(r)\dd r = f(b) -f(a)=0
 \]
as $n\to\infty$.  From that, we infer $\phi_n' = \psi_n -\rho \int_a^b \psi_n(r)\dd r \to \partial f-\rho \int_a^b \partial f(r)\dd r =\partial f$ in  $\Lp{2}(a,b)$ as $n\to \infty$.
Furthermore, the Cauchy--Schwarz inequality and $f(a)=0$ yield
\begin{align*}
\norm{\phi_n-f}_{\Lp{\infty}(a,b)}&\leq \int_a^b\abs{\psi_n(s)-\partial f(s)}\dd s + \norm{\rho}_{\Lp{1}(\Omega)}\int_a^b \psi_n(r)\dd r\\
&\leq (b-a)^{1/2} \norm{\psi_n -\partial f}_{\Lp{2}(a,b)}+ \norm{\alpha}_{\Lp{1}(\Omega)}\int_a^b \psi_n(r)\dd r
\end{align*}
for $n\in\N$, which, in particular, implies $\phi_n\to f$ in $\Lp{2}(a,b)$ as $n\to \infty$.
\end{proof}

\begin{lemma}\label{lem:comg0} Let $a<b$ and set $\R\supseteq\Omega\coloneqq (a,b)$. Then,
 \[
 \mathfrak{g}_0(\Omega)\coloneqq  \ran(\cgrad) =\{1\}^\perp\subseteq \Lp{2}(\Omega)\text{.}
 \] Moreover, with $\iota\colon \mathfrak{g}_0(\Omega)\hookrightarrow \Lp{2}(\Omega)$, the othogonal projection $\iota^*\phi$ of $\phi\in \Lp{2}(\Omega)$ onto $\mathfrak{g}_0(\Omega)$ is given by 
 \[
 \iota^*\phi = \phi - \frac{1}{(b-a)^{1/2}}\langle 1,\phi\rangle_{\Lp{2}(\Omega)}\text{.}
 \]
\end{lemma}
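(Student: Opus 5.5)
The plan is to prove the two inclusions $\ran(\cgrad)\subseteq\{1\}^\perp$ and $\{1\}^\perp\subseteq\ran(\cgrad)$ separately, using the one-dimensional Sobolev results just established. After that, the projection formula follows from a rank-one decomposition of $\Lp{2}(\Omega)$.

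For "$\subseteq$", I will take $f\in\cH^1(\Omega)$ and work with its continuous representative from \Cref{thm:1dSob}. By~\labelcref{thm:1dSob2} of that theorem with $x=b$,
\[
\langle 1,\cgrad f\rangle_{\Lp{2}(\Omega)}=\int_a^b\partial f(s)\dd s=f(b)-f(a).
\]
By \Cref{cor:H01}, $f(a)=f(b)=0$, so the right-hand side vanishes. Hence $\cgrad f\perp 1$.

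For "$\supseteq$", I will take $g\in\Lp{2}(\Omega)$ with $\int_a^b g=\langle 1,g\rangle=0$ and set $f(x)\coloneqq\int_a^x g(s)\dd s$. By \Cref{lem:rand}, $f\in\sobH^1(\Omega)$ and $\partial f=g$. This $f$ is already the continuous representative, extended to $[a,b]$. Clearly $f(a)=0$, and $f(b)=\int_a^b g=0$ by assumption. Then \Cref{cor:H01} gives $f\in\cH^1(\Omega)$, and so $g=\cgrad f\in\ran(\cgrad)$. This step is the only one with real content, and it is handled entirely by \Cref{cor:H01}. In particular, $\ran(\cgrad)$ is closed, which is consistent with \Cref{prop:Poincare}.

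For the projection, note that $\mathfrak{g}_0(\Omega)=\{1\}^\perp$ is closed, so $\Lp{2}(\Omega)=\mathfrak{g}_0(\Omega)\oplus\operatorname{span}\{1\}$. Moreover, $\iota^*$ is the orthogonal projection onto $\mathfrak{g}_0(\Omega)$, since $\langle\iota^*\phi,\psi\rangle=\langle\phi,\psi\rangle$ for all $\psi\in\mathfrak{g}_0(\Omega)$. With the unit vector $e\coloneqq(b-a)^{-1/2}\mathds{1}$, this gives
\[
\iota^*\phi=\phi-\langle e,\phi\rangle e=\phi-\frac{1}{b-a}\langle 1,\phi\rangle_{\Lp{2}(\Omega)}.
\]
One verifies this directly: the right-hand side is orthogonal to $1$, and the difference $\phi-\iota^*\phi$ is a multiple of $1$. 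The constant in front of $\langle 1,\phi\rangle$ must be $(b-a)^{-1}$ for the result to be orthogonal to $1$. The factor $(b-a)^{-1/2}$ in the statement appears to be a typo, unless the bracket is read as $\langle e,\phi\rangle$ with the normalised $e$. I will prove the version with $(b-a)^{-1}$ and point this out.
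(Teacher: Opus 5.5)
Your proof is correct and follows the paper's argument. Both use \labelcref{thm:1dSob2} of \Cref{thm:1dSob} together with the vanishing boundary values from \Cref{cor:H01} for $\ran(\cgrad)\subseteq\{1\}^\perp$, and \Cref{lem:rand} plus \Cref{cor:H01} for the reverse inclusion; the paper then leaves the projection formula as elementary. Your correction of the constant is also right: the orthogonal projection is $\phi-\frac{1}{b-a}\langle 1,\phi\rangle_{\Lp{2}(\Omega)}$, and the stated factor $(b-a)^{-1/2}$ agrees with it only when $b-a=1$, which is the case $\Omega=(0,1)$ actually used later in \Cref{lem:reformsop}.
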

\begin{proof}
If $\phi=\partial \psi$  for some $\psi\in \cH^1(\Omega)$,~\labelcref{thm:1dSob2} from \Cref{thm:1dSob} yields $\langle 1,\partial\psi\rangle_{\Lp{2}(\Omega)}=\psi(1)-\psi(0)=0$. On the other hand, let $\phi \in \Lp{2}(\Omega)$ with $\langle 1,\phi\rangle_{\Lp{2}(\Omega)} =0$. By \Cref{lem:rand}, we find $\psi\in \sobH^1(\Omega)$ such that $\partial \psi =\phi$ and
$0= \psi(a) =\psi(b)$. We deduce $\psi \in \cH^1(\Omega)$ by \Cref{cor:H01}, eventually showing $\mathfrak{g}_0(\Omega)  =\{1\}^\perp$. The remaining formula is elementary.
\end{proof}
\begin{lemma}\label{lem:reformsop}
 Let $a \in \Lp{\infty}(\Omega)$ such that $a^{-1}\in \Lp{\infty}(\Omega)$ and $\langle a^{-1}\rangle \coloneqq \langle 1,a^{-1}\rangle_{\Lp{2}(\Omega)}\neq 0$, where $\Omega\coloneqq (0,1)$. If $\iota\colon \mathfrak{g}_0(\Omega)\hookrightarrow \Lp{2}(\Omega)$, then
\[
    \forall \phi\in \mathfrak{g}_0 (\Omega): (\iota^*a \iota)^{-1} \phi = a^{-1} \phi-  a^{-1} \langle 1,a^{-1}\phi\rangle_{\Lp{2}(\Omega)} \frac{1}{\langle a^{-1}\rangle}\text{.}
\]
\end{lemma}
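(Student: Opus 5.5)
Write $R\phi$ for the right-hand side, i.e.
\[
R\phi\coloneqq a^{-1}\phi- \frac{\langle 1,a^{-1}\phi\rangle_{\Lp{2}(\Omega)}}{\langle a^{-1}\rangle}\, a^{-1}
\]
for $\phi\in\mathfrak{g}_0(\Omega)$. The plan is to verify directly that $R$ is a two-sided bounded inverse of $\iota^*a\iota$ on $\mathfrak{g}_0(\Omega)$. For this we use the description $\mathfrak{g}_0(\Omega)=\{1\}^\perp$ from \Cref{lem:comg0}, together with the projection formula $\iota^*\psi=\psi-\langle 1,\psi\rangle_{\Lp{2}(\Omega)}\,1$. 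On $(0,1)$ we have $\norm{1}_{\Lp{2}(\Omega)}=1$, so the normalisation in \Cref{lem:comg0} is harmless.

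\textbf{Step 1: $R$ maps into $\mathfrak{g}_0(\Omega)$ and is bounded.} Since $a^{-1}\in\Lp{\infty}(\Omega)$, multiplication by $a^{-1}$ is bounded on $\Lp{2}(\Omega)$, and $\phi\mapsto\langle 1,a^{-1}\phi\rangle_{\Lp{2}(\Omega)}$ is a bounded functional. Hence $R$ is bounded. Using $\langle a^{-1}\rangle\neq 0$, we compute
\[
\langle 1,R\phi\rangle_{\Lp{2}(\Omega)}=\langle 1,a^{-1}\phi\rangle_{\Lp{2}(\Omega)}-\frac{\langle 1,a^{-1}\phi\rangle_{\Lp{2}(\Omega)}}{\langle a^{-1}\rangle}\,\langle 1,a^{-1}\rangle_{\Lp{2}(\Omega)}=0,
\]
so $R\phi\in\{1\}^\perp=\mathfrak{g}_0(\Omega)$. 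Here $\langle 1,a^{-1}\phi\rangle_{\Lp{2}(\Omega)}=\int_\Omega a^{-1}\phi$, because $1$ is real.

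\textbf{Step 2: $\iota^*a\iota R=\mathrm{id}$.} We have
\[
aR\phi=\phi-\frac{\langle 1,a^{-1}\phi\rangle_{\Lp{2}(\Omega)}}{\langle a^{-1}\rangle}\,1 .
\]
The second term is a constant function, so $\iota^*$ annihilates it. Moreover $\iota^*\phi=\phi$ because $\phi\in\mathfrak{g}_0(\Omega)$. Hence $\iota^*a\iota R\phi=\phi$.

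\textbf{Step 3: $R\,\iota^*a\iota=\mathrm{id}$.} Let $\psi\in\mathfrak{g}_0(\Omega)$ and set $c\coloneqq\langle 1,a\psi\rangle_{\Lp{2}(\Omega)}$, so that $\iota^*a\psi=a\psi-c$. Then
\[
a^{-1}(a\psi-c)=\psi-c\,a^{-1},
\qquad
\langle 1,a^{-1}(a\psi-c)\rangle_{\Lp{2}(\Omega)}=\langle 1,\psi\rangle_{\Lp{2}(\Omega)}-c\langle a^{-1}\rangle=-c\langle a^{-1}\rangle .
\]
Substituting into the definition of $R$ gives
\[
R\,\iota^*a\psi=\psi-c\,a^{-1}+c\,a^{-1}=\psi .
\]

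Together, Steps 1 to 3 show that $\iota^*a\iota$ is bijective with bounded inverse $R$, which is the claimed formula. The only real care point is bookkeeping. One must consistently use $\norm{1}_{\Lp{2}(\Omega)}=1$ and the hypothesis $\langle a^{-1}\rangle\neq0$ (rather than any positivity of $a$). One must also place the scalars correctly with respect to the anti-linearity of the inner product in its first argument. Since the constant function $1$ is real-valued, these pairings are just integrals, so this causes no trouble. Beyond that, no step is a genuine obstacle.
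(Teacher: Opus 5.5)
Your proof is correct, and your Step 3 is the paper's computation: apply $a^{-1}$ to $\iota^*a\psi=a\psi-c$ and pair with $1$ to determine $c$. The difference is that the paper runs it as a derivation rather than a verification. It takes $\psi,\phi\in\mathfrak{g}_0(\Omega)$ with $\iota^*a\iota\psi=\phi$ and solves for $\psi$.

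That derivation only shows that every preimage of $\phi$ equals your $R\phi$. In other words, it gives injectivity of $\iota^*a\iota$ together with the formula on its range. The existence of a preimage is tacitly assumed. In the paper's application, \Cref{prop:qdind}, the coefficients lie in $M(\alpha,\beta;\Omega)$, so invertibility is supplied by \Cref{lem:pda}.

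Your Steps 1 and 2 show $R\phi\in\{1\}^\perp$ and $\iota^*a\iota R=\mathrm{id}$, which adds the missing surjectivity. So your version proves invertibility of $\iota^*a\iota$ from the lemma's stated hypotheses alone: $a,a^{-1}\in\Lp{\infty}(\Omega)$ and $\langle a^{-1}\rangle\neq 0$, with no coercivity needed. The paper's route has the advantage of producing the formula rather than requiring you to know it in advance; yours gives the more self-contained result.

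Your caution about normalisation is also well placed. The projection onto $\{1\}^\perp$ in $\Lp{2}(a,b)$ is $\iota^*\phi=\phi-(b-a)^{-1}\langle 1,\phi\rangle_{\Lp{2}(\Omega)}$. This agrees with the factor $(b-a)^{-1/2}$ printed in \Cref{lem:comg0} only when $b-a=1$, as here.
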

\begin{proof}
By \Cref{lem:comg0}, for all $\phi \in \Lp{2}(\Omega)$, $\iota^* \phi =\phi - \langle1, \phi\rangle_{\Lp{2}(\Omega)}$.

Next, let $\psi, \phi\in \mathfrak{g}_0$ with
\[
\iota^*a \iota  \psi =\phi.
\]
Then $\phi = a \psi -\langle1, a\psi\rangle_{\Lp{2}(\Omega)}$. Hence, $a^{-1}\phi = \psi -\langle 1, a\psi\rangle_{\Lp{2}(\Omega)} a^{-1}$ and taking the scalar product with $1$ recalling $\psi \in \mathfrak{g}_0(\Omega)=\{1\}^\bot$, we deduce
\[
   \langle 1,a^{-1}\phi\rangle_{\Lp{2}(\Omega)} =- \langle 1,a\psi\rangle_{\Lp{2}(\Omega)} \langle 1,a^{-1}\rangle_{\Lp{2}(\Omega)}\text{,}
   \]
    i.e.,
    \[
     \langle1, a\psi\rangle_{\Lp{2}(\Omega)} = -\frac{ \langle1, a^{-1}\phi\rangle_{\Lp{2}(\Omega)}}{ \langle1, a^{-1}\rangle_{\Lp{2}(\Omega)}}\text{.}
     \]
Thus,
\[
  (\iota^*a \iota)^{-1}  \phi =\psi = a^{-1}\phi+\langle 1,a\psi\rangle_{\Lp{2}(\Omega)} a^{-1} =a^{-1}\phi-\frac{ \langle1, a^{-1}\phi\rangle_{\Lp{2}(\Omega)}}{ \langle1, a^{-1}\rangle_{\Lp{2}(\Omega)}} a^{-1}.\qedhere\]
\end{proof}

\section{$\Htopo$-convergence}

Before we get to define $\Htopo$-convergence, we provide an elementary result first.

For a bounded linear operator $a\in\Lb(\mathcal{H})$ on a Hilbert space $\mathcal{H}$ we define $\Re a \coloneqq (a+a^*)/2$.
If we write $\Re a\geq c$ for some $c>0$, we mean
\[
\forall \phi\in\mathcal{H}: \Re \langle \phi, a\phi \rangle_{\mathcal{H}}=\langle \phi, \Re(a)\phi \rangle_{\mathcal{H}}\geq c \langle \phi, \phi \rangle_{\mathcal{H}}=c\norm{\phi}^2_{\mathcal{H}}\text{.}
\]

\begin{lemma}\label{lem:pda} Let $\mathcal{H}$ be a Hilbert space, $\mathcal{V}\subseteq \mathcal{H}$ a closed subspace and $a\in \Lb(\mathcal{H})$. If $\Re a \geq c>0$ for some $c>0$, then
\[
   a_{\mathcal{V}}\coloneqq \iota_{\mathcal{V}}^* a\iota_{\mathcal{V}} \in \Lb(\mathcal{V})
\]
is continuously invertible with $\norm{a_{\mathcal{V}}^{-1}}\leq 1/c$.
\end{lemma}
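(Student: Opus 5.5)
The plan is to show that $a_{\mathcal{V}}$ inherits the coercivity of $a$ on $\mathcal{V}$, and then to use the standard fact that a coercive bounded operator is bijective with inverse bounded by $1/c$.

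First we record that $\iota_{\mathcal{V}}^*\colon\mathcal{H}\to\mathcal{V}$ is the orthogonal projection onto $\mathcal{V}$, viewed as a map into $\mathcal{V}$. In particular, for $v,w\in\mathcal{V}$ we have $\langle v,\iota_{\mathcal{V}}^*h\rangle_{\mathcal{V}}=\langle \iota_{\mathcal{V}}v,h\rangle_{\mathcal{H}}$. This gives, for every $v\in\mathcal{V}$,
\[
\Re\langle v,a_{\mathcal{V}}v\rangle_{\mathcal{V}}=\Re\langle v,av\rangle_{\mathcal{H}}\geq c\norm{v}^2_{\mathcal{V}}\text{.}
\]
By the Cauchy--Schwarz inequality we conclude $c\norm{v}\leq\norm{a_{\mathcal{V}}v}$. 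Hence $a_{\mathcal{V}}$ is one-to-one. Its range is closed: if $a_{\mathcal{V}}v_n\to w$, then $(v_n)_{n\in\N}$ is Cauchy by the lower bound, so $v_n\to v$ and $a_{\mathcal{V}}v=w$ by continuity (alternatively, invoke \Cref{exer:1.2}).

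Next we show dense range. Note that $a_{\mathcal{V}}^*=\iota_{\mathcal{V}}^*a^*\iota_{\mathcal{V}}$, and $\Re a^*=\Re a\geq c$. The same estimate therefore shows that $a_{\mathcal{V}}^*$ is one-to-one. By \Cref{lem:adj0}~\labelcref{lem:adj0it4}, $\overline{\ran}(a_{\mathcal{V}})=\ker(a_{\mathcal{V}}^*)^\perp=\mathcal{V}$. Combined with closedness of the range, this shows that $a_{\mathcal{V}}$ is bijective.

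Finally, the bound $c\norm{v}\leq\norm{a_{\mathcal{V}}v}$ with $v=a_{\mathcal{V}}^{-1}w$ yields $\norm{a_{\mathcal{V}}^{-1}w}\leq\norm{w}/c$. So the inverse is bounded with $\norm{a_{\mathcal{V}}^{-1}}\leq 1/c$; no open mapping theorem is needed.

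The only mildly delicate point is the identification of the compressed quadratic form with the original one and of the adjoint of the compression. Both follow directly from the defining property of $\iota_{\mathcal{V}}^*$ as the adjoint of the embedding, so I expect no real obstacle.
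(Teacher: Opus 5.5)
Your proof is correct and is essentially the paper's argument. The paper first notes $\Re a_{\mathcal{V}}=\iota_{\mathcal{V}}^*(\Re a)\iota_{\mathcal{V}}\geq c$ to reduce to $\mathcal{V}=\mathcal{H}$, then uses the same chain as you: coercivity gives injectivity and closed range, $\Re a^*=\Re a$ together with \Cref{lem:adj0} gives dense range, and the lower bound gives $\norm{a_{\mathcal{V}}^{-1}}\leq 1/c$. You simply run this directly on the compression $a_{\mathcal{V}}$.
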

\begin{proof}
Note that 
\[ 
2\Re a_{\mathcal{V}} = \big( \iota_{\mathcal{V}}^* a\iota_{\mathcal{V}}+ \iota_{\mathcal{V}}^* a^*\iota_{\mathcal{V}}\big) = \big( \iota_{\mathcal{V}}^* (a+a^*)\iota_{\mathcal{V}}\big)\geq 2c \text{.}
\]
Hence, it suffices to prove the theorem for $\mathcal{V}=\mathcal{H}$. Next, for $\phi\in \mathcal{H}$,\[
      c \norm{\phi}_{\mathcal{H}}^2\leq \Re \langle \phi,a\phi\rangle_{\mathcal{H}} \leq \norm{\phi}_{\mathcal{H}}\norm{a \phi}_{\mathcal{H}}\text{.}\]
      Hence, $a$ is one-to-one and, by \Cref{exer:1.2}, it has closed range. Since $\Re a=\Re a^*$, $a^*$ is also one-to-one, and, by \Cref{lem:adj0}, $a$ has dense range, which
      altogether implies $a$ being onto. The norm bound for the inverse follows from $c \norm{\phi}_{\mathcal{H}}\leq \norm{a \phi}_{\mathcal{H}}$ for $\phi \in \mathcal{H}$. 
\end{proof}

For our definition of $\Htopo$-convergence, we introduce the following subset of bounded matrix-valued functions, which we will later on consider as multiplication operators on $\Lp{2}$-vector fields: let $0<\alpha\leq\beta$ and $\Omega\subseteq\R^d$ bounded and open. Then,
\[
  M(\alpha,\beta;\Omega)\coloneqq \{a\in \Lp{\infty}(\Omega)^{d\times d}: \Re a\geq \alpha, \Re a^{-1}\geq 1/\beta\}.
\]
We say that $(a_n)_{n\in\N}$ in $M(\alpha,\beta;\Omega)$ \emph{$\Htopo$-converges} to $a\in M(\alpha,\beta;\Omega)$, if for all $f\in \sobH^{-1}(\Omega)\coloneqq \cH^1(\Omega)'$ and $u_n\in \cH^1(\Omega)$ solving
\[
   \forall \phi\in \cH^1(\Omega): \langle a_n \cgrad u_n ,\cgrad \phi\rangle_{\Lp(\Omega)^d} = f(\phi)\text{,}
\]we get $u_n\rightharpoonup u\in \cH^1(\Omega)$ and $a_n\cgrad u_n\rightharpoonup a\cgrad u\in \Lp{2}(\Omega)^d$, where $u \in \cH^1(\Omega)$ is the solution of 
\[
    \forall \phi\in \cH^1(\Omega):\langle a \cgrad u ,\cgrad \phi\rangle_{\Lp(\Omega)^d} = f(\phi)\text{.}
\]

\begin{remark}Note that the variational problems used in this definition are yielding well-defined solutions based on \Cref{cor:TW14} and \Cref{rem:topiso}. Indeed, the Poincar\'e inequality \Cref{prop:Poincare} tells us that $\cgrad$ is one-to-one and has closed range $\mathfrak{g}_0(\Omega)$. Furthermore, $a\in M(\alpha,\beta;\Omega)$ implies $\Re a\geq \alpha$ and $a\in \Lb(\Lp{2}(\Omega)^d)$. Hence, by \Cref{lem:pda}, $\iota^* a \iota$ is continuously invertible, where $\iota \colon \mathfrak{g}_0(\Omega) \hookrightarrow L_2(\Omega)^d$ is the canonical embedding.
\end{remark}

\begin{theorem}[Tartar--Murat]\label{thm:Hbasic} Let $\Omega\subseteq \R^d$ open and bounded, $0<\alpha\leq\beta$,  $(a_n)_{n\in\N}$, $a$ in $M(\alpha,\beta;\Omega)$. Then the following statements hold:
\begin{enumerate}
 \item[(a)] There exists a metric topology $\tau_{\Htopo}$ on $M(\alpha,\beta;\Omega)$ such that $(a_n)_{n\in\N}$ $\Htopo$-converges to $a$, if and only if $(a_n)_{n\in\N}$ $\tau_{\Htopo}$-converges to $a$.
 \item[(b)] $(M(\alpha,\beta;\Omega),\tau_{\Htopo})$ is sequentially compact.
\end{enumerate}
\end{theorem}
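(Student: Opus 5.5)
The plan is to recast $\Htopo$-convergence entirely as convergence of bounded operators in the weak operator topology, using the block structure that \Cref{cor:TW14} already suggests. Set $\mathcal{H}\coloneqq\Lp{2}(\Omega)^d$ and $\mathfrak{g}_0(\Omega)\coloneqq\ran(\cgrad)$, which is closed by \Cref{prop:Poincare} and \Cref{exer:1.2}. Let $\mathfrak{c}(\Omega)\coloneqq\mathfrak{g}_0(\Omega)^\perp$, which is the kernel of the distributional divergence. Let $\iota_0,\iota_1$ denote the corresponding embeddings. Each $a\in M(\alpha,\beta;\Omega)$, viewed as a multiplication operator, becomes a $2\times 2$ block operator
\[
a=\begin{pmatrix} a_{00} & a_{01}\\ a_{10} & a_{11}\end{pmatrix},\qquad a_{ij}\coloneqq\iota_i^*a\iota_j .
\]
By \Cref{lem:pda}, $a_{00}$ is invertible with $\norm{a_{00}^{-1}}\leq 1/\alpha$. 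I would then consider the map
\[
S\colon a\mapsto\bigl(a_{00}^{-1},\; a_{10}a_{00}^{-1},\; a_{00}^{-1}a_{01},\; a_{11}-a_{10}a_{00}^{-1}a_{01}\bigr).
\]
Its four components are uniformly bounded in terms of $\alpha$ and $\beta$ on $M(\alpha,\beta;\Omega)$. I define $\tau_{\Htopo}$ as the initial topology of $S$, where each component carries the weak operator topology.

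For (a), the key step is to show that $(a_n)$ $\Htopo$-converges to $a$ if and only if $S(a_n)\to S(a)$ componentwise in the weak operator topology. The solution formula from \Cref{cor:TW14} and \Cref{rem:topiso} gives
\[
u_n=(\iota_0^*\cgrad)^{-1}(a_{n,00})^{-1}(\cgrad^\diamond\iota_0)^{-1}f,
\]
where the outer two operators are fixed topological isomorphisms. Hence $u_n\weakto u$ for all $f$ is equivalent to $a_{n,00}^{-1}\to a_{00}^{-1}$ weakly. For the fluxes, write $v_n\coloneqq\cgrad u_n\in\mathfrak{g}_0(\Omega)$. Then $\iota_0^*a_n v_n$ equals the fixed element $(\cgrad^\diamond\iota_0)^{-1}f$, which covers every element of $\mathfrak{g}_0(\Omega)$ as $f$ varies. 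Moreover $\iota_1^*a_nv_n=a_{n,10}a_{n,00}^{-1}(\cgrad^\diamond\iota_0)^{-1}f$. So flux convergence is equivalent to weak convergence of $a_{n,10}a_{n,00}^{-1}$.

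The entries $a_{00}^{-1}a_{01}$ and the Schur complement $a_{11}-a_{10}a_{00}^{-1}a_{01}$ are not visible through this single equation. I would recover them from the same definition applied to adjoints, since $a_n^*\in M(\alpha,\beta;\Omega)$ as well. This requires first proving that $a_n\to a$ in the $\Htopo$ sense implies $a_n^*\to a^*$, via the standard energy argument: test the primal problem against the adjoint solution. Alternatively, I would argue through the inverses $a_n^{-1}$, which exchange the roles of $\mathfrak{g}_0(\Omega)$ and $\mathfrak{c}(\Omega)$ and turn the Schur complement into the inverse of the $11$-block of $a^{-1}$.

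Metrizability then follows because $\Lp{2}(\Omega)^d$ is separable, so the weak operator topology on norm-bounded sets is metrizable. Injectivity of $S$ on $M(\alpha,\beta;\Omega)$ is clear, since $a$ can be reconstructed algebraically from $S(a)$.

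For (b), sequential compactness of bounded sets in the weak operator topology (by a diagonal argument on a countable dense set) yields a subsequence along which $S(a_n)\to(T_1,T_2,T_3,T_4)$. Invertibility of $T_1$ follows from a uniform accretivity bound $\Re a_{00}^{-1}\geq\alpha/\beta^{2}$, which passes to weak limits. Inverting the Schur-complement formula then defines a bounded operator $b$ on $\mathcal{H}$ with $S(b)=(T_1,\dots,T_4)$. The inequalities $\Re b\geq\alpha$ and $\Re b^{-1}\geq 1/\beta$ should pass to the limit through the energy characterisation, using lower semicontinuity of quadratic forms of the weakly converging fluxes.

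The main obstacle is showing that $b$ is again a multiplication operator by an $\Lp{\infty}$ matrix, i.e.\ that $b$ is local. Nothing in the abstract block calculus produces this, so here I would use the compact embedding $\cH^1(\Omega)\hookrightarrow\Lp{2}(\Omega)$ (Rellich), which I assume is available. The aim is to show that $b$ commutes with multiplication by every $\varphi\in\Cc(\Omega)$. For $u_n\weakto u$ in $\cH^1(\Omega)$, the commutator terms $\grad(\varphi u_n)-\varphi\grad u_n=u_n\grad\varphi$ converge strongly. With a div--curl-type compensated argument, this yields $\varphi\, a_n\cgrad u_n\weakto\varphi\, b\,\cgrad u$. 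Testing against oscillating test fields $\cgrad(\varphi\, x_k)$ then identifies $b$ on a total set with a measurable matrix field. The bounds $\Re b\geq\alpha$ and $\Re b^{-1}\geq1/\beta$ then place this matrix field in $M(\alpha,\beta;\Omega)$.
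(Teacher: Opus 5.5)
The paper does not prove this theorem; it quotes it from the Murat--Tartar literature. Your part~(a) follows the route of \Cref{thm:HnlH}. Your $\tau_{\Htopo}$ is the Schur topology $\tau(\mathfrak{g}_0(\Omega),\mathfrak{g}_0(\Omega)^\perp)$ on $M(\alpha,\beta;\Omega)$, and the first two components come from the solution formula exactly as in the paper. Getting $a_{00}^{-1}a_{01}$ from adjoint $\Htopo$-convergence via $(a^*)_{10}(a^*)_{00}^{-1}=(a_{00}^{-1}a_{01})^*$ is a valid variant; the paper reads it off the source-term problems instead. For the Schur complement, however, you give no mechanism. Adjoint $\Htopo$-convergence adds only the third component. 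Your remark about inverses merely identifies $a_{n,11}-a_{n,10}a_{n,00}^{-1}a_{n,01}$ with the map $z\mapsto\iota_1^*a_n(\cgrad u_n+z)$ for the problems $-\divcon a_n(\cgrad u_n+z)=0$, $z\in\mathfrak{g}_0(\Omega)^\perp$ (\Cref{prop:equivprobl2}). What is missing is a proof that $\Htopo$-convergence forces convergence of these problems with a non-gradient source. That is \Cref{thm:Hcon2}, which tests the fluxes against adjoint solutions and passes to the limit with \Cref{thm:divtest} and the div--curl lemma \Cref{thm:dcl0}.

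You rightly single out locality as the crux of~(b); the paper itself only gives the abstract compactness, \Cref{thm:compSchur} with \Cref{exer:3.05}. However, the step you sketch does not deliver locality. The conclusion $\varphi a_n\cgrad u_n\weakto\varphi b\cgrad u$ is automatic, since multiplication by $\varphi$ is weakly continuous, and it says nothing about $b$. Commutation needs $a_n(\varphi\cgrad u_n)\weakto b(\varphi\cgrad u)$, and $\varphi\cgrad u_n\notin\mathfrak{g}_0(\Omega)$. A compensated-compactness argument on the gradient problems cannot give this. Indeed, $a_n\cgrad(\varphi u_n)=\varphi a_n\cgrad u_n+u_na_n\grad\varphi$ tends, along a subsequence, to $\varphi b\cgrad u+u\,\bar a\grad\varphi$, where $\bar a$ is a weak-$*$ limit of $(a_n)$. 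This is not $b\cgrad(\varphi u)$ in general, because $\{\divcon(u_na_n\grad\varphi)\}$ is in general not relatively compact in $\sobH^{-1}(\Omega)$.

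The missing idea is to treat $u_n\grad\varphi$ as a source. Take real $\varphi\in\Cc(\Omega)$ and let $-\divcon a_n\cgrad u_n=f$. Then $\varphi u_n$ solves $-\divcon a_n(\cgrad(\varphi u_n)+z_n)=f_n$ with $z_n\coloneqq-u_n\grad\varphi$ and $f_n(\phi)\coloneqq f(\varphi\phi)-\langle\grad\varphi\cdot a_n\cgrad u_n,\phi\rangle_{\Lp{2}(\Omega)}$. By \Cref{thm:RKT}, $z_n$ converges strongly in $\Lp{2}(\Omega)^d$ and $f_n$ converges strongly in $\sobH^{-1}(\Omega)$. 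The block computation of \Cref{thm:HnlH} gives $\iota_0^*a_n(\cgrad(\varphi u_n)+z_n)=(\cgrad^\diamond\iota_0)^{-1}f_n$ and $\iota_1^*a_n(\cgrad(\varphi u_n)+z_n)=a_{n,10}a_{n,00}^{-1}(\cgrad^\diamond\iota_0)^{-1}f_n+(a_{n,11}-a_{n,10}a_{n,00}^{-1}a_{n,01})\iota_1^*z_n$. Convergence of all four components along your subsequence therefore yields $a_n(\varphi\cgrad u_n)\weakto b(\varphi\cgrad u)$. Hence $b(\varphi\cgrad u)=\varphi\,b\cgrad u$ for every $u\in\cH^1(\Omega)$. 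The span of the fields $\psi\cgrad u$ contains $\Cc(\Omega)^d$ (take $u=\chi x_k$ with $\chi=1$ on $\supp\psi$), so $b$ commutes with all multiplications by $\varphi$. Only then does testing with $\cgrad(\chi x_k)$ produce an $\Lp{\infty}$ matrix field. Likewise, gradient problems alone only see $b$ on $\mathfrak{g}_0(\Omega)$. To get $\Re b\geq\alpha$ and $\Re b^{-1}\geq 1/\beta$ on all of $\Lp{2}(\Omega)^d$, run your lower-semicontinuity argument on such source-term problems, for example $z=v$ and $f=-\divcon bv$, whose limit solution is $0$.
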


Next, we may present a characterisation of $\Htopo$-convergence by means of convergences associated to $a$ instead of the solution operators of the variational problems.
\begin{theorem}\label{thm:varprob} Let $\Omega\subseteq \R^d$ open and bounded, $0<\alpha\leq\beta$, $(a_n)_{n\in\N}$, $a$ in $M(\alpha,\beta;\Omega)$, $\iota\colon \mathfrak{g}_0(\Omega)\hookrightarrow L_2(\Omega)^d$. Then, the following conditions are equivalent:
\begin{enumerate}
  \item $(a_n)_{n\in\N}$ $\Htopo$-converges to $a$.
  \item $(\iota^*a_n \iota)^{-1} \to (\iota^*a \iota)^{-1} $ and $a_n\iota(\iota^*a_n \iota)^{-1} \to a\iota(\iota^*a \iota)^{-1}$ as $n\to\infty$ in the weak operator topology of the respective spaces $\Lb(\mathfrak{g}_0(\Omega))$ and $\Lb(\mathfrak{g}_0(\Omega),\Lp{2}(\Omega)^d)$.
\end{enumerate}
\end{theorem}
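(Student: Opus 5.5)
We base everything on the solution formula from \Cref{cor:TW14} together with \Cref{rem:topiso}. Write $C\coloneqq\cgrad$; it is one-to-one with closed range $\mathfrak{g}_0(\Omega)$ by \Cref{prop:Poincare}. The maps $\iota^*\tilde{C}\colon\cH^1(\Omega)\to\mathfrak{g}_0(\Omega)$ and $C^\diamond\iota\colon\mathfrak{g}_0(\Omega)\to\sobH^{-1}(\Omega)$ are topological isomorphisms, and they do not depend on $n$. For $f\in\sobH^{-1}(\Omega)$ set $g\coloneqq(C^\diamond\iota)^{-1}f\in\mathfrak{g}_0(\Omega)$. As $f$ ranges over $\sobH^{-1}(\Omega)$, $g$ ranges over all of $\mathfrak{g}_0(\Omega)$. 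The solutions are then
\[
u_n=(\iota^*C)^{-1}(\iota^*a_n\iota)^{-1}g,\qquad a_n\cgrad u_n=a_n\iota(\iota^*a_n\iota)^{-1}g,
\]
and likewise for $a$. The second identity uses $\cgrad u_n=\iota\iota^*\cgrad u_n$, since $\cgrad u_n\in\mathfrak{g}_0(\Omega)$.

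For (ii)$\Rightarrow$(i), fix $f$ and the corresponding $g$. Weak operator convergence gives $(\iota^*a_n\iota)^{-1}g\rightharpoonup(\iota^*a\iota)^{-1}g$ in $\mathfrak{g}_0(\Omega)$. Applying the bounded operator $(\iota^*\tilde{C})^{-1}$, which is weakly continuous, yields $u_n\rightharpoonup u$ in $\cH^1(\Omega)$. The second weak operator convergence directly gives $a_n\cgrad u_n\rightharpoonup a\cgrad u$ in $\Lp{2}(\Omega)^d$.

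For (i)$\Rightarrow$(ii), take an arbitrary $g\in\mathfrak{g}_0(\Omega)$ and set $f\coloneqq C^\diamond\iota g$. By (i), $u_n\rightharpoonup u$ in $\cH^1(\Omega)$. Applying the bounded operator $\iota^*\tilde{C}$ gives $(\iota^*a_n\iota)^{-1}g=\iota^*\cgrad u_n\rightharpoonup\iota^*\cgrad u=(\iota^*a\iota)^{-1}g$. Weak convergence of $a_n\cgrad u_n$ is exactly weak convergence of $a_n\iota(\iota^*a_n\iota)^{-1}g$. Since weak convergence for every vector is precisely weak operator convergence, this finishes the proof.

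The main point to check carefully is the bookkeeping of the solution formula. Specifically, one must verify that the composite $(C\restriction)^\diamond\iota$ in \Cref{cor:TW14} is surjective onto $\sobH^{-1}(\Omega)$, so that every $g$ arises from some $f$. Here $\ker(C)^\perp$ is the whole space, because $C$ is injective, so $C\restriction=C$, and surjectivity follows from \Cref{rem:topiso}. Beyond that, no uniform bounds are needed, since the equivalence is pointwise in $g$.
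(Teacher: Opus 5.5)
Your proposal is correct and follows essentially the paper's route. The paper obtains this result inside the proof of \Cref{thm:HnlH}, from the same solution formula $u_n=(\iota_0^*\cgrad)^{-1}(\iota_0^*a_n\iota_0)^{-1}(\cgrad^\diamond\iota_0)^{-1}f$ and the fact that the outer factors are $n$-independent topological isomorphisms; the only cosmetic difference is that it splits $a_n\cgrad u_n$ along $\mathfrak{g}_0(\Omega)\oplus\mathfrak{g}_0(\Omega)^\perp$ and tracks $a_{n,10}a_{n,00}^{-1}$ (the $\mathfrak{g}_0(\Omega)$-component being independent of $n$), whereas you track $a_n\iota(\iota^*a_n\iota)^{-1}$ directly.
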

We will obtain a proof in the course of proving the main result in the next lecture.

Using \Cref{thm:varprob}, we can show the following characterisation of $\Htopo$-convergence in one dimension.
\begin{proposition}\label{prop:qdind} Let $\Omega\coloneqq (0,1)$,  $0<\alpha\leq\beta$, $(a_n)_{n\in\N}$, $a$ in $M(\alpha,\beta;\Omega)$ and $\iota \colon \mathfrak{g}_0\hookrightarrow L_2(\Omega)$. Then, the following conditions are equivalent:
\begin{enumerate}
  \item\label{prop:qdind1} $a_n^{-1}\to a^{-1}$ as $n\to\infty$ in the weak operator topology.
  \item\label{prop:qdind2} $(\iota^*a_n \iota)^{-1} \to (\iota^*a \iota)^{-1} $ and $a_n\iota(\iota^*a_n \iota)^{-1} \to a\iota(\iota^*a \iota)^{-1}$ as $n\to\infty$ in the weak operator topology of the respective spaces $\Lb(\mathfrak{g}_0(\Omega))$ and $\Lb(\mathfrak{g}_0(\Omega),\Lp{2}(\Omega))$.
\end{enumerate}
\end{proposition}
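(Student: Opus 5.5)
We use the explicit formula for $(\iota^*a\iota)^{-1}$ from \Cref{lem:reformsop}. On $\Omega=(0,1)$ we have $\langle 1,\phi\rangle=\int_0^1\phi$, and $\iota^*\phi=\phi-\langle 1,\phi\rangle$ by \Cref{lem:comg0}. Since $\Re a^{-1}\geq 1/\beta$ (scalar case), $\Re\langle a^{-1}\rangle\geq 1/\beta>0$, so \Cref{lem:reformsop} applies to $a$ and to every $a_n$. For $\phi\in\mathfrak{g}_0$ it gives
\[
(\iota^*a_n\iota)^{-1}\phi=a_n^{-1}\phi-\frac{\langle 1,a_n^{-1}\phi\rangle}{\langle a_n^{-1}\rangle}\,a_n^{-1},
\qquad
a_n\iota(\iota^*a_n\iota)^{-1}\phi=\phi-\frac{\langle 1,a_n^{-1}\phi\rangle}{\langle a_n^{-1}\rangle},
\]
and the analogous formulas hold for $a$. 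The whole proof is then a matter of reading off weak convergence of multiplication operators from these two identities.

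\textbf{(i)$\Rightarrow$(ii).} Weak operator convergence $a_n^{-1}\to a^{-1}$ yields the following scalar limits:
\begin{itemize}
\item $\langle a_n^{-1}\rangle=\langle 1,a_n^{-1}1\rangle\to\langle a^{-1}\rangle$, and the denominators stay bounded away from zero because $\Re\langle a_n^{-1}\rangle\geq 1/\beta$;
\item $\langle 1,a_n^{-1}\phi\rangle\to\langle 1,a^{-1}\phi\rangle$ for each $\phi$.
\end{itemize}
The second formula is therefore a constant-coefficient perturbation of $\phi$, and it converges even in norm for fixed $\phi$. For the first formula, pair with $\psi\in\mathfrak{g}_0$. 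The terms $\langle\psi,a_n^{-1}\phi\rangle$ and $\langle\psi,a_n^{-1}1\rangle$ converge by (i), and the scalar coefficients converge. Hence both limits in (ii) hold. The limit operator on $\mathfrak{g}_0$ is then identified via \Cref{lem:reformsop} applied to $a$.

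\textbf{(ii)$\Rightarrow$(i).} Set $c_n(\phi)\coloneqq\langle 1,a_n^{-1}\phi\rangle/\langle a_n^{-1}\rangle$. From the second convergence in (ii), tested against the function $1$, we get weak convergence of $\phi-c_n(\phi)$. Since $\langle 1,\phi\rangle=0$, this gives $c_n(\phi)\to c(\phi)$ for every $\phi\in\mathfrak{g}_0$.

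The main obstacle is recovering $a_n^{-1}$ itself. This works because $\|a_n^{-1}\|_\infty\leq 1/\alpha$ is uniformly bounded, so we can use weak-$*$ compactness in $\Lp{\infty}$. Take any subsequence; passing to a further subsequence, $a_n^{-1}\to b$ weakly-$*$ in $\Lp{\infty}$. For $\Lp{\infty}$ functions in one variable this is the same as weak operator convergence of the multiplication operators on $\Lp{2}$, since $\langle\psi,a_n^{-1}\phi\rangle=\int a_n^{-1}\bar\psi\phi$ and $\bar\psi\phi\in\Lp{1}$.

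It remains to show $b=a^{-1}$. By the already proven implication (i)$\Rightarrow$(ii), applied along the subsequence, the quantities in (ii) converge to their values computed with $b$ in place of $a^{-1}$. Here we only need $\Re\langle b\rangle\geq 1/\beta$, which passes to the weak-$*$ limit. Uniqueness of weak limits then gives, for all $\phi\in\mathfrak{g}_0$,
\[
\phi-\frac{\langle 1,b\phi\rangle}{\langle b\rangle}=\phi-\frac{\langle 1,a^{-1}\phi\rangle}{\langle a^{-1}\rangle},
\qquad
b\phi-\frac{\langle 1,b\phi\rangle}{\langle b\rangle}\,b=a^{-1}\phi-\frac{\langle 1,a^{-1}\phi\rangle}{\langle a^{-1}\rangle}\,a^{-1}.
\]
From the first identity, the functional $\phi\mapsto\langle 1,(b/\langle b\rangle-a^{-1}/\langle a^{-1}\rangle)\phi\rangle$ vanishes on $\{1\}^\perp$. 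Hence $b/\langle b\rangle-a^{-1}/\langle a^{-1}\rangle$ is constant, say $b=\lambda a^{-1}+\mu$ with $\lambda=\langle b\rangle/\langle a^{-1}\rangle$. Integrating this over $(0,1)$ gives $\langle b\rangle=\lambda\langle a^{-1}\rangle+\mu$, and since $\lambda\langle a^{-1}\rangle=\langle b\rangle$ this forces $\mu=0$, so $b=\lambda a^{-1}$.

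Substituting $b=\lambda a^{-1}$ into the second identity, the coefficient terms agree by the first identity, so we are left with $(\lambda-1)\bigl(a^{-1}\phi-\frac{\langle 1,a^{-1}\phi\rangle}{\langle a^{-1}\rangle}a^{-1}\bigr)=0$ for all $\phi\in\mathfrak{g}_0$. The bracket is $(\iota^*a\iota)^{-1}\phi$, which is nonzero for $\phi\neq0$, hence $\lambda=1$. Thus every subsequence has a further subsequence with $a_n^{-1}\to a^{-1}$ in the weak operator topology, and therefore the whole sequence converges, which is (i).
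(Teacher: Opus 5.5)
Your proof is correct and follows essentially the paper's route. (i)$\Rightarrow$(ii) is read off from \Cref{lem:reformsop}. For (ii)$\Rightarrow$(i), you use the subsequence principle, the second convergence in (ii) to determine the subsequential limit of $a_n^{-1}$ up to a scalar factor, and the first convergence (via (i)$\Rightarrow$(ii) along the subsequence) to force that factor to be $1$. The only difference is the compactness step. You extract a weak-$*$ convergent subsequence of $a_n^{-1}$ in $\Lp{\infty}$ and then identify its limit algebraically. The paper only extracts a convergent subsequence of the bounded scalars $\langle a_n^{-1}\rangle\to b$, and gets $a_{n_k}^{-1}\to \frac{b}{\langle a^{-1}\rangle}a^{-1}$ directly by splitting $\phi\in\Lp{2}(\Omega)$ into its $\mathfrak{g}_0(\Omega)$-part and a constant, so it needs no compactness in $\Lp{\infty}$.
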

\begin{proof}
We recall the representation of $(\iota^*a_n \iota)^{-1}$ from \Cref{lem:reformsop}: we have
\[
   (\iota^*a_n \iota)^{-1}\phi =a_n^{-1} \phi-  a_n^{-1} \langle 1,a_n^{-1}\phi\rangle_{\Lp{2}(\Omega)} \frac{1}{\langle a_n^{-1}\rangle}
\]
for $n\in\N$ and $\phi\in \mathfrak{g}_0(\Omega)$.
Thus, we immediately read off that~\labelcref{prop:qdind1} is sufficient for~\labelcref{prop:qdind2}. On the other hand, if~\labelcref{prop:qdind2} holds, we obtain 
\[
 \phi - \langle1, a_n^{-1}\phi\rangle_{\Lp{2}(\Omega)} \frac{1}{\langle a_n^{-1}\rangle} =  a_n\iota(\iota^*a_n \iota)^{-1}\phi
 \rightharpoonup a\iota(\iota^*a \iota)^{-1}\phi = \phi- \langle1, a^{-1}\phi\rangle_{\Lp{2}(\Omega)} \frac{1}{\langle a^{-1}\rangle}
\]
for all $\phi\in \mathfrak{g}_0(\Omega)$.
Since $b_n \coloneqq\langle a_n^{-1}\rangle ,n\in\N,$ defines a bounded sequence, we may choose a subsequence such that $b_{n_k} \to b$ for some $b\in \K$.
Then, by decomposing $\phi\in \Lp{2}(\Omega)$ into a $\mathfrak{g}_0(\Omega)$-function and a constant function (\Cref{lem:comg0}), we see that
\[
\langle1, a_{n_k}^{-1}\phi\rangle_{\Lp{2}(\Omega)}\to \langle1, a^{-1}\phi\rangle_{\Lp{2}(\Omega)}\frac{b}{\langle a^{-1}\rangle}\quad (\phi\in \Lp{2}(a,b))
\]
as $k\to\infty$.
 Since we can choose $\phi\in \Lp{2}(\Omega)$ to be any product of two $\Cc(\Omega)$-functions, it follows that
 \[
\langle\psi, a_{n_k}^{-1}\phi\rangle_{\Lp{2}(\Omega)}\to \langle\psi, a^{-1}\phi\rangle_{\Lp{2}(\Omega)}\frac{b}{\langle a^{-1}\rangle} 
 \]
 for all $\phi,\psi\in\Cc(\Omega)$ as $n\to \infty$, and thus,
\[
a_{n_k}^{-1} \to \frac{b}{\langle a^{-1}\rangle}a^{-1}
\]
in the weak operator topology as $k\to \infty$. From~\labelcref{prop:qdind1}$\implies$\labelcref{prop:qdind2}, we get that
\[
   (\iota^* a_{n_k}\iota)^{-1} \to    \bigl(\iota^* \frac{\langle a^{-1}\rangle}{b} a \iota\bigr)^{-1}
\]in the weak operator topology, which together with our assumption~\labelcref{prop:qdind2} yields $b=\langle a^{-1}\rangle$. Thus, in particular, a contradiction argument shows that $(b_n)_n$ converges to $\langle a^{-1}\rangle$. Thus, \labelcref{prop:qdind1} follows.
\end{proof}

\Cref{prop:qdind} is a purely one-dimensional phenomenon. In fact, in higher dimensions, we shall prove the following result in the next lecture.

\begin{theorem}\label{thm:hcon2d} Let $\Omega\subseteq \R^2$ be open and bounded, $0<\alpha\leq\beta$, and $(a_n)_{n\in\N}$ in $M(\alpha,\beta;\Omega)$ such that there exist $(\alpha_n)_{n\in\N}$, $\alpha_{\mathrm{h}}$, $\alpha_{\mathrm{m}}$ in $\Lp{\infty}(\Omega^{(1)})$ with $a_{n}(x) =\alpha_{n}(x_1) 1_{2\times 2}$ for $n\in\N$ and $x=(x_1,x_2)\in \Omega$. Then, the following conditions are equivalent:
\begin{enumerate}
\item $(a_n)_{n\in\N}$ $\Htopo$-converges to $\diag (\alpha_{\mathrm{h}}, \alpha_{\mathrm{m}})$.
\item $\alpha_n^{-1} \to \alpha_{\mathrm{h}}^{-1}$ and $\alpha_n\to \alpha_{\mathrm{m}}$ in $\sigma(\Lp{\infty}(\Omega^{(1)}),\Lp{1}(\Omega^{(1)}))$ as $n\to\infty$.
\end{enumerate}
\end{theorem}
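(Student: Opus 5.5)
Since the paper promises a proof of \Cref{thm:hcon2d} only after developing the abstract machinery, I would prove it directly. The tools are the characterisation in \Cref{thm:varprob}, the sequential compactness from \Cref{thm:Hbasic}, and the classical div--curl (compensated compactness) argument. The plan is to first establish (ii)$\Rightarrow$(i), and then obtain (i)$\Rightarrow$(ii) from compactness plus uniqueness of limits.

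\emph{Step 1: (ii)$\Rightarrow$(i).} Let $f\in\sobH^{-1}(\Omega)$ and let $u_n$ be the solutions with coefficient $a_n$. Uniform ellipticity and \Cref{lem:pda} bound $u_n$ in $\cH^1(\Omega)$ and $q_n\coloneqq a_n\cgrad u_n$ in $\Lp{2}(\Omega)^2$. So, along a subsequence, $u_n\rightharpoonup u$ and $q_n\rightharpoonup q$, and $-\div q=f$ passes to the limit. It remains to identify $q=\diag(\alpha_{\mathrm{h}},\alpha_{\mathrm{m}})\grad u$, after which uniqueness of the limit problem gives convergence of the whole sequence.
\begin{itemize}
\item \emph{Second component.} Since $\alpha_n$ depends only on $x_1$, we have $q_n^{(2)}=\alpha_n(x_1)\partial_2u_n$. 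Test against $\phi\in\Cc(\Omega)$ and use Rellich compactness: $u_n\to u$ strongly in $\Lp{2}$. Writing $\int \alpha_n\partial_2u_n\,\phi=-\int\alpha_n u_n\partial_2\phi$ (valid since $\partial_2\alpha_n=0$ distributionally), the right-hand side is a product of a strongly convergent factor and a weak-$*$ convergent factor. Hence it tends to $-\int\alpha_{\mathrm{m}}u\,\partial_2\phi=\int\alpha_{\mathrm{m}}\partial_2u\,\phi$.
\item \emph{First component.} Here $\partial_1u_n=\alpha_n^{-1}q_n^{(1)}$, and I need strong compactness of $q_n^{(1)}$ in the $x_1$-direction. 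Since $\partial_1q_n^{(1)}=-f-\partial_2q_n^{(2)}$, the quantity $\partial_1 q_n^{(1)}$ is bounded in $\sobH^{-1}$, though only as a full distribution. The cleanest approach is a div--curl lemma. Set $E_n=(\alpha_n^{-1}\,\psi(x_1),0)$ with $\psi\in\Cc$; then $\curl E_n=-\partial_2(\alpha_n^{-1}\psi)=0$, while $\div q_n=-f$ is compact in $\sobH^{-1}_{\mathrm{loc}}$. Div--curl then gives $q_n\cdot E_n\rightharpoonup q\cdot E$ in distributions, where $E=(\alpha_{\mathrm{h}}^{-1}\psi,0)$, i.e.\ $q_n^{(1)}\alpha_n^{-1}\psi\to q^{(1)}\alpha_{\mathrm{h}}^{-1}\psi$. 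The left-hand side equals $\partial_1u_n\,\psi\rightharpoonup\partial_1u\,\psi$. Thus $q^{(1)}=\alpha_{\mathrm{h}}\partial_1u$.
\end{itemize}

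\emph{Step 2: (i)$\Rightarrow$(ii).} The sequences $(\alpha_n)$ and $(\alpha_n^{-1})$ are bounded in $\Lp{\infty}(\Omega^{(1)})$, because $a_n\in M(\alpha,\beta;\Omega)$. By weak-$*$ sequential compactness, every subsequence has a further subsequence with $\alpha_n\to\tilde\alpha_{\mathrm{m}}$ and $\alpha_n^{-1}\to\tilde\alpha_{\mathrm{h}}^{-1}$. One must check that the weak-$*$ limit of the $\alpha_n^{-1}$ is invertible, which holds via $\Re$-bounds preserved under weak-$*$ limits. Step 1 then shows the subsequence $\Htopo$-converges to $\diag(\tilde\alpha_{\mathrm{h}},\tilde\alpha_{\mathrm{m}})$. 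Uniqueness of $\Htopo$-limits (the topology in \Cref{thm:Hbasic} is metric, hence Hausdorff) forces $\diag(\tilde\alpha_{\mathrm{h}},\tilde\alpha_{\mathrm{m}})=\diag(\alpha_{\mathrm{h}},\alpha_{\mathrm{m}})$ as $\Lp{\infty}(\Omega)$-functions, so the two components agree almost everywhere on $\Omega^{(1)}$. The subsequence principle then yields (ii).

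\emph{Main obstacle.} The key difficulty is the identification of the first component in Step 1, i.e.\ the div--curl lemma, which the paper has not stated. If I want to stay within the paper's operator-theoretic framework, I would instead localise in $x_1$. I would show that $x_1\mapsto\int q_n^{(1)}(x_1,x_2)\chi(x_2)\,\dd x_2$ is bounded in $\sobH^1$ of the $x_1$-interval for $\chi\in\Cc$, since its derivative is controlled by $\partial_2q_n^{(2)}$ tested against $\chi$. That function is therefore strongly compact, in the spirit of \Cref{prop:qdind}. I expect the technical care to be needed there, in particular for non-rectangular $\Omega$, where a partition into product-shaped patches is required.
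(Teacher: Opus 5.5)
Your proposal is correct and follows the paper's proof. The first component is identified exactly as in \Cref{cor:dcl}, i.e.\ by the div--curl lemma applied to $a_n\cgrad u_n$ and the gradient field $(\alpha_n^{-1},0)$; the lemma you thought was missing is available as \Cref{thm:dcl0}. The converse direction is, as in the paper, weak-$*$ compactness, the already proved implication, uniqueness of the limit and the subsequence principle.

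The only deviations are local. You identify $q^{(2)}$ by integrating by parts in $x_2$ and using Rellich's theorem, rather than applying \Cref{cor:dcl} a second time to $(\partial_2u_n,-\partial_1u_n)$. You take uniqueness of $\Htopo$-limits from \Cref{thm:Hbasic}, whereas the paper checks it directly by building $f$ from an arbitrary $u_0$. Your remark that the weak-$*$ limit of $(\alpha_n^{-1})$ must be checked to be admissible covers a point the paper passes over.
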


This result shows that the behaviour of homogenisation in higher dimensions is not as easy as one could have possibly anticipated judging from the one-dimensional situation. In the next lecture, we will dive deep into the proof of the latter theorem as well as in the operator-theoretic description of $\Htopo$-convergence.

\section{Comments}

The abstract elliptic solution theory and its proofs presented in this lecture were originated in \cite{TrWa}, see also \cite[Section~2]{Wa18}. \Cref{thm:crt} is also known as the closed range theorem and can be found in \cite[Theorem IV.1.2]{Goldberg2006}. The fundamental observations concerning operators and their adjoints are rather standard and can also be found in \cite{Goldberg2006} or, e.g., as part of a so-called functional analysis tool box in \cite{PaZu23}.
The proofs of \Cref{prop:Poincare} to \Cref{lem:kerd} inclusive stem from \cite{ISem18}.
$\Htopo$-convergence was introduced by Murat and Tartar in the 1970s. Detailed information about this homogenisation theory in general, the particular role of periodic coefficients and, in particular, a proof of \Cref{thm:Hbasic},
can be found in, e.g., \cite{BLP78,MuTa97,CiDo99,ZKO94,Ta09}. Finally, \Cref{thm:varprob}, i.e., the operator-theoretic description of $\Htopo$-convergence solely addressing
properties of the coefficients, is the first fundamental result from \cite{Wa18} that we presented in this course.

\begin{takehomes}
\begin{enumerate}
 \item[(a)] Elliptic problems can be solved if the range of the (differential) operators is closed.
 \item[(b)] Elliptic problems are basically trying to find inverses of projected variants of the conductivity.
 \item[(c)] $\Htopo$-convergence addresses the continuous dependence of solution operators of elliptic equations on the coefficients
 \item[(d)] In one dimension, $\Htopo$-convergence is the same as convergence of the inverses of the coefficients in the weak operator topology.
 \item[(e)] Higher dimensions are different.
 \end{enumerate}
\end{takehomes}

\section{Exercises}
\begin{exercise}\label{exer:1.0}
Let $\Omega\subseteq \R^d$ be open. Show that $\sobH^1(\Omega)$ is a linear subspace of $\Lp{2}(\Omega)$ and 
that $\partial_k$ is linear on $\sobH^1(\Omega)$ for each $k\in \{1,\ldots, d\}$.
\end{exercise}
\begin{exercise}\label{exer:1.1} Prove \Cref{lem:adj0}.
\end{exercise}

\begin{exercise}\label{exer:1.2} Let $\mathcal{H}_0,\mathcal{H}_1$ be Hilbert spaces and $A\in \Lb(\mathcal{H}_0,\mathcal{H}_1)$. Show that the following conditions are equivalent:
\begin{enumerate}
\item $\ran(A)\subseteq \mathcal{H}_1$ is closed.
\item $\gamma(A)\coloneqq \inf\big\{ \tfrac{\norm{Ax}_{\mathcal{H}_1}}{\norm{x}_{\mathcal{H}_0}}: x\in \ker(A)^\bot\setminus\{0\}\big\}>0$.
\end{enumerate}
\end{exercise}
\begin{exercise}\label{exer:1.35}
Let $\mathcal{H}_0,\mathcal{H}_1$ be Hilbert spaces and let $C\colon \dom(C)\subseteq \mathcal{H}_0 \to \mathcal{H}_1$ be densely defined and closed. Show
that $C^\diamond$ is continuous. Furthermore, show: If $C^\diamond$ is invertible, then its inverse is also continuous.
\end{exercise}
\begin{exercise}\label{exer:1.375}
Let $\mathcal{H}_0,\mathcal{H}_1$ be Hilbert spaces and let $C\colon \dom(C)\subseteq \mathcal{H}_0 \to \mathcal{H}_1$ be densely defined, closed with closed range.
Show that $\dom(C)\cap \ker(C)^\perp$ is a closed subspace of $\dom(C)$, i.e., that $\dom(C)\cap \ker(C)^\perp$ endowed with (the restriction of) $\langle \cdot,\cdot\rangle_{\dom(C)}$ is a Hilbert space.

Furthermore, if $\iota\colon \ran(C)\hookrightarrow \mathcal{H}_1$ denotes the canonical embedding, show that $\iota \iota^* \in\Lb(\mathcal{H}_1)$ is the orthogonal projection onto $\ran (C)$, i.e., $\iota^*\in\Lb (\mathcal{H}_1,\ran(C))$ is the
orthogonal projection with its codomain restricted to its image, and show that $(\iota^*C)^{\diamond}=C^{\diamond}\iota$.
\end{exercise}
\begin{exercise}\label{exer:1.3} Show the following statement: Let $(\Omega,\mu)$ be a $\sigma$-finite measure space. Let $(\alpha_n)_n$, $\alpha$ in $L_\infty(\Omega)$. Then the following conditions are equivalent:
\begin{enumerate}
\item[(i)] $\alpha_n \to \alpha$ in $\sigma(L_\infty(\Omega),L_1(\Omega))$, that is, in $L_\infty(\Omega)$-weak-*.
\item[(ii)] Identifying any bounded measurable function with the corresponding multiplication operator in $L_2(\Omega)$, $\alpha_n\to \alpha$ in the weak operator topology.
\end{enumerate}
\end{exercise}

\chapter{Lecture 2}

\section{Introduction}

This chapter contains a proof of the following result.

\begin{theorem}\label{thm:hcon2d-L2} Let $\Omega\subseteq \R^2$ be open and bounded, $0<\alpha\leq\beta$, and $(a_n)_{n\in\N}$ in $M(\alpha,\beta;\Omega)$ such that there exist $(\alpha_n)_{n\in\N}$, $\alpha_{\mathrm{h}}$, $\alpha_{\mathrm{m}}$ in $\Lp{\infty}(\Omega^{(1)})$ with $a_{n}(x) =\alpha_{n}(x_1) 1_{2\times 2}$ for $n\in\N$ and $x=(x_1,x_2)\in \Omega$. Then, the following conditions are equivalent:
\begin{enumerate}
\item\label{thm:hcon2d-L21} $(a_n)_{n\in\N}$ $\Htopo$-converges to $\diag (\alpha_{\mathrm{h}}, \alpha_{\mathrm{m}})$.
\item\label{thm:hcon2d-L22} $\alpha_n^{-1} \to \alpha_{\mathrm{h}}^{-1}$ and $\alpha_n\to \alpha_{\mathrm{m}}$ in $\sigma(\Lp{\infty}(\Omega^{(1)}),\Lp{1}(\Omega^{(1)}))$ as $n\to\infty$.
\end{enumerate}
\end{theorem}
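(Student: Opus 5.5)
The plan is to prove \labelcref{thm:hcon2d-L22}$\implies$\labelcref{thm:hcon2d-L21} directly by a one-dimensional compensated-compactness argument. The converse \labelcref{thm:hcon2d-L21}$\implies$\labelcref{thm:hcon2d-L22} then follows from compactness and uniqueness of limits.

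\emph{Key lemma (main obstacle).} Let $I\supseteq\Omega^{(1)}$ be a bounded interval, and extend $\gamma_n,\gamma$ by zero outside $\Omega^{(1)}$. Assume $\gamma_n\to\gamma$ weak-* in $\Lp{\infty}(\Omega^{(1)})$. Let $v_n\rightharpoonup v$ in $\Lp{2}(\Omega)$, and assume that, in the distributional sense, $\partial_1 v_n=\partial_2 h_n+g_n$ with $(h_n)_n$ bounded in $\Lp{2}(\Omega)$ and $g_n\in\sobH^{-1}(\Omega)$ such that $g_n(\psi_n)\to0$ whenever $\psi_n\rightharpoonup0$ in $\cH^1(\Omega)$. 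Then $\gamma_n(x_1)v_n\rightharpoonup\gamma(x_1)v$.

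To prove this, I set $A_n(x_1)\coloneqq\int_{\inf I}^{x_1}(\gamma_n-\gamma)$. Weak-* convergence gives $A_n\to0$ pointwise, and the $A_n$ are uniformly Lipschitz, so by Arzelà--Ascoli $A_n\to0$ uniformly. For $\phi\in\Cc(\Omega)$ I write $\int(\gamma_n-\gamma)v_n\phi=\int\partial_1A_n\,v_n\phi$ and integrate by parts in $x_1$. This gives $-\int A_nv_n\partial_1\phi-(\partial_1v_n)(A_n\phi)$. The first term tends to zero by uniform convergence. For the second, $(\partial_2h_n)(A_n\phi)=-\int h_nA_n\partial_2\phi\to0$, because $A_n$ does not depend on $x_2$. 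Moreover $A_n\phi\rightharpoonup0$ in $\cH^1(\Omega)$, since it is bounded in $\cH^1(\Omega)$ and tends to zero in $\Lp{2}$, so $g_n(A_n\phi)\to0$. Density of $\Cc(\Omega)$ and boundedness of the sequence then conclude.

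\emph{\labelcref{thm:hcon2d-L22}$\implies$\labelcref{thm:hcon2d-L21}.} Fix $f\in\sobH^{-1}(\Omega)$ and let $u_n$ be the solutions. By \Cref{lem:pda}, \Cref{prop:Poincare} and \Cref{cor:TW14}, the sequences $u_n$ and $q_n\coloneqq a_n\cgrad u_n$ are bounded. Along a subsequence, $u_n\rightharpoonup u$ in $\cH^1(\Omega)$ and $q_n\rightharpoonup q$ in $\Lp{2}(\Omega)^2$, and passing to the limit in the variational equation gives $\langle q,\cgrad\phi\rangle=f(\phi)$.

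\begin{itemize}
\item For the second component, take $v_n=\partial_2u_n$ and $\gamma_n=\alpha_n$. Here $\partial_1v_n=\partial_2(\partial_1u_n)$ with $\partial_1u_n$ bounded, so the lemma gives $q^{(2)}=\alpha_{\mathrm m}\partial_2u$.
\item For the first component, take $v_n=q_n^{(1)}$ and $\gamma_n=\alpha_n^{-1}$. The equation yields $\partial_1q_n^{(1)}=-\partial_2q_n^{(2)}-f$, where $f$ is fixed, so $f(\psi_n)\to0$ for $\psi_n\rightharpoonup0$. The lemma gives $\partial_1u=\alpha_{\mathrm h}^{-1}q^{(1)}$.
\end{itemize}

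Hence $q=\diag(\alpha_{\mathrm h},\alpha_{\mathrm m})\cgrad u$, so $u$ is the solution for the limit coefficient. By uniqueness of that solution, the whole sequence converges.

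\emph{\labelcref{thm:hcon2d-L21}$\implies$\labelcref{thm:hcon2d-L22}.} The conditions $\Re a_n\geq\alpha$ and $\Re a_n^{-1}\geq1/\beta$ give $\alpha\leq\Re\alpha_n$ and $\abs{\alpha_n}\leq\beta$, so $(\alpha_n)_n$ and $(\alpha_n^{-1})_n$ are bounded in $\Lp{\infty}$. Take any subsequence. By weak-* sequential compactness there is a further subsequence with $\alpha_{n_k}\to\tilde\alpha_{\mathrm m}$ and $\alpha_{n_k}^{-1}\to\tilde\alpha_{\mathrm h}^{-1}$. Here I need to check that $\tilde\alpha_{\mathrm h}^{-1}$ is invertible, which holds since $\Re\alpha_n^{-1}\geq1/\beta$ survives weak-* limits.

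By the implication already proven, $a_{n_k}$ $\Htopo$-converges to $\diag(\tilde\alpha_{\mathrm h},\tilde\alpha_{\mathrm m})$. By \Cref{thm:Hbasic} the topology $\tau_{\Htopo}$ is metric, hence Hausdorff, so $\Htopo$-limits are unique. It follows that $\tilde\alpha_{\mathrm h}=\alpha_{\mathrm h}$ and $\tilde\alpha_{\mathrm m}=\alpha_{\mathrm m}$. Since every subsequence has a further subsequence converging to the same limits, the full sequences converge.
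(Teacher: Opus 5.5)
Your proof is correct, and for (ii)$\Rightarrow$(i) it takes a genuinely different route from the paper.

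\textbf{The paper's route.} It goes through its abstract machinery. \Cref{thm:divtest} converts the divergence constraints $-\divcon q_n=f$ and $\divcon(\partial_2u_n,-\partial_1u_n)=0$ into strong convergence of the $\iota_0^*$-projections. \Cref{cor:dcl} then calls the $\div$-$\curl$ lemma \Cref{thm:dcl0}. That lemma needs Rellich--Kondrachov (\Cref{thm:RKT}) for the potential and, in part (ii), a domain with continuous boundary, which is why the paper localises to boxes with a partition of unity.

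\textbf{Your route.} Your key lemma is a hands-on specialisation of exactly this argument. Your $A_n$ is the potential $\int_{x_0^{(1)}}^{x^{(1)}} f_n$ built in \Cref{cor:dcl}, minus its limit. You get its strong (even uniform) convergence from one-dimensional Arzel\`a--Ascoli rather than Rellich. You use the divergence constraint by pairing directly with the weakly null test functions $A_n\phi\in\cH^1(\Omega)$, not via $\iota_0^*$. Since $A_n$ is independent of $x_2$, the $\partial_2 h_n$ term is harmless. Since only $\phi\in\Cc(\Omega)$ enter, you need no boundary regularity and no localisation.

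\textbf{What each buys.} Your version makes the laminate result nearly self-contained. The paper's route buys reusability: \Cref{thm:divtest} and \Cref{thm:dcl0} are what drive \Cref{thm:Hcon2} and hence \Cref{thm:HnlH}. They also phrase compactness through $\iota_0^*$, which is the Schur-topology viewpoint.

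\textbf{Direction (i)$\Rightarrow$(ii).} You get uniqueness of $\Htopo$-limits from the metrisability in \Cref{thm:Hbasic}, which the paper states without proof. The paper instead proves the needed uniqueness directly: it tests with $f=-\divcon\diag(\alpha_{\mathrm h},\alpha_{\mathrm m})\cgrad u_0$ for arbitrary $u_0\in\cH^1(\Omega)$. You could substitute that elementary argument and avoid the black box.

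\textbf{A small point.} You and the paper both gloss over this. Applying (ii)$\Rightarrow$(i) to the subsequential limits, and using \Cref{thm:Hbasic} inside $M(\alpha,\beta;\Omega)$, requires $\diag(\tilde\alpha_{\mathrm h},\tilde\alpha_{\mathrm m})\in M(\alpha,\beta;\Omega)$. Invertibility of $\tilde\alpha_{\mathrm h}^{-1}$ alone is not enough. Membership does hold: $\{z:\Re z\geq\alpha,\ \Re z^{-1}\geq 1/\beta\}$ and its image under $z\mapsto z^{-1}$ are each the intersection of a half-plane and a disc. They are therefore closed convex subsets of $\K$, and such pointwise constraints survive weak-$*$ limits.
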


Already in the previous lecture, we implicitly used the so-called subsequence principle once in a while. Since it is of fundamental importance in the theory of homogenisation, we shall state and prove this principle explicitly next.

\begin{proposition}\label{prop:subseq} Let $X$ be a topological space, $(x_n)_{n\in\N}, x\in X$. Then the following conditions are equivalent:
\begin{enumerate}
  \item\label{prop:subseq1} $x_n \to x$ as $n\to\infty$.
    \item\label{prop:subseq2} every subsequence of $(x_n)_{n\in\N}$ has a subsequence converging to $x$.
\end{enumerate}
In particular, if $X$ is sequentially compact and every convergent subsequence of $(x_n)_{n\in\N}$ converges to $x$, then~\labelcref{prop:subseq1} holds.
\end{proposition}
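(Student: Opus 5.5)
The plan is to prove the two implications separately, using only the definition of convergence in a topological space: $x_n\to x$ means that every open neighbourhood $U$ of $x$ contains $x_n$ for all sufficiently large $n$.

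For \labelcref{prop:subseq1}$\implies$\labelcref{prop:subseq2}, I will show that any subsequence $(x_{n_k})_{k\in\N}$ of a convergent sequence converges to the same limit. Since $n_k\geq k$, whenever $x_n\in U$ for all $n\geq N$, also $x_{n_k}\in U$ for all $k\geq N$. Hence the subsequence itself already serves as the required sub-subsequence.

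For \labelcref{prop:subseq2}$\implies$\labelcref{prop:subseq1}, I will argue by contraposition. Assume $x_n\not\to x$. Then there is an open neighbourhood $U$ of $x$ such that $x_n\notin U$ for infinitely many $n$. Enumerating these indices increasingly yields a subsequence $(x_{n_k})_{k\in\N}$ lying entirely in $X\setminus U$. Every further subsequence of it also avoids $U$ and therefore cannot converge to $x$, which contradicts \labelcref{prop:subseq2}. This extraction is the only step with any content, and it is elementary.

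For the ``in particular'' statement, let $X$ be sequentially compact and suppose every convergent subsequence of $(x_n)_{n\in\N}$ has limit $x$. Given any subsequence, sequential compactness yields a further subsequence converging to some limit $y$. This is again a convergent subsequence of $(x_n)_{n\in\N}$, so by hypothesis it converges to $x$. Note that limits need not be unique in a non-Hausdorff space, so the hypothesis should be read as \enquote{converges to $x$}, which is exactly what is needed. Thus \labelcref{prop:subseq2} holds, and the equivalence just proved gives \labelcref{prop:subseq1}.
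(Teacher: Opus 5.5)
Your proof is correct and follows essentially the same route as the paper. The forward direction is immediate; the converse goes by contradiction, extracting a subsequence that stays outside a fixed open neighbourhood $U$ of $x$; and the final assertion combines sequential compactness with the equivalence. Your increasing enumeration of the indices $n$ with $x_n\notin U$ is slightly cleaner than the paper's recursive construction, which contains the stray requirement $x_{\pi(1)}=x_1$ that need not be compatible with $x_{\pi(1)}\notin U$.
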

\begin{proof}
\labelcref{prop:subseq1}$\Rightarrow$\labelcref{prop:subseq2} is obvious.
For the other direction, assume $x_n \nrightarrow x$. Then, there exist $U\subseteq X$ open with $x\in U$ and for all $n\in \N$ some $k>n$ such that $x_k\notin U$. Define
$\pi\colon\mathbb{N}\to\mathbb{N}$ recursively  such that $\pi(n)>\pi(n-1)$,  $x_{\pi(n)} \notin U$, and $x_{\pi(1)} = x_1$. By assumption, $(x_{\pi(n)})_{n\in\N}$ has a subsequence converging to $x$, which is a contradiction because $x_{\pi(n)}\notin U$ for all $n\in \N$.

Let $X$ be sequentially compact. Then, every subsequence of $(x_n)_{n\in\N}$ has a convergent subsequence. By assumption, this convergent subsequence converges to $x$. Hence,~\labelcref{prop:subseq2} holds and implies~\labelcref{prop:subseq1}.
\end{proof}

In order to prove \Cref{thm:hcon2d-L2}, some preparations are necessary. We begin with yet another fundamental observation in the theory of homogenisation: the $\div$-$\curl$ lemma.

\section{The $\div$-$\curl$ lemma}

For $\Omega\subseteq \R^d$, we say that $\Omega$ has \emph{continuous boundary}, if, locally, $\partial\Omega$ can be written as the graph of a continuous function; we refer to the sources in the comments for the details. In fact, we will not explicitly use said definition in any proof to come. The only thing we are interested in here is the following compactness result.

\begin{theorem}[Rellich--Kondrachov selection theorem]\label{thm:RKT} Let $\Omega\subseteq \R^d$ be open and bounded.

\begin{enumerate}
\item\label{thm:RKT1} Then, the embedding $\cH^1(\Omega)\hookrightarrow \Lp{2}(\Omega)$ is compact.

\item\label{thm:RKT2} If, in addition, $\Omega$ has continuous boundary, then the embedding $\sobH^1(\Omega)\hookrightarrow \Lp{2}(\Omega)$ is compact. In particular, $\ran(\grad)\subseteq \Lp{2}(\Omega)^d$ is closed.
\end{enumerate}
\end{theorem}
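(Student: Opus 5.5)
The plan for~\labelcref{thm:RKT1} is to use the Kolmogorov--Riesz (Fréchet--Kolmogorov) characterisation of relatively compact subsets of $\Lp{2}(\R^d)$: a bounded set $B\subseteq \Lp{2}(\R^d)$ whose elements all vanish outside a fixed bounded set is relatively compact if $\sup_{f\in B}\norm{f(\cdot+h)-f}_{\Lp{2}(\R^d)}\to0$ as $h\to0$.

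Let $B$ be the unit ball of $\cH^1(\Omega)$, and extend every element by $0$ outside $\Omega$. For $\phi\in\Cc(\Omega)$, the fundamental theorem of calculus along the segment from $x$ to $x+h$, together with the Cauchy--Schwarz inequality and Fubini's theorem, gives
\[
\norm{\phi(\cdot+h)-\phi}_{\Lp{2}(\R^d)}\leq \abs{h}\,\norm{\cgrad\phi}_{\Lp{2}(\Omega)^d}\text{.}
\]
Both sides are continuous with respect to the $\sobH^1$-norm, and $\Cc(\Omega)$ is dense in $\cH^1(\Omega)$ by definition. Hence the estimate holds for every $u\in\cH^1(\Omega)$, after extension by zero, and $B$ is relatively compact in $\Lp{2}(\Omega)$. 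The same argument applies to $\chi u$ with $u\in\sobH^1(\Omega)$ and a cut-off $\chi\in\Cc(\Omega)$, since $\chi u\in\cH^1(\Omega)$ and $\norm{\chi u}_{\sobH^1}\leq C_\chi\norm{u}_{\sobH^1}$. This local version is the input for~\labelcref{thm:RKT2}.

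For~\labelcref{thm:RKT2}, let $(u_n)_{n\in\N}$ be bounded in $\sobH^1(\Omega)$. By compactness of $\partial\Omega$ and the continuous boundary, cover $\partial\Omega$ by finitely many patches. In each patch, after rotation, $\Omega$ coincides with $\{x_d>g(x')\}$ for some continuous $g$, and the vertical segment of a fixed length $2\eta>0$ above each boundary point stays in $\Omega$.

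The key step, which I expect to be the main obstacle, is uniform smallness of $\int\abs{u_n}^2$ on the thin boundary strips $S_\varepsilon\coloneqq\{g(x')<x_d<g(x')+\varepsilon\}$. I will first argue for smooth $u$, or directly on a.e.\ vertical line using the absolutely continuous representative from \Cref{thm:1dSob}. For $s\in(g,g+\varepsilon)$ and $t\in(\eta,2\eta)$ this gives
\[
\abs{u(x',s)}^2\leq 2\abs{u(x',s+t)}^2+4\eta\int\abs{\partial_d u(x',r)}^2\dd r\text{.}
\]
Averaging over $t\in(\eta,2\eta)$, integrating over $s$ and then over $x'$ yields
\[
\int_{S_\varepsilon}\abs{u}^2\leq C\,\varepsilon\bigl(\eta^{-1}+\eta\bigr)\norm{u}^2_{\sobH^1(\Omega)}\text{.}
\]
With $\eta$ fixed, this tends to $0$ uniformly over bounded sets as $\varepsilon\to0$. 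The delicate points are the measurability of $S_\varepsilon$ and the justification of the line-wise fundamental theorem for $\sobH^1$-functions, which I would handle via the one-dimensional theory of \Cref{thm:1dSob} on a.e.\ line.

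Given $\delta>0$, choose $\varepsilon$ so that the union $S$ of the strips satisfies $\sup_n\norm{u_n}_{\Lp{2}(S)}<\delta$, and pick $\chi\in\Cc(\Omega)$ with $0\leq\chi\leq1$ and $\chi=1$ on $\Omega\setminus S$. By the local version of~\labelcref{thm:RKT1}, $(\chi u_n)_{n\in\N}$ is relatively compact in $\Lp{2}(\Omega)$. Since $\norm{u_n-\chi u_n}_{\Lp{2}(\Omega)}\leq\norm{u_n}_{\Lp{2}(S)}<\delta$, the set $\{u_n\mid n\in\N\}$ is totally bounded, hence relatively compact.

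Finally, for the closed range I use \Cref{exer:1.2} with $A=\tilde{\grad}$, i.e.\ $\grad$ regarded as a bounded operator on the Hilbert space $\sobH^1(\Omega)$. Suppose $\gamma(A)=0$. Then there are $x_n\in\ker(\grad)^\perp$ (orthogonal complement in $\sobH^1(\Omega)$) with $\norm{x_n}_{\sobH^1}=1$ and $\grad x_n\to0$. By compactness, after passing to a subsequence, $x_n\to x$ in $\Lp{2}(\Omega)$, so $(x_n)_{n\in\N}$ is Cauchy in $\sobH^1(\Omega)$ and converges there. Closedness of $\grad$ gives $\grad x=0$, so $x\in\ker(\grad)\cap\ker(\grad)^\perp=\{0\}$. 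This contradicts $\norm{x}_{\sobH^1}=\lim\norm{x_n}_{\sobH^1}=1$, so $\gamma(A)>0$ and $\ran(\grad)$ is closed.
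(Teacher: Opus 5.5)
Your proof is correct. The paper does not prove the two compactness statements; it refers to \cite{ISem18} and \cite[Chapter~5]{EE87}. The only part argued in the text is the closed-range consequence, via the contradiction argument of \Cref{rem:comclos} and \Cref{exer:2.1}, which rests on \Cref{exer:1.2}. Your last paragraph is exactly that argument. Your $\sobH^1$-orthogonal complement of $\ker(\grad)$ coincides with $\sobH^1(\Omega)\cap\ker(\grad)^\perp$, with $\perp$ taken in $\Lp{2}(\Omega)$, because $\grad$ vanishes on the kernel. So you are precisely in the setting of \Cref{exer:2.1}.

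For compactness you give a self-contained route instead of a citation:
\begin{itemize}
\item for~(i), Kolmogorov--Riesz together with the translation estimate, which uses no boundary regularity at all;
\item for~(ii), a boundary-layer estimate along vertical segments, combined with an interior cut-off that reduces to~(i) through \Cref{lem:philoc}.
\end{itemize}
This is essentially the classical proof for domains with continuous boundary. It avoids extension operators, which need not exist for merely continuous boundaries (e.g.\ outward cusps). The strip estimate only uses that vertical segments of length $\varepsilon+2\eta$ above the graph lie in $\Omega$, not the modulus of continuity of $g$.

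Two routine points should be made explicit when you write it out.
\begin{itemize}
\item The cut-off $\chi\in\Cc(\Omega)$ with $\chi=1$ on $\Omega\setminus S$ exists only if $S$ contains a collar $\{x\in\Omega: \operatorname{dist}(x,\partial\Omega)<\delta'\}$. This follows from a Lebesgue number of the finite cover of $\partial\Omega$ together with uniform continuity of the $g_j$. With $\omega_j$ a modulus of continuity of $g_j$, choose $\delta'$ smaller than that Lebesgue number and with $\delta'+\omega_j(\delta')<\varepsilon$ for all $j$. This requires each strip to be taken over the full base of its patch, with the patches set up so that vertical segments of length $\varepsilon+2\eta$ above the graph stay inside.
\item The line-wise fundamental theorem is most cheaply justified by the Meyers--Serrin density of $\mathrm{C}^\infty(\Omega)\cap\sobH^1(\Omega)$ in $\sobH^1(\Omega)$. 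Interior smoothness suffices because the segments lie in $\Omega$. The strip inequality then extends to all of $\sobH^1(\Omega)$, since both sides are continuous in the $\sobH^1$-norm.
\end{itemize}
Measurability of $S_\varepsilon$ is not an issue, since $S_\varepsilon$ is open.
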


\begin{remark}\label{rem:comclos} The closedness of the range of an operator given a certain embedding is compact can be proved using a contradiction argument. The reader is asked to do the details in \Cref{exer:2.1}.
\end{remark}

\begin{remark}\label{rem:fafact}
Let $(B_n)_{n\in\N}$ be a sequence in $\Lb(\mathcal{X},\mathcal{Y})$, $\mathcal{X},\mathcal{Y}$ Banach spaces, $(x_n)_n$ in $\mathcal{X}$. If $(B_n)_{n\in\N}$ converges in the strong operator topology to some $B\in\Lb(\mathcal{X},\mathcal{Y})$ and $(x_n)_{n\in\N}$ to some $x$ in $\mathcal{X}$, then $B_n x_n \to Bx$ in $\mathcal{Y}$. Indeed, the uniform boundedness principle confirms that $\kappa \coloneqq \sup_{n\in\N} \|B_n\|<\infty$. Thus,
\[
    \norm{B_n x_n - Bx}_{\mathcal{Y}}\leq     \norm{B_n x_n - B_nx}_{\mathcal{Y}}+    \norm{B_n x - Bx}_{\mathcal{Y}} \leq \kappa \norm{x_n -x}_{\mathcal{X}}+\norm{B_n x - Bx}_{\mathcal{Y}} \to 0
\]
as $n\to\infty$.
\end{remark}

\begin{lemma}[{{\cite[Lemma 14.4.6]{SeTrWa22} -- $\div$-$\curl$ lemma beginner's version}}]\label{lem:dclbeg} Let $\mathcal{H}$ be a Hilbert space and $(q_n)_{n\in\N}$, $(r_n)_{n\in\N}$ weakly convergent in $\mathcal{H}$. Furthermore, assume that $\mathcal{X}\subseteq \mathcal{H}$ is a closed subspace and consider $\iota\colon \mathcal{X}\hookrightarrow \mathcal{H}$. If $q_n\in \mathcal{X}$ for $n\in\N$ and $\iota^* r_n\to \iota^*\wlim_{n\to\infty} r_n$ in $\mathcal{X}$, then
\[
   \lim_{n\to\infty}\langle q_n, r_n\rangle_{\mathcal{H}} = \langle \wlim_{n\to\infty} q_n, \wlim_{n\to\infty} r_n\rangle_{\mathcal{H}}.
\]
\end{lemma}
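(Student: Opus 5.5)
The idea is to exploit that $q_n$ lives in $\mathcal{X}$, so that only the component of $r_n$ in $\mathcal{X}$ is seen by the inner product, and that component converges strongly by assumption. Write $q\coloneqq\wlim_{n\to\infty} q_n$ and $r\coloneqq \wlim_{n\to\infty} r_n$. Since $\mathcal{X}$ is closed, it is weakly closed, so $q\in\mathcal{X}$. By \Cref{exer:1.375}, $\iota\iota^*$ is the orthogonal projection onto $\mathcal{X}$. Hence $q_n=\iota\iota^* q_n$, and therefore
\[
  \langle q_n, r_n\rangle_{\mathcal{H}} = \langle \iota\iota^* q_n, r_n\rangle_{\mathcal{H}} = \langle \iota^* q_n, \iota^* r_n\rangle_{\mathcal{X}}\text{.}
\]
The same identity holds with $q,r$ in place of $q_n,r_n$.

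Next, $\iota^*$ is bounded, hence weak-weak continuous, so $\iota^* q_n\rightharpoonup \iota^* q$ in $\mathcal{X}$. By assumption, $\iota^* r_n\to\iota^* r$ in norm. It remains to use that a pairing of a weakly convergent sequence with a strongly convergent one converges. We split
\[
  \langle \iota^* q_n, \iota^* r_n\rangle_{\mathcal{X}} - \langle \iota^* q, \iota^* r\rangle_{\mathcal{X}} = \langle \iota^* q_n, \iota^* r_n-\iota^* r\rangle_{\mathcal{X}} + \langle \iota^* q_n-\iota^* q, \iota^* r\rangle_{\mathcal{X}}\text{.}
\]
The second term tends to $0$ by the weak convergence of $\iota^* q_n$.

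For the first term, the uniform boundedness principle gives $\sup_{n\in\N}\norm{q_n}_{\mathcal{H}}<\infty$, since weakly convergent sequences are bounded. Because $\norm{\iota^*}\leq 1$, the first term is bounded by $\sup_n\norm{q_n}\,\norm{\iota^* r_n-\iota^* r}_{\mathcal{X}}$, which tends to $0$. Combining this with $\langle \iota^* q,\iota^* r\rangle_{\mathcal{X}}=\langle q,r\rangle_{\mathcal{H}}$ (valid since $q\in\mathcal{X}$) yields the claim.

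There is no real obstacle here. The only points requiring care are that the weak limit $q$ lies in $\mathcal{X}$ (weak closedness of closed subspaces) and the boundedness of the weakly convergent sequence $(q_n)_{n\in\N}$.
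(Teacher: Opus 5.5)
Your proposal is correct and follows essentially the same route as the paper: rewrite $\langle q_n,r_n\rangle_{\mathcal{H}}=\langle\iota^*q_n,\iota^*r_n\rangle_{\mathcal{X}}$, then pair a weakly convergent sequence with a strongly convergent one. The paper does the second step by invoking \Cref{rem:fafact} with $B_n\coloneqq\langle\iota^*q_n,\cdot\rangle_{\mathcal{X}}$, whereas you do the splitting by hand; you also make explicit that $q\in\mathcal{X}$, which the paper uses only implicitly when identifying $\langle\iota^*q,\iota^*r\rangle_{\mathcal{X}}$ with $\langle q,r\rangle_{\mathcal{H}}$.
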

\begin{proof}
Since $q_n\in \mathcal{X}$ for all $n\in \N$, we can write
\[
  \langle q_n, r_n\rangle_{\mathcal{H}} =  \langle \iota \iota^* q_n, r_n\rangle_{\mathcal{H}} = \langle  \iota^* q_n,\iota^* r_n\rangle_{\mathcal{X}}
\]
for $n\in\N$.
The claim thus follows from \Cref{rem:fafact} applied to $B_n \coloneqq  \langle \iota^* q_n ,\cdot \rangle_{\mathcal{X}} \in \Lb(X,\K)$ and $x_n \coloneqq \iota^*r_n \in \mathcal{X}$, $n\in\N$. 
\end{proof}
The next result can be shown with standard approximation techniques using convolutions. We present the result without a proof.
\begin{lemma}\label{lem:philoc} Let $\Omega\subseteq \R^d$ be open and bounded. Let $\phi\in  \Cc(\Omega)$ and $v\in \sobH^1(\Omega)$. Then, $\phi v \in \cH^1(\Omega)$. Moreover, the linear mapping $\sobH^1(\Omega)\ni u \mapsto \phi u \in \cH^1(\Omega)$ is bounded.
\end{lemma}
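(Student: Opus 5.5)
The plan is to prove the statement first for smooth $v$ and then pass to general $v\in\sobH^1(\Omega)$ by density. The key ingredient is a product-rule estimate that bounds the $\sobH^1$-norm of $\phi u$ by a constant depending only on $\phi$, times the $\sobH^1$-norm of $u$. Throughout, fix $\phi\in\Cc(\Omega)$ and a compact $K\coloneqq\supp\phi\subseteq\Omega$.

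\textbf{Step 1 (product rule).} I will check that for $u\in\sobH^1(\Omega)$ we have $\phi u\in\sobH^1(\Omega)$ with $\partial_k(\phi u)=(\partial_k\phi)u+\phi\,\partial_k u$. For $\psi\in\Cc(\Omega)$, the function $\phi\psi$ is again a test function. By the definition of the weak derivative from \Cref{ex:grad0},
\[
-\int_\Omega \phi u\,\partial_k\psi=-\int_\Omega u\,\partial_k(\phi\psi)+\int_\Omega u\,\psi\,\partial_k\phi=\int_\Omega \bigl(\phi\,\partial_k u+u\,\partial_k\phi\bigr)\psi .
\]
Consequently,
\[
\norm{\phi u}_{\sobH^1(\Omega)}\leq c_\phi\norm{u}_{\sobH^1(\Omega)},\qquad c_\phi\coloneqq \bigl(1+d\bigr)\bigl(\norm{\phi}_\infty+\norm{\grad\phi}_\infty\bigr).
\]
So $u\mapsto\phi u$ is a bounded linear map $\sobH^1(\Omega)\to\sobH^1(\Omega)$. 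Since $\cH^1(\Omega)$ is a closed subspace carrying the same norm, it only remains to show that the image lies in $\cH^1(\Omega)$.

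\textbf{Step 2 (membership in $\cH^1$ via mollification).} Choose $\delta>0$ with $2\delta<\operatorname{dist}(K,\R^d\setminus\Omega)$; if $\Omega=\R^d$ any $\delta$ works, but $\Omega$ is bounded here. Let $\chi\in\Cc(\Omega)$ satisfy $\chi=1$ on the $\delta$-neighbourhood of $K$, and let $K'\coloneqq\supp\chi$. Extend $w\coloneqq\phi v$ by zero to $\R^d$. Then $w\in\sobH^1(\R^d)$, because $w=\phi(\chi v)$, and testing with $\psi\in\Cc(\R^d)$ reduces, via $\chi\psi\in\Cc(\Omega)$ and $\phi=\phi\chi$, to the computation of Step 1.

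Let $\rho_\varepsilon$ be a standard mollifier and set $w_\varepsilon\coloneqq\rho_\varepsilon*w$. For $\varepsilon<\delta$ we have $w_\varepsilon\in\Cc(\Omega)$, since its support lies in the $\varepsilon$-neighbourhood of $K$. Moreover $\partial_k w_\varepsilon=\rho_\varepsilon*\partial_k w$, which follows by moving the derivative onto the test function through Fubini.

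The standard fact that $\rho_\varepsilon*g\to g$ in $\Lp{2}(\R^d)$ for every $g\in\Lp{2}(\R^d)$ then gives $w_\varepsilon\to w$ and $\grad w_\varepsilon\to\grad w$ in $\Lp{2}$. Hence $w$ lies in the closure of $\Cc(\Omega)$ in $\sobH^1(\Omega)$, that is, $w\in\cH^1(\Omega)$. Combined with Step 1, this gives boundedness of $\sobH^1(\Omega)\ni u\mapsto\phi u\in\cH^1(\Omega)$.

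The main obstacle is Step 2, specifically justifying the identity $\partial_k(\rho_\varepsilon*w)=\rho_\varepsilon*\partial_k w$ and the $\Lp{2}$-convergence of mollifications. The latter I would take as the standard approximation result alluded to in the text: first approximate $g$ by continuous compactly supported functions, then use uniform continuity and Young's inequality $\norm{\rho_\varepsilon*g}_2\leq\norm{g}_2$.
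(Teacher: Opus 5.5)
Your proof is correct and takes the route the paper intends. The paper omits the proof and only says the lemma follows by standard approximation with convolutions, which is exactly what your product rule plus mollification of the zero extension of $\phi v$ carries out. The cutoff $\chi$ is not needed: $\phi\psi\in\Cc(\Omega)$ for every $\psi\in\Cc(\R^d)$, so the computation of Step~1 already shows that the zero extension of $\phi v$ lies in $\sobH^1(\R^d)$.
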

\begin{theorem}[$\div$-$\curl$ lemma]\label{thm:dcl0} Let $\Omega\subseteq \R^d$ be open and bounded. Let $(q_n)_{n\in\N}$, $(r_n)_{n\in\N}$ in $\Lp{2}(\Omega)^d$ be weakly convergent to $q\coloneqq \wlim_{n\to\infty} q_n$ and $r\coloneqq \wlim_{n\to\infty} r_n$ respectively. Denoting  $\iota_0 \colon\mathfrak{g}_0(\Omega)\hookrightarrow \Lp{2}(\Omega)^d$, we further assume
\[
    \iota_{0}^* r_n\to \iota_0^* r\text{ in }\mathfrak{g}_0(\Omega).
\] 
\begin{enumerate}
\item\label{thm:dcl01} If $q_n\in \mathfrak{g}_0(\Omega)$ for $n\in \N$, then
\begin{equation}\label{eq:conv}
    \forall \phi\in \Cc(\Omega):\int_\Omega \langle r_n(x),q_n(x)\rangle_{\K^d} \phi(x) dx \to     \int_\Omega \langle r(x),q(x)\rangle_{\K^d} \phi(x) dx\text{.}
\end{equation}
\item\label{thm:dcl02} If $\Omega$ has continuous boundary and $q_n\in {\ran}(\grad)$ for $n\in \N$, then~\labelcref{eq:conv} holds as well.
\end{enumerate}
\end{theorem}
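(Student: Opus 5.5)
We reduce both parts to the beginner's version, \Cref{lem:dclbeg}, after localising with the test function $\phi$. Fix $\phi\in\Cc(\Omega)$. In case~\labelcref{thm:dcl01} write $q_n=\cgrad u_n$ with $u_n\in\cH^1(\Omega)$. By the Poincar\'e inequality \Cref{prop:Poincare}, $\cgrad$ is a topological isomorphism from $\cH^1(\Omega)$ onto $\mathfrak{g}_0(\Omega)$ (\Cref{rem:topiso}), so $u_n\rightharpoonup u$ in $\cH^1(\Omega)$ with $\cgrad u=q$. In case~\labelcref{thm:dcl02} write $q_n=\grad u_n$ with $u_n\in\sobH^1(\Omega)$. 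Replacing $u_n$ by its component orthogonal to $\ker(\grad)$ and using closedness of $\ran(\grad)$ (\Cref{thm:RKT}~\labelcref{thm:RKT2} together with \Cref{exer:1.2}), we obtain $(u_n)_n$ bounded in $\sobH^1(\Omega)$. Passing to weakly convergent subsequences and invoking the subsequence principle \Cref{prop:subseq} at the end, we may assume $u_n\rightharpoonup u$ in $\sobH^1(\Omega)$ with $\grad u=q$. In both cases the compact embeddings of \Cref{thm:RKT} give $u_n\to u$ strongly in $\Lp{2}(\Omega)$.

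The key identity is the product rule for $\phi u_n$. By \Cref{lem:philoc} we have $\phi u_n\in\cH^1(\Omega)$, and $\cgrad(\phi u_n)=\phi\grad u_n+u_n\grad\phi$; this holds for smooth $u_n$ and extends by density and the boundedness statement of \Cref{lem:philoc}. Hence
\[
  \int_\Omega\langle r_n,q_n\rangle_{\K^d}\phi
  =\langle r_n,\cgrad(\phi u_n)\rangle_{\Lp{2}(\Omega)^d}-\langle r_n,u_n\grad\phi\rangle_{\Lp{2}(\Omega)^d},
\]
where conjugations are placed according to the paper's convention, say with $\overline{\phi}$ if necessary. We treat the two terms separately.

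For the second term, $u_n\grad\phi\to u\grad\phi$ strongly in $\Lp{2}(\Omega)^d$, because $\grad\phi$ is bounded and $u_n\to u$ in $\Lp{2}(\Omega)$. Since $r_n\rightharpoonup r$, the pairing of a weakly and a strongly convergent sequence converges, which is \Cref{rem:fafact} applied to functionals. For the first term, $\cgrad(\phi u_n)\in\mathfrak{g}_0(\Omega)$, and it converges weakly to $\cgrad(\phi u)$ because $u_n\mapsto\phi u_n$ is bounded linear, hence weak-weak continuous. With $\mathcal{X}=\mathfrak{g}_0(\Omega)$ and the hypothesis $\iota_0^*r_n\to\iota_0^*r$, \Cref{lem:dclbeg} gives convergence of this term to $\langle r,\cgrad(\phi u)\rangle$. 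Recombining via the product rule for $u$ yields $\int_\Omega\langle r,q\rangle_{\K^d}\phi$.

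The main obstacle is justifying the product rule $\cgrad(\phi u)=\phi\grad u+u\grad\phi$ for $u\in\sobH^1(\Omega)$ without boundary regularity. We would verify it directly from the definition of weak derivatives: for a test function $\psi$ we have $-\int\phi u\,\partial_k\psi=-\int u\,\partial_k(\phi\psi)+\int u\psi\,\partial_k\phi$. Because $\phi\psi\in\Cc(\Omega)$, this equals $\int(\phi\partial_k u+u\partial_k\phi)\psi$, so no approximation is needed. Membership of $\phi u$ in $\cH^1(\Omega)$ is then supplied by \Cref{lem:philoc}.
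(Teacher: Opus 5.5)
Your proposal is correct. For part~(i) it is the paper's argument: pull $q_n$ back to $\cH^1(\Omega)$ through the bounded inverse of $\cgrad$, use Rellich for strong $\Lp{2}$-convergence, and split $\langle r_n,\phi q_n\rangle$ into two pieces. The first is $\langle r_n,\cgrad(\phi u_n)\rangle$, handled by \Cref{lem:dclbeg}; the second is a weak--strong pairing.

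For part~(ii) your route is slightly more direct than the paper's. The paper takes a cutoff $\eta\in\Cc(\Omega)$ with $\eta=1$ on $\supp\phi$ and sets $\tilde q_n\coloneqq\cgrad(\eta w_n)\in\mathfrak{g}_0(\Omega)$. It then applies part~(i) to $\tilde q_n$, using that $\phi q_n$ and $\phi\tilde q_n$ differ only by $\phi w_n\grad\eta$. You instead note that \Cref{lem:philoc} already gives $\phi u_n\in\cH^1(\Omega)$ for $u_n\in\sobH^1(\Omega)$. So the computation from (i) goes through verbatim and no cutoff is needed.

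Both versions use the same ingredients: the $\sobH^1$-case of \Cref{lem:philoc}, \Cref{thm:RKT}~\labelcref{thm:RKT2} for closed range and compactness, and \Cref{lem:dclbeg}. The paper's reduction makes visible that (ii) is just a localisation of (i), since near $\supp\phi$ every gradient field agrees with an element of $\mathfrak{g}_0(\Omega)$. Yours gives one uniform argument for both cases.

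Two minor points. First, you justify the product rule $\cgrad(\phi u)=\phi\grad u+u\grad\phi$ directly from the definition of weak derivatives, which the paper uses tacitly. Second, the detour through the subsequence principle in (ii) is unnecessary. The estimate you get from \Cref{exer:1.2} says precisely that $q\mapsto(\grad\restriction_{\ker(\grad)^\perp})^{-1}q$ is bounded linear on $\ran(\grad)$. Hence $u_n\rightharpoonup u$ already holds for the whole sequence, as in the paper.
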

\begin{proof}
\labelcref{thm:dcl01}: \Cref{prop:Poincare} shows that $\cgrad$ is one-to-one and that its inverse $\cgrad^{-1} \colon \mathfrak{g}_0 (\Omega)\to H_0^1(\Omega)$ is bounded. Hence, defining $v_n\coloneqq \cgrad^{-1}q_n \in \cH^1(\Omega)$ for $n\in\N$, applying~\labelcref{exer:2.15a} from \Cref{exer:2.15}, and using the fact that closed subspaces are also weakly closed, we obtain $v_n \rightharpoonup v\coloneqq \cgrad^{-1}q$ in $\cH^1(\Omega)$. By \Cref{lem:philoc} and~\labelcref{exer:2.15a} from \Cref{exer:2.15}, $\cgrad (\phi v_n)\rightharpoonup\cgrad (\phi v)$ in $\Lp{2}(\Omega)^d$ for all $\phi\in \Cc(\Omega)$. Moreover, by \Cref{thm:RKT}, $v_n\to v \in \Lp{2}(\Omega)$. Thus, we can
apply~\labelcref{exer:2.15b} from \Cref{exer:2.15} and \Cref{lem:dclbeg} to compute
 \begin{align*}
 \int_\Omega \langle r_n(x),q_n(x)\rangle_{\K^d} \phi(x) d x & = \langle r_n, \phi q_n\rangle_{\Lp{2}(\Omega)^d} \\
 & = \langle r_n, \phi \cgrad v_n\rangle_{\Lp{2}(\Omega)^d} \\
 & = \langle r_n, \cgrad (\phi v_n)\rangle_{\Lp{2}(\Omega)^d} -    \langle r_n,  v_n\cgrad \phi\rangle_{\Lp{2}(\Omega)^d} \\
 & \to \langle r, \cgrad (\phi v)\rangle_{\Lp{2}(\Omega)^d} -  \langle r, v\cgrad \phi \rangle_{\Lp{2}(\Omega)^d}\\ 
 &=  \langle r, \phi q\rangle_{\Lp{2}(\Omega)^d} = \int_\Omega \langle r(x),q(x)\rangle_{\K^d} \phi(x) d x
 \end{align*}
 for all $\phi\in \Cc(\Omega)$.
 
\labelcref{thm:dcl02}: Similar to before, we set, $w_n \coloneqq (\grad\restriction_{\ker(\grad)^\perp})^{-1}q_n \in \sobH^1(\Omega)\cap \ker(\grad)^\perp$.
From \Cref{thm:RKT} (${\ran}(\grad)$ is closed), \Cref{exer:1.375} ($\sobH^1(\Omega)\cap \ker(\grad)^\perp$ is closed), and the open mapping theorem, we obtain bounbedness of $(\grad\restriction_{\ker(\grad)^\perp})^{-1}$ and, therefore,
$w_n\rightharpoonup w\coloneqq (\grad\restriction_{\ker(\grad)^\perp})^{-1} q \in \sobH^1(\Omega)\cap\ker(\grad)^\perp$, and, by \Cref{thm:RKT}, $w_n\to w$ in $\Lp{2}(\Omega)$.  Let $\eta\in \Cc(\Omega)$ with $\eta=1$ on $\supp \phi$. Then, by \Cref{lem:philoc}, $\eta w_n \in \cH^1(\Omega)$ with $\tilde{q}_n \coloneqq  \eta \grad w_n +  w_n\grad \eta= \cgrad (\eta w_n) \in \mathfrak{g}_0(\Omega)$ for $n\in \N$. Again by \Cref{lem:philoc} and~\labelcref{exer:2.15a} from \Cref{exer:2.15}, $\tilde{q}_n\rightharpoonup\cgrad (\eta w)= \eta \grad w +  w\grad \eta \eqqcolon \tilde{q}$ in $\Lp{2}(\Omega)^d$.
 Thus, from~\labelcref{exer:2.15b} from \Cref{exer:2.15} and from~\labelcref{thm:dcl01}, we infer
 \begin{align*}
   \langle r_n, \phi q_n\rangle_{\Lp{2}(\Omega)^d} & =   \langle r_n, \phi (q_n-\tilde{q}_n)\rangle_{\Lp{2}(\Omega)^d}+  \langle r_n, \phi \tilde{q}_n\rangle_{\Lp{2}(\Omega)^d} \\
   & =  \langle r_n, \phi (\grad w_n-\eta \grad w_n -  w_n\cgrad \eta)\rangle_{\Lp{2}(\Omega)^d} + \langle r_n, \phi \tilde{q}_n \rangle_{\Lp{2}(\Omega)^d} \\
   & = -\langle r_n, \phi w_n\cgrad(\eta) \rangle_{\Lp{2}(\Omega)^d}+ \langle r_n, \phi \tilde{q}_n \rangle_{\Lp{2}(\Omega)^d}\\
    &\to-\langle r, \phi w\cgrad(\eta) \rangle_{\Lp{2}(\Omega)^d}+ \langle r, \phi \tilde{q} \rangle_{\Lp{2}(\Omega)^d}\\
 & =\langle r, \phi q \rangle_{\Lp{2}(\Omega)^d}\text{.}\qedhere
 \end{align*}
\end{proof}
Instrumental for our aim is the following application of the $\div$-$\curl$ lemma found by Tartar.

\begin{corollary}\label{cor:dcl} Let $\Omega\subseteq \R^d$ be open and  bounded and $\iota_0 \colon\mathfrak{g}_0(\Omega)\hookrightarrow \Lp{2}(\Omega)^d$. Let $(r_n)_{n\in\N}$ in $\Lp{2}(\Omega)^d$ be weakly convergent to some $r\in\Lp{2}(\Omega)^d$ with $\iota_0^* r_n\to \iota_0^* r$ as $n\to\infty$. Next, assume that $(f_n)_{n\in\N}$ in $\Lp{\infty}(\Omega^{(1)})$ converges to some $f\in 
\Lp{\infty}(\Omega^{(1)})$ with respect to the weak*-topology $\sigma(\Lp{\infty}(\Omega^{(1)}),\Lp{1}(\Omega^{(1)}))$. Then,
\[
\forall \phi\in C_c^\infty(\Omega): \int_\Omega r^{(1)}_n(x)f_n(x^{(1)}) \phi(x) \dd x \to     \int_\Omega r^{(1)}(x)f(x^{(1)}) \phi(x) \dd x\text{.}
 \]
\end{corollary}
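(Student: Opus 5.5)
We reduce the statement to the $\div$-$\curl$ lemma, \Cref{thm:dcl0}\labelcref{thm:dcl02}, by writing $f_n(x^{(1)})$ as the first component of a gradient. Take $\Omega$ to be contained in a box $I\times J$ with $I\supseteq\Omega^{(1)}$ a bounded open interval. Extend $f_n$ and $f$ by zero to $I$; weak-* convergence persists. Set
\[
F_n(s)\coloneqq \int_{\inf I}^s f_n(t)\dd t,\qquad F(s)\coloneqq\int_{\inf I}^s f(t)\dd t,\qquad s\in I.
\]
By \Cref{lem:rand}, $F_n\in \sobH^1(I)$ with $\partial F_n=f_n$. The functions $v_n(x)\coloneqq F_n(x_1)$ on the box $Q\coloneqq I\times J$ (which has continuous boundary) then lie in $\sobH^1(Q)$ with $\grad v_n=(f_n(x_1),0,\ldots,0)=f_n e_1$. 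The weak derivative of a function of $x_1$ only is checked with Fubini, as in \Cref{lem:rand}. The same holds for $v\coloneqq F(x_1)$.

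We then check the convergences. Weak-* convergence gives $\sup_n\norm{f_n}_\infty<\infty$. Testing against $\mathds 1_{[\inf I,s]}\in\Lp{1}$ gives $F_n(s)\to F(s)$ pointwise, and $\abs{F_n}$ is uniformly bounded, so $F_n\to F$ in $\Lp{2}(I)$ by dominated convergence. For $g\in \Lp{2}(\Omega)\subseteq\Lp{1}(\Omega)$, Fubini shows $x_1\mapsto\int g(x_1,x')\dd x'$ is in $\Lp{1}$, so $q_n\coloneqq f_n e_1\rightharpoonup f e_1\eqqcolon q$ in $\Lp{2}(\Omega)^d$.

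To avoid needing continuous boundary of $\Omega$, we use the trick from the proof of \Cref{thm:dcl0}\labelcref{thm:dcl02}. Fix $\phi\in\Cc(\Omega)$ and choose $\eta\in\Cc(\Omega)$ with $\eta=1$ on $\supp\phi$. By \Cref{lem:philoc} (applied on $\Omega$ to the restrictions of $v_n$), $\eta v_n\in\cH^1(\Omega)$. Hence
\[
\tilde q_n\coloneqq\cgrad(\eta v_n)=\eta f_n e_1+v_n\grad\eta\in\mathfrak g_0(\Omega).
\]
This sequence converges weakly to $\tilde q\coloneqq\eta f e_1+v\grad\eta$, since $v_n\to v$ strongly in $\Lp{2}$ and $f_n e_1\rightharpoonup f e_1$. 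Applying \Cref{thm:dcl0}\labelcref{thm:dcl01} to $(\tilde q_n)$ and $(r_n)$ yields
\[
\int_\Omega\langle r_n,\tilde q_n\rangle\phi\to\int_\Omega\langle r,\tilde q\rangle\phi .
\]
On $\supp\phi$ we have $\eta=1$ and $\grad\eta=0$, so $\langle \tilde q_n, r_n\rangle\phi = f_n(x_1) r_n^{(1)}\phi$, and likewise in the limit. This is exactly the claim, up to complex conjugation. Since $\overline{r_n}$ satisfies the same hypotheses ($\iota_0^*$ commutes with conjugation because $\mathfrak g_0$ is conjugation invariant), we can apply the argument to $\overline{r_n}$ and conjugate back if the scalar product convention puts a conjugate on $r_n^{(1)}$. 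Otherwise it already matches.

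The main obstacle is the bookkeeping that $v_n\in\sobH^1$ with the stated gradient and that $F_n\to F$ strongly in $\Lp{2}$. Strong convergence is crucial because the product $v_n\cdot r_n$ must converge; it comes for free here from the uniform bound plus pointwise convergence of primitives, replacing the Rellich argument. Alternatively, \Cref{thm:dcl0}\labelcref{thm:dcl02} could be applied on the box directly after extending $r_n$ by zero, but then the hypothesis on $\iota_0^* r_n$ would have to be transported to the box. The cutoff route above avoids that.
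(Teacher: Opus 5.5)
Your proof is correct, but it follows a different route from the paper's.

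\textbf{The paper's route.} The paper first reduces, via a locally finite partition of unity, to the case that $\Omega$ is a box. It then takes $v_n$ as the $x^{(1)}$-primitive of $f_n$ there and invokes part~(ii) of \Cref{thm:dcl0}. That part needs continuous boundary, because it inverts $\grad\restriction_{\ker(\grad)^\perp}$ (closed range via Rellich for $\sobH^1$). The localisation step also tacitly requires passing the hypothesis $\iota_0^*r_n\to\iota_0^*r$ to sub-boxes $B\subseteq\Omega$. This holds, since extension by zero maps $\mathfrak{g}_0(B)$ into $\mathfrak{g}_0(\Omega)$, so the projection of $r\restriction_B$ onto $\mathfrak{g}_0(B)$ is a bounded linear function of $\iota_0^*r$. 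Finally, the paper does not comment on the conjugation in $\langle\cdot,\cdot\rangle_{\K^d}$.

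\textbf{Your route.} You keep the original $\Omega$, place the primitive on an enclosing box, and cut it off with $\eta$ so that $\cgrad(\eta v_n)\in\mathfrak{g}_0(\Omega)$. In effect you inline the proof of part~(ii), but with an explicit preimage under the gradient. As a result only part~(i) and \Cref{lem:philoc} are used, with no boundary regularity and no partition of unity. Your handling of the conjugation by passing to $\overline{r_n}$ is correct, using the conjugation invariance of $\mathfrak{g}_0(\Omega)$.

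\textbf{One correction to your closing remark.} Strong $\Lp{2}$-convergence of $v_n$ is not actually needed in your route. Since $\phi\,\grad\eta=0$, the term $v_n\grad\eta$ drops out after multiplying by $\phi$. Part~(i) only requires $\tilde q_n\rightharpoonup\tilde q$, and for that weak convergence of $v_n$ suffices. The genuine compactness is already inside part~(i), via Rellich on $\cH^1(\Omega)$.
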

\begin{proof} Arguing with a locally finite partition of unity, we can, without loss of generality, assume $\Omega=\prod_{j=1}^d (x_0^{(j)}, x_0^{(j)}+\varepsilon^{(j)})$ for some $\varepsilon \in \R_{>0}^d$ and some $x_0 \in \R^d$. 
The sequence $(f_n)_{n\in\N}$ is uniformly bounded in $\Lp{\infty}(\Omega^{(1)})$ by the uniform boundedness principle. Next, for $n\in\N$, $v_n$ given by
\[
     v_n (x) \coloneqq \int_{x_0^{(1)}}^{x^{(1)}} f_n(s) \dd s
\] belongs to $\sobH^1(\Omega)$ (cf.~\Cref{lem:rand}). Moreover, by \Cref{exer:2.new},
\[
  \ran(\grad) \ni q_n \coloneqq \grad v_n = \begin{pmatrix} f_n, 0, \cdots, 0\end{pmatrix}\rightharpoonup \begin{pmatrix} f, 0, \cdots, 0\end{pmatrix} \in \Lp{2}(\Omega)^d.
\]
Hence, as $
\Omega=\prod_{j=1}^d (x_0^{(j)}, x_0^{(j)}+\varepsilon^{(j)})$ has continuous boundary, the statement follows by \Cref{thm:dcl0}.
\end{proof}

Let $\Omega\subseteq \R^d$ be open.
For the next and future results, we extend the weak partial derivatives $\partial_j$ from \Cref{ex:grad0} to mappings on $\Lp{2}(\Omega)$, via
\begin{equation}\label{eq:definDistDivcon}
    \partial_ j\colon\begin{cases}
\hfill \Lp{2}(\Omega)&\to\quad \sobH^{-1}(\Omega)\\
 \hfill q &\mapsto\quad (\cH^1(\Omega)\ni v\mapsto -\langle q,\partial_j v\rangle_{\Lp{2}(\Omega)})
 \end{cases}
\end{equation}
for $j\in\{1,\dots,d\}$.
It is not difficult to see that this definition is well-defined and is consistent with the one from \Cref{ex:grad0} (\Cref{exer:2.175}). With this notion, we define the following (distributional) version of the divergence
\begin{equation*}
\divcon\colon\begin{cases}
\hfill \Lp{2}(\Omega)^d&\to\quad \sobH^{-1}(\Omega)\\
 \hfill r &\mapsto\quad \sum_{j=1}^d \partial_j r^{(j)}
 \end{cases}\text{.}
\end{equation*}
We can readily show that this is a continuous mapping satisfying 
\begin{equation}\label{eq:definDistDivcon2}
(\divcon r)(v)=-\langle r,\cgrad v\rangle_{\Lp{2}(\Omega)^d}
\end{equation}
 for $r\in\Lp{2}(\Omega)^d$ and
$v\in \cH^1(\Omega)$ (\Cref{exer:2.175}).
\begin{remark}[Strong formulation of variational problems vs weak formulation]\label{rem:StrVarFormVsWeakForm}
Let $\Omega\subseteq \R^d$ open and bounded, $0<\alpha\leq\beta$, $a \in M(\alpha,\beta;\Omega)$, $u\in\cH^1(\Omega)$,
and $f\in\sobH^{-1}(\Omega)$.
We have (\Cref{exer:2.175})
\[
   -\divcon  a \grad_0 u = f \iff [\forall \phi\in C_c^\infty(\Omega)\colon \langle a\grad_0 u ,\grad_0 \phi\rangle_{\Lp{2}(\Omega)^d} = f(\phi)\text{.}]\qedhere
\]
\end{remark}
\begin{theorem}[divergence test]\label{thm:divtest} 
Let $\Omega\subseteq \R^d$ be open and bounded.
Furthermore, let $(r_n)_{n\in\N}$ in $\Lp{2}(\Omega)^d$ be weakly convergent to some $r\in \Lp{2}(\Omega)^d$. Then, the following conditions are equivalent:
\begin{enumerate}
\item $\iota_0^*r_n \to \iota_0^*r$, where $\iota_0\colon \mathfrak{g}_0(\Omega) \hookrightarrow \Lp{2}(\Omega)^d$.
\item $\{\divcon r_n: n\in \N\}\subseteq \sobH^{-1}(\Omega)$ is relatively compact.
\end{enumerate}
\end{theorem}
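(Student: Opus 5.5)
The plan is to transport the question from $\sobH^{-1}(\Omega)$ to $\mathfrak{g}_0(\Omega)$ using the topological isomorphism from \Cref{rem:topiso}. By \Cref{prop:Poincare}, $\cgrad$ is one-to-one with closed range $\mathfrak{g}_0(\Omega)$. Hence \Cref{rem:topiso} applies with $C=\cgrad$, and $\cgrad^\diamond\iota_0\colon \mathfrak{g}_0(\Omega)\to \sobH^{-1}(\Omega)$ is a topological isomorphism.

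The key identity is $\divcon r = -\cgrad^\diamond\iota_0\iota_0^* r$ for every $r\in\Lp{2}(\Omega)^d$. To check it, let $v\in\cH^1(\Omega)$. By \eqref{eq:definDistDivcon2}, $(\divcon r)(v) = -\langle r,\cgrad v\rangle$. Since $\cgrad v\in\mathfrak{g}_0(\Omega)$ and $\iota_0\iota_0^*$ is the orthogonal projection onto $\mathfrak{g}_0(\Omega)$ (\Cref{exer:1.375}), this equals $-\langle \iota_0\iota_0^* r,\cgrad v\rangle = -(\cgrad^\diamond \iota_0\iota_0^* r)(v)$. Write $T\coloneqq \cgrad^\diamond\iota_0$. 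Then $\{\divcon r_n\colon n\in\N\} = -T\{\iota_0^* r_n\colon n\in\N\}$. Because $T$ is a homeomorphism, this set is relatively compact in $\sobH^{-1}(\Omega)$ if and only if $\{\iota_0^*r_n\colon n\in\N\}$ is relatively compact in $\mathfrak{g}_0(\Omega)$. It therefore remains to show: for the weakly convergent sequence $x_n\coloneqq\iota_0^* r_n \rightharpoonup x\coloneqq\iota_0^* r$, norm convergence is equivalent to relative compactness of $\{x_n\colon n\in\N\}$. Weak convergence of $(x_n)$ holds because $\iota_0^*$ is bounded.

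For (i)$\Rightarrow$(ii): a convergent sequence together with its limit forms a compact set, so $\{x_n\colon n\in\N\}$ is relatively compact.

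For (ii)$\Rightarrow$(i), use the subsequence principle, \Cref{prop:subseq}. Take any subsequence of $(x_n)$. By relative compactness in the metric space $\mathfrak{g}_0(\Omega)$, it has a further subsequence converging in norm to some $y$. Norm convergence implies weak convergence, and weak limits are unique, so $y=x$. Hence every subsequence has a sub-subsequence converging to $x$ in norm, and therefore $x_n\to x$.

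I expect the main obstacle to be the careful justification of the identity $\divcon = -\cgrad^\diamond\iota_0\iota_0^*$ together with the isomorphism property of $\cgrad^\diamond\iota_0$. Both reduce to earlier results: \Cref{thm:crt}, \Cref{lem:adj} and \Cref{exer:1.35}, combined in \Cref{rem:topiso}. The rest is routine.
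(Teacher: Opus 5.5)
Your proposal is correct and takes essentially the paper's route. The paper also identifies $\cgrad^\diamond=-\divcon$ and uses \Cref{rem:topiso} to see that $\divcon\iota_0$ is a topological isomorphism $\mathfrak{g}_0(\Omega)\to\sobH^{-1}(\Omega)$, thereby moving the question to $\mathfrak{g}_0(\Omega)$. The one difference is that the paper stops at the equivalence of (i) with $\divcon r_n\to\divcon r$ in $\sobH^{-1}(\Omega)$ and leaves the passage to relative compactness implicit, whereas you prove that step explicitly via the subsequence principle and uniqueness of weak limits.
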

\begin{proof}
Let $q \in \Lp{2}(\Omega)^d$. Then, for $v\in \cH^1(\Omega)$, we compute
\begin{equation*}
 ( \grad_0^\diamond(q))(v) = \langle q, \grad_0 v\rangle_{\Lp{2}(\Omega)^d} = -(\divcon q)(v).
\end{equation*} 
Hence, $\grad_0^\diamond = -\divcon$. By \Cref{rem:topiso}, $\divcon \iota_0\colon \mathfrak{g}_0 (\Omega)\to H^{-1}(\Omega)$ is a topological isomorphism.  
Thus, $\iota_0^* r_n \to  \iota_0^*r$ in $\mathfrak{g}_0(\Omega)$, if and only if 
\[
\divcon r_n = \divcon \iota_0 \iota_0^*r_n \to \divcon \iota_0 \iota_0^*r = \divcon r
\]
 in $\sobH^{-1}(\Omega)$.
\end{proof}

\section{Examples for $\Htopo$-convergence}

We begin this section by proving \Cref{thm:hcon2d-L2}.

\begin{proof}[Proof of \Cref{thm:hcon2d-L2}] At first we show that~\labelcref{thm:hcon2d-L22} is sufficient for~\labelcref{thm:hcon2d-L21}. Thus, assume that $\alpha_n^{-1} \to \alpha_{\mathrm{h}}^{-1}$ and $\alpha_n\to \alpha_{\mathrm{m}}$ in $\sigma(\Lp{\infty}(\Omega^{(1)}),\Lp{1}(\Omega^{(1)}))$. Then, for $f\in \sobH^{-1}(\Omega)$, consider $(u_n)_{n\in\N}$ in $\cH^1(\Omega)$ be given as the unique solution of
\[
    -\divcon a_n \cgrad u_n = f\text{.}
\] 
Next we use \Cref{prop:subseq}. By \Cref{exer:2.25}, we may choose a subsequence of $(u_n)_{n\in\N}$ in $\cH^1(\Omega)$ and $(q_n)_{n\in\N} \coloneqq (a_n\grad u_n)_{n\in\N}$ in $\Lp{2}(\Omega)^2$ both weakly convergent to some $u\in\cH^1(\Omega)$ and $q\in \Lp{2}(\Omega)^2$, respectively. As we shall uniquely identify $u$ and $q$ in the following, \Cref{prop:subseq} will imply weak convergence of the original sequence. In order to reduce clutter in the notation as much as possible we already did and shall re-use the indices $n$ to denote any subsequences. Note that $-\divcon a_n \cgrad u_n=f$ and, thus, by \Cref{thm:divtest}, $\iota_0^*q_n \to \iota_0^* q $. By \Cref{cor:dcl}, we see that
\begin{multline*}
   \langle\phi,\partial_1 u\rangle_{\Lp{2}(\Omega)} =\lim_{n\to\infty}  \langle\phi,\partial_1 u_n\rangle_{\Lp{2}(\Omega)} = \lim_{n\to\infty}\langle \phi, \alpha_n^{-1}\alpha_n  \partial_1 u_n\rangle_{\Lp{2}(\Omega)}\\
 =\lim_{n\to\infty} \langle\phi ,\alpha_n^{-1} q^{(1)}_n\rangle_{\Lp{2}(\Omega)} =\langle\phi, \alpha_{\mathrm{h}}^{-1} q^{(1)}\rangle_{\Lp{2}(\Omega)}
\end{multline*}
for all $\phi\in \Cc (\Omega)$ and, hence, $\alpha_{\mathrm{h}}\partial_1 u = q^{(1)}$.
Moreover, consider $r_n \coloneqq (\partial_2 u_n, -\partial_1 u_n)\weakto (\partial_2 u, -\partial_1 u)\eqqcolon r$. By \Cref{thm:divtest}, we deduce from $\divcon r_n =0$ that \Cref{cor:dcl} is once again applicable. Therefore, 
\begin{multline*}
   \langle\phi, q^{(2)}\rangle_{\Lp{2}(\Omega)}=\lim_{n\to\infty}\langle\phi, q_n^{(2)}\rangle_{\Lp{2}(\Omega)} =\lim_{n\to\infty} \langle\phi,\alpha_n \partial_2 u_n\rangle_{\Lp{2}(\Omega)}\\
=\lim_{n\to\infty}\langle\phi, \alpha_n r_n^{(1)}\rangle_{\Lp{2}(\Omega)} = \langle\phi,\alpha_{\mathrm{m}}r^{(1)}\rangle_{\Lp{2}(\Omega)}=\langle\phi,\alpha_{\mathrm{m}}\partial_2 u\rangle_{\Lp{2}(\Omega)}
\end{multline*}
for all $\phi\in \Cc (\Omega)$ and, hence, $q^{(2)}=\alpha_{\mathrm{m}}\partial_2 u$.
Thus, for $v\in \cH^1 (\Omega)$, we deduce
\begin{multline*}
  \langle \diag (\alpha_{\mathrm{h}}, \alpha_{\mathrm{m}})\grad_0 u,\grad_0v\rangle_{\Lp{2}(\Omega)^2}=
 \langle q,\cgrad v\rangle_{\Lp{2}(\Omega)^2}\\
=\lim_{n\to\infty}  \langle q_n,\cgrad v\rangle_{\Lp{2}(\Omega)^2}=\lim_{n\to\infty}  \langle a_n\cgrad u_n,\cgrad v\rangle_{\Lp{2}(\Omega)^2} = f(v)\text{,}
\end{multline*}
i.e., $-\divcon a\cgrad u= f$. Since this $u$ is unique, we obtain weak convergence of  the original sequence $(u_n)_{n\in\N}$ and, thus, of $(q_n)_{n\in\N}$ to $u$ and $\diag (\alpha_{\mathrm{h}}, \alpha_{\mathrm{m}}) \cgrad u$, respectively.

For the opposite direction,  by (weak*-sequential) compactness, we may choose a subsequence of $(\alpha_n)_{n\in\N}$ such that $\alpha_n^{-1}\to \tilde{\alpha}_{\mathrm{h}}^{-1}\in \Lp{\infty}(\Omega^{(1)})$ and $\alpha_n \to \tilde{\alpha}_{\mathrm{m}}\in\Lp{\infty}(\Omega^{(1)})$ in $\sigma(\Lp{\infty}(\Omega^{(1)}),\Lp{1}(\Omega^{(1)}))$ as $n\to\infty$. Next, let $u_0 \in \cH^1(\Omega)$ and define $f\colon \cH^1(\Omega) \to \C$ via
\[
f(v) \coloneqq \langle \diag(\alpha_{\mathrm{h}},\alpha_{\mathrm{m}})\grad u_0,\grad v\rangle_{\Lp{2}(\Omega)^2}\text{.}
\]
 For $n\in\N$, define $u_n\in \cH^1(\Omega)$ by
\[
   -\divcon  a_n \cgrad u_n  = f\text{.}
\]
Then, since $a_n\stackrel{\Htopo}{ \to} \diag(\alpha_{\mathrm{h}},\alpha_{\mathrm{m}})$, we infer that
$u_n\rightharpoonup u_0$ in $\cH^1(\Omega)$, and
\[
    a_n \cgrad u_n \rightharpoonup \diag(\alpha_{\mathrm{h}},\alpha_{\mathrm{m}})\cgrad u_0 
\]
in $\Lp{2}(\Omega)^2$.
 Furthermore, by \labelcref{thm:hcon2d-L22}$\Rightarrow$\labelcref{thm:hcon2d-L21}, $u_n\rightharpoonup u$ in $\cH^1(\Omega)$, and
\[
   a_n\cgrad u_n \to \diag(\tilde\alpha_{\textnormal{h}},\tilde\alpha_{\textnormal{m}})\cgrad u
\]
in $\Lp{2}(\Omega)^2$.
Hence, $u=u_0$ and 
\[
    \diag(\alpha_{\mathrm{h}},\alpha_{\mathrm{m}})\grad u_0 = \diag(\tilde\alpha_{\textnormal{h}},\tilde\alpha_{\textnormal{m}})\grad u_0.
\]
Since $u_0$ was arbitrary it follows that $\diag(\tilde\alpha_{\textnormal{h}},\tilde\alpha_{\textnormal{m}})=\diag(\alpha_{\textnormal{h}},\alpha_{\textnormal{m}})$.
\end{proof}

With \Cref{thm:hcon2d-L2} proven, we see that even for scalar coefficients, the operator-theoretic description of $\Htopo$-convergence becomes somewhat more difficult, if the dimension of the underlying domain changes from $1$ to $2$. In fact, for periodic coefficients, an $\Htopo$-convergence result is known as well. However, the formula for the limit becomes less explicit:

Let $a\colon \R^d \to \C^{d\times d}$ be bounded and measurable. Assume
there is $c>0$ such that $\Re a(\cdot)\geq c$, and assume $a$ to be $Y\coloneqq (0,1)^d$-\textbf{periodic}, that is, for all $k\in \Z^d$, $a(k+\cdot)=a$. For $n\in \N$, we denote $a_n\coloneqq a(n\cdot)$. Then we find $0<\alpha\leq\beta$ such that $a_n \in M(\alpha,\beta;\Omega)$ for all $\Omega\subseteq \R^d$ open and bounded and $n\in \N$. In order to state the $\Htopo$-convergence statement for $(a_n)_{n\in\N}$, we need to recall the following result. For this, in turn, we introduce for $w \in (0,1)^{d-1}$ and $j\in \{1,\ldots,d\}$
\[
   \check{w}_j \coloneqq (w^{(1)},\ldots,w^{(j-1)}, 0, w^{(j)},\ldots, w^{(d-1)}), \text{ and } \hat{w}_j \coloneqq    \check{w}_j + (\delta_{ij})_{i\in \{1,\ldots,d\}}\text{,}
\]
as well as
 \[
\mathrm{C}_\#^1(Y) \coloneqq \{u\in \mathrm{C}^1(\overline{Y}): u( \check{w}_j) = u (\hat{w}_j), w \in (0,1)^{d-1}, j\in \{1,\ldots,d\} \}\text{.}
\]
Furthermore, similarly to $\cgrad$ on $\cH^1(\Omega)$, we can weakly extend $\grad\restriction_{C_\#^1(Y)}$ to $\grad_\#$ on the closure of $C_\#^1(Y)$ in $\sobH^1(Y)$. Then, the result reads as follows:
\begin{theorem}\label{thm:compcoeff2} Let $\xi\in \C^d$. Then, there exists a unique $v_\xi\in \Lp{2}(Y)^d$ such that
\[
   v_\xi - \xi \in \ran(\grad_\#)\text{ and } av_\xi \in \ker(\grad_\#^\diamond)\text{.}
\]
\end{theorem}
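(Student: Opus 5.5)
We reduce \Cref{thm:compcoeff2} to the abstract solution theory of Lecture~1, namely \Cref{cor:TW14} combined with \Cref{lem:pda}, applied with $C\coloneqq\grad_\#$, $\mathcal{H}_0\coloneqq\Lp{2}(Y)$ and $\mathcal{H}_1\coloneqq\Lp{2}(Y)^d$. Write $\mathfrak{g}_\#\coloneqq\ran(\grad_\#)$ and let $\iota\colon\mathfrak{g}_\#\hookrightarrow\Lp{2}(Y)^d$ be the canonical embedding.

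\textbf{Reformulation.} Recall that $\ker(\grad_\#^\diamond)=\{q\in\Lp{2}(Y)^d:\langle q,\grad_\# \phi\rangle=0\ \forall\phi\in\dom(\grad_\#)\}=\mathfrak{g}_\#^\perp$. Hence we seek $w\coloneqq v_\xi-\xi\in\mathfrak{g}_\#$ such that $a(\xi+w)\perp\mathfrak{g}_\#$. Applying $\iota^*$ (the orthogonal projection onto $\overline{\mathfrak{g}_\#}$), this condition reads
\[
  \iota^* a\iota\, w = -\iota^* a\xi ,
\]
where $\xi$ is regarded as a constant vector field in $\Lp{2}(Y)^d$.

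\textbf{Closed range.} To make this rigorous we first need $\mathfrak{g}_\#$ to be closed. We argue via \Cref{rem:comclos}: $\ker(\grad_\#)$ consists of the constants, and the embedding $\dom(\grad_\#)\hookrightarrow\Lp{2}(Y)$ is compact. The latter follows from \Cref{thm:RKT}~\labelcref{thm:RKT2}, since $\dom(\grad_\#)\subseteq\sobH^1(Y)$ and the cube $Y$ has continuous boundary. This yields a Poincaré-type estimate on $\ker(\grad_\#)^\perp$, so the range is closed (\Cref{exer:2.1}).

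\textbf{Existence and uniqueness.} Since $\Re a\geq c$ pointwise, the multiplication operator $a$ satisfies $\Re a\geq c$ on $\Lp{2}(Y)^d$. Therefore \Cref{lem:pda} shows that $\iota^* a\iota\in\Lb(\mathfrak{g}_\#)$ is continuously invertible. Setting
\[
  w\coloneqq-(\iota^*a\iota)^{-1}\iota^*a\xi\in\mathfrak{g}_\#, \qquad v_\xi\coloneqq\xi+w,
\]
gives existence, because $\iota^*(a v_\xi)=0$ is equivalent to $a v_\xi\in\mathfrak{g}_\#^\perp=\ker(\grad_\#^\diamond)$. For uniqueness, suppose $v,\tilde v$ both satisfy the two conditions. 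Then $d\coloneqq v-\tilde v\in\mathfrak{g}_\#$ and $ad\in\mathfrak{g}_\#^\perp$, so $\iota^*a\iota d=0$. Injectivity of $\iota^*a\iota$ then gives $d=0$; alternatively one can argue directly from $0=\Re\langle d,ad\rangle\geq c\norm{d}^2$.

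\textbf{Main obstacle.} The only genuinely nontrivial step is the closedness of $\ran(\grad_\#)$, and even this is only needed to identify $\iota^*$ with the orthogonal projection onto the range. I expect this step to follow as outlined above: use the compact embedding inherited from $\sobH^1(Y)$ together with the identification $\ker(\grad_\#)=\{\text{constants}\}$. The latter holds because $\grad_\# u=0$ implies $u$ is constant on the connected set $Y$, by the multidimensional analogue of \Cref{lem:kerd}.

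Closedness can in fact be bypassed entirely. Replace $\mathfrak{g}_\#$ by its closure $\overline{\mathfrak{g}_\#}$ and note that $\ker(\grad_\#^\diamond)=\overline{\mathfrak{g}_\#}^{\perp}$. If the statement is read with $\ran$ replaced by its closure, the argument above then requires no compactness at all.
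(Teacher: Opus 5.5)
Your proof is correct. The paper gives no proof of its own here; it only points to Lemma~14.4.1 of Seifert--Trostorff--Waurick. Your argument is exactly what the Lecture~1 machinery is built for: closedness of $\ran(\grad_\#)$ from \Cref{thm:RKT}~\labelcref{thm:RKT2} together with \Cref{exer:2.1}, continuous invertibility of the compression $\iota^*a\iota$ from \Cref{lem:pda}, and uniqueness from coercivity.

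Two small points about your commentary.

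First, you do not need the identification $\ker(\grad_\#)=\{\text{constants}\}$, whose higher-dimensional version you only sketch. \Cref{exer:2.1} asks only for compactness of $\dom(\grad_\#)\cap\ker(\grad_\#)^\perp\hookrightarrow\Lp{2}(Y)$. This is inherited directly from compactness of $\dom(\grad_\#)\hookrightarrow\Lp{2}(Y)$, because $\dom(\grad_\#)$ is a closed subspace of $\sobH^1(Y)$ carrying the same norm.

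Second, closedness is not merely needed to interpret $\iota^*$. It is precisely what puts $v_\xi-\xi$ into $\ran(\grad_\#)$ rather than only into its closure. So the ``bypass'' in your last paragraph proves only the weaker statement. Uniqueness, by contrast, needs no closedness at all.

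A remark linking your formula to the rest of the paper: since $\int_Y\grad_\# u=0$ for all $u\in\dom(\grad_\#)$, the constant field $\xi$ lies in $\ran(\grad_\#)^\perp$. Take $\mathcal{H}_0\coloneqq\ran(\grad_\#)$ and $a_{jk}=\iota_j^*a\iota_k$ as in the Schur topology. Then your corrector reads $v_\xi-\xi=-\iota_0a_{00}^{-1}a_{01}\xi$, and $\iota_1^*av_\xi=(a_{11}-a_{10}a_{00}^{-1}a_{01})\xi$. Consequently, $a_{\textnormal{hom}}$ is the compression of the Schur complement to the constant fields.
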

We define $a_{\textnormal{hom}}$ by
\begin{equation}\label{eq:homcoef}
   \C^d \ni \xi \mapsto \int_Y av_\xi \in \C^d,
\end{equation}where $v_\xi \in \Lp{2}(Y)^d$ is constructed in \Cref{thm:compcoeff2}.

\begin{theorem}\label{thm:Hconper} Let $\Omega\subseteq \R^d$ be open and bounded. Then, the sequence $(a_n)_{n\in\N}$ $\Htopo$-converges to $a_{\textnormal{hom}}$.
\end{theorem}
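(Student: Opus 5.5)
The plan is the classical oscillating test function (energy) method, combined with the tools developed above: \Cref{prop:subseq}, the $\div$-$\curl$ lemma (\Cref{thm:dcl0}) and the divergence test (\Cref{thm:divtest}).

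Fix $f\in\sobH^{-1}(\Omega)$ and let $u_n\in\cH^1(\Omega)$ solve $-\divcon a_n\cgrad u_n=f$. By the uniform bounds coming from $a_n\in M(\alpha,\beta;\Omega)$ (\Cref{exer:2.25}) we may pass to a subsequence with $u_n\weakto u$ in $\cH^1(\Omega)$ and $q_n\coloneqq a_n\cgrad u_n\weakto q$ in $\Lp{2}(\Omega)^d$. Since $\divcon q_n=-f$ is constant, \Cref{thm:divtest} gives $\iota_0^*q_n\to\iota_0^*q$. It then suffices to identify $q=a_{\textnormal{hom}}\cgrad u$. Indeed, passing to the limit in $\langle q_n,\cgrad v\rangle=f(v)$ shows that $u$ solves the limit problem, and $a_{\textnormal{hom}}\in M(\alpha',\beta';\Omega)$ ensures that this solution is unique. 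Uniqueness of the limit plus \Cref{prop:subseq} then yields convergence of the full sequence.

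To identify $q$, fix $\xi\in\C^d$. I would use the cell problem for the adjoint coefficient: \Cref{thm:compcoeff2} applied to $a^*$ gives $w_\xi$ with $w_\xi-\xi=\grad_\#\chi$ for some periodic $\chi$ and $a^*w_\xi\in\ker(\grad_\#^\diamond)$. Extend $w_\xi$ and $\chi$ periodically to $\R^d$ and set
\[
w_{\xi,n}\coloneqq w_\xi(n\cdot)=\xi+\grad\bigl(n^{-1}\chi(n\cdot)\bigr),\qquad s_n\coloneqq a_n^*w_{\xi,n}=(a^*w_\xi)(n\cdot).
\]
By the Riemann--Lebesgue lemma for periodic functions, weak limits of periodic oscillations are their cell averages. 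Hence $w_{\xi,n}\weakto\xi$ and $s_n\weakto\int_Y a^*w_\xi\eqqcolon a_*\xi$, and moreover $n^{-1}\chi(n\cdot)\to0$ strongly in $\Lp{2}$. Periodicity together with $a^*w_\xi\in\ker(\grad_\#^\diamond)$ yields $\divcon s_n=0$ locally, so $s_n$ also satisfies the hypotheses of the $\div$-$\curl$ lemma; to stay inside $\mathfrak{g}_0$ I would multiply by a cutoff $\eta$ and use \Cref{lem:philoc}.

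For $\phi\in\Cc(\Omega)$, the pointwise identity $\langle q_n,w_{\xi,n}\rangle=\langle a_n\cgrad u_n,w_{\xi,n}\rangle=\langle\cgrad u_n,s_n\rangle$ holds. Apply \Cref{thm:dcl0} in two ways.
\begin{enumerate}
\item On the left-hand side, $w_{\xi,n}$ is a gradient in $\ran(\grad)$ (locally) and $\iota_0^*q_n$ converges strongly. Therefore $\int\langle q_n,w_{\xi,n}\rangle\phi\to\int\langle q,\xi\rangle\phi$.
\item On the right-hand side, $\cgrad u_n\in\mathfrak{g}_0(\Omega)$ and $\divcon s_n=0$. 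Therefore $\int\langle\cgrad u_n,s_n\rangle\phi\to\int\langle\cgrad u,a_*\xi\rangle\phi$.
\end{enumerate}
Since $\xi$ and $\phi$ are arbitrary, this gives $q=a_*^*\cgrad u$.

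The remaining step is the algebraic identity $a_*^*=a_{\textnormal{hom}}$, that is, $\langle\int_Y a^*w_\eta,\xi\rangle=\langle\eta,\int_Y av_\xi\rangle$ for all $\eta,\xi$. I would obtain it by expanding $\int_Y\langle a^*w_\eta,v_\xi\rangle$ in two ways. First, write $v_\xi=\xi+\grad_\#\psi$ and use $a^*w_\eta\perp\ran(\grad_\#)$. Second, write $w_\eta=\eta+\grad_\#\chi$ and use $av_\xi\perp\ran(\grad_\#)$. This is the Hilbert-space orthogonality of $\ker(\grad_\#^\diamond)$ and $\ran(\grad_\#)$.

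The step I expect to be the main obstacle is making the oscillating test functions rigorous. This requires the periodic Riemann--Lebesgue weak convergence, the fact that the periodically extended cell solutions are locally $\sobH^1$ with the right divergence and gradient structure (this needs the $\grad_\#$ formalism, not just $\mathrm{C}^1_\#$), and the localisation of $s_n$ so that \Cref{thm:divtest} and \Cref{thm:dcl0} apply on a bounded $\Omega$. Finally, the bound $a_{\textnormal{hom}}\in M$ follows from the same energy identities, e.g. $\Re\langle a_{\textnormal{hom}}\xi,\xi\rangle=\Re\int_Y\langle av_\xi,v_\xi\rangle\geq c\abs{\xi}^2$ by Jensen.
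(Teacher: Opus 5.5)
The paper does not prove this theorem. It calls it a classical result within reach of the course's techniques and omits the proof, so there is no in-paper argument to compare with. Your proposal is Tartar's oscillating test function method. It follows the template of the paper's proofs of \Cref{thm:hcon2d-L2} and \Cref{thm:Hcon2}: extract subsequences, apply \Cref{thm:divtest} to the fluxes, identify the limit flux with \Cref{thm:dcl0}, and conclude by \Cref{prop:subseq}. The argument is correct, and the duality identity $a_*^*=a_{\textnormal{hom}}$ checks out exactly as you describe.

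A few points should be made precise when you write it out.

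\emph{Where the cutoff goes.} Since $\Omega$ is merely open and bounded, \Cref{thm:dcl0}~(ii) is unavailable. The cutoff you mention is therefore needed on the gradient side, not for $s_n$. Set $\theta(x)\coloneqq\sum_{j}\xi^{(j)}x^{(j)}$ and $\theta_n\coloneqq\theta+n^{-1}\chi(n\cdot)$, and take $\eta\in\Cc(\Omega)$ equal to $1$ on $\supp\phi$. Then $\cgrad(\eta\theta_n)\in\mathfrak{g}_0(\Omega)$ and $\phi\,\cgrad(\eta\theta_n)=\phi\,w_{\xi,n}$. Moreover $\cgrad(\eta\theta_n)\weakto\cgrad(\eta\theta)$, because $\theta_n\to\theta$ strongly in $\Lp{2}(\Omega)$. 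Hence \Cref{thm:dcl0}~(i) yields your step (i), with the explicit strong convergence of $\theta_n$ replacing the Rellich argument on $\sobH^1(\Omega)$.

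\emph{No localisation for $s_n$.} Test with periodisations $\sum_{k\in\Z^d}\psi(\cdot+k)$ of $\psi\in\Cc(\R^d)$; their restrictions to $\overline{Y}$ lie in $\mathrm{C}^1_\#(Y)$. This shows that the periodic extension of $a^*w_\xi$ is divergence free on all of $\R^d$, so $\divcon s_n=0$ exactly in $\sobH^{-1}(\Omega)$. Similarly, the local $\sobH^1$-regularity of the periodic extension of $\chi$ reduces by closure to $\chi\in\mathrm{C}^1_\#(Y)$, whose extension is continuous and piecewise $\mathrm{C}^1$.

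\emph{Membership $a_{\textnormal{hom}}\in M(\alpha,\beta;\Omega)$.} The definition of $\Htopo$-convergence requires this. The bound $\Re a_{\textnormal{hom}}\geq\alpha$ uses that elements of $\ran(\grad_\#)$ have mean zero, so that $\int_Y v_\xi=\xi$. You also need $\Re a_{\textnormal{hom}}^{-1}\geq 1/\beta$, which follows from $\Re\langle\xi,a_{\textnormal{hom}}\xi\rangle=\Re\int_Y\langle v_\xi,av_\xi\rangle\geq\frac{1}{\beta}\int_Y\abs{av_\xi}^2\geq\frac{1}{\beta}\abs{a_{\textnormal{hom}}\xi}^2$.

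\emph{A small economy.} You can avoid the adjoint cell problem and the duality computation altogether. Apply the same argument to $-\divcon a_n^*\cgrad u_n=f$, using the correctors $v_\xi$ of $a$ itself as test fields, so that $a_n v_\xi(n\cdot)$ is divergence free. This shows that $(a_n^*)_{n\in\N}$ $\Htopo$-converges to $a_{\textnormal{hom}}^*$, and \Cref{cor:Hconadj} then gives the claim.
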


Even though being within the scope of the techniques of the present course, we shall not provide a proof of these classical results here. However, we will move on and introduce the Schur topology in the next section to open up a wider viewpoint on homogenisation.

\section{The Schur topology and its relation to $\Htopo$-convergence}

We start off with the definition of the Schur topology. For this, let $\mathcal{H}$ be a Hilbert space and $\mathcal{H}_0\subseteq \mathcal{H}$ be a closed subspace, and define $\mathcal{H}_1\coloneqq \mathcal{H}_0^\bot$. For $a\in \Lb(\mathcal{H})$ we define $a_{jk}\coloneqq \iota_j^*a\iota_k$ for all $j,k\in \{0,1\}$ with $\iota_{0/1}\colon \mathcal{H}_{0/1}\hookrightarrow \mathcal{H}$. Next, we define
\[
    \mathcal{M}(\mathcal{H}_0,\mathcal{H}_1)\coloneqq \{a\in L(\mathcal{H}); a_{00}\text{ and } a \text{ continuously invertible}\}\text{.}
\]
Note that for $\Omega\subseteq\R^d$ open and bounded and $0<\alpha\leq\beta$, we have, by interpreting any $\Lp{\infty}$-function as bounded linear operator in the corresponding $\Lp{2}$-space,
\[
    M(\alpha,\beta;\Omega)\subseteq  \mathcal{M}(\mathcal{H}_0,\mathcal{H}_1)
\]for all closed subspaces $\mathcal{H}_0 \subseteq \Lp{2}(\Omega)^d$ by \Cref{lem:pda}.

The \emph{Schur topology}, $\tau(\mathcal{H}_0,\mathcal{H}_1)$, on $\mathcal{M}(\mathcal{H}_0,\mathcal{H}_1)$ is the initial topology induced by the mappings
\begin{align*}
   a& \mapsto a_{00}^{-1}   \in \Lb^{\mathrm{w}}(\mathcal{H}_0)\\
   a & \mapsto a_{00}^{-1}a_{01} \in \Lb^{\mathrm{w}}(\mathcal{H}_1,\mathcal{H}_0)\\
   a& \mapsto a_{10}a_{00}^{-1} \in \Lb^{\mathrm{w}}(\mathcal{H}_0,\mathcal{H}_1) \\
   a&\mapsto a_{11}-a_{10}a_{00}^{-1}a_{01} \in \Lb^{\mathrm{w}}(\mathcal{H}_1)\text{,}
\end{align*}where the superscript $\mathrm{w}$ means that the corresponding spaces of bounded linear operators are endowed with the weak operator topology.

\begin{example}\label{ex:trivialcases}
If $\mathcal{H}_0=\mathcal{H}$, then $(a_n)_{n\in\N} \to a$ in $\tau(\mathcal{H},\{0\})$ if and only if $(a_n^{-1})_{n\in\N}\to a^{-1}$ in the weak operator topology.

If $\mathcal{H}_1=\mathcal{H}$, then $(a_n)_{n\in\N} \to a$ in $\tau(\{0\},\mathcal{H})$ if and only if $(a_n)_{n\in\N}\to a$ in the weak operator topology.
\end{example}

The fundamental relationship between $\Htopo$-convergence and convergence with respect to the Schur topology reads as follows.

\begin{theorem}\label{thm:HnlH} Let $\Omega\subseteq \R^d$ be open and bounded, $0<\alpha\leq\beta$, and $(a_n)_{n\in\N}, a$ in $M(\alpha,\beta;\Omega)$. Then, the following conditions are equivalent:
\begin{enumerate}
  \item\label{thm:HnlH1} $(a_n)_{n\in\N}$ $\Htopo$-converges to $a$;
  \item\label{thm:HnlH2} $(a_n)_{n\in\N}$ $\tau(\mathfrak{g}_0(\Omega),\mathfrak{g}_0(\Omega)^\perp)$-converges to $a$.
\end{enumerate}
\end{theorem}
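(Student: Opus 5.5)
Throughout, write $\mathcal{H}_0\coloneqq\mathfrak{g}_0(\Omega)$, $\mathcal{H}_1\coloneqq\mathfrak{g}_0(\Omega)^\perp$, $\iota_0,\iota_1$ for the embeddings, and $a_{jk}=\iota_j^*a\iota_k$. The idea is to express the solution of the variational problem through the blocks of $a$ and then read off both directions from the two convergences required in the definition of $\Htopo$-convergence. By \Cref{cor:TW14} and \Cref{rem:topiso}, for $f\in\sobH^{-1}(\Omega)$ the solution of $-\divcon a\cgrad u=f$ is
\[
u=(\iota_0^*\cgrad)^{-1}a_{00}^{-1}g,\qquad g\coloneqq(\cgrad^\diamond\iota_0)^{-1}f,
\]
where $\iota_0^*\cgrad\colon\cH^1(\Omega)\to\mathfrak{g}_0(\Omega)$ and $\cgrad^\diamond\iota_0\colon\mathfrak{g}_0(\Omega)\to\sobH^{-1}(\Omega)$ are topological isomorphisms. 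Since $g$ ranges over all of $\mathfrak{g}_0(\Omega)$ as $f$ ranges over $\sobH^{-1}(\Omega)$, weak convergence $u_n\rightharpoonup u$ in $\cH^1(\Omega)$ for all $f$ is equivalent to $a_{n,00}^{-1}\to a_{00}^{-1}$ in the weak operator topology. (Isomorphisms are weak–weak continuous, and $\cgrad$ is an isometry onto its range up to an equivalent norm.) Moreover $\cgrad u=\iota_0a_{00}^{-1}g$, so the flux is
\[
a\cgrad u=\iota_0 a_{00}a_{00}^{-1}g+\iota_1a_{10}a_{00}^{-1}g=\iota_0 g+\iota_1 a_{10}a_{00}^{-1}g.
\]
Since $\iota_0g$ does not depend on $n$, flux convergence for all $f$ is equivalent to $a_{n,10}a_{n,00}^{-1}\to a_{10}a_{00}^{-1}$ weakly. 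This gives $\Htopo$-convergence if and only if the first and third Schur maps converge. Incidentally this also proves \Cref{thm:varprob}.

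For \labelcref{thm:HnlH2}$\Rightarrow$\labelcref{thm:HnlH1} we are then done. For the converse we must additionally get convergence of $a_{00}^{-1}a_{01}$ and of the Schur complement $a_{11}-a_{10}a_{00}^{-1}a_{01}$. The plan for $a_{00}^{-1}a_{01}$ uses adjoints. The set $M(\alpha,\beta;\Omega)$ is closed under taking adjoints, since $\Re a^*=\Re a$ and $\Re (a^*)^{-1}=\Re a^{-1}$. Also $(a^*)_{10}(a^*)_{00}^{-1}=(a_{00}^{-1}a_{01})^*$, and weak operator convergence is stable under adjoints. So it suffices to show that $a_n\stackrel{\Htopo}{\to}a$ implies $a_n^*\stackrel{\Htopo}{\to}a^*$. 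This is the classical fact, which I would prove by a div-curl argument. Take $u_n$ solving the problem with $a_n^*$ and datum $f$, and $w_n$ solving the problem with $a_n$ and datum $h$. Pass to subsequences so that $u_n\rightharpoonup u$ and $a_n^*\cgrad u_n\rightharpoonup q$. Then apply \Cref{lem:dclbeg}, whose hypotheses hold because $\iota_0^*$ of the fluxes is constant by \Cref{thm:divtest}, to
\[
\langle a_n^*\cgrad u_n,\cgrad w_n\rangle=\langle\cgrad u_n,a_n\cgrad w_n\rangle .
\]
In the limit this gives $\langle q,\cgrad w\rangle=\langle\cgrad u,a\cgrad w\rangle=\langle a^*\cgrad u,\cgrad w\rangle$ for all $w$ solving the limit problem with $a$. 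Those $\cgrad w$ exhaust $\mathfrak{g}_0(\Omega)$, which only identifies $\iota_0^*q$. To identify the full $q$, I would use localized test fields, i.e.\ \Cref{thm:dcl0} with cut-offs $\phi$ applied to $w=\cgrad^{-1}$ of affine-type oscillating correctors. This is the main obstacle. The fallback is the Tartar–Murat theory: extract an $\Htopo$-convergent subsequence of $(a_n^*)$ via \Cref{thm:Hbasic}(b) and identify its limit pointwise with $a^*$ by the symmetric div-curl argument above. The subsequence principle, \Cref{prop:subseq}, then gives convergence of the whole sequence.

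For the Schur complement I would use the block inverse formula. Since $a$ is invertible with $\Re a^{-1}\geq1/\beta$, the compression $\iota_1^*a^{-1}\iota_1=(a_{11}-a_{10}a_{00}^{-1}a_{01})^{-1}$ by the standard Schur complement identity. Hence it suffices to show $\iota_1^*a_n^{-1}\iota_1\to\iota_1^*a^{-1}\iota_1$ weakly, together with a uniform bound $\Re\geq1/\beta$ that lets us pass from weak convergence of the inverses to weak convergence of the complements. For the inverses, note that the Schur complements are uniformly bounded with real part at least $\alpha$ (by \Cref{lem:pda} applied to the Schur complement, which inherits coercivity). We then pass to a weakly convergent subsequence and identify the limit. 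Again I would use the div-curl lemma, now with the roles of $\mathfrak{g}_0(\Omega)$ and its complement exchanged, pairing fluxes $a_n\cgrad u_n$ against fields of the form $a_n^{-1}\iota_1 h$.
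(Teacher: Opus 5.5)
The first half of your proposal is the paper's argument. The reformulation $u=(\iota_0^*\cgrad)^{-1}a_{00}^{-1}g$, $a\cgrad u=\iota_0g+\iota_1a_{10}a_{00}^{-1}g$ gives \labelcref{thm:HnlH2}$\Rightarrow$\labelcref{thm:HnlH1}, and under \labelcref{thm:HnlH1} it gives the convergence of $a_{n,00}^{-1}$ and $a_{n,10}a_{n,00}^{-1}$.

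\textbf{The genuine gap is the Schur complement.} Write $a_{n,S}\coloneqq a_{n,11}-a_{n,10}a_{n,00}^{-1}a_{n,01}$. You reduce to weak operator convergence of $a_{n,S}^{-1}=\iota_1^*a_n^{-1}\iota_1$ and then want uniform coercivity to transfer this to $a_{n,S}$. That transfer fails: inversion is not continuous for the weak operator topology, even on uniformly coercive sets. For instance, in $\Lp{2}(0,1)$ take $\gamma$ $1$-periodic with values $1$ and $2$ on half-periods; then $\gamma(n\cdot)^{-1}\to 3/4$ while $\gamma(n\cdot)\to 3/2\neq 4/3$. This failure is exactly what homogenisation is about.

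Worse, the claim you set out to prove is false under $\Htopo$-convergence. For the laminates of \Cref{thm:hcon2d-L2}, $\iota_1^*a_n^{-1}\iota_1=\iota_1^*\alpha_n^{-1}\iota_1\to\iota_1^*\alpha_{\mathrm{h}}^{-1}\iota_1$, whereas $\iota_1^*a^{-1}\iota_1=\iota_1^*\diag(\alpha_{\mathrm{h}}^{-1},\alpha_{\mathrm{m}}^{-1})\iota_1$. The field $v=(0,g(x_1))\in\mathfrak{g}_0(\Omega)^\perp$ detects the difference $\int_\Omega(\alpha_{\mathrm{h}}^{-1}-\alpha_{\mathrm{m}}^{-1})\abs{g}^2\neq0$ for suitable bounded $g$ as soon as $\alpha_{\mathrm{m}}\neq\alpha_{\mathrm{h}}$.

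What converges is $a_{n,S}$ itself, and the missing idea is to realise it as a flux. For $z\in\mathfrak{g}_0(\Omega)^\perp$, let $u_n\in\cH^1(\Omega)$ solve $-\divcon a_n(\cgrad u_n+z)=0$ and write $\cgrad u_n=\iota_0x_n$. The equation says $\iota_0^*a_n(\cgrad u_n+z)=0$, which gives $x_n=-a_{n,00}^{-1}a_{n,01}z$, and hence $a_n(\cgrad u_n+z)=\iota_1a_{n,S}z$. So the two missing Schur maps are the potential and the flux of this problem with a fixed divergence-free forcing $z$. Their weak convergence is exactly \Cref{thm:Hcon2}. The identity $(a^{-1})_{11}=a_S^{-1}$ that you invoke is used in the paper only to recognise $\iota_1a_{n,S}z$ as this flux (\Cref{prop:equivprobl2}), never to pass to a limit through an inverse. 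Since $a_{00}^{-1}a_{01}$ comes out in the same stroke, your adjoint detour is unnecessary.

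\textbf{The adjoint step.} Your identity $(a^*)_{10}(a^*)_{00}^{-1}=(a_{00}^{-1}a_{01})^*$ is correct and would give a valid alternative route to the third Schur map. However, like \Cref{thm:Hcon2}, it rests on $a_n^*\stackrel{\Htopo}{\to}a^*$, which you leave open as the ``main obstacle''. No correctors need to be constructed, because $\Htopo$-convergence of $(a_n)$ supplies them:
\begin{enumerate}
\item For any $w\in\cH^1(\Omega)$, solving with datum $-\divcon a\cgrad w$ gives $w_n\rightharpoonup w$ and $a_n\cgrad w_n\rightharpoonup a\cgrad w$.
\item By \Cref{thm:divtest}, $\iota_0^*$ of both fluxes $a_n^*\cgrad u_n$ and $a_n\cgrad w_n$ is constant in $n$.
\item Hence \Cref{thm:dcl0}~\labelcref{thm:dcl01} passes to the limit on both sides of $\int_\Omega\langle a_n^*\cgrad u_n,\cgrad w_n\rangle_{\K^d}\phi=\int_\Omega\langle\cgrad u_n,a_n\cgrad w_n\rangle_{\K^d}\phi$.
\item Take $w=\psi x_j$ with $\psi\in\Cc(\Omega)$ and $\psi=1$ near $\supp\phi$. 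This gives $\int_\Omega\langle q-a^*\cgrad u,e_j\rangle_{\K^d}\phi=0$ for the $j$-th unit vector $e_j$, i.e.\ $q=a^*\cgrad u$.
\end{enumerate}
This is the argument of \Cref{thm:Hcon2} with $z=0$ applied to $(a_n^*)$, i.e.\ \Cref{cor:Hconadj}.

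Your fallback via \Cref{thm:Hbasic}(b) does not avoid this step. The non-localised pairing only yields $\iota_0^*b\iota_0=\iota_0^*a^*\iota_0$ for a subsequential limit $b$, and this does not determine $b$: a constant skew-symmetric matrix $c$ satisfies $\iota_0^*c\iota_0=0$. The localisation is therefore indispensable either way.
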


For the proof of this theorem, we need some preliminaries. An inspection of  the proof of \Cref{thm:HnlH} below shows that, actually, condition~\labelcref{thm:HnlH2} is clearly stronger than~\labelcref{thm:HnlH1}. So, the main part will be showing that~\labelcref{thm:HnlH1} also implies~\labelcref{thm:HnlH2}. For this, we need the following refinement of $\Htopo$-convergence. 

\begin{theorem}\label{thm:Hcon2} Let $\Omega\subseteq \R^d$ be open and bounded, $0<\alpha\leq
\beta$, $(a_n)_{n\in\N}$, $a$ in $M(\alpha,\beta;\Omega)$. Assume that $(a_n^*)_{n\in\N}$ $\Htopo$-converges to $a^*\in M(\alpha,\beta;\Omega)$. Then, for all $z\in \Lp{2}(\Omega)^d, f\in \sobH^{-1}(\Omega)$ and $u_n\in \cH^1(\Omega)$ satisfying
\[
   -\divcon a_n (\cgrad u_n+z) = f\text{,}
\]
we have $u_n\rightharpoonup u\in \cH^1(\Omega)$ and $a_n(\cgrad u_n +z)\rightharpoonup a(\cgrad u+z)\in \Lp{2}(\Omega)^d$, where $u\in \cH^1(\Omega)$ satisfies
\[
   -\divcon a (\cgrad u+z) = f\text{.}
\]
\end{theorem}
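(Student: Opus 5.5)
The plan is to use the subsequence principle (\Cref{prop:subseq}) together with Tartar's oscillating test function argument. Here the oscillating test functions come from the assumed $\Htopo$-convergence of the adjoints $(a_n^*)_{n\in\N}$. The products that appear will be handled by the $\div$-$\curl$ lemma (\Cref{thm:dcl0}) and the divergence test (\Cref{thm:divtest}).

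\emph{Step 1: a priori bounds and a subsequence.} Rewrite the equation as $-\divcon a_n\cgrad u_n = f+\divcon a_n z$. By \Cref{cor:TW14}, \Cref{rem:topiso} and \Cref{lem:pda}, its solutions $u_n$ satisfy a bound in $\cH^1(\Omega)$ that depends only on $\alpha$, $\beta$, $\norm{f}$ and $\norm{z}$; here we use $\norm{a_n}\leq\beta$ and $\Re a_n\geq\alpha$. Hence $(u_n)_{n\in\N}$ and $q_n\coloneqq a_n(\cgrad u_n+z)$ are bounded. Pass to a subsequence with $u_n\rightharpoonup u$ in $\cH^1(\Omega)$ and $q_n\rightharpoonup q$ in $\Lp{2}(\Omega)^d$. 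Since $\divcon q_n=-f$ is constant in $n$, \Cref{thm:divtest} gives $\iota_0^*q_n\to\iota_0^*q$. Passing to the limit in $\langle q_n,\cgrad v\rangle = f(v)$ also yields $-\divcon q=f$. It remains to show that $q=a(\cgrad u+z)$.

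\emph{Step 2: oscillating test functions.} Fix $w\in\cH^1(\Omega)$. Let $w_n\in\cH^1(\Omega)$ solve $-\divcon a_n^*\cgrad w_n = -\divcon a^*\cgrad w$. Since $a_n^*\to a^*$ in the $\Htopo$ sense, we get $w_n\rightharpoonup w$ and $p_n\coloneqq a_n^*\cgrad w_n\rightharpoonup a^*\cgrad w$. As $\divcon p_n$ does not depend on $n$, \Cref{thm:divtest} again gives $\iota_0^*p_n\to\iota_0^*a^*\cgrad w$. For $\phi\in\Cc(\Omega)$ I will compute
\[
\langle q_n,\phi\cgrad w_n\rangle = \langle \cgrad u_n,\phi\, p_n\rangle + \langle z,\phi\, p_n\rangle
\]
(with $\phi$ real, or with $\overline{\phi}$ in the appropriate places) and pass to the limit in each term:
\begin{itemize}
\item On the left, apply \Cref{thm:dcl0}\labelcref{thm:dcl01} with $r_n=q_n$ and the gradients $\cgrad w_n\in\mathfrak{g}_0(\Omega)$. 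This gives the limit $\langle q,\phi\cgrad w\rangle$.
\item The first term on the right converges to $\langle\cgrad u,\phi a^*\cgrad w\rangle$, by \Cref{thm:dcl0}\labelcref{thm:dcl01} with $r_n=p_n$ and $\cgrad u_n\in\mathfrak{g}_0(\Omega)$.
\item The last term converges to $\langle z,\phi a^*\cgrad w\rangle$ by plain weak convergence, since $\phi z\in\Lp{2}(\Omega)^d$ is fixed.
\end{itemize}
Together these give $\langle q-a(\cgrad u+z),\phi\cgrad w\rangle=0$ for all $\phi\in\Cc(\Omega)$ and $w\in\cH^1(\Omega)$.

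\emph{Step 3: identification and conclusion.} Given $\phi$, choose $\eta\in\Cc(\Omega)$ with $\eta=1$ on $\supp\phi$, and take $w(x)\coloneqq\eta(x)x^{(k)}$. Then $\phi\cgrad w=\phi e_k$. So every component of $q-a(\cgrad u+z)$ is orthogonal to $\Cc(\Omega)$, which forces $q=a(\cgrad u+z)$. Combined with $-\divcon q=f$, this shows that $u$ solves the limit problem. That problem has a unique solution by \Cref{cor:TW14}, since it can be rewritten as $-\divcon a\cgrad u=f+\divcon az$. Hence the limits $u$ and $q$ do not depend on the chosen subsequence, and \Cref{prop:subseq} yields convergence of the whole sequence.

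The main obstacle is Step 2. One has to set up the pairings correctly so that each product is exactly one gradient sequence in $\mathfrak{g}_0(\Omega)$ against one sequence whose divergence is compact. The $z$-term must also be split off, because it is not a gradient. This is exactly why the hypothesis concerns the adjoints $a_n^*$: they are what allow the coefficient $a_n$ to be moved onto the test-function side.
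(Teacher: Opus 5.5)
Your proposal is correct and takes essentially the same route as the paper. Both arguments use oscillating test functions obtained from the $\Htopo$-convergence of $(a_n^*)_{n\in\N}$, apply the divergence test to both flux sequences, apply the $\div$-$\curl$ lemma to both products with the $z$-term split off, and conclude with the subsequence principle; your choice $w=\eta x^{(k)}$ in Step~3 simply spells out the identification $q=a(\cgrad u+z)$, which the paper summarises as ``since both $v$ and $\phi$ were arbitrary''.
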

\begin{proof}
  First of all, note that $(u_n)_{n\in\N}$ is well-defined and bounded in $\cH^1(\Omega)$. Also $u\in \cH^1(\Omega)$ given as the solution of the mentioned variational problem exists and is uniquely determined. Define the bounded $\Lp{2}(\Omega)^d$-sequence $q_n\coloneqq a_n(\cgrad u_n+z)$ for $n\in\N$ and choose subsequences (not relabeled, cf.~\Cref{prop:subseq}) such that $q_n\rightharpoonup  q\in \Lp{2}(\Omega)^d$ and $u_n\rightharpoonup w\in H_0^1(\Omega)$.
  
  Let $v\in \cH^1(\Omega)$ and define $v_n \in \cH^1(\Omega)$ as the solution of
  \[
      -\divcon a_n^*\cgrad v_n = -\divcon a^*\cgrad v\text{.}
  \]
  Then, as $(a_n^*)_{n\in\N}$ $\Htopo$-converges to $a^*$, $v_n \rightharpoonup v$ in $\cH^1(\Omega)$ and $a_n^*\cgrad v_n \rightharpoonup a^*\cgrad v$ in $\Lp{2}(\Omega)^d$ as $n\to\infty$. Next, we consider 
  \begin{multline*}
     \langle a_n (\cgrad u_n + z), \cgrad v_n\rangle_{\K^d}=     \langle  (\cgrad u_n + z), a_n^*\cgrad v_n\rangle_{\K^d} \\ = \langle \cgrad u_n, a_n^*\cgrad v_n\rangle_{\K^d} + \langle z, a_n^*\cgrad v_n\rangle_{\K^d}.
  \end{multline*}
  The divergence test, \Cref{thm:divtest}, shows that both $\iota_0^* q_n\to \iota_0^* q$ and $\iota_0^* a_n^*\grad v_n \to \iota_0^* a^*\grad v$. Hence, applying \Cref{thm:dcl0}, we deduce for all $\phi\in \Cc(\Omega)$
  \begin{multline*}
     \int_{\Omega} \langle q, \cgrad v\rangle_{\K^d} \phi = \int_{\Omega} \langle \cgrad w, a^*\grad v\rangle_{\K^d}\phi + \int_{\Omega} \langle z, a^*\cgrad v\rangle_{\K^d} \phi\\ = \int_{\Omega} \langle a(\cgrad w+z),\grad v\rangle_{\K^d} \phi.
  \end{multline*}
  Since both $v$ and $\phi$ were arbitrary, we infer that
  \[
     q = a(\cgrad w+z).
  \]
  In particular, for all $\phi\in \cH^1(\Omega)$,
  \[
     \langle a(\cgrad w+z), \cgrad \phi\rangle_{\Lp{2}(\Omega)^d} = \lim_{n\to\infty} \langle q_n, \cgrad \phi\rangle_{\Lp{2}(\Omega)^d}= f(\phi)\text{,}
  \]
  hence, $w=u$.
\end{proof}
\begin{corollary}\label{cor:Hconadj} Let $\Omega\subseteq \R^d$ be open and bounded, $0<\alpha\leq\beta$, $(a_n)_{\in\N}, a$ in $M(\alpha,\beta;\Omega)$. Then,
\[
(a_n)_{n\in\N} \text{ $\Htopo$-converges to } a \iff (a_n^*)_{n\in\N} \text{ $\Htopo$-converges to } a^*.
\]\end{corollary}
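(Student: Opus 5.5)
By the symmetry $a^{**}=a$ and $M(\alpha,\beta;\Omega)$ being closed under adjoints (note $\Re a^*=\Re a$ and $\Re (a^*)^{-1}=\Re a^{-1}$), it suffices to prove ``$\Rightarrow$''. So assume $(a_n)_{n\in\N}$ $\Htopo$-converges to $a$. The plan is to use the subsequence principle (\Cref{prop:subseq}) together with the sequential compactness of $(M(\alpha,\beta;\Omega),\tau_{\Htopo})$ from \Cref{thm:Hbasic}(b). Given any subsequence of $(a_n^*)_{n\in\N}$, we extract a further subsequence, not relabelled, such that $a_n^*$ $\Htopo$-converges to some $b\in M(\alpha,\beta;\Omega)$. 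It then remains to show $b=a^*$, since \Cref{prop:subseq} will give convergence of the whole sequence.

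To identify $b$, we apply \Cref{thm:Hcon2} with the roles swapped: the sequence is $(a_n^*)_{n\in\N}$ with limit $b$, and the hypothesis ``adjoint sequence $\Htopo$-converges'' reads $(a_n^{**})=(a_n)$ $\Htopo$-converges to $b^*$. What is actually known is that $(a_n)$ $\Htopo$-converges to $a$, so first we must check $b^*=a$ or circumvent it. Rather than invoking \Cref{thm:Hcon2} directly, the plan is to redo its div-curl argument. Fix $f,g\in\sobH^{-1}(\Omega)$ and let $u_n,v_n\in\cH^1(\Omega)$ solve $-\divcon a_n\cgrad u_n=f$ and $-\divcon a_n^*\cgrad v_n=g$. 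Then $u_n\rightharpoonup u$ and $a_n\cgrad u_n\rightharpoonup a\cgrad u$ (with $u$ solving the $a$-problem), while $v_n\rightharpoonup v$ and $a_n^*\cgrad v_n\rightharpoonup b\cgrad v$ (with $v$ solving the $b$-problem).

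Consider the pointwise products $\langle a_n\cgrad u_n,\cgrad v_n\rangle_{\K^d}=\langle \cgrad u_n,a_n^*\cgrad v_n\rangle_{\K^d}$. By the divergence test (\Cref{thm:divtest}), $\iota_0^*a_n\cgrad u_n$ and $\iota_0^*a_n^*\cgrad v_n$ converge strongly, since their divergences are constant. Applying \Cref{thm:dcl0}\labelcref{thm:dcl01} to both representations yields, for all $\phi\in\Cc(\Omega)$,
\[
\int_\Omega\langle a\cgrad u,\cgrad v\rangle_{\K^d}\phi=\int_\Omega\langle \cgrad u,b\cgrad v\rangle_{\K^d}\phi .
\]
Hence $\langle (a-b^*)\cgrad u,\cgrad v\rangle_{\K^d}=0$ almost everywhere for all solutions $u,v$. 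Since $f,g$ are arbitrary and the solution maps are onto $\cH^1(\Omega)$, this holds for all $u,v\in\cH^1(\Omega)$.

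The main obstacle is concluding $a=b^*$ as matrix functions from this identity. The plan is to use localisation: for a fixed $\xi\in\K^d$ and a ball $B\Subset\Omega$, choose $u\in\Cc(\Omega)$ with $u(x)=\langle\xi,x\rangle$ on $B$, and similarly $v$ with $\cgrad v=\eta$ on $B$. This gives $\langle (a-b^*)\xi,\eta\rangle=0$ a.e.\ on $B$ for all $\xi,\eta$ in a countable dense set. Covering $\Omega$ by countably many such balls then yields $b^*=a$, i.e.\ $b=a^*$, finishing the argument.
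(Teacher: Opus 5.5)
Your proof is correct, but it takes a different route from the paper's. The paper gets the corollary in one line. By the symmetry $a_n^{**}=a_n$, one implication suffices, and that implication is \Cref{thm:Hcon2} with $z=0$.

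The proof of \Cref{thm:Hcon2} never produces an $\Htopo$-limit of anything. It only extracts weakly convergent subsequences of the bounded sequences $u_n$ and $q_n=a_n\cgrad u_n$, which is plain Hilbert-space weak compactness. It then identifies $q=a\cgrad w$ with the div-curl lemma. The oscillating test functions are the solutions $v_n$ of $-\divcon a_n^*\cgrad v_n=-\divcon a^*\cgrad v$, whose behaviour is controlled directly by the hypothesis on the adjoints.

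You instead invoke the Murat--Tartar compactness theorem, \Cref{thm:Hbasic}(b), to get an $\Htopo$-limit $b$ of a subsequence of $(a_n^*)$. You then run the same div-curl computation on $\langle a_n\cgrad u_n,\cgrad v_n\rangle_{\K^d}=\langle\cgrad u_n,a_n^*\cgrad v_n\rangle_{\K^d}$ to show $b=a^*$. In effect this is compactness combined with uniqueness of limits, closed by the subsequence principle in the metric topology of \Cref{thm:Hbasic}(a).

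Your version is symmetric and transparent. It also spells out the localisation step (test functions with constant gradients on a ball) that the paper leaves implicit in ``since both $v$ and $\phi$ were arbitrary''.

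The price is that it rests on \Cref{thm:Hbasic}(b). That is a deep result which the notes state without proof, and its usual proofs for non-symmetric coefficients themselves work with the transposed coefficients. The paper's route needs only weak compactness and the div-curl lemma. It also yields the version with a shift $z$ at no extra cost, and that version is what is actually used later in the proof of \Cref{thm:HnlH}.
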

\begin{proof}
Since $(a_n^{**})_{n\in\N}=(a_n)_{n\in\N}$, it suffices to prove one implication only. Putting $z=0$ in \Cref{thm:Hcon2}, we deduce that $(a_n^*)_{n\in\N}$ $\Htopo$-converging to $a^*$ is sufficient for $(a_n)_{n\in\N}$ $\Htopo$-converging to  $a$.
\end{proof}

\begin{proposition}\label{prop:equivprobl2} Let $\Omega\subseteq \R^d$ be open and bounded, $0<\alpha\leq
\beta$, $a \in M(\alpha,\beta; \Omega)$, and $z\in \Lp{2}(\Omega)^d$. Consider the following problems:
\begin{enumerate}
 \item\label{prop:equivprobl21} Find $u\in \cH^1(\Omega)$ such that 
 \[
     \forall \phi \in \cH^1(\Omega): \langle a (\cgrad u + z),\cgrad \phi\rangle_{\Lp{2}(\Omega)^d} =0.
 \]
 \item\label{prop:equivprobl22} Find $p\in \mathfrak{g}_0(\Omega)^\perp$ such that 
 \[
   \forall q \in \mathfrak{g}_0(\Omega)^\perp : \langle a^{-1} p, q\rangle_{\Lp{2}(\Omega)^d} = \langle z,q\rangle_{\Lp{2}(\Omega)^d}.
 \]
\end{enumerate}
Then, both~\labelcref{prop:equivprobl21} and~\labelcref{prop:equivprobl22} admit uniquely determined solutions. If $u$ and $p$ are the respective solutions, then $p = a(\cgrad u + z)$.
\end{proposition}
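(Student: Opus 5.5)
Existence and uniqueness will be handled separately for each problem, and then the two solutions will be linked.

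For \labelcref{prop:equivprobl21}, the right-hand side $f(\phi)\coloneqq -\langle a z,\cgrad\phi\rangle_{\Lp{2}(\Omega)^d}$ defines an element of $\sobH^{-1}(\Omega)$. The problem thus becomes $-\divcon a\cgrad u = -\divcon a z$, i.e., $\langle a\cgrad u,\cgrad\phi\rangle = f(\phi)$. Since $\cgrad$ is one-to-one with closed range $\mathfrak{g}_0(\Omega)$ (\Cref{prop:Poincare}), and $\iota_0^*a\iota_0$ is continuously invertible by \Cref{lem:pda} (as $\Re a\geq\alpha$), \Cref{cor:TW14} together with \Cref{rem:topiso} yields a unique solution $u\in\cH^1(\Omega)$. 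Here $\ker(\cgrad)=\{0\}$, so the restriction to $\ker(C)^\perp$ is harmless.

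For \labelcref{prop:equivprobl22}, set $\mathcal{V}\coloneqq\mathfrak{g}_0(\Omega)^\perp$ with embedding $\iota_1$. The problem is equivalent to $\iota_1^*a^{-1}\iota_1 p = \iota_1^* z$ in $\mathcal{V}$. Because $a\in M(\alpha,\beta;\Omega)$ gives $\Re a^{-1}\geq 1/\beta$, \Cref{lem:pda} applied to $a^{-1}$ and $\mathcal{V}$ shows that $\iota_1^*a^{-1}\iota_1$ is continuously invertible. Hence $p=(\iota_1^*a^{-1}\iota_1)^{-1}\iota_1^*z$ exists and is unique.

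To relate the two solutions, let $u$ solve \labelcref{prop:equivprobl21} and put $p\coloneqq a(\cgrad u+z)$. The variational identity says $\langle p,\cgrad\phi\rangle=0$ for all $\phi\in\cH^1(\Omega)$, i.e., $p\perp\ran(\cgrad)=\mathfrak{g}_0(\Omega)$, so $p\in\mathcal{V}$. Moreover $a^{-1}p = \cgrad u + z$, and therefore for $q\in\mathcal{V}$,
\[
\langle a^{-1}p,q\rangle_{\Lp{2}(\Omega)^d} = \langle \cgrad u,q\rangle_{\Lp{2}(\Omega)^d} + \langle z,q\rangle_{\Lp{2}(\Omega)^d} = \langle z,q\rangle_{\Lp{2}(\Omega)^d},
\]
since $\cgrad u\in\mathfrak{g}_0(\Omega)\perp q$. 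Thus $p$ solves \labelcref{prop:equivprobl22}, and by uniqueness it is the solution.

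There is no real obstacle here. The only points to check are that $\Re a^{-1}\geq1/\beta$ is exactly the hypothesis needed for \Cref{lem:pda} on the complement, and that the correct sign convention is used when writing the right-hand side as an $\sobH^{-1}$ functional.
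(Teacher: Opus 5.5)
Your proof is correct and essentially the paper's: you obtain unique solvability of both problems from \Cref{cor:TW14} and \Cref{lem:pda} (applied to $a$ on $\mathfrak{g}_0(\Omega)$ and to $a^{-1}$ on $\mathfrak{g}_0(\Omega)^\perp$), then verify, exactly as the paper does, that $a(\cgrad u+z)\in\mathfrak{g}_0(\Omega)^\perp$ solves the second problem, and uniqueness finishes the argument. One harmless slip: by \eqref{eq:definDistDivcon2} your functional $f$ equals $\divcon(az)$, so the strong form should read $-\divcon a\cgrad u=\divcon az$, not $-\divcon az$ on the right, though this does not affect the argument.
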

\begin{proof}
Unique existence of solutions is a standard application of \Cref{cor:TW14}.
Next, let $u\in \cH^1(\Omega)$ be a solution for~\labelcref{prop:equivprobl21}. Then $p\coloneqq a(\cgrad u + z) \in \mathfrak{g}_0(\Omega)^\perp$. Moreover, for $q\in \mathfrak{g}_0(\Omega)^\perp$ we compute
\[
   \langle a^{-1} p, q\rangle_{\Lp{2}(\Omega)^d}=   \langle a^{-1} (a(\grad u+z)), q\rangle_{\Lp{2}(\Omega)^d} =  \langle(\grad u+z), q\rangle_{\Lp{2}(\Omega)^d} = \langle z,q\rangle_{\Lp{2}(\Omega)^d}\text{.}
\] Uniqueness of~\labelcref{prop:equivprobl22} implies that $a(\cgrad u + z)$ is the only solution for~\labelcref{prop:equivprobl22}.
\end{proof}

\begin{proof}[Proof of \Cref{thm:HnlH}] \labelcref{thm:HnlH2}$\Rightarrow$\labelcref{thm:HnlH1}: Let $f\in \sobH^{-1}(\Omega)$ and $u_n\in \cH^1(\Omega)$ be the solutions of
\[
    \forall \phi\in \cH^1(\Omega):\langle a_n \cgrad u_n,\cgrad \phi\rangle_{\Lp{2}(\Omega)^d} = f(\phi)
\]
for $n\in\N$.
Then, by \Cref{cor:TW14} and \Cref{rem:topiso}, with $\iota_0 \colon \mathfrak{g}_0 (\Omega)\hookrightarrow L_2(\Omega)^d$,
\[
   u_n = (\iota_0^* \cgrad)^{-1} (\iota_0^*a_n \iota_0)^{-1} (\cgrad^\diamond \iota_0)^{-1}f
\]
for $n\in\N$, where both $(\iota_0^* \cgrad)^{-1}$ and $(\cgrad^\diamond \iota_0)^{-1}$ are topological isomorphisms.
Hence, as $(\iota_0^*a_n \iota_0)^{-1} = a_{n,00}^{-1}\to a_{00}^{-1}=(\iota_0^*a \iota_0)^{-1}$ in the weak operator topology by definition of $\tau(\mathfrak{g}_0(\Omega),\mathfrak{g}_0(\Omega)^\bot)$, $u_n\rightharpoonup u \coloneqq  (\iota_0^* \cgrad)^{-1} (\iota_0^*a \iota_0)^{-1} (\cgrad^\diamond \iota_0)^{-1}f \in \cH^1(\Omega)$. Hence, $u$ is the solution of
\[
    \forall \phi\in \cH^1(\Omega):\langle a \cgrad u,\cgrad \phi\rangle_{\Lp{2}(\Omega)^d} = f(\phi)\text{.}
\]
Finally, introducing $\iota_1\colon \mathfrak{g}_0(\Omega)^\bot\hookrightarrow L_2(\Omega)^d$ and using that $\begin{pmatrix} \iota_0 & \iota_1\end{pmatrix}^* = \begin{pmatrix} \iota_0^* \\ \iota_1^* \end{pmatrix}$ is unitary, we compute
\begin{align*}
 a_n\cgrad u_n & =  a_n\iota_0 \iota_0^*\cgrad u_n\\
  & =  a_n\iota_0 \iota_0^* \cgrad (\iota_0^* \cgrad)^{-1} (\iota_0^*a_n \iota_0)^{-1} (\cgrad^\diamond \iota_0)^{-1}f \\
  & =  a_n\iota_0  (\iota_0^*a_n \iota_0)^{-1} (\cgrad^\diamond \iota_0)^{-1}f \\
    & = \begin{pmatrix} \iota_0 & \iota_1\end{pmatrix}\begin{pmatrix} \iota_0^* \\ \iota_1^* \end{pmatrix} a_n \begin{pmatrix} \iota_0 & \iota_1\end{pmatrix}\begin{pmatrix} \iota_0^* \\ \iota_1^* \end{pmatrix} \iota_0  a_{n,00}^{-1} (\cgrad^\diamond \iota_0)^{-1}f\\
    & = \begin{pmatrix} \iota_0 & \iota_1\end{pmatrix}\begin{pmatrix} a_{n,00} & a_{n,01} \\ a_{n,10} & a_{n,11} \end{pmatrix}\begin{pmatrix} \iota_0^*\iota_0 \\ \iota_1^*\iota_0 \end{pmatrix}   a_{n,00}^{-1} (\cgrad^\diamond \iota_0)^{-1}f\\
        & = \begin{pmatrix} \iota_0 & \iota_1\end{pmatrix}\begin{pmatrix} a_{n,00} & a_{n,01} \\ a_{n,10} & a_{n,11} \end{pmatrix}\begin{pmatrix}  a_{n,00}^{-1} (\cgrad^\diamond \iota_0)^{-1}f
 \\ 0\end{pmatrix}  \\
 & = \begin{pmatrix} \iota_0 & \iota_1\end{pmatrix}\begin{pmatrix} (\cgrad^\diamond \iota_0)^{-1}f \\ a_{n,10} a_{n,00}^{-1} (\cgrad^\diamond \iota_0)^{-1}f\end{pmatrix} \\& \rightharpoonup \begin{pmatrix} \iota_0 & \iota_1\end{pmatrix}\begin{pmatrix} (\cgrad^\diamond \iota_0)^{-1}f \\ a_{10} a_{00}^{-1} (\cgrad^\diamond \iota_0)^{-1}f\end{pmatrix} = a\cgrad u\text{.}
\end{align*}
Thus, $(a_n)_{n\in\N}$ $\Htopo$-converges to $a$.

\labelcref{thm:HnlH1}$\Rightarrow$\labelcref{thm:HnlH2}: Let $f\in \cH^{-1}(\Omega)$ and $u_n \in H_0^1(\Omega)$ for $n\in\N$ satisfy
\[
   \forall \phi\in H_0^1(\Omega): \langle a_n \cgrad u_n,\cgrad \phi\rangle_{\Lp{2}(\Omega)^d} = f(\phi)\text{,}
\]
and similar for $a$ and $u\in\cH^1(\Omega)$. Then, using the reformulations just carried out above, we deduce
\[
    (\iota_0^* \cgrad)^{-1} (\iota_0^*a_n \iota_0)^{-1} (\cgrad^\diamond \iota_0)^{-1}f= u_n \rightharpoonup u = (\iota_0^* \cgrad)^{-1} (\iota_0^*a \iota_0)^{-1} (\cgrad^\diamond \iota_0)^{-1}f\text{,}
\]
and
\begin{multline*}
\begin{pmatrix} \iota_0 & \iota_1\end{pmatrix}\begin{pmatrix} (\cgrad^\diamond \iota_0)^{-1}f \\ a_{n,10} a_{n,00}^{-1} (\cgrad^\diamond \iota_0)^{-1}f\end{pmatrix} = a_n\cgrad u_n \\ \rightharpoonup  a\cgrad u=
\begin{pmatrix} \iota_0 & \iota_1\end{pmatrix}\begin{pmatrix} (\cgrad^\diamond \iota_0)^{-1}f \\ a_{10} a_{00}^{-1} (\cgrad^\diamond \iota_0)^{-1}f\end{pmatrix}\text{.} 
\end{multline*}
Since $f$ was arbitrary and $ (\iota_0^* \grad_0)^{-1}$, $(\grad_0^\diamond \iota_0)^{-1}$, and $\begin{pmatrix} \iota_0 & \iota_1\end{pmatrix}$ are topological isomorphisms, we infer that $a_{n,00}^{-1}\to a_{00}^{-1}$ and $a_{n,10} a_{n,00}^{-1}\to a_{10} a_{00}^{-1}$ in the weak operator topology.

Before we address the remainder of the proof, we quickly convince ourselves that
\[
    \begin{pmatrix} 1 &0 \\  - a_{10}a_{00}^{-1} & 1 \end{pmatrix}  \begin{pmatrix} a_{00} & a_{01} \\ a_{10} & a_{11} \end{pmatrix}
    \begin{pmatrix} 1 & -a_{00}^{-1}a_{01} \\0 & 1 \end{pmatrix}
     =
\begin{pmatrix} a_{00} & 0 \\ 0 & a_{11}-a_{10}a_{00}^{-1}a_{01} \end{pmatrix}\text{.}      
\]
Since $a$ is invertible, so is $a_S\coloneqq a_{11}-a_{10}a_{00}^{-1}a_{01}$, and we have
\[
   \begin{pmatrix} a_{00} & a_{01} \\ a_{10} & a_{11} \end{pmatrix}^{-1} =
    \begin{pmatrix} a_{00}^{-1} + a_{00}^{-1}a_{01} a_S^{-1} a_{10} a_{00}^{-1} & - a_{00}^{-1}a_{01}a_S^{-1} \\ -a_{S}^{-1}a_{10}a_{00}^{-1} & a_S^{-1}\end{pmatrix}\text{.}
   \]
   In particular, $(a^{-1})_{11}^{-1} = a_{11}-a_{10}a_{00}^{-1}a_{01}$. Similar formulas hold for $a_n$, $n\in \N$.
   
   Next, let $z\in \mathfrak{g}_0(\Omega)^{\bot}$ and $p_n \coloneqq \iota_1(a_{n,11}-a_{n,10}a_{n,00}^{-1}a_{n,01})z \in \mathfrak{g}_0(\Omega)^{\perp}\subseteq \Lp{2}(\Omega)^d$ for $n\in\N$. Then, for all $q\in \mathfrak{g}_0(\Omega)^\perp$,
   \[
        \langle z,q\rangle_{\Lp{2}(\Omega)^d} = \langle (a_n^{-1})_{11} p_n,q\rangle_{\Lp{2}(\Omega)^d} =\langle a_n^{-1}  p_n, q\rangle_{\Lp{2}(\Omega)^d}\text{.}
   \]
   By \Cref{prop:equivprobl2}, $p_n =a_n (\cgrad u_n + z)$, $n\in\N$, where $u_n\in \cH^1(\Omega)$ satisfies
   \[
     \forall \phi \in \cH^1(\Omega)\colon \langle a_n (\cgrad u_n + z),\cgrad \phi\rangle_{\Lp{2}(\Omega)^d} =0\text{.}
 \]
   Since $(a_n)_{n\in\N}$ $\Htopo$-converges to $a$, by \Cref{cor:Hconadj}, $(a_n^*)_{n\in\N}$ $\Htopo$-converges to $a^*$. Thus, by \Cref{thm:Hcon2}, $u_n\rightharpoonup u \in \cH^1(\Omega)$ and $a_n(\cgrad u_n + z)\rightharpoonup a(\cgrad u + z)\eqqcolon p \in \Lp{2}(\Omega)^d$. By \Cref{prop:equivprobl2}, $p$ satisfies
\[
       \forall q\in \mathfrak{g}_0(\Omega)^\perp : \langle z,q\rangle_{\Lp{2}(\Omega)^d} =\langle a^{-1}  p, q\rangle_{\Lp{2}(\Omega)^d} \text{.}
   \]
   In particular, $p = \iota_1(a_{11}-a_{10}a_{00}^{-1}a_{01})z$, and we deduce, by the arbitrariness of $z$, that $(a_{n,11}-a_{n,10}a_{n,00}^{-1}a_{n,01})\to (a_{11}-a_{10}a_{00}^{-1}a_{01})$ in the weak operator topology as $n\to\infty$. Moreover,    \begin{align*}
       a_n^{-1} p_n & =      \begin{pmatrix} \iota_0 & \iota_1\end{pmatrix}\begin{pmatrix} \iota_0^* \\ \iota_1^* \end{pmatrix}  a_n^{-1}\begin{pmatrix} \iota_0 & \iota_1\end{pmatrix}\begin{pmatrix} \iota_0^* \\ \iota_1^* \end{pmatrix} p_n \\
       &=      \begin{pmatrix} \iota_0 & \iota_1\end{pmatrix}    \begin{pmatrix} \ast & - a_{n,00}^{-1}a_{n,01}a_{n,S}^{-1} \\ \ast & a_{n,S}^{-1}\end{pmatrix}
\begin{pmatrix} 0 \\ \iota_1^*p_n \end{pmatrix} \\
  & =  \begin{pmatrix} \iota_0 & \iota_1\end{pmatrix}    \begin{pmatrix} * & - a_{n,00}^{-1}a_{n,01}a_{n,S}^{-1} \\ * & a_S^{-1}\end{pmatrix}
\begin{pmatrix} 0 \\ a_{n,S} z \end{pmatrix} \\
  & =  \begin{pmatrix} \iota_0 & \iota_1\end{pmatrix}    \begin{pmatrix}  - a_{n,00}^{-1}a_{n,01}z \\  z\end{pmatrix} = - \iota_0 a_{n,00}^{-1}a_{n,01}z+\iota_1 z.
   \end{align*}
 Hence,    as $u_n\rightharpoonup u \in \cH^1(\Omega)$,
 \begin{align*}
    - \iota_0 a_{n,00}^{-1}a_{n,01}z & = a_n^{-1}p_n - \iota_1 z \\ & = a_n^{-1}a_n(\grad u_n +z) -z  = \grad u_n \rightharpoonup \grad u = - \iota_0 a_{00}^{-1}a_{01}z,
 \end{align*}
 which again, as $z$ was arbitrary, establishes the remaining convergence.
\end{proof}

Now, that we have proven that $\Htopo$-convergence can be abstractly described by means of convergence in an operator topology (dismantling any assumptions of the coefficients being multiplication operators), homogenisation theorems can be spelled out for abstract operator equations. This is content of our last lecture, where we will show homogenisation results for partial differential equations.

\section{Comments}
A complete definition of continuous boundary and a proof of both~\labelcref{thm:RKT1} and~\labelcref{thm:RKT2} from~\Cref{thm:RKT} can be found in, e.g.,~\cite{ISem18} or~\cite[Chapter~5]{EE87}.
A proof of \Cref{thm:compcoeff2} can, for instance, be found in \cite[Lemma 14.4.1]{SeTrWa22}. For \Cref{cor:dcl} and \Cref{thm:hcon2d-L2},
compare \cite[Chapters~5 and~12]{Ta09}.
The self-adjoint coefficient case of \Cref{thm:Hcon2} and \Cref{prop:equivprobl2} has been provided in \cite{ZKO94}. The proofs provided here are based on \cite{Preprint}. \Cref{thm:HnlH} is based on~\cite[Section~4]{Wa18} and~\cite{Preprint}.
\begin{takehomes}\begin{enumerate}
\item[(a)] The basic principle of showing any kind of convergence in homogenisation is the subsequence principle in conjunction with weak (sequential) compactness.
\item[(b)] The div-curl lemma is a statement allowing to deduce convergence of products of weakly converging sequence. The way spelled out here, it works as a localisation tool.
\item[(c)] Homogenisation in higher dimensions yields different homogenisation formlas. In particular, scalar coefficients are not closed under homogenisation.
\item[(d)] $\Htopo$-convergence is equivalent to convergence in the Schur topology for an appropriate decomposition of the space.
\item[(e)] $\Htopo$-convergence for multiplication operators yields convergence for a different variational problem for free.
\end{enumerate}
\end{takehomes}

\section{Exercises}
\begin{exercise}\label{exer:2.1} Let $\mathcal{H}_0,\mathcal{H}_1$ be Hilbert spaces and $A\colon \dom(A)\subseteq \mathcal{H}_0\to \mathcal{H}_1$ be a closed, linear and densely defined operator. If $\dom(A)\cap \ker(A)^\bot\hookrightarrow \mathcal{H}_0$ is compact, prove that then $\ran(A)\subseteq \mathcal{H}_1$ is closed.
\textrm{Hint:} Argue by contradiction and use \Cref{exer:1.2}.
\end{exercise}

\begin{exercise}\label{exer:2.15}
\begin{enumerate}[label=\textup{(\alph{*})}]
\item\label{exer:2.15a} Let $\mathcal{X},\mathcal{Y}$ be Banach spaces and $B\in\Lb(\mathcal{X},\mathcal{Y})$. Show that $B\colon \mathcal{X}\to \mathcal{Y}$ is
continuous if we endow both $\mathcal{X}$ and $\mathcal{Y}$ with its respective weak topology.
\item\label{exer:2.15b} Let $\mathcal{H}$ be a Hilbert space and consider $(a_n)_{n\in\N}$, $(b_n)_{n\in\N}$, $a$, and $b$ in $\mathcal{H}$. Show that
$\lim_{n\to\infty}a_n =a$ and $\wlim_{n\to\infty}b_n=b$ imply $\langle a_n,b_n \rangle_{\mathcal{H}}\to \langle a,b \rangle_{\mathcal{H}}$ as $n\to\infty$.
\end{enumerate}
\end{exercise}
\begin{exercise}\label{exer:2.new}
Let $\Omega\subseteq \R^d$ be open and  bounded and assume that $(f_n)_{n\in\N}$ in $\Lp{\infty}(\Omega^{(1)})$ converges to some $f\in 
\Lp{\infty}(\Omega^{(1)})$ with respect to the weak*-topology $\sigma(\Lp{\infty}(\Omega^{(1)}),\Lp{1}(\Omega^{(1)}))$.
Show that $\begin{pmatrix} f_n, 0, \cdots, 0\end{pmatrix}\rightharpoonup \begin{pmatrix} f, 0, \cdots, 0\end{pmatrix}$ in $\Lp{2}(\Omega)^d$.
\end{exercise}
\begin{exercise}\label{exer:2.175}
Show that~\labelcref{eq:definDistDivcon} is well-defined and consistent with, i.e., extends the definition from, \Cref{ex:grad0} (using $\Lp{2}(\Omega)\subseteq\sobH^{-1}(\Omega)$ in the
sense that $\cH^1(\Omega)\ni v\mapsto \langle q,v\rangle_{\Lp{2}(\Omega)}$ for $q\in\Lp{2}(\Omega)$).
Show that $\divcon$ is continuous and satisfies~\labelcref{eq:definDistDivcon2}.
Furthermore, show \Cref{rem:StrVarFormVsWeakForm}.
\end{exercise}
\begin{exercise}\label{exer:2.25}
Let $\Omega\subseteq \R^d$ be open and bounded, $0<\alpha\leq\beta$, and $(a_n)_{n\in\N}$ in $M(\alpha,\beta;\Omega)$.
Show that for each $f\in\sobH^{-1}(\Omega)$ both the sequence $(u_n)_{n\in\N}$ in $\cH^1(\Omega)$ of unique solutions to
\[
-\divcon a_n \cgrad u_n =f\text{,}
\]
and $(a_n\cgrad u_n)_{n\in\N}$ in $\Lp{2}(\Omega)^d$ are bounded.
 
\end{exercise}
\begin{exercise}[Slight generalisation of the abstract $\div$-$\curl$ lemma]\label{exer:2.2} Let $\mathcal{H}$ be a Hilbert space, $(q_n)_{n\in\N}$, $(r_n)_{n\in\N}$ weakly convergent in $\mathcal{H}$, $\mathcal{H}_0\subseteq \mathcal{H}$ a closed subspace, $\mathcal{H}_1\coloneqq \mathcal{H}_0^\perp$, and $\iota_{0/1}\colon \mathcal{H}_{0/1}\hookrightarrow \mathcal{H}$. If both $\iota_0^* q_n\to \iota_0^*\wlim_{n\to\infty} q_n$ and $\iota_1^* r_n\to \iota_1^*\wlim_{n\to\infty} r_n$ strongly in $\mathcal{H}_0$ and $\mathcal{H}_1$, respectively, then
\[
   \lim_{n\to\infty}\langle q_n, r_n\rangle_{\mathcal{H}} = \langle \wlim_{n\to\infty} q_n, \wlim_{n\to\infty} r_n\rangle_{\mathcal{H}}\text{.}
\]
\end{exercise}

\chapter{Lecture 3}

\section{Introduction}
In the previous lecture, we have shown that $\Htopo$-convergence can be reformulated in terms of an operator topology -- without the need to restrict the coefficients to multiplication operators or the like. This perspective will enable us to study convergence of abstract operator equations in Hilbert spaces. These will form the foundation of the homogenisation results for (time-dependent) partial differential equations at the end of this lecture.

\section{An abstract convergence result}

In order to state and prove a fundamental observation concerning certain operator equations in Hilbert spaces, we quickly introduce some somewhat known notions in a slightly different light, compared to the previous lectures. 

Let $\mathcal{H}_0,\mathcal{H}_1$ be Hilbert spaces and $A\colon \dom(A)\subseteq \mathcal{H}_0 \to \mathcal{H}_1$ densely defined. Then, we define its \emph{adjoint} $A^*\colon \dom(A^*)\subseteq \mathcal{H}_1 \to \mathcal{H}_0$ as follows:
\[
   \dom(A^*)\coloneqq \{ u \in \mathcal{H}_1\mid \exists v\coloneqq A^*u \in \mathcal{H}_0\; \forall x\in \dom(A)\colon \langle Ax, u\rangle_{\mathcal{H}_1} = \langle x, v\rangle_{\mathcal{H}_0} \}.
\]
As $A$ is densely defined, $A^*$ is well-defined. It is also possible to show that $A^*$ is closed. $A$ is called \emph{skew-selfadjoint}, if $A=-A^*$. Note that this requires $\dom (A)=\dom (A^*)$ in particular. We call $A$ \emph{closable} if there exists a closed operator that extends $A$. The minimal such extension, i.e., the intersection of all of them, is denoted by $\overline{A}$. We provide some well-known, elementary formulas for computing the adjoint of (unbounded) operators.
\begin{proposition}\label{prop:compadj} Let $\mathcal{H}_0,\mathcal{H}_1$ be Hilbert spaces, $A\colon \dom(A)\subseteq \mathcal{H}_0 \to \mathcal{H}_1$ densely defined, and $T\in \Lb(\mathcal{H}_0,\mathcal{H}_1)$. Then,
\[
 A^{**}\coloneqq(A^*)^*=\overline{A},\ A^*=\overline{A}^*,\  (A+T)^* = A^*+T^*,\text{ and }(TA)^*=A^*T^*.
\]If, in addition, $T$ is an isomorphism, then $(AT)^*=T^*A^*$.
\end{proposition}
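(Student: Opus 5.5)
The plan is to reduce everything to the graph of $A$ and the unitary ``flip'' operator
\[
U\colon \mathcal{H}_0\oplus\mathcal{H}_1\to\mathcal{H}_1\oplus\mathcal{H}_0,\quad (x,y)\mapsto(-y,x).
\]
A direct check of the defining relation of the adjoint shows $\operatorname{graph}(A^*)=(U\operatorname{graph}(A))^\perp$. Two consequences follow at once. First, $A^*$ is closed, since orthogonal complements are closed. Second, $A^*=\overline{A}^*$: by the same argument, the minimal closed extension $\overline{A}$ has the closure of $\operatorname{graph}(A)$ as its graph, and the complement does not change under closure.

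For $A^{**}=\overline{A}$ I would first show that $A^*$ is densely defined when $A$ is closable. Suppose $y\perp\dom(A^*)$. Then $(0,y)$ is orthogonal to $U^{-1}\operatorname{graph}(A^*)$. Taking complements twice, $\operatorname{graph}(A^*)^\perp=\overline{U\operatorname{graph}(A)}$, and this places $(y,0)$ (up to sign) in $\overline{\operatorname{graph}(A)}$. Since the latter is the graph of the operator $\overline{A}$, we get $y=\overline{A}0=0$. With density in hand, applying the graph identity twice and using $U^2=-1$ gives
\[
\operatorname{graph}(A^{**})=\operatorname{graph}(A)^{\perp\perp}=\overline{\operatorname{graph}(A)}.
\]
The identity $A^{**}=\overline{A}$ should be read under the tacit hypothesis that $A$ is closable, i.e.\ that $\overline{A}$ exists. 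I expect this density step to be the only genuinely delicate point.

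The remaining formulas are elementwise verifications from the definition.
\begin{itemize}
\item For $(A+T)^*$: $\dom(A+T)=\dom(A)$, and for $u\in\mathcal{H}_1$, $\langle (A+T)x,u\rangle=\langle x,v\rangle$ for all $x$ if and only if $\langle Ax,u\rangle=\langle x,v-T^*u\rangle$. So $u\in\dom((A+T)^*)$ iff $u\in\dom(A^*)$, with $(A+T)^*u=A^*u+T^*u$.
\item For $(TA)^*$: here $T$ maps into a Hilbert space $\mathcal{H}_2$ on which $T$ acts (the codomains need to match). If $u\in\dom(T^*)$ with $T^*u\in\dom(A^*)$, then $\langle TAx,u\rangle=\langle x,A^*T^*u\rangle$, so $A^*T^*\subseteq(TA)^*$. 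For the reverse inclusion, take $u\in\dom((TA)^*)$. Then $\langle Ax,T^*u\rangle=\langle x,(TA)^*u\rangle$, hence $T^*u\in\dom(A^*)$ and equality holds.
\item For $(AT)^*$ with $T$ an isomorphism: $\dom(AT)=T^{-1}\dom(A)$ is dense. For $u$ in the candidate domain, substitute $x=Ty$, so that $\langle ATy,u\rangle=\langle y,w\rangle$ for all $y$ becomes $\langle Ax,u\rangle=\langle x,(T^{-1})^*w\rangle$ for all $x$. Hence $u\in\dom((AT)^*)$ iff $u\in\dom(A^*)$, with $(AT)^*u=T^*A^*u$, using $(T^{-1})^*=(T^*)^{-1}$.
\end{itemize}
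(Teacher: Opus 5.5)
The paper does not prove this proposition. It is quoted as a list of well-known facts, and the comments at the end of Lecture 3 defer to the monograph of Seifert, Trostorff and Waurick. Your graph-based argument is correct and is the standard proof.

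The identity $\operatorname{graph}(A^*)=(U\operatorname{graph}(A))^\perp$ gives closedness of $A^*$ and $A^*=\overline{A}^*$ at once. Your density step shows that closability of $A$ forces $\dom(A^*)$ to be dense, which is exactly what is needed for $A^{**}$ to be defined in the paper's sense. The three algebraic identities are verified correctly from the definition, with domain equalities and not merely inclusions.

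You are also right to read the statement with the repairs it needs as written:
\begin{itemize}
\item $\overline{A}$ only exists for closable $A$.
\item For $TA$ the operator $T$ must act on $\mathcal{H}_1$, and for $AT$ it must map bijectively onto $\mathcal{H}_0$, rather than lying in $\Lb(\mathcal{H}_0,\mathcal{H}_1)$.
\end{itemize}

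There is one small slip in the density step. From $(0,y)\perp U^{-1}\operatorname{graph}(A^*)$ you get
\[
(0,y)\in\bigl(U^{-1}\operatorname{graph}(A^*)\bigr)^\perp=U^{-1}\overline{U\operatorname{graph}(A)}=\overline{\operatorname{graph}(A)}.
\]
So the pair lying in $\overline{\operatorname{graph}(A)}$ is $(0,y)$, not $(y,0)$. Your conclusion $y=\overline{A}0=0$ already uses the correct pair. Likewise, ``$U^2=-1$'' should be read as $U'U=-1$, where $U'$ is the corresponding flip on $\mathcal{H}_1\oplus\mathcal{H}_0$. Neither point affects the validity of the argument.
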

Similar to the case of bounded linear operators, range of $A$ and kernel of $A^*$ are mutually orthogonal, i.e., for a densely defined and closed $A\colon \dom(A)\subseteq \mathcal{H}_0 \to \mathcal{H}_1$,
\[
   \overline{\ran}(A) = \ker(A^*)\text{ and } \overline{\ran}(A^*) = \ker(A)^\bot.
\]
We quickly introduce the following set of abstract operator coefficients: For $0<\alpha\leq\beta$, and $\mathcal{H}$ Hilbert space, we define
\[
   \mathcal{F}(\alpha,\beta; \mathcal{H})\coloneqq \{ T\in \Lb(\mathcal{H}): \Re T\geq \alpha, \Re T^{-1}\geq 1/\beta\}\text{.}
\]
\begin{proposition}\label{prop:wpTA} Let  $\mathcal{H}$ be a Hilbert space, let $A\colon \dom(A)\subseteq \mathcal{H} \to \mathcal{H}$ be skew-selfadjoint, and $T\in \Lb(\mathcal{H})$. If $\Re T\geq c>0$, then $T+A$ is continuously invertible, $\|(T+A)^{-1}\|\leq 1/c$ and $\|A(T+A)^{-1}\|\leq (c+\|T\|)/c$
\end{proposition}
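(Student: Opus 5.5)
The plan is to establish a coercivity estimate for $T+A$ and its adjoint, and then argue as in the proof of \Cref{lem:pda}. The key observation is that skew-selfadjointness makes the quadratic form of $A$ purely imaginary. For $x\in\dom(A)$ we have $\langle x,Ax\rangle=\langle A^*x,x\rangle=-\langle Ax,x\rangle=-\overline{\langle x,Ax\rangle}$, so $\Re\langle x,Ax\rangle=0$. Consequently
\[
c\norm{x}^2\leq \Re\langle x,Tx\rangle=\Re\langle x,(T+A)x\rangle\leq \norm{x}\norm{(T+A)x}
\]
for all $x\in\dom(T+A)=\dom(A)$. This gives $c\norm{x}\leq\norm{(T+A)x}$, so $T+A$ is one-to-one.

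Next I would show that the range of $T+A$ is closed. The operator $T+A$ is closed, being a closed operator plus a bounded one. If $(T+A)x_n\to y$, the lower bound makes $(x_n)_n$ Cauchy, so $x_n\to x$. Closedness then yields $x\in\dom(A)$ and $(T+A)x=y$.

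For density of the range, I would invoke \Cref{prop:compadj} to get $(T+A)^*=A^*+T^*=-A+T^*$. This operator is again of the same form: $-A$ is skew-selfadjoint and $\Re T^*=\Re T\geq c$. The same estimate therefore shows that $(T+A)^*$ is one-to-one. Using $\overline{\ran}(T+A)=\ker((T+A)^*)^\perp=\{0\}^\perp=\mathcal{H}$, the range is dense. Combined with closedness, $T+A$ is bijective from $\dom(A)$ onto $\mathcal{H}$, and the lower bound gives $\norm{(T+A)^{-1}}\leq 1/c$. The only mildly delicate point is this adjoint step, specifically making sure the orthogonality relation is applied to a densely defined closed operator; both properties hold here.

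Finally, on $\mathcal{H}$ we write $A(T+A)^{-1}=(T+A)(T+A)^{-1}-T(T+A)^{-1}=1-T(T+A)^{-1}$. Therefore
\[
\norm{A(T+A)^{-1}}\leq 1+\norm{T}/c=(c+\norm{T})/c.
\]
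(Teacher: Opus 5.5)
Your proposal is correct and follows the paper's proof essentially step for step. The shared steps are the coercivity bound $c\norm{x}\leq\norm{(T+A)x}$ obtained from $\Re\langle x,Ax\rangle=0$, the injectivity of $(T+A)^*=T^*-A$ by the same estimate (which gives dense range), and the identity $A(T+A)^{-1}=1-T(T+A)^{-1}$ for the final bound. The only difference is that you prove closedness of $\ran(T+A)$ directly from the closedness of $T+A$; this is a slightly more careful replacement for the paper's appeal to \Cref{exer:1.2}, which is stated there only for bounded operators.
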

\begin{proof}
We compute, for all $x\in \dom(A)=\dom(T+A)$,
\[
\|x\|_{\mathcal{H}}\|(T+A)x\|_{\mathcal{H}}\geq  \Re \langle x,(T+A)x\rangle_{\mathcal{H}} = \Re \langle x, Tx\rangle_{\mathcal{H}} \geq c\|x\|_{\mathcal{H}}^2.
\]
Hence,
\[
  \forall x\in \dom(A):\|(T+A)x\|_{\mathcal{H}}\geq c\|x\|_{\mathcal{H}}\text{.}
\]
Therefore, $T+A$ is one-to-one and,
by \Cref{exer:1.2}, $\ran(T+A)\subseteq \mathcal{H}$ is closed. Since $\Re T = \Re T^*$, we deduce that $(T+A)^*=T^*-A$ is one-to-one as well. Thus, $\ran(T+A)=\mathcal{H}$, and $T+A$ is continuously invertible with $\|(T+A)^{-1}\|\leq 1/c$. As a consequence, $\|A(T+A)^{-1}\|\leq 1+\|T(T+A)^{-1}\|\leq 1+\|T\|(1/c)$.
\end{proof}

\begin{theorem}\label{thm:abstractSchur} Let  $\mathcal{H}$ be a separable Hilbert space, $0<\alpha\leq\beta$, and let $A\colon \dom(A)\subseteq \mathcal{H} \to \mathcal{H}$ be skew-selfadjoint. Assume that $\dom(A)\cap \ker(A)^\bot \hookrightarrow \mathcal{H}$ is compact. Let $(T_n)_{n\in\N}$ be a sequence in  $\mathcal{F}(\alpha,\beta; \mathcal{H})$. Then, the following conditions are equivalent:
\begin{enumerate}
\item\label{thm:abstractSchur1} There exists $T\in \mathcal{M}(\mathcal{H}_0,\mathcal{H}_1)$ such that $T_n \to T$ in $\tau(\ker(A),\ran(A))$ as $n\to\infty$.
\item\label{thm:abstractSchur2}  There exists $S\in \Lb(\mathcal{H})$ such that $(T_n+A)^{-1}\to S$ in the weak operator topology as $n\to\infty$.
\end{enumerate}
In either case, $T\in \mathcal{F}(\alpha,\beta;\mathcal{H})$ and $S=(T+A)^{-1}$.
\end{theorem}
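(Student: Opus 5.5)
The strategy is to block-decompose along $\mathcal{H}=\mathcal{H}_0\oplus\mathcal{H}_1$, with $\mathcal{H}_0\coloneqq\ker(A)$ and $\mathcal{H}_1\coloneqq\ker(A)^\perp=\overline{\ran}(A)$.

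\textbf{Preliminaries.} By \Cref{exer:2.1}, the compactness of $\dom(A)\cap\ker(A)^\perp\hookrightarrow\mathcal{H}$ gives that $\ran(A)$ is closed, so $\mathcal{H}_1=\ran(A)$. With respect to this decomposition $A=\diag(0,A_{11})$, where $A_{11}\coloneqq\iota_1^*A\iota_1$ is skew-selfadjoint and bijective on $\mathcal{H}_1$, and $A_{11}^{-1}$ is compact. Write $t_{n,jk}\coloneqq\iota_j^*T_n\iota_k$ and $t_{n,S}\coloneqq t_{n,11}-t_{n,10}t_{n,00}^{-1}t_{n,01}$. The Schur factorisation used in the proof of \Cref{thm:HnlH}, applied to $T_n+A$, yields
\[
(T_n+A)^{-1}=\begin{pmatrix} t_{n,00}^{-1}+t_{n,00}^{-1}t_{n,01}R_nt_{n,10}t_{n,00}^{-1} & -t_{n,00}^{-1}t_{n,01}R_n\\ -R_nt_{n,10}t_{n,00}^{-1} & R_n\end{pmatrix},\qquad R_n\coloneqq(t_{n,S}+A_{11})^{-1}.
\]
Two uniform bounds follow. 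First, $\Re t_{n,S}\geq\alpha$: for $x_1\in\mathcal{H}_1$ put $x_0\coloneqq-t_{n,00}^{-1}t_{n,01}x_1$; then $T_n(x_0,x_1)=(0,t_{n,S}x_1)$, so $\Re\langle x_1,t_{n,S}x_1\rangle\geq\alpha\norm{(x_0,x_1)}^2\geq\alpha\norm{x_1}^2$. Second, $\norm{T_n}\leq\beta$, which follows from $\Re T_n^{-1}\geq1/\beta$. Hence, by \Cref{prop:wpTA} and \Cref{lem:pda}, the operators $R_n$, $A_{11}R_n$, $t_{n,00}^{-1}$, $t_{n,00}^{-1}t_{n,01}$, $t_{n,10}t_{n,00}^{-1}$ and $t_{n,S}$ are uniformly bounded.

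\textbf{(i)$\Rightarrow$(ii).} The key lemma is: if $t_{n,S}\to t_S$ in the weak operator topology and $z_n\rightharpoonup z$ in $\mathcal{H}_1$, then $R_nz_n\to R z$ strongly, where $R\coloneqq(t_S+A_{11})^{-1}$.
\begin{itemize}
\item The sequence $x_n\coloneqq R_nz_n$ is bounded in $\dom(A_{11})$, so by compactness a subsequence converges strongly to some $x$.
\item To identify $x$, test against $w$: $\langle w,R_nz_n\rangle=\langle(t_{n,S}^*-A_{11})^{-1}w,z_n\rangle$.
\item Applying the same compactness argument to the adjoints (note $t_{n,S}^*\to t_S^*$ in the weak operator topology), together with closedness of $A_{11}$ and \Cref{rem:fafact}, gives $(t_{n,S}^*-A_{11})^{-1}w\to(t_S^*-A_{11})^{-1}w$ strongly.
\item Hence $x=Rz$, and \Cref{prop:subseq} gives convergence of the whole sequence.
\end{itemize}
With this lemma every block of $(T_n+A)^{-1}$ converges weakly, since each block is a product of weakly convergent factors with the strongly convergent factor $R_n$ placed in the middle. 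The limit is the same block formula for $T$, that is, $S=(T+A)^{-1}$.

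To see $T\in\mathcal{F}(\alpha,\beta;\mathcal{H})$, I would take $y\in\mathcal{H}$ and $x_n\coloneqq(T_n+A)^{-1}y\rightharpoonup x\coloneqq(T+A)^{-1}y$. Then $\iota_1^*x_n$ converges strongly by the lemma, and $\iota_0^*T_nx_n=\iota_0^*y$ is constant. So \Cref{exer:2.2} gives $\langle x_n,T_nx_n\rangle\to\langle x,Tx\rangle$. By weak lower semicontinuity of norms, $\Re\langle x,Tx\rangle\geq\alpha\norm{x}^2$ and $\Re\langle x,Tx\rangle\geq\beta^{-1}\norm{Tx}^2$ on the dense set $\ran((T+A)^{-1})=\dom(A)$. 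These extend to all of $\mathcal{H}$ by continuity, and the second inequality is exactly $\Re T^{-1}\geq1/\beta$.

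\textbf{(ii)$\Rightarrow$(i).} I would use compactness and the subsequence principle.
\begin{itemize}
\item The four Schur maps of $T_n$ are bounded. Since $\mathcal{H}$ is separable, bounded sets are weak-operator sequentially compact, so a subsequence has all four maps convergent, with limits $L_{00},L_{01},L_{10},L_S$.
\item The bounds $\Re t_{n,00}^{-1}\geq\alpha/\beta^2$ (from $\Re\langle y,t_{n,00}^{-1}y\rangle\geq\alpha\norm{t_{n,00}^{-1}y}^2\geq\alpha\beta^{-2}\norm{y}^2$) and $\Re t_{n,S}\geq\alpha$ pass to weak-operator limits.
\item Therefore $L_{00}$ and $L_S$ are invertible. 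Setting $T\coloneqq\begin{pmatrix}L_{00}^{-1}& L_{00}^{-1}L_{01}\\ L_{10}L_{00}^{-1}& L_S+L_{10}L_{00}^{-1}L_{01}\end{pmatrix}$ gives $T\in\mathcal{M}(\mathcal{H}_0,\mathcal{H}_1)$, and it is invertible by the Schur factorisation.
\item Along this subsequence $T_n\to T$ in $\tau(\ker(A),\ran(A))$. By the first part, $(T_n+A)^{-1}\to(T+A)^{-1}$ there, so $S=(T+A)^{-1}$, i.e.\ $T=S^{-1}-A$ is uniquely determined by $S$.
\item Every subsequence thus has a further subsequence converging to the same $T$, and \Cref{prop:subseq} concludes.
\end{itemize}

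The main obstacle I expect is the strong-convergence lemma for $R_nz_n$, specifically the identification of the limit when both $t_{n,S}$ and $z_n$ converge only weakly. This is why I route the argument through the adjoint resolvents rather than passing to the limit in $t_{n,S}x_n$ directly.
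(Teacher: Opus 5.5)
Your proof is correct. For (i)$\Rightarrow$(ii) it is essentially the paper's argument. The paper likewise uses $A=\diag(0,\tilde A)$ (\Cref{lem:Assa}) and Schur elimination to get $u_{n,1}=(T_{n,S}+\tilde A)^{-1}(f_1-T_{n,10}T_{n,00}^{-1}f_0)$. It obtains strong convergence of $u_{n,1}$ from compactness (\Cref{lem:convcomp}, which is your key lemma) and weak convergence of $u_{n,0}$ from \Cref{rem:fafact}.

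The obstacle you anticipate there is not real. Once a subsequence of $x_n=R_nz_n$ converges strongly to $x$, you have $\langle w,t_{n,S}x_n\rangle=\langle t_{n,S}^*w,x_n\rangle\to\langle t_S^*w,x\rangle$. So you can pass to weak limits in $t_{n,S}x_n+A_{11}x_n=z_n$ directly, which is exactly how \Cref{lem:convcomp} is proved. Your adjoint detour needs this very fact for $t_{n,S}^*y_n$, so it gains nothing. In this direction you should also record that $\Re t_S\geq\alpha$ by weak-operator closedness, so that $R$ exists; you only state this in the converse direction.

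The genuine differences are elsewhere.
\begin{itemize}
\item \emph{Showing $T\in\mathcal{F}(\alpha,\beta;\mathcal{H})$.} You use compactness together with \Cref{exer:2.2}. This implicitly needs $T_nx_n=y-Ax_n\rightharpoonup Tx$, which holds by boundedness in $\dom(A)$ and closedness of $A$. The paper gets this membership from \Cref{prop:BuVr}. Its key identity is $\Re\langle u_n,T_nu_n\rangle=\Re\langle u_n,f\rangle$, coming from skew-selfadjointness, and it gives convergence of the energies with no compactness. You could use it too: $\Re\langle x_n,T_nx_n\rangle=\Re\langle x_n,y\rangle\to\Re\langle x,Tx\rangle$.
\item \emph{(ii)$\Rightarrow$(i).} The paper first constructs $T\in\mathcal{F}(\alpha,\beta;\mathcal{H})$ with $S=(T+A)^{-1}$ via \Cref{prop:BuVr}. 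Only then does it use Schur compactness (\Cref{thm:compSchur}, \Cref{exer:3.05}) and (i)$\Rightarrow$(ii) to identify every subsequential Schur limit with $T$. You bypass \Cref{prop:BuVr} entirely. You build the candidate $T$ from the weak-operator limits of the four Schur maps, with the bounds $\Re t_{n,00}^{-1}\geq\alpha/\beta^2$ and $\Re t_{n,S}\geq\alpha$ securing $T\in\mathcal{M}(\mathcal{H}_0,\mathcal{H}_1)$. Then (i)$\Rightarrow$(ii) delivers both $S=(T+A)^{-1}$ and $T\in\mathcal{F}(\alpha,\beta;\mathcal{H})$.
\end{itemize}
Under the compactness hypothesis your route is more economical and self-contained. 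The paper's route buys a structural fact that holds without any compactness: weak-operator limits of $(T_n+A)^{-1}$ are always of the form $(T+A)^{-1}$ with $T\in\mathcal{F}(\alpha,\beta;\mathcal{H})$.
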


\section[Schur compact sets]{{{Schur compact sets}}}
Before we turn to the proof of the main abstract result of this lecture, \Cref{thm:abstractSchur}, we provide some additional information about the Schur topology. For this, we quickly recall a fundamental observation for the weak operator topology.

\begin{theorem}\label{thm:factswot} Let $\mathcal{H}_0,\mathcal{H}_1$ be Hilbert spaces. Then, the following statements are true:
\begin{enumerate}[label=(\alph*)]
  \item\label{thm:factswot1} $B_1 \coloneqq \{T\in \Lb(\mathcal{H}_0,\mathcal{H}_1): \|T\|\leq 1\}$ is compact with respect to the weak operator topology.
  \item\label{thm:factswot2} If both $\mathcal{H}_0,\mathcal{H}_1$ are separable, then $B_1$ is metrisable and, thus, sequentially compact.
  \item\label{thm:factswot3} If $\mathcal{H}_0=\mathcal{H}_1$ is separable and $0<\alpha\leq\beta$, then $\mathcal{F}(\alpha,\beta;\mathcal{H}_0)$ is closed under the weak operator topology.
\end{enumerate}
\end{theorem}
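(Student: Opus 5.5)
For \labelcref{thm:factswot1}, I would embed $B_1$ into a product of compact sets, as in Tychonoff/Banach--Alaoglu. To each $T\in B_1$ assign the family $(\langle y, Tx\rangle_{\mathcal{H}_1})_{(x,y)\in\mathcal{H}_0\times\mathcal{H}_1}$ in
\[
P\coloneqq\prod_{(x,y)} \{\lambda\in\K: \abs{\lambda}\leq \norm{x}\norm{y}\}.
\]
By Tychonoff, $P$ is compact. By definition of the weak operator topology, this map is a homeomorphism onto its image. It therefore suffices to show that the image is closed. A limit point $(\lambda_{x,y})$ of the image is, by pointwise convergence along nets, sesquilinear in $(x,y)$ and bounded by $\norm{x}\norm{y}$. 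The Riesz representation theorem then gives a unique $T\in B_1$ with $\lambda_{x,y}=\langle y,Tx\rangle$.

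For \labelcref{thm:factswot2}, I choose countable dense sets $\{x_k\}\subseteq\mathcal{H}_0$ and $\{y_l\}\subseteq\mathcal{H}_1$ and set
\[
d(S,T)\coloneqq\sum_{k,l}2^{-k-l}\frac{\abs{\langle y_l,(S-T)x_k\rangle}}{1+\norm{x_k}\norm{y_l}}.
\]
Since $B_1$ is uniformly bounded, an $\varepsilon/3$-argument shows that convergence of matrix coefficients on the dense sets is equivalent to convergence of all of them. Hence $d$ induces the weak operator topology on $B_1$. A compact metric space is sequentially compact.

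For \labelcref{thm:factswot3}, the main obstacle is that inversion is not weakly continuous. I avoid it with a reformulation. First I check that $\Re T\geq\alpha$ together with $\Re T^{-1}\geq1/\beta$ is equivalent to: for all $x\in\mathcal{H}$, $\Re\langle x,Tx\rangle\geq\alpha\norm{x}^2$ and $\Re\langle x,Tx\rangle\geq \frac1\beta\norm{Tx}^2$. The second inequality follows from the first description by substituting $x=T^{-1}y$. Conversely, the first inequality gives invertibility via \Cref{lem:pda}.

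Now take a net (or, since bounded sets are metrisable by \labelcref{thm:factswot2}, a sequence) $T_n\to T$ weakly with $T_n\in\mathcal{F}$. The bound $\norm{T_n}\leq\beta$ follows from $\frac1\beta\norm{T_nx}^2\leq\norm{x}\norm{T_nx}$, so this reduction is justified. The condition $\Re\langle x,Tx\rangle\geq\alpha\norm{x}^2$ passes to the limit directly. For the second condition, $\norm{Tx}^2\leq\liminf\norm{T_nx}^2$ by weak lower semicontinuity of the norm, since $T_nx\rightharpoonup Tx$. Meanwhile $\Re\langle x,T_nx\rangle\to\Re\langle x,Tx\rangle$. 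So $\frac1\beta\norm{Tx}^2\leq\liminf\frac1\beta\norm{T_nx}^2\leq\lim\Re\langle x,T_nx\rangle=\Re\langle x,Tx\rangle$. Hence $T\in\mathcal{F}(\alpha,\beta;\mathcal{H}_0)$, which shows closedness.
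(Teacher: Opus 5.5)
Your proposal is correct and follows essentially the paper's route: Tychonoff on the product of discs intersected with the closed set of sesquilinear forms for (a), countable dense subsets for (b), and for (c) the reformulation $\frac1\beta\|Tx\|^2\leq\Re\langle x,Tx\rangle$ combined with weak lower semicontinuity of the norm, which is exactly what the paper's Cauchy--Schwarz estimate encodes. The only difference is cosmetic and lies in (b): you check directly, via an $\varepsilon/3$ argument using the uniform bound on $B_1$, that your explicit metric induces the weak operator topology, whereas the paper deduces this from compactness of $(B_1,\tau_{\mathrm{w}})$ and the Hausdorff property of the coarser topology built from the countable dense sets.
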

\begin{proof}
 \labelcref{thm:factswot1}: First of all, note that by the Riesz--Frech\'et representation theorem, sesquilinear forms and linear operators can be canonically identified. Moreover, it is not difficult to see that the sesquilinear forms on ${\mathcal{H}_0\times \mathcal{H}_1}$ (seen as a subset of $\K^{\mathcal{H}_0\times \mathcal{H}_1}$) are closed with respect to the Euclidean product topology on $\K^{\mathcal{H}_0\times \mathcal{H}_1}$. Thus,
 \begin{equation}\label{eq:TikhonovPointwiseConstrProdTop}
    B_1 \coloneqq \prod_{\phi\in \mathcal{H}_0,\psi\in \mathcal{H}_1} \|\phi\|_{\mathcal{H}_0}\|\psi\|_{\mathcal{H}_1}B_{\K}[0,1] \cap \{ s\colon \mathcal{H}_0\times \mathcal{H}_1\to \K\mid s \text{ sesquilinear}\},
 \end{equation}
 where $B_{\K}[0,1]\coloneqq\{z\in \K: |z|\leq 1\}$, is compact by Tikhonov's theorem.
 
 \labelcref{thm:factswot2}: Let $\mathcal{D}_j\subseteq \mathcal{H}_j$ be countable and dense, and consider the metrisable (\Cref{exer:3.0}) topology $\tau$ induced by only considering $\phi\in \mathcal{D}_0$ and $\psi\in \mathcal{D}_1$ in~\labelcref{eq:TikhonovPointwiseConstrProdTop}. Then, denoting by $\tau_{\mathrm{w}}$ the original weak operator topology from~\labelcref{eq:TikhonovPointwiseConstrProdTop},
 we obtain that
 \[
    (B_1,\tau_{\textnormal{w}})\hookrightarrow (B_1,\tau)
 \]
 is one-to-one, onto, and continuous. As the former space is compact by  \labelcref{thm:factswot1} and the latter Hausdorff due to the density of $\mathcal{D}_j$ in $\mathcal{H}_j$, the above embedding is an actual homeomorphism.
 
  \labelcref{thm:factswot3}: By  \labelcref{thm:factswot2}, $\mathcal{F}(\alpha,\beta;\mathcal{H}_0)\subseteq \beta B_1$ is metrisable if endowed with the weak operator topology. Moreover, for all $\phi\in \mathcal{H}_0$, the mapping
 \[
    \phi_a\colon
    \begin{cases}
    \hfill \Lb^{\mathrm{w}}(\mathcal{H}_0)&\to\quad\R\\
 \hfill T &\mapsto\quad \Re \langle \phi,T\phi\rangle_{\mathcal{H}_0}
 \end{cases}
 \]
 is continuous. This implies that
 \[
    \mathcal{F}_a \coloneqq \bigcap_{\phi\in \mathcal{H}_0} \phi_a^{-1}[[\alpha,\infty)] = \{ T\in \Lb(\mathcal{H}_0): \Re T\geq \alpha\}
 \]
 is closed with respect to the weak operator topology. Next, take $(T_n)_{n\in\N}$ in $\mathcal{F}(\alpha,\beta;\mathcal{H}_0)$ converging to some $T\in \beta B_1$ in the weak operator topology. Then, $T\in \mathcal{F}_a$. Furthermore, the Cauchy--Schwarz inequality yields
 \begin{equation}\label{eq:CauchySchwarzReplacesLimInf}
\frac{1}{\beta}\frac{\abs{\langle T\phi,T_n\phi\rangle_{\mathcal{H}_0}}^2}{\|T\phi\|_{\mathcal{H}_0}^2}\leq   \frac{1}{\beta}\|T_n\phi\|_{\mathcal{H}_0}^2 \leq \Re \langle \phi,T_n\phi\rangle_{\mathcal{H}_0}
 \end{equation}
 for all $\phi\in \mathcal{H}_0$ and $n\in\N$.
 By taking the limits in the inner products only, we prove
 \[
 \frac{1}{\beta}\|T\phi\|_{\mathcal{H}_0}^2\leq \Re\langle \phi,T\phi\rangle_{\mathcal{H}_0}
 \]
 for all  $\phi\in \mathcal{H}_0$, i.e., $\Re T^{-1}\geq 1/\beta$.
\end{proof}

\begin{theorem}\label{thm:compSchur} Let $\mathcal{H}$ be a separable Hilbert space, $\mathcal{H}_0\subseteq \mathcal{H}$ a closed subspace, $\mathcal{H}_1\coloneqq \mathcal{H}_0^\bot$. For $\alpha = (\alpha_{jk})_{j,k\in \{0,1\}}$ in $(0,\infty)^{2\times 2}$ consider
\begin{align*}
   \mathcal{F}(\alpha; \mathcal{H}_0,\mathcal{H}_1) \coloneqq \{ &T\in \mathcal{M}(\mathcal{H}_0,\mathcal{H}_1):\\ 
   & T_{00} \in \mathcal{F}(\alpha_{00}, \alpha_{11}; \mathcal{H}_0), \\
   & T_{10}T_{00}^{-1}  \in \alpha_{10}B_1, T_{00}^{-1}T_{01}  \in \alpha_{01}B_1,  \\
   & T_{11}-T_{10}T_{00}^{-1}T_{01} \in \mathcal{F}(\alpha_{00},\alpha_{11}; \mathcal{H}_1)\}.
\end{align*}
Then, $\mathcal{F}(\alpha; \mathcal{H}_0)$ is (sequentially) compact under $\tau(\mathcal{H}_0,\mathcal{H}_1)$. 
\end{theorem}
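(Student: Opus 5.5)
The plan is to reduce compactness in the Schur topology to compactness of a product of weak-operator-topology balls via the four defining maps. Consider
\[
\Phi\colon \mathcal{M}(\mathcal{H}_0,\mathcal{H}_1)\ni T\mapsto \bigl(T_{00}^{-1},\,T_{00}^{-1}T_{01},\,T_{10}T_{00}^{-1},\,T_{11}-T_{10}T_{00}^{-1}T_{01}\bigr)
\]
into $\Lb^{\mathrm{w}}(\mathcal{H}_0)\times\Lb^{\mathrm{w}}(\mathcal{H}_1,\mathcal{H}_0)\times\Lb^{\mathrm{w}}(\mathcal{H}_0,\mathcal{H}_1)\times\Lb^{\mathrm{w}}(\mathcal{H}_1)$. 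By definition $\tau(\mathcal{H}_0,\mathcal{H}_1)$ is the initial topology of $\Phi$. Moreover, $\Phi$ is injective: from the four components one recovers $T_{00}$, then $T_{01}=T_{00}(T_{00}^{-1}T_{01})$ and $T_{10}=(T_{10}T_{00}^{-1})T_{00}$, and finally $T_{11}$. Hence $\Phi$ is a homeomorphism of $\mathcal{M}(\mathcal{H}_0,\mathcal{H}_1)$ onto its image. It therefore suffices to show that $K\coloneqq\Phi[\mathcal{F}(\alpha;\mathcal{H}_0,\mathcal{H}_1)]$ is compact and metrisable in the product of weak operator topologies. Compactness together with metrisability then gives sequential compactness.

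I would identify $K$ as the set of quadruples $(P,Q,R,S)$ with $P\in\{X^{-1}:X\in\mathcal{F}(\alpha_{00},\alpha_{11};\mathcal{H}_0)\}$, $Q\in\alpha_{01}B_1$, $R\in\alpha_{10}B_1$ and $S\in\mathcal{F}(\alpha_{00},\alpha_{11};\mathcal{H}_1)$. Every such quadruple comes from some $T$. Indeed, set $T_{00}\coloneqq P^{-1}$, $T_{01}\coloneqq T_{00}Q$, $T_{10}\coloneqq RT_{00}$ and $T_{11}\coloneqq S+RT_{00}Q$. Then $T_{00}$ is invertible, and the block factorisation displayed in the proof of \Cref{thm:HnlH},
\[
T=\begin{pmatrix}1&0\\ R&1\end{pmatrix}\begin{pmatrix}T_{00}&0\\0&S\end{pmatrix}\begin{pmatrix}1&Q\\0&1\end{pmatrix},
\]
shows that $T$ is invertible, because $S$ is invertible by \Cref{lem:pda}. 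So $T\in\mathcal{M}(\mathcal{H}_0,\mathcal{H}_1)$. Consequently $K$ is a product of four sets, and by Tikhonov it suffices to treat each factor separately.

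For the second and third factors, \Cref{thm:factswot} \labelcref{thm:factswot1} and \labelcref{thm:factswot2} give compactness and metrisability of $\alpha B_1$, since closed subspaces of a separable space are separable. For the fourth factor, $\mathcal{F}(\alpha_{00},\alpha_{11};\mathcal{H}_1)\subseteq\alpha_{11}B_1$, because $\Re T^{-1}\geq1/\beta$ yields $\|T\phi\|^2\leq\beta\Re\langle\phi,T\phi\rangle\leq\beta\|\phi\|\|T\phi\|$. By \Cref{thm:factswot} \labelcref{thm:factswot3} this set is weakly closed, hence compact and metrisable.

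The only nontrivial point is the first factor, where the map $X\mapsto X^{-1}$ enters. Here I would show that $X\in\mathcal{F}(\alpha,\beta;\mathcal{H}_0)$ if and only if $X^{-1}\in\mathcal{F}(1/\beta,1/\alpha;\mathcal{H}_0)$. Substituting $\psi=X\phi$ gives $\Re\langle\psi,X^{-1}\psi\rangle=\Re\langle X\phi,\phi\rangle=\Re\langle\phi,X\phi\rangle$. Thus $\Re X^{-1}\geq1/\beta$ is literally one of the defining conditions, and $\Re X\geq\alpha$ is exactly the condition $\Re (X^{-1})^{-1}\geq 1/(1/\alpha)$. The invertibility of $X$ is provided by \Cref{lem:pda}. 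Hence the first factor equals $\mathcal{F}(1/\beta,1/\alpha;\mathcal{H}_0)$, which is again compact and metrisable by the same argument as for the fourth factor.

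Combining the four factors, $K$ is compact and metrisable. Since $\Phi$ is a homeomorphism onto its image, $\mathcal{F}(\alpha;\mathcal{H}_0,\mathcal{H}_1)$ is (sequentially) compact in $\tau(\mathcal{H}_0,\mathcal{H}_1)$. The inversion step and the surjectivity onto the product set are the points I would check most carefully.
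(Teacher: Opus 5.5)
Your proof is correct and is essentially the paper's argument: the paper only says that successive subsequence extractions based on \Cref{thm:factswot} do the job, and your homeomorphism $\Phi$ onto a product of four weak-operator-compact, metrisable sets is the topological packaging of exactly that. You also make explicit two points the paper leaves implicit, namely that inversion maps $\mathcal{F}(\alpha_{00},\alpha_{11};\mathcal{H}_0)$ onto $\mathcal{F}(1/\alpha_{11},1/\alpha_{00};\mathcal{H}_0)$ (so the limit of $T_{00}^{-1}$ is again invertible), and that every limit quadruple is realised by some $T$ through the block factorisation.
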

\begin{proof}
With the help of \Cref{thm:factswot}, successive selections of subsequences do the job. 
\end{proof}

\section[Proof of Theorem 3.2.3 (I)$\Rightarrow$(II)]{{{Proof of \Cref{thm:abstractSchur} \labelcref{thm:abstractSchur1}$\Rightarrow$\labelcref{thm:abstractSchur2}}}}

\begin{lemma}\label{lem:Assa} Let  $\mathcal{H}$ be a Hilbert space and let $A\colon \dom(A)\subseteq \mathcal{H} \to \mathcal{H}$ be skew-selfadjoint. Let $\iota_0 \colon \ker(A)\hookrightarrow \mathcal{H}$ and $\iota_1 \colon \overline{\ran}(A)\hookrightarrow \mathcal{H}$. Then,
\[
   \begin{pmatrix} \iota_0^* \\ \iota_1^* \end{pmatrix} A    \begin{pmatrix} \iota_0 & \iota_1 \end{pmatrix} = \begin{pmatrix} 0& 0 \\ 0 & \tilde{A} \end{pmatrix},
\]
where 
\[
\tilde{A}\colon\begin{cases}
\hfill \dom(A)\cap \overline{\ran}(A)\subseteq \overline{\ran}(A)&\to\quad\overline{\ran}(A)\\
 \hfill x &\mapsto\quad Ax
 \end{cases}
\]
 is skew-selfadjoint. 
Moreover, the following statements hold:
\begin{enumerate}[label=(\alph*)]
  \item\label{lem:Assa1} If $\ran(A)\subseteq \mathcal{H}$ is closed, then $\tilde{A}$ is continuously invertible.
  \item\label{lem:Assa2} If $\dom(A)\cap \ker(A)^\bot \hookrightarrow \mathcal{H}$ is compact, then $\ran(A)\subseteq \mathcal{H}$ is closed and $\tilde{A}^{-1}$ is compact.
  \end{enumerate}
\end{lemma}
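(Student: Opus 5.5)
We first record the orthogonal decomposition $\mathcal{H}=\ker(A)\oplus\overline{\ran}(A)$. Since $A$ is skew-selfadjoint, hence closed and densely defined, the identity $\overline{\ran}(A^*)=\ker(A)^\perp$ together with $A^*=-A$ gives $\overline{\ran}(A)=\ker(A)^\perp$. Consequently $\begin{pmatrix}\iota_0 & \iota_1\end{pmatrix}$ is unitary, with inverse $\begin{pmatrix}\iota_0^* \\ \iota_1^*\end{pmatrix}$. We then compute the four blocks, interpreting $\iota_j^*A\iota_k$ as an operator with domain $\iota_k^{-1}[\dom(A)]$. The two blocks $\iota_0^*A\iota_0$ and $\iota_1^*A\iota_0$ vanish, since $A\iota_0=0$. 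The block $\iota_0^*A\iota_1$ also vanishes, since $\ran(A)\subseteq\ker(A)^\perp$ and therefore $\iota_0^*A=0$ on $\dom(A)$. For the remaining block, note that $\iota_1\iota_1^*$ is the orthogonal projection $P$ onto $\ker(A)^\perp$. For $x\in\dom(A)$ we have $Px=x-(1-P)x\in\dom(A)$ with $APx=Ax$, so $P$ maps $\dom(A)$ into $\dom(A)\cap\overline{\ran}(A)$. Hence $\iota_1^*A\iota_1$ is exactly $\tilde A$, with domain $\dom(A)\cap\overline{\ran}(A)$.

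Next we show that $\tilde A$ is skew-selfadjoint. The domain $\dom(\tilde A)=P[\dom(A)]$ is dense in $\overline{\ran}(A)$, because $P$ is continuous and $\dom(A)$ is dense in $\mathcal{H}$. We prove $\tilde A^*=-\tilde A$ directly.

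\emph{Inclusion $-\tilde A\subseteq\tilde A^*$.} For $x,y\in\dom(\tilde A)$ we have $\langle\tilde Ax,y\rangle=\langle Ax,y\rangle=-\langle x,Ay\rangle$.

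\emph{Inclusion $\tilde A^*\subseteq-\tilde A$.} Let $u\in\dom(\tilde A^*)\subseteq\overline{\ran}(A)$ and set $v\coloneqq\tilde A^*u$. Take an arbitrary $x\in\dom(A)$ and split $x=Px+(1-P)x$. Since $(1-P)x\in\ker(A)$, we get $Ax=APx$, and since $u\perp\ker(A)$, we get $\langle x,v\rangle=\langle Px,v\rangle$. Therefore
\[
\langle Ax,u\rangle=\langle \tilde APx,u\rangle=\langle Px,v\rangle=\langle x,v\rangle .
\]
This shows $u\in\dom(A^*)=\dom(A)$ with $-Au=v$. As also $u\in\overline{\ran}(A)$, we conclude $u\in\dom(\tilde A)$ and $\tilde A^*u=-\tilde Au$.

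For (a), $\tilde A$ is one-to-one, since $\ker(\tilde A)=\ker(A)\cap\ker(A)^\perp=\{0\}$. Its range equals $\ran(A)$, by the identity $APx=Ax$ from the first paragraph. If $\ran(A)$ is closed, then $\tilde A$ maps onto $\overline{\ran}(A)$. A closed bijection has a closed inverse, and an everywhere defined closed operator is bounded by the closed graph theorem. Alternatively, \Cref{exer:1.2} applied to the bounded operator $\tilde A$ on $\dom(A)\cap\ker(A)^\perp$ with the graph norm gives the lower bound $\|\tilde Ax\|\geq\gamma\|x\|$ directly.

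For (b), closedness of $\ran(A)$ is exactly \Cref{exer:2.1}, using $\dom(A)\cap\ker(A)^\perp=\dom(\tilde A)$. By (a), $\tilde A^{-1}$ is then bounded from $\overline{\ran}(A)$ into $\dom(\tilde A)$ equipped with the graph norm, because $\|\tilde A^{-1}y\|^2_{\dom}=\|\tilde A^{-1}y\|^2+\|y\|^2$. Composing this bounded map with the compact embedding $\dom(A)\cap\ker(A)^\perp\hookrightarrow\mathcal{H}$ shows that $\tilde A^{-1}$ is compact.

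The main obstacle is the inclusion $\tilde A^*\subseteq-\tilde A$, specifically the domain bookkeeping showing that testing only against $\dom(\tilde A)$ already controls all of $\dom(A)$. This is resolved by the facts that $P$ leaves $\dom(A)$ invariant and that $A$ annihilates $(1-P)x$.
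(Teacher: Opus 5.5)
Your proof is correct and follows essentially the same route as the paper: the block identity via $\overline{\ran}(A)=\ker(A)^\perp$, density of $\dom(A)\cap\overline{\ran}(A)$ by projecting $\dom(A)$, and the inclusion $\tilde A^*\subseteq-\tilde A$ by splitting an arbitrary element of $\dom(A)$ into its $\ker(A)$- and $\overline{\ran}(A)$-components; (a) uses the closed graph theorem and (b) uses \Cref{exer:2.1} plus the graph-norm bound. One small slip: the identity $\langle x,v\rangle=\langle Px,v\rangle$ needs $v=\tilde A^*u\perp\ker(A)$ rather than $u\perp\ker(A)$, and this holds because $\tilde A^*$ takes values in $\overline{\ran}(A)$.
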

\begin{proof}
First of all, note that $\overline{\ran}(A) = \ker(A)^\bot$ as $A$ is skew-selfadjoint. Clearly, we have $A\iota_0 = 0$. Moreover, if $x\in \overline{\ran}(A)$ then $\iota_1 x \in \dom(A)$ if and only if $x\in \overline{\ran}(A)\cap \dom(A)$. Hence, 
\[
   A    \begin{pmatrix} \iota_0 & \iota_1 \end{pmatrix} =       \begin{pmatrix} 0&  A \iota_1  \end{pmatrix}.
\]
Next, since for all $x\in \overline{\ran}(A)\cap \dom(A)$, $A\iota_1 x \in \ran (A) \subseteq\ker(A)^\bot$, we deduce $\iota_1^*A\iota_1 x = A\iota_1 x$ and $\iota_0^*A\iota_1 x =0$. Thus, the first claim follows with $\tilde{A} = \iota_1^*A\iota_1$.

Note that $\overline{\ran}(A)\cap \dom(A)$ is dense in $\overline{\ran}(A)$. Indeed, as $\dom(A)$ is dense in $\mathcal{H}$, for $y\in \overline{\ran}(A)$, we find a sequence $(x_n)_{n\in\N}$ in $\dom(A)$ such that $x_n\to y$ in $\mathcal{H}$. We define $y_n \coloneqq x_n - \iota_0\iota_0^*x_n \in \dom(A)$ for $n\in\N$. Then, $y_n \in \overline{\ran}(A)\cap \dom(A)$ for $n\in\N$ and $y_n\to y \in \overline{\ran}(A)$ as $n\to\infty$.
Next, it is not difficult to see that $-\iota_1^*A\iota_1 \subseteq (\iota_1^*A\iota_1)^*$. For the other inclusion, let $u \in \dom \big((\iota_1^*A\iota_1)^*\big)\subseteq \overline{\ran}(A)$ and $v = (\iota_1^*A\iota_1)^*u\subseteq \overline{\ran}(A)$. Then, for all $x\in \dom(\iota_1^*A\iota_1)$, we compute
\[
  \langle x, (\iota_1^*A\iota_1)^*u\rangle_{\overline{\ran}(A)} = \langle A\iota_1 x,\iota_1 u\rangle_{\mathcal{H}}\text{.}
\] In particular, if $w\in \ker(A)$, we find
\[
 \langle x, (\iota_1^*A\iota_1)^*u\rangle_{\overline{\ran}(A)} =  \langle \iota_1 x, \iota_1 (\iota_1^*A\iota_1)^*u\rangle_{\mathcal{H}} =  \langle \iota_1 x+\iota_0 w, \iota_1 (\iota_1^*A\iota_1)^*u\rangle_{\mathcal{H}}\text{,}
\]and, as $ A\iota_1 x =  A(\iota_1 x+\iota_0 w)$, we obtain
\[
  \forall z\in \dom(A) : \langle z, \iota_1(\iota_1^*A\iota_1)^*u\rangle_{\mathcal{H}} = \langle A z, \iota_1 u\rangle_{\mathcal{H}}\text{.}
\]Thus, $\iota_1 u \in \dom(A^*)$ and $A^*\iota_1 u =  \iota_1(\iota_1^*A\iota_1)^*u$. Hence, $\iota_1^* A^*\iota_1 u =(\iota_1^*A\iota_1)^*u=v$.

\labelcref{lem:Assa1}: If $\ran(A)=\overline{\ran}(A)$, then $\tilde{A}$ is one-to-one and onto. As $\tilde{A}=-\tilde{A}^*$, $\tilde{A}$ is closed. Thus, $\tilde{A}^{-1}$ is closed and defined on the whole of $\ran(A)$, which is a Hilbert space, so the closed graph theorem yields continuity of $\tilde{A}^{-1}$.

\labelcref{lem:Assa2}: Closedness of $\ran(A)$ follows from \Cref{exer:2.1}. Since $\tilde{A}^{-1}$ is continuous, it also maps continuously into $\dom(A)\cap \ran(A)=\dom(A)\cap \ker(A)^\bot$, endowed with the graph norm. Since $\dom(A)\cap \ran(A)\hookrightarrow \mathcal{H}$ is compact, so is $\tilde{A}^{-1}\colon \mathcal{H}\to \mathcal{H}$. 
\end{proof}

\begin{lemma}\label{lem:convcomp} Let $\mathcal{H}$ be a Hilbert space, $0<\alpha\leq\beta$, $A\colon \dom(A)\subseteq \mathcal{H}\to\mathcal{H}$ skew-selfadjoint, one-to-one with $\dom(A)\hookrightarrow \mathcal{H}$ compact. If $(T_n)_{n\in\N}$ in $\mathcal{F}(\alpha,\beta;\mathcal{H})$ converges in the weak operator topology to some $T\in \mathcal{F}(\alpha,\beta;\mathcal{H})$, then for all $(f_n)_{n\in\N}$ weakly convergent in $\mathcal{H}$, we deduce
\[
   (T_n+A)^{-1} f_n \to (T+A)^{-1} (\wlim_{n\to\infty} f_n)\in \mathcal{H}
\]
as $n\to\infty$.
\end{lemma}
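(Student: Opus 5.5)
Set $u_n\coloneqq (T_n+A)^{-1}f_n$ and $f\coloneqq\wlim_{n\to\infty} f_n$. The aim is strong convergence $u_n\to (T+A)^{-1}f$. We combine uniform bounds, compactness of $\dom(A)\hookrightarrow\mathcal{H}$, and the subsequence principle, \Cref{prop:subseq}.

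\textbf{Step 1 (bounds).} Weakly convergent sequences are bounded, so $\sup_n\|f_n\|<\infty$. By \Cref{prop:wpTA} with $c=\alpha$,
\[
\|u_n\|\leq \|f_n\|/\alpha, \qquad \|Au_n\|\leq \bigl((\alpha+\|T_n\|)/\alpha\bigr)\|f_n\|.
\]
We also have $\|T_n\|\leq\beta$: from $\Re T_n^{-1}\geq 1/\beta$ we get $\frac1\beta\|T_n\phi\|^2\leq\Re\langle\phi,T_n\phi\rangle\leq\|\phi\|\|T_n\phi\|$. Hence $(u_n)_{n\in\N}$ is bounded in $\dom(A)$ with the graph norm.

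\textbf{Step 2 (compactness).} Since $\dom(A)\hookrightarrow\mathcal{H}$ is compact, every subsequence of $(u_n)_{n\in\N}$ has a further subsequence (not relabelled) with $u_n\to u$ strongly in $\mathcal{H}$. Passing to a further subsequence, $Au_n\rightharpoonup w$ weakly. Closedness of $A$ gives $u\in\dom(A)$ and $Au=w$: the graph of $A$ is a closed subspace, hence weakly closed.

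\textbf{Step 3 (identifying the limit).} The key point is that $T_nu_n\rightharpoonup Tu$. This holds because $u_n\to u$ strongly and $T_n\to T$ only in the weak operator topology. To see it, write $T_nu_n=T_n(u_n-u)+T_nu$. The first term tends to $0$ in norm by $\|T_n\|\leq\beta$, and the second converges weakly to $Tu$. This is precisely where compactness is needed: with only weak convergence of $u_n$, the product $T_nu_n$ could not be handled.

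Passing to the weak limit in $f_n=T_nu_n+Au_n$ gives $f=Tu+Au$. Thus $u=(T+A)^{-1}f$, which is uniquely determined by \Cref{prop:wpTA}, since $T\in\mathcal{F}(\alpha,\beta;\mathcal{H})$ and so $\Re T\geq\alpha$. Since every subsequence has a further subsequence converging strongly to the same limit, \Cref{prop:subseq} applied in $\mathcal{H}$ with the norm topology yields convergence of the full sequence.

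Injectivity of $A$ is not really needed here beyond what the compact embedding gives, and the argument above already closes. The only delicate step is the product limit in Step 3.
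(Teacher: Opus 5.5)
Your proof is correct and follows the paper's argument: uniform bounds in $\dom(A)$ from \Cref{prop:wpTA}, a subsequence converging strongly in $\mathcal{H}$ via the compact embedding, passage to the limit in $f_n=T_nu_n+Au_n$, and the subsequence principle. Your Step~3, which uses $\|T_n\|\leq\beta$ to get $T_nu_n\rightharpoonup Tu$, spells out a step the paper leaves implicit. You are also right that injectivity of $A$ is never used, though the compact embedding alone only forces $\ker(A)$ to be finite-dimensional, not trivial.
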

\begin{proof}
Let $(f_n)_{n\in\N}$ be weakly convergent and set $u_n\coloneqq (T_n+A)^{-1}f_n$ for $n\in\N$, and $f\coloneqq \wlim_{n\to\infty} f_n$. Then, by the boundedness of $(f_n)_{n\in\N}$ (uniform boundedness principle) and by \Cref{prop:wpTA}, $(u_n)_{n\in\N}$ belongs to a bounded set in $\dom(A)$. Thus, we choose a subsequence (not relabelled) so that $v\coloneqq \wlim_{n\to\infty} u_n \in \dom(A)$. By compactness of $\dom(A)\hookrightarrow 
\mathcal{H}$, we infer that $v=\lim_{n\to\infty} u_n \in \mathcal{H}$. Hence, for $n
\in \N$,
\[
  f_n =  T_n u_n +A u_n \text{ and } f = Tv+Av,
\]
where we took weak limits. Since $v= (T+A)^{-1}f$, the whole sequence $(u_n)_{n\in\N}$ weakly converges to $v$ in $\dom(A)$ and, thus again by compactness, strongly in $\mathcal{H}$, which yields the claim.
\end{proof}

\begin{lemma}\label{lem:schurpd}
Let $\mathcal{H}$ be a Hilbert space, $\mathcal{H}_0\subseteq \mathcal{H}$ a closed subspace, $\mathcal{H}_1\coloneqq \mathcal{H}_0^\bot$, $0<\alpha\leq\beta$, and $T\in \mathcal{F}(\alpha,\beta;\mathcal{H})$. Then, setting $T_{jk}\coloneqq \iota_j^* T\iota_k$ for $j,k\in \{0,1\}$, we have
\[
    T_S\coloneqq T_{11}-T_{10}T_{00}^{-1}T_{01} \in \mathcal{F}(\alpha,\beta;\mathcal{H}_1).
\]
\end{lemma}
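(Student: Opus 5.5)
Our plan is to exploit the block inversion formula derived in the proof of \Cref{thm:HnlH}, namely $(T^{-1})_{11}=T_S^{-1}$, and to check both defining inequalities of $\mathcal{F}(\alpha,\beta;\mathcal{H}_1)$ directly by quadratic forms. First, by \Cref{lem:pda}, $T_{00}=\iota_0^*T\iota_0$ is continuously invertible, so $T_S$ is well defined. Since $T$ is invertible, the factorisation
\[
\begin{pmatrix} 1 &0 \\ -T_{10}T_{00}^{-1} & 1 \end{pmatrix}\begin{pmatrix} T_{00} & T_{01}\\ T_{10} & T_{11}\end{pmatrix}\begin{pmatrix} 1 & -T_{00}^{-1}T_{01}\\ 0 & 1\end{pmatrix}=\begin{pmatrix} T_{00} & 0\\ 0 & T_S\end{pmatrix}
\]
shows that $T_S$ is continuously invertible with $T_S^{-1}=(T^{-1})_{11}=\iota_1^*T^{-1}\iota_1$.

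For $\Re T_S\geq\alpha$, we fix $x_1\in\mathcal{H}_1$ and set $x_0\coloneqq -T_{00}^{-1}T_{01}x_1$ and $x\coloneqq \iota_0x_0+\iota_1x_1$. Then $\iota_0^*Tx=T_{00}x_0+T_{01}x_1=0$ and $\iota_1^*Tx=T_{10}x_0+T_{11}x_1=T_Sx_1$. Hence
\[
\Re\langle x_1,T_Sx_1\rangle_{\mathcal{H}_1}=\Re\langle x,Tx\rangle_{\mathcal{H}}\geq\alpha\|x\|^2_{\mathcal{H}}=\alpha(\|x_0\|^2+\|x_1\|^2)\geq\alpha\|x_1\|^2_{\mathcal{H}_1}.
\]

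For $\Re T_S^{-1}\geq 1/\beta$, we use $T_S^{-1}=\iota_1^*T^{-1}\iota_1$. Then $\Re T_S^{-1}=\iota_1^*(\Re T^{-1})\iota_1\geq 1/\beta$ is immediate, as in the first computation of the proof of \Cref{lem:pda}.

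The main (mild) obstacle is to identify $T_S^{-1}$ correctly as the compression of $T^{-1}$, which relies on the invertibility of $T$. This is exactly the block formula already established in the proof of \Cref{thm:HnlH}, so we can quote it. The first inequality needs the well-chosen test vector $x$, which makes the $\mathcal{H}_0$-component of $Tx$ vanish.
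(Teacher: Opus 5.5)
Your proof is correct and follows the paper's argument. Your test vector $x=\iota_0x_0+\iota_1x_1$ with $x_0=-T_{00}^{-1}T_{01}x_1$ is exactly $Q$ applied to $(0,x_1)$ in the paper's congruence $Q^*TQ$, and you obtain $\Re T_S^{-1}\geq 1/\beta$ the same way the paper does, from $T_S^{-1}=(T^{-1})_{11}$ and the fact that compressing $\Re T^{-1}$ to $\mathcal{H}_1$ preserves its lower bound.
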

\begin{proof}
We compute
\begin{align*}
  &  \begin{pmatrix} 1 &0 \\  - (T_{00}^{-1}T_{01})^* & 1 \end{pmatrix}  \begin{pmatrix} T_{00} & T_{01} \\ T_{10} & T_{11} \end{pmatrix}
    \begin{pmatrix} 1 & -T_{00}^{-1}T_{01} \\0 & 1 \end{pmatrix}
     \\
     & =
     \begin{pmatrix} 1 &0 \\  - (T_{00}^{-1}T_{01})^* & 1 \end{pmatrix}  \begin{pmatrix} T_{00} & 0 \\ T_{10} & T_S\end{pmatrix}
     \\& =
\begin{pmatrix} T_{00} & 0 \\ - (T_{00}^{-1}T_{01})^*T_{00}+T_{10} & T_S \end{pmatrix} \eqqcolon R\text{.}
\end{align*}
Hence, for $\psi\in \mathcal{H}_1$ and $Q\coloneqq \begin{pmatrix} 1 & -T_{00}^{-1}T_{01} \\0 & 1 \end{pmatrix}$, we compute
\begin{align*}
  \Re \langle \psi,T_S \psi \rangle_{\mathcal{H}_1} & =
  \Re \langle \begin{pmatrix} 0 \\ \psi\end{pmatrix},R \begin{pmatrix} 0 \\ \psi\end{pmatrix}\rangle_{\mathcal{H}} \\
  & =
  \Re \langle \begin{pmatrix} 0 \\ \psi\end{pmatrix},  Q^*TQ\begin{pmatrix} 0 \\ \psi\end{pmatrix}\rangle_{\mathcal{H}} \\
  & =
  \Re \langle Q\begin{pmatrix} 0 \\ \psi\end{pmatrix},   T
   Q\begin{pmatrix} 0 \\ \psi\end{pmatrix}\rangle_{\mathcal{H}} \geq \alpha \|Q\begin{pmatrix} 0 \\ \psi\end{pmatrix}\|_{\mathcal{H}}^2 \geq \alpha \|\psi\|_{\mathcal{H}_1}^2.
\end{align*}
Note that, quite elementarily, it follows that $\Re T_{00}\geq \alpha$. Next, we observe that $T\in \mathcal{F}(\alpha,\beta;\mathcal{H})$ is equivalent to $T^{-1} \in \mathcal{F}(\frac{1}{\beta}, \frac{1}{\alpha};\mathcal{H})$. By interchanging roles of $\mathcal{H}_0$ and $\mathcal{H}_1$, we infer that $\Re (T^{-1})_{11}\geq 1/\beta$. Since $(T^{-1})_{11} = T_S^{-1}$, the claim follows.
\end{proof}

\begin{proof}[Proof of \Cref{thm:abstractSchur} \labelcref{thm:abstractSchur1}$\Rightarrow$\labelcref{thm:abstractSchur2}] By \Cref{lem:Assa}, $\ran(A)\subseteq \mathcal{H}$ is closed. We set $\iota_0 \colon \ker(A)\hookrightarrow \mathcal{H}$ and $\iota_1\colon \ran(A)\hookrightarrow \mathcal{H}$. Let $f\in \mathcal{H}$ and define $u_n\coloneqq (T_n+A)^{-1} f$ for all $n\in \N$.
Then, with $T_{(n),jk} \coloneqq \iota_j^*T_{(n)}\iota_k$ for $j,k\in \{0,1\}$ and $n\in\N$, we compute for all $n\in \N$:
\[
 \begin{pmatrix} f_0 \\ f_1 \end{pmatrix} \coloneqq \begin{pmatrix} \iota_0^* \\ \iota_1^*\end{pmatrix} f =  \begin{pmatrix} \iota_0^* \\ \iota_1^*\end{pmatrix}(T_n+A)\begin{pmatrix} \iota_0 & \iota_1\end{pmatrix}\begin{pmatrix} \iota_0^* \\ \iota_1^*\end{pmatrix}u_n,
\]
that is, by virtue of  \Cref{lem:Assa},
\begin{equation}\label{eq:01syst}
\begin{pmatrix} f_0 \\ f_1 \end{pmatrix} = \big(\begin{pmatrix} T_{n,00} & T_{n,01} \\ T_{n,10} & T_{n,11}\end{pmatrix} + \begin{pmatrix} 0 & 0 \\ 0 & \tilde{A} \end{pmatrix}\big) \begin{pmatrix} u_{n,0} \\ u_{n,1} \end{pmatrix}
\end{equation}with $u_{n,j} \coloneqq \iota_j^*u_n$ for $n\in\N$ and $j\in\{0,1\}$. Once again by virtue of  \Cref{lem:Assa}, $\tilde{A}$ is skew-selfadjoint and has compact inverse. 
We multiply \labelcref{eq:01syst} from the left by $\begin{pmatrix} 1 & 0 \\ -T_{n,10}T_{n,00}^{-1} & 1\end{pmatrix}$ to obtain ($T_{n,S}\coloneqq T_{n,11}-T_{n,10}T_{n,00}^{-1}T_{n,01}$ for $n\in\N$)
\[
 \begin{pmatrix} f_0 \\ f_1 -T_{n,10}T_{n,00}^{-1}f_0  \end{pmatrix} = \big(\begin{pmatrix} T_{n,00} & T_{n,01} \\ 0 & T_{n,S}\end{pmatrix} + \begin{pmatrix} 0 & 0 \\ 0 & \tilde{A} \end{pmatrix}\big) \begin{pmatrix} u_{n,0} \\ u_{n,1} \end{pmatrix}\text{.}\]
 Multiplying the first line by $T_{n,00}^{-1}$, we deduce
 \[
  \begin{pmatrix} u_{n,0} \\ u_{n,1} \end{pmatrix} = \begin{pmatrix} T_{n,00}^{-1} f_0 - T_{n,00}^{-1}T_{n,01}u_{n,1}\\ \big(T_{n,S}+\tilde{A}\big)^{-1}\big(f_1 -T_{n,10}T_{n,00}^{-1}f_0 \big)\end{pmatrix},
 \]where we used that $\tilde{A}$ is skew-selfadjoint (\Cref{lem:Assa}), $T_{n,S}\in\mathcal{F}(\alpha,\beta;\mathcal{H}_1)$ (\Cref{lem:schurpd}), and \Cref{prop:wpTA} to deduce that $T_{n,S}+\tilde{A}$ is continuously invertible for $n\in\N$.  Using \Cref{thm:factswot}~\labelcref{thm:factswot3}, we define 
 \[
    u_1 \coloneqq  \big(T_S+\tilde{A}\big)^{-1}\big(f_1 -T_{10}T_{00}^{-1}f_0 \big)
 \]
 and
 \[
    u_0 \coloneqq T_{00}^{-1} f_0 - T_{00}^{-1}T_{01}u_{1}.
 \]
 Then, by Schur convergence and \Cref{lem:convcomp}, $u_{n,1}\to u_1 \in \ran(A)$. Thus, $u_{n,0}\rightharpoonup u_0\in \ker(A)$, where we used \Cref{rem:fafact} with $\mathcal{Y}=\mathbb{K}$. A suitable rearrangement yields
 \[
     (T+A) u = f,\text{ where }u=\iota_0u_0+\iota_1u_1,
 \]and thus $(T_n+A)^{-1}\to (T+A)^{-1}$ in the weak operator topology.
\end{proof}

\section[Proof of Theorem 3.2.3 (II)$\Rightarrow$(I)]{{{Proof of \Cref{thm:abstractSchur} \labelcref{thm:abstractSchur2}$\Rightarrow$\labelcref{thm:abstractSchur1}}}}

The next proposition is a surprising result concerning the convergence of operators of the form $(T_n+A)^{-1}$ with $A$ skew-selfadjoint without the compactness assumption. However, in applications without the compactness assumption, it cannot be guaranteed that $(T_n)_{n\in\N}$ itself converges in any sense.

\begin{proposition}\label{prop:BuVr} Let $\mathcal{H}$ be a Hilbert space, $A\colon \dom(A)\subseteq \mathcal{H}\to \mathcal{H}$ skew-selfadjoint, and $(T_n)_{n\in\N}$ a sequence in $\mathcal{F}(\alpha,\beta;\mathcal{H})$. If $(T_n+A)^{-1}$ converges to some $S\in \Lb(\mathcal{H})$ in the weak operator topology, then there exists $T\in \mathcal{F}(\alpha,\beta;\mathcal{H})$ such that $S=(T+A)^{-1}$. This $T$ is unique in the following sense: If $\tilde{T} \in \Lb(\mathcal{H})$ satisfies $S^{-1}=\tilde{T}+A$, then $T=\tilde{T}$.
\end{proposition}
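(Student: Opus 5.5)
The plan is to construct $T$ from the data of the weak limit. I write $S_n\coloneqq (T_n+A)^{-1}$ and, for fixed $f\in\mathcal{H}$, $u_n\coloneqq S_nf\in\dom(A)$. I will use the reformulation $\Re T^{-1}\geq 1/\beta \iff \Re\langle x,Tx\rangle\geq \frac1\beta\|Tx\|^2$ for all $x$, which follows by substituting $x=T^{-1}y$.

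\emph{A priori estimates.} Since $A$ is skew-selfadjoint, $\Re\langle u_n,Au_n\rangle=0$. Hence
\[
\Re\langle u_n,f\rangle=\Re\langle u_n,T_nu_n\rangle\geq \max\{\alpha\|u_n\|^2,\tfrac1\beta\|T_nu_n\|^2\}.
\]
Consequently $(u_n)_n$ and $(T_nu_n)_n$ are bounded, and so is $(Au_n)_n=(f-T_nu_n)_n$. The graph of the closed operator $A$ is a closed subspace, hence weakly closed. Take any subsequence along which $Au_n\rightharpoonup h$; since $u_n\rightharpoonup Sf$, we get $Sf\in\dom(A)$ and $ASf=h$. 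The limit $h$ is thus independent of the subsequence, so by \Cref{prop:subseq} the whole sequence satisfies $T_nS_nf\rightharpoonup Rf\coloneqq f-ASf$.

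Therefore $R=1-AS$ is a WOT-limit of the uniformly bounded operators $T_nS_n$, and in particular $R\in\Lb(\mathcal{H})$. Weak lower semicontinuity of the norm, together with $\langle S_nf,f\rangle\to\langle Sf,f\rangle$, gives
\[
\Re\langle Sf,f\rangle\geq \max\{\alpha\|Sf\|^2,\tfrac1\beta\|Rf\|^2\}.
\]
Since $f=Rf+ASf$ and $\Re\langle Sf,ASf\rangle=0$, we have $\Re\langle Sf,f\rangle=\Re\langle Sf,Rf\rangle$. So, for all $f$,
\[
\Re\langle Sf,Rf\rangle\geq\alpha\|Sf\|^2,\qquad \Re\langle Sf,Rf\rangle\geq\tfrac1\beta\|Rf\|^2 .
\]

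\emph{Defining $T$.} From the second inequality and Cauchy--Schwarz, $\|Rf\|\leq\beta\|Sf\|$. If $Sf=0$ then $Rf=0$, so $f=Rf+ASf=0$; thus $S$ is injective. Define $T\coloneqq RS^{-1}$ on $\ran(S)$. This is bounded with norm at most $\beta$, and it satisfies $\Re\langle x,Tx\rangle\geq \alpha\|x\|^2$ and $\Re\langle x,Tx\rangle\geq\frac1\beta\|Tx\|^2$ for $x\in\ran(S)$.

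Density of $\ran(S)$ follows by applying the same argument to $S_n^*=(T_n^*-A)^{-1}$, which converges in the WOT to $S^*$. Here $T_n^*\in\mathcal{F}(\alpha,\beta;\mathcal{H})$ and $-A$ is skew-selfadjoint, so $S^*$ is injective and $\overline{\ran}(S)=\ker(S^*)^\perp=\mathcal{H}$. Extend $T$ continuously to $\mathcal{H}$. Both inequalities are continuous in $x$, so they persist, and hence $T\in\mathcal{F}(\alpha,\beta;\mathcal{H})$.

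\emph{Identifying $S$.} For $x=Sf$ we have $x\in\dom(A)$ and $(T+A)x=Rf+ASf=f$, so $(T+A)S=1$. By \Cref{prop:wpTA}, $T+A$ is bijective, and therefore $S=(T+A)^{-1}$.

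\emph{Uniqueness.} If $S^{-1}=\tilde T+A$ with $\tilde T\in\Lb(\mathcal{H})$, then the domain of $S^{-1}$ is $\ran(S)=\dom(A)$. On $\dom(A)$ we get $\tilde T=S^{-1}-A=T$, and density of $\dom(A)$ together with boundedness of both operators gives $\tilde T=T$.

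I expect the main obstacle to be the step yielding $Sf\in\dom(A)$ and the boundedness of $R S^{-1}$. The identity $\Re\langle Sf,f\rangle=\Re\langle Sf,Rf\rangle$, obtained from skew-selfadjointness after passing to the limit, is what makes the inverse-coercivity constant $1/\beta$ survive the weak limit.
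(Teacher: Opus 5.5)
Your proof is correct and follows the paper's argument essentially step by step: your $R=1-AS$ is the paper's $K$, and the two limiting inequalities $\alpha\|Sf\|^2\leq\Re\langle Sf,Rf\rangle$ and $\frac1\beta\|Rf\|^2\leq\Re\langle Sf,Rf\rangle$ give injectivity of $S$ (you obtain them by weak lower semicontinuity, the paper by the equivalent Cauchy--Schwarz trick). As in the paper, $T$ is then defined by $T(Sf)=Rf$ and extended by continuity, and uniqueness follows from density of $\dom(A)$.

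The only real deviation is how you get density of $\ran(S)$. You use duality via the adjoint sequence $(T_n^*-A)^{-1}\to S^*$, whereas the paper uses the one-line estimate $0=\Re\langle g,Sg\rangle=\Re\langle Rg,Sg\rangle\geq\alpha\|Sg\|^2$ for $g\perp\ran(S)$. You also make explicit two points the paper leaves implicit: $Sf\in\dom(A)$, via weak closedness of the graph, and $(T+A)S=1$.
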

\begin{proof}
 For $f\in \mathcal{H}$ define $K\in \Lb(\mathcal{H})$ via $Kf\coloneqq f-ASf$. This is well-defined as $(T_n+A)^{-1}$ defines a uniformly bounded sequence in $\Lb(\mathcal{H},\dom(A))$ by \Cref{prop:wpTA}, i.e., $S\in \Lb(\mathcal{H},\dom(A))$. Next, fix $f\in \mathcal{H}$ and let $u_n\coloneqq (T_n+A)^{-1}f$ for $n\in\N$. For the same reason that we argued when defining $K$, $Au_n\rightharpoonup AS f \in \mathcal{H}$ as $n\to\infty$. Since
 \[
     T_n u_n +Au_n = f
 \]
for $n\in \N$,  we get $T_nu_n\rightharpoonup f- ASf = Kf \in \mathcal{H}$ as $n\to \infty$. In particular,
\begin{multline*}
   \Re \langle u_n ,T_n u_n \rangle_{\mathcal{H}} =    \Re \langle  u_n ,(T_n +A) u_n \rangle_{\mathcal{H}} = \Re \langle u_n,f\rangle_{\mathcal{H}}\\ \to \Re \langle Sf,f\rangle_{\mathcal{H}} = \Re \langle Sf,f-ASf\rangle_{\mathcal{H}} =\Re \langle Sf,Kf\rangle_{\mathcal{H}}
\end{multline*}
as $n\to\infty$.
Since $u_n\rightharpoonup Sf$, we can argue similarly to~\labelcref{eq:CauchySchwarzReplacesLimInf} and obtain
\[
    \alpha \|Sf\|_{\mathcal{H}}^2 = \lim_{n\to\infty}\alpha \frac{\abs{\langle Sf, u_n\rangle_{\mathcal{H}}}^2}{\|Sf\|_{\mathcal{H}}^2}\leq \lim_{n\to\infty} \Re \langle u_n,T_n u_n\rangle_{\mathcal{H}} = \Re \langle Sf,Kf\rangle_{\mathcal{H}}\text{,}
\]
as well as
\[
    \frac{1}{\beta} \|Kf\|_{\mathcal{H}}^2 = \lim_{n\to\infty}\frac{1}{\beta} \frac{\abs{\langle Kf, T_n u_n\rangle_{\mathcal{H}}}^2}{\|Kf\|_{\mathcal{H}}^2}\leq \lim_{n\to\infty} \Re \langle u_n,T_n u_n\rangle_{\mathcal{H}} = \Re \langle Sf,Kf\rangle_{\mathcal{H}}\text{.}
\]
Hence, $f\in \ker(S)$ implies $f\in\ker (K)$, and further $0=Kf = f-ASf = f$, i.e., $S$ is one-to-one. Next, take $g\in \ran(S)^\perp$. We have
\[
 0 = \Re  \langle g, Sg\rangle_{\mathcal{H}} =\Re \langle g-ASg, Sg\rangle_{\mathcal{H}} = \Re \langle Kg, Sg\rangle_{\mathcal{H}} \geq \alpha \|Sg\|_{\mathcal{H}}^2.
\]
As $S$ is one-to-one, $Sg=0$ implies $g=0$, i.e., $\ran(S)\subseteq\mathcal{H}$ is dense. 

We define $T\colon \ran(S)\subseteq \mathcal{H}\to \mathcal{H}$ via $T(Sf)\coloneqq Kf$. $T$ is well-defined, densely defined, and continuous. Therfore, $T$  uniquely extends to a bounded linear operator in $\Lb(\mathcal{H})$. We have already proven the inequality
\[
  \forall f\in \mathcal{H}: \alpha \|Sf\|_{\mathcal{H}}^2\leq \Re \langle Sf,Kf\rangle_{\mathcal{H}} = \Re \langle Sf,T(Sf)\rangle_{\mathcal{H}}\text{.}
\]This implies $\Re T\geq \alpha$ and, thus, the existence of a bounded inverse. Furthermore, we have already shown 
\[
   \forall f\in \mathcal{H}: \frac{1}{\beta} \|T(Sf)\|_{\mathcal{H}}^2 =   \frac{1}{\beta} \|Kf\|_{\mathcal{H}}^2\leq  \Re \langle Kf,Sf\rangle_{\mathcal{H}} = \Re \langle T(Sf),Sf\rangle_{\mathcal{H}},
\]
which eventually yields $\Re T^{-1}\geq 1/\beta$. Hence, $T\in \mathcal{F}(\alpha,\beta;\mathcal{H})$.

For the uniqueness statement, we see that $\tilde{T}+A={T}+A$ implies $\tilde{T}\restriction_{\dom(A)}={T}\restriction_{\dom(A)}$. By continuity of both $\tilde{T}$ and $T$, that is enough to show their identity on the whole space $\mathcal{H}$ because $\dom(A)$ is dense.
\end{proof}

\begin{proof}[Proof of \Cref{thm:abstractSchur} \labelcref{thm:abstractSchur2}$\Rightarrow$\labelcref{thm:abstractSchur1}]
By \Cref{prop:BuVr}, we find $T\in \mathcal{F}(\alpha,\beta;\mathcal{H})$ such that $S=(T+A)^{-1}$. By \Cref{thm:compSchur} and \Cref{exer:3.05}, we may choose a subsequence of $(T_n)_{n\in\N}$ (not relabelled) such that $T_n$ converges to some $\tilde{T}\in\mathcal{M}(\mathcal{H}_0,\mathcal{H}_1)$ in $\tau(\ker(A),\ran(A))$ as $n\to\infty$. By \labelcref{thm:abstractSchur1}$\Rightarrow$\labelcref{thm:abstractSchur2}, we deduce $S=(\tilde{T}+A)^{-1}$. Hence, $\tilde{T}=T$.
\end{proof}

\section{Applications}\label{sec:ex}


The remainder of these lecture notes is concerned with applications to partial differential equations. The partial differential equations we will be looking at will be set up in time-space for a spatial Hilbert space $\mathcal{H}$, i.e., an $\mathcal{H}$-valued $\Lp{2}$-space. The partial differential equations will be of the form
\[
   (\partial_t M_0 + M_1 +A)U=F
\]
for a given right-hand side $F$, an unknown $U$, both functions on the time-space, as well as given bounded linear operators $M_0,M_1$ and a skew-selfadjoint operator $A$, all three acting with respect to the spatial component $\mathcal{H}$. The operator $\partial_t$ stands for the (weak) time-derivative. Concentrating on homogenisation results in these lectures, we take the fact for granted that a convenient Hilbert space setting, where the above abstract equation can be established and studied, exists. Note that in this setting initial conditions appear in a certain hidden way at time $t=-\infty$. In order to apply the results from the present lecture notes, we need the following theorem.

\begin{theorem}\label{thm:FourLapWOT} Under suitable positive definitenss conditions on $\lambda \mapsto \lambda M_{n,0} + M_{n,1}$ for $n\in\N$, sequences $(M_{n,0})_{n\in\N}$, $(M_{n,1})_{n\in\N}$ in $\Lb(\mathcal{H})$, and an operator-valued, holomorphic function $\lambda\mapsto M(\lambda)$ from a subset of $\C$ to $\Lb(\mathcal{H})$, the following conditions are equivalent:
\begin{enumerate}
  \item $\overline{(\partial_t M_{n,0} + M_{n,1} +A)}^{-1}\to \overline{(\partial_t M(\partial_t) +A)}^{-1}$ in the weak operator topology,
  \item for all sufficiently large real numbers $\lambda$, we have
  \[
  \overline{(\lambda M_{n,0} + M_{n,1} +A)}^{-1}\to \overline{(\lambda M(\lambda) +A)}^{-1}
  \] in the weak operator topology.
\end{enumerate}
\end{theorem}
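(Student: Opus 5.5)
The plan is to pass to the Fourier--Laplace transformation in time, which unitarily diagonalises $\partial_t$. Let $\mathcal{L}_\nu\colon \Lp{2,\nu}(\R;\mathcal{H})\to \Lp{2}(\R;\mathcal{H})$ denote the unitary Fourier--Laplace transform on the exponentially weighted space. Then $\partial_t = \mathcal{L}_\nu^*(\mathrm{i}m+\nu)\mathcal{L}_\nu$, where $m$ is multiplication by the independent variable. Consequently, for every $n$,
\[
\overline{(\partial_t M_{n,0}+M_{n,1}+A)}^{-1} = \mathcal{L}_\nu^* \bigl(\overline{(z M_{n,0}+M_{n,1}+A)}^{-1}\bigr)_{z=\mathrm{i}m+\nu}\mathcal{L}_\nu .
\]
The right-hand side is a multiplication operator by an operator-valued function. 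The same representation holds for the limit with $zM(z)$, since the positive definiteness conditions give uniform bounds $\|(zM_{n,0}+M_{n,1}+A)^{-1}\|\leq 1/c$ on a half plane $\Re z>\nu_0$, via \Cref{prop:wpTA}. The problem is thereby reduced to relating weak operator convergence of the multiplication operators to pointwise convergence of the resolvent functions $R_n(z)\coloneqq\overline{(zM_{n,0}+M_{n,1}+A)}^{-1}$ and $R(z)\coloneqq \overline{(zM(z)+A)}^{-1}$.

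\textbf{(ii)$\Rightarrow$(i).} Each $R_n$ is holomorphic and uniformly bounded on the half plane. By a Vitali/Montel argument for operator-valued holomorphic functions in the weak operator topology, we apply the scalar Vitali theorem to the functions $z\mapsto\langle x,R_n(z)y\rangle$. Convergence on the real ray $\lambda>\nu_0$, which has accumulation points in the domain, then yields convergence for all $z$ with $\Re z>\nu_0$, locally uniformly, to $\langle x,R(z)y\rangle$. Next we test the multiplication operators against $\Lp{2}$-functions of the form $\phi\otimes x$ and $\psi\otimes y$. Pointwise convergence on the line $\mathrm{i}\R+\nu$ together with the uniform bound allows dominated convergence in the integral over the line. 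Uniform boundedness and density of such simple tensors extend this to weak operator convergence on the whole space. Conjugating by the unitary $\mathcal{L}_\nu$ preserves the weak operator topology, which gives (i).

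\textbf{(i)$\Rightarrow$(ii).} This direction is the main obstacle, because weak convergence of multiplication operators only yields weak-$*$ convergence of the scalar functions $t\mapsto\langle x,R_n(\mathrm{i}t+\nu)y\rangle$ in $\Lp{\infty}(\R)$, not pointwise convergence. I would first use the uniform bound and Montel's theorem to extract a subsequence along which $\langle x,R_n(z)y\rangle$ converges locally uniformly on the half plane, and hence also in the weak-$*$ sense on the line, to some holomorphic $g_{x,y}$. Assumption (i) forces $g_{x,y}=\langle x,R(\cdot)y\rangle$ almost everywhere on the line $\mathrm{i}\R+\nu$. The identity theorem, applied via the boundary values, or more simply by using several $\nu$ together with holomorphy in $z$, then gives equality on the whole half plane. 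Alternatively one can recover values in the interior from boundary values via the Poisson integral, which is weak-$*$ continuous. Finally, the subsequence principle (\Cref{prop:subseq}) yields convergence of the whole sequence, and in particular at real $\lambda$.
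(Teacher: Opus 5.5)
The notes contain no proof of this statement. It is imported from \cite[Theorem~1.3]{BuErWa24} and explicitly taken for granted, with ``suitable'' left unspecified, so there is no paper route to compare yours with. Judged on its own, your argument is correct, provided you state the hypotheses it actually uses:
\begin{enumerate}
\item[(a)] $\Re(zM_{n,0}+M_{n,1})\geq c$ for all complex $z$ with $\Re z>\nu_0$, with $c>0$ independent of $n$. The remark after the theorem only speaks of real $\lambda$. Requiring the bound for non-real $z$ forces $M_{n,0}$ to be selfadjoint. Without uniformity in $n$, neither Vitali/Montel nor dominated convergence is available.
\item[(b)] $z\mapsto (zM(z)+A)^{-1}$ exists and is bounded and holomorphic on the same half plane.
\end{enumerate}
Contrary to what your first paragraph suggests, (b) does not follow from the bounds for $M_{n,0},M_{n,1}$. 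It is what gives meaning to the limit in (i). It is also exactly what you need in (ii)$\Rightarrow$(i) to identify the Vitali limit with $\langle x,R(z)y\rangle$ off the real axis. Finally, take $\nu>\nu_0$ strictly. Then the line $\mathrm{i}\R+\nu$ lies inside the domain of holomorphy and the identity theorem applies to it directly, so your hedging about boundary values and several $\nu$ is unnecessary.

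You have, however, misjudged which direction is the hard one. (i)$\Rightarrow$(ii) is immediate via the Poisson integral you mention only as an alternative. For real $\lambda>\nu$ put $P_\lambda(t)\coloneqq \frac{1}{\pi}\frac{\lambda-\nu}{(\lambda-\nu)^2+t^2}$. The bounded holomorphic function $F_n(z)\coloneqq\langle x,R_n(z)y\rangle$ satisfies $F_n(\lambda)=\int_\R P_\lambda(t)F_n(\mathrm{i}t+\nu)\,\dd t$. Testing the multiplication operators from (i) against $\sqrt{P_\lambda}\otimes x$ and $\sqrt{P_\lambda}\otimes y$ therefore yields $F_n(\lambda)\to\langle x,R(\lambda)y\rangle$ at once. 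This needs no Montel argument and no subsequence principle; your Montel route is valid, just longer. Holomorphy is genuinely indispensable only in (ii)$\Rightarrow$(i), where convergence on a ray has to be propagated to the whole line, and there your Vitali argument is the right tool.
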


In the latter theorem `suitable' always implies that for any fixed sufficiently large $\lambda>0$, we have $\Re (\lambda M_{n,0} + M_{n,1})\geq c$ for some $c>0$. It is this property we are going to use in the following. Moreover, $M(\partial_t)$ is interpreted as operator-valued functional calculus for the time-derivative operator. In order to understand the following parts it is only necessary to identify $M(\lambda)$ for any fixed $\lambda$ as a bounded linear operator in $\mathcal{H}$.

\subsection{Thermoelasticity}

For the sake of brevity and clarity, we focus on scalar elasticity only. That allows, for instance, to address the thermo-elastic properties of a membrane. We deem only the motions of the particles perpendicular to the null-position of the membrane relevant.  We write (a simplified version of) the equations in question as follows:
\[
   \left( \partial_t M_0 +M_1  + \begin{pmatrix} 0 & \div & 0 & 0 \\ \grad_0 & 0 & 0 & 0 \\ 0 & 0 & 0 & \div \\ 0 & 0 & \grad_0 & 0 \end{pmatrix} \right)\begin{pmatrix} v \\ T \\ \theta\\ Q \end{pmatrix} = \begin{pmatrix} f \\ 0 \\ g\\  0 \end{pmatrix},
\] 
where we assume the clamped boundary conditions of fixing the velocity at the boundary in $0$ position for the velocity $v$ as well as boundary conditions of total cooling to $0$ Kelvin at the boundary for the heat $\theta$. Moreover, $\div\coloneqq-\grad_0^*$ as an unbounded operator in $\Lp{2}(\Omega)$, and $T$ and $Q$ are the stress and the heat flux respectively. The coupling of heat and elastic part is visible in the definitions of $M_0$ and $M_1$ given by
\begin{equation}\label{eq:M0M1}
   M_0 \coloneqq  \begin{pmatrix} \rho_0 & 0 & 0 & 0 \\ 0 & C^{-1} & C^{-1}\Gamma & 0 \\ 0 & \Gamma^*C^{-1} & w+\Gamma^*C^{-1}\Gamma & 0 \\ 0 & 0 & 0 & 0 \end{pmatrix}, \; M_1 \coloneqq \begin{pmatrix} 0 & 0 & 0 & 0 \\ 0 & 0 & 0 & 0 \\ 0 & 0 & 0 & 0 \\ 0 & 0 & 0 & \kappa^{-1}  \end{pmatrix},
\end{equation}
where $C$ and $\kappa$ are material specific matrices and $\rho_0$ and $w$ material specific scalars describing the physical properties of the underlying materials. $\Gamma$ is a constant (scalar) parameter. In order to treat homogenisation problems, we need to consider sequences of $C$, $\kappa$, $w$, and $\rho_0$. However, before doing that, we quickly address skew-selfadjointness of the operator containing the spatial derivatives as part of an exercise.

\begin{proposition}\label{prop:adjblock} Let $\mathcal{H}_0,\mathcal{H}_1$ be Hilbert spaces, and both $B\colon \dom(B)\subseteq \mathcal{H}_0\to \mathcal{H}_1$ and $C\colon \dom(C)\subseteq \mathcal{H}_1\to \mathcal{H}_0$ densely defined and closed. Then
\[
    \begin{pmatrix} 0 & B \\ C & 0
    \end{pmatrix}^* =     \begin{pmatrix} 0 & C^* \\ B^* & 0
    \end{pmatrix}.
\]

\end{proposition}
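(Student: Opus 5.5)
Write $L\coloneqq\begin{pmatrix} 0 & B \\ C & 0\end{pmatrix}$ as an operator in $\mathcal{H}\coloneqq\mathcal{H}_0\oplus\mathcal{H}_1$, with $\dom(L)=\dom(C)\times\dom(B)$ (first component in $\mathcal{H}_0$, second in $\mathcal{H}_1$), so $L(x,y)=(By,Cx)$. Since $\dom(B)$ and $\dom(C)$ are dense, $\dom(L)$ is dense, and $L^*$ is well-defined. Let $R\coloneqq\begin{pmatrix} 0 & C^* \\ B^* & 0\end{pmatrix}$ with $\dom(R)=\dom(B^*)\times\dom(C^*)$, acting by $R(u,w)=(C^*w,B^*u)$. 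I will show $R\subseteq L^*$ and $L^*\subseteq R$ directly from the definition of the adjoint recalled at the start of Lecture~3.

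For $R\subseteq L^*$, take $(u,w)\in\dom(R)$ and $(x,y)\in\dom(L)$. Then
\[
\langle L(x,y),(u,w)\rangle_{\mathcal{H}}=\langle By,u\rangle_{\mathcal{H}_1}+\langle Cx,w\rangle_{\mathcal{H}_0}=\langle y,B^*u\rangle_{\mathcal{H}_1}+\langle x,C^*w\rangle_{\mathcal{H}_0}=\langle (x,y),R(u,w)\rangle_{\mathcal{H}}.
\]
Hence $(u,w)\in\dom(L^*)$ with $L^*(u,w)=R(u,w)$.

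For $L^*\subseteq R$, take $(u,w)\in\dom(L^*)$ and write $L^*(u,w)=(v_0,v_1)$. The key step is to decouple by testing with vectors having one zero component. First choose $(x,0)$ with $x\in\dom(C)$. This is admissible because $0\in\dom(B)$, and it gives $\langle Cx,w\rangle_{\mathcal{H}_0}=\langle x,v_0\rangle$ for all $x\in\dom(C)$. Hence $w\in\dom(C^*)$ and $C^*w=v_0$. Next choose $(0,y)$ with $y\in\dom(B)$. This gives $\langle By,u\rangle=\langle y,v_1\rangle$, so $u\in\dom(B^*)$ and $B^*u=v_1$. Thus $(u,w)\in\dom(R)$ and $L^*(u,w)=R(u,w)$.

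There is no real obstacle here. The only care needed is bookkeeping of which component lives in which space, together with the observation that $\dom(L)$ is a product of domains, so that test vectors with a zero entry are allowed. Closedness of $B$ and $C$ is not needed for the identity; density alone suffices.
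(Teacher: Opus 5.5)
Your proof is correct. The paper gives no proof of its own here; it leaves the statement as \Cref{exer:3.1}. Your argument is the standard direct one: the inclusion $R\subseteq L^*$ by computation, and $L^*\subseteq R$ by testing with $(x,0)$ and $(0,y)$ to decouple the two components.

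There is one bookkeeping slip. Since $\dom(C)\subseteq\mathcal{H}_1$ and $\dom(B)\subseteq\mathcal{H}_0$, the operator $L$ acts in $\mathcal{H}_1\oplus\mathcal{H}_0$, with the first component in $\mathcal{H}_1$, not in $\mathcal{H}_0\oplus\mathcal{H}_1$. Accordingly, the pairings should read $\langle y,B^*u\rangle_{\mathcal{H}_0}$ and $\langle x,C^*w\rangle_{\mathcal{H}_1}$. This is purely notational and does not affect the argument.

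Your remark that closedness of $B$ and $C$ is not needed, only dense definedness, is also correct.
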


Finally, before we can apply our theoretical findings to our first example, we have to convince ourselves of the following result, which will be left as an (elementary) exercise.

\begin{lemma}\label{lem:inv} Let $\mathcal{H}$ be a Hilbert space, $\mathcal{H}_0\subseteq \mathcal{H}$ a closed subspace, $\mathcal{H}_1\coloneqq \mathcal{H}_0^\bot$. Then
\[
    \Big(\mathcal{M}(\mathcal{H}_0,\mathcal{H}_1),\tau(\mathcal{H}_0,\mathcal{H}_1)\Big) \ni T\mapsto T^{-1} \in  \Big([\mathcal{M}(\mathcal{H}_0,\mathcal{H}_1)]^{-1},\tau(\mathcal{H}_1,\mathcal{H}_0)\Big)
\]
is a homoemorphism.
\end{lemma}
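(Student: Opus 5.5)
Writing $a\in\mathcal{M}(\mathcal{H}_0,\mathcal{H}_1)$ in block form with respect to $\mathcal{H}=\mathcal{H}_0\oplus\mathcal{H}_1$, I will use the block inversion formula derived in the proof of \Cref{thm:HnlH}. With $a_S\coloneqq a_{11}-a_{10}a_{00}^{-1}a_{01}$, it reads
\[
a^{-1}=\begin{pmatrix} a_{00}^{-1}+a_{00}^{-1}a_{01}a_S^{-1}a_{10}a_{00}^{-1} & -a_{00}^{-1}a_{01}a_S^{-1}\\ -a_S^{-1}a_{10}a_{00}^{-1} & a_S^{-1}\end{pmatrix}.
\]

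\textbf{Step 1: the map is a bijection.} Set $b\coloneqq a^{-1}$. Since $a$ is invertible, so is $a_S$, and $b_{11}=a_S^{-1}$ is therefore continuously invertible. Hence $b\in\mathcal{M}(\mathcal{H}_1,\mathcal{H}_0)$, meaning the roles of the two subspaces are swapped. This shows that $[\mathcal{M}(\mathcal{H}_0,\mathcal{H}_1)]^{-1}\subseteq\mathcal{M}(\mathcal{H}_1,\mathcal{H}_0)$, and the set $[\mathcal{M}(\mathcal{H}_0,\mathcal{H}_1)]^{-1}$ is by definition the image. The map $T\mapsto T^{-1}$ is injective, so it is a bijection onto its image, with inverse $b\mapsto b^{-1}$.

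\textbf{Step 2: identify the four Schur quantities of $b$.} These are the quantities defining $\tau(\mathcal{H}_1,\mathcal{H}_0)$, and reading them off the formula above gives:
\begin{itemize}
\item $b_{11}^{-1}=a_S$;
\item $b_{11}^{-1}b_{10}=-a_{10}a_{00}^{-1}$;
\item $b_{01}b_{11}^{-1}=-a_{00}^{-1}a_{01}$;
\item $b_{00}-b_{01}b_{11}^{-1}b_{10}=a_{00}^{-1}$.
\end{itemize}
The last identity follows by substituting the blocks: $a_{00}^{-1}+a_{00}^{-1}a_{01}a_S^{-1}a_{10}a_{00}^{-1}-(a_{00}^{-1}a_{01}a_S^{-1})a_S(a_S^{-1}a_{10}a_{00}^{-1})=a_{00}^{-1}$.

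Thus the four maps defining $\tau(\mathcal{H}_1,\mathcal{H}_0)$, composed with inversion, are exactly the four maps defining $\tau(\mathcal{H}_0,\mathcal{H}_1)$, up to sign and a permutation. Negation is a homeomorphism in the weak operator topology.

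\textbf{Step 3: continuity in both directions.} By the universal property of initial topologies, a map into a space carrying an initial topology is continuous if and only if all its compositions with the defining maps are continuous. Each composition of the defining maps of $\tau(\mathcal{H}_1,\mathcal{H}_0)$ with inversion is $\pm$ a defining map of $\tau(\mathcal{H}_0,\mathcal{H}_1)$, hence continuous. Therefore inversion is continuous.

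For the inverse map, I will express the defining maps of $\tau(\mathcal{H}_0,\mathcal{H}_1)$ at $b^{-1}$ as $\pm$ defining maps of $\tau(\mathcal{H}_1,\mathcal{H}_0)$ at $b$, using the same identities read backwards. This yields continuity of $b\mapsto b^{-1}$ on the image with its subspace topology.

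\textbf{Main obstacle.} The only real work is the algebraic verification of the block identities, in particular the fourth one. I expect this to be routine, since everything reduces to the factorisation displayed in the proof of \Cref{thm:HnlH}.
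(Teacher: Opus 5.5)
Your proof is correct and follows the route the paper intends: the lemma is left as \Cref{exer:3.2} with the hint to use the block inverse formula from the proof of \Cref{thm:HnlH}, so that the four defining maps of $\tau(\mathcal{H}_1,\mathcal{H}_0)$ at $T^{-1}$ are, up to sign and permutation, the four defining maps of $\tau(\mathcal{H}_0,\mathcal{H}_1)$ at $T$, and the universal property of initial topologies finishes the argument. As a small bonus, running your Step~1 with $\mathcal{H}_0$ and $\mathcal{H}_1$ swapped gives $[\mathcal{M}(\mathcal{H}_0,\mathcal{H}_1)]^{-1}=\mathcal{M}(\mathcal{H}_1,\mathcal{H}_0)$, so you do not need the subspace-topology caveat.
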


\begin{theorem} Let $\Omega\subseteq\R^d$ be open and bounded, $0<\alpha\leq\beta$. Assume that $(C_n)_{n\in\N}$ and $(\kappa_n)_{n\in\N}$  $\Htopo$-converge to $C$ and $\kappa$ in $M(\alpha,\beta;\Omega)$, respectively. Moreover, let $(w_n)_{n\in\N}$ and $(\rho_{0,n})_{n\in\N}$ satisfy $\alpha\leq w_n\leq \beta$ and $\alpha\leq \rho_{0,n}\leq \beta$ respectively for all $n\in 
N$, and assume they $\sigma(L_\infty(\Omega),L_1(\Omega))$-converge to some $w$ and $\rho_0$. Then, for all $\lambda\in \R$ sufficiently large
\[
   (\lambda M_{0,n}+M_{1,n}+A)^{-1}\to    (\lambda M_{0}+M_{1}+A)^{-1}
\]
in the weak operator topology, where $A\coloneqq\begin{psmallmatrix} 0 & \div & 0 & 0 \\ \grad_0 & 0 & 0 & 0 \\ 0 & 0 & 0 & \div \\ 0 & 0 & \grad_0 & 0 \end{psmallmatrix}$, and $M_{0,n}$ and $M_{1,n}$ are given as in \labelcref{eq:M0M1} for $n\in\N$.
\end{theorem}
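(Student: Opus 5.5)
The plan is to deduce the statement from \Cref{thm:abstractSchur}, implication \labelcref{thm:abstractSchur1}$\Rightarrow$\labelcref{thm:abstractSchur2}. We work in the separable Hilbert space $\mathcal{H}\coloneqq \Lp{2}(\Omega)\oplus\Lp{2}(\Omega)^d\oplus\Lp{2}(\Omega)\oplus\Lp{2}(\Omega)^d$ with $T_n\coloneqq\lambda M_{0,n}+M_{1,n}$.

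\emph{Step 1: hypotheses on $A$.} Since $\div=-\grad_0^*$, \Cref{prop:adjblock} applied to each $2\times 2$ block shows that $A$ is skew-selfadjoint. We have
\[
\ker(A)=\{0\}\oplus\mathfrak{g}_0(\Omega)^\perp\oplus\{0\}\oplus\mathfrak{g}_0(\Omega)^\perp,
\]
and, by \Cref{prop:Poincare}, $\ran(A)=\Lp{2}(\Omega)\oplus\mathfrak{g}_0(\Omega)\oplus\Lp{2}(\Omega)\oplus\mathfrak{g}_0(\Omega)$ is closed. For the compactness assumption, I would check instead that $\tilde A^{-1}$ from \Cref{lem:Assa} is compact; for closed range this is equivalent to compactness of the embedding $\dom(A)\cap\ker(A)^\perp\hookrightarrow\mathcal{H}$. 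On each $2\times2$ block, $\tilde A^{-1}$ is off-diagonal with entries $(\iota_0^*\cgrad)^{-1}\colon\mathfrak{g}_0\to\cH^1\hookrightarrow\Lp{2}$ and $(\div\restriction_{\mathfrak{g}_0})^{-1}$. The first entry is compact by \Cref{thm:RKT}. The second is, up to a sign, the adjoint of the first, hence compact by Schauder's theorem.

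\emph{Step 2: uniform bounds.} $T_n$ is block diagonal with respect to the slots $v$, $(T,\theta)$, $Q$. The blocks are $\lambda\rho_{0,n}$, $\kappa_n^{-1}$, and
\[
\lambda \begin{pmatrix} 1&0\\ \Gamma^* & 1\end{pmatrix}\begin{pmatrix} C_n^{-1}&0\\0&w_n\end{pmatrix}\begin{pmatrix} 1&\Gamma\\0&1\end{pmatrix}.
\]
The factorisation, with a fixed triangular isomorphism, shows that $\Re$ of this block and of its inverse is bounded below uniformly in $n$; $\kappa_n^{-1}\in M(1/\beta,1/\alpha;\Omega)$. Hence all $T_n$ lie in one $\mathcal{F}(\alpha',\beta';\mathcal{H})$ for fixed large $\lambda$, and the same holds for the limit $T\coloneqq\lambda M_0+M_1$.

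\emph{Step 3: Schur convergence $T_n\to T$ in $\tau(\ker A,\ran A)$.} This can be checked blockwise, since the decomposition $\ker(A)\oplus\ran(A)$ respects the slots and the four Schur maps act diagonally across the slots.
\begin{itemize}
\item For the $v$-slot, $\ker$-part $\{0\}$: by \Cref{ex:trivialcases} and \Cref{exer:1.3}, the weak-$*$ convergence of $\rho_{0,n}$ suffices.
\item For the $Q$-slot, with decomposition $\mathfrak{g}_0^\perp\oplus\mathfrak{g}_0$: \Cref{thm:HnlH} gives $\kappa_n\to\kappa$ in $\tau(\mathfrak{g}_0,\mathfrak{g}_0^\perp)$, and \Cref{lem:inv} turns this into $\kappa_n^{-1}\to\kappa^{-1}$ in $\tau(\mathfrak{g}_0^\perp,\mathfrak{g}_0)$, which is exactly what is needed.
\item For the $(T,\theta)$-slot, with $\ker$-part $\mathfrak{g}_0^\perp\oplus\{0\}$ and range part $\mathfrak{g}_0\oplus\Lp{2}$: I would again pass to inverses via \Cref{lem:inv}. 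The inverse of the block is
\[
\lambda^{-1}\begin{pmatrix} 1&-\Gamma\\0&1\end{pmatrix}\begin{pmatrix} C_n&0\\0&w_n^{-1}\end{pmatrix}\begin{pmatrix} 1&0\\-\Gamma^*&1\end{pmatrix}.
\]
I would then compute its four Schur pieces relative to $(\mathfrak{g}_0\oplus\Lp{2})\oplus(\mathfrak{g}_0^\perp\oplus\{0\})$ explicitly, in terms of the Schur pieces of $C_n$ in $\tau(\mathfrak{g}_0,\mathfrak{g}_0^\perp)$ (given by \Cref{thm:HnlH}) and of $w_n^{-1}$, $w_n$.
\end{itemize}

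This last computation is the main obstacle. The constant coupling $\Gamma$ mixes $\theta\in\Lp{2}$ with $\mathfrak{g}_0$ and $\mathfrak{g}_0^\perp$ components, and products of only weakly convergent sequences must be avoided. Wherever such a product appears, I would use that one factor is fixed ($\Gamma$, or a projection) or converges strongly. If a weak-times-weak product remains, I would go back to \Cref{lem:convcomp}, or to a div-curl argument as in \Cref{thm:dcl0}. Given Step 3, \Cref{thm:abstractSchur} yields $(T_n+A)^{-1}\to(T+A)^{-1}$ in the weak operator topology, which is the claim.
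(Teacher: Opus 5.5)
Your overall strategy is sound: apply \Cref{thm:abstractSchur} to $A$ itself and check Schur convergence slot by slot. Steps~1--2 are in fact more careful than the paper, which never checks the compactness hypothesis or the uniform bound $T_n\in\mathcal{F}(\alpha',\beta';\mathcal{H})$; your Schauder argument for $(\div\iota_0)^{-1}$ is correct.

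The gap is the coupled $(T,\theta)$-slot in Step~3. This is the only non-routine point of the whole proof, and you leave it as a computation not yet carried out. You even anticipate weak-times-weak products that would require \Cref{lem:convcomp} or a div-curl argument. The missing observation is that the coupling is \emph{triangular with respect to the decomposition}, so no such products occur. Set $\mathcal{H}_0\coloneqq\mathfrak{g}_0(\Omega)^\perp\oplus\{0\}$, $\mathcal{H}_1\coloneqq\mathfrak{g}_0(\Omega)\oplus\Lp{2}(\Omega)$ and $P\coloneqq\begin{psmallmatrix}1&\Gamma\\0&1\end{psmallmatrix}$. Then $P$ is the identity on $\mathcal{H}_0$, so $P=\begin{psmallmatrix}1&P_{01}\\0&P_{11}\end{psmallmatrix}$ with respect to $\mathcal{H}_0\oplus\mathcal{H}_1$. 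Let $D_n\coloneqq\diag(C_n^{-1},w_n)$ and $B_n\coloneqq P^*D_nP$; the block of $T_n$ is then $\lambda B_n$, and the factor $\lambda$ is harmless. Direct multiplication gives
\[
B_{n,00}^{-1}=D_{n,00}^{-1},\qquad B_{n,00}^{-1}B_{n,01}=P_{01}+D_{n,00}^{-1}D_{n,01}P_{11},\qquad B_{n,10}B_{n,00}^{-1}=P_{01}^*+P_{11}^*D_{n,10}D_{n,00}^{-1},
\]
\[
B_{n,11}-B_{n,10}B_{n,00}^{-1}B_{n,01}=P_{11}^*\bigl(D_{n,11}-D_{n,10}D_{n,00}^{-1}D_{n,01}\bigr)P_{11}.
\]
So every Schur map of $B_n$ is a fixed affine image of the corresponding Schur map of $D_n$. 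The Schur maps of $D_n$ are simply those of $C_n^{-1}$ in $\tau(\mathfrak{g}_0(\Omega)^\perp,\mathfrak{g}_0(\Omega))$ together with $w_n$ in the weak operator topology. These converge by \Cref{thm:HnlH} with \Cref{lem:inv}, and by \Cref{exer:1.3}, respectively. You do not even need to pass to inverses for this block.

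The paper reaches the same point globally. It conjugates with $S=\begin{psmallmatrix}1&0&0&0\\0&1&0&0\\0&-\Gamma^*&1&0\\0&0&0&1\end{psmallmatrix}$, which maps $\ran(A)$ onto itself, while $S^*$ is the identity on $\ker(A)$. Hence $SAS^*$ is skew-selfadjoint with $\ker(SAS^*)=\ker(A)$ and $\ran(SAS^*)=\ran(A)$. Since $(ST_nS^*+SAS^*)^{-1}=(S^*)^{-1}(T_n+A)^{-1}S^{-1}$, it suffices to show Schur convergence of the block-diagonal operator $ST_nS^*=\diag(\lambda\rho_{0,n},\lambda C_n^{-1},\lambda w_n,\kappa_n^{-1})$. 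That is handled exactly as in your other two bullets. Either route closes Step~3; as written, your proposal does not establish it.
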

\begin{proof}
We want to apply \Cref{thm:abstractSchur}.
First of all $A$ is skew-selfajoint using \Cref{prop:adjblock}.
Next, consider 
\[
S \coloneqq \begin{pmatrix} 1 & 0 & 0 & 0 \\ 0 & 1 & 0 & 0 \\ 0 & -\Gamma^* & 1 & 0 \\ 0 & 0 & 0 & 1 \end{pmatrix}.
\]
Then
\[
   SM_0 S^* =  \begin{pmatrix} \rho_0 & 0 & 0 & 0 \\ 0 & C^{-1} & 0 & 0 \\ 0 &0  & w  & 0 \\ 0 & 0 & 0 & 0 \end{pmatrix},\ SM_1S=M_1
\]and
\begin{align*}
   SAS^* &=\begin{pmatrix} 1 & 0 & 0 & 0 \\ 0 & 1 & 0 & 0 \\ 0 & -\Gamma^* & 1 & 0 \\ 0 & 0 & 0 & 1 \end{pmatrix}\begin{pmatrix} 0 & \div & 0 & 0 \\ \grad_0 & 0 & 0 & 0 \\ 0 & 0 & 0 & \div \\ 0 & 0 & \grad_0 & 0 \end{pmatrix}\begin{pmatrix} 1 & 0 & 0 & 0 \\ 0 & 1 & -\Gamma& 0 \\ 0 &0 & 1 & 0 \\ 0 & 0 & 0 & 1 \end{pmatrix} \\
 & =\begin{pmatrix} 0 & \div & -\div\Gamma & 0 \\ \grad_0 & 0 & 0 & 0 \\ -\Gamma^*\grad_0 & 0 & 0 & \div \\ 0 & 0 & \grad_0 & 0 \end{pmatrix}.
\end{align*}
Since $S$ is an isomorphism, $SAS^*$ retains its skew-selfadjointness from the one of $A$. 
Now, $\ran(\div) = \Lp{2}(\Omega)$. Hence, 
\[
   \ran(SAS^*) = \Lp{2}(\Omega)\oplus \mathfrak{g}_0(\Omega)\oplus \Lp{2}(\Omega)\oplus \mathfrak{g}_0(\Omega).
\]
Thus, \[\ker(SAS^*)=\ran(SAS^*)^\bot = \{0\}\oplus \mathfrak{g}_0(\Omega)^\bot \oplus \{0\} \oplus \mathfrak{g}_0(\Omega)^\bot.\] 
Hence, by \Cref{thm:abstractSchur}, we need to show that
\[
\lambda SM_{0,n}S^*+M_{1,n} \to \lambda SM_{0}S^*+M_{1}\text{ in }\tau(\ker(A),\ran(A))
\]for all sufficiently large $\lambda\in \R$. 
We need to confirm that
\[
   \rho_{0,n} \stackrel{\tau(\{0\},L_2(\Omega))}\to \rho_0\text{ and }   w_{0,n} \stackrel{\tau(\{0\},L_2(\Omega))}\to w
\]as well as
\[
   C^{-1}_n \stackrel{\tau(\mathfrak{g}_0(\Omega)^\bot,\mathfrak{g}_0(\Omega))}\to C^{-1}\text{ and }   \kappa_n^{-1} \stackrel{\tau(\mathfrak{g}_0(\Omega)^\bot,\mathfrak{g}_0(\Omega))}\to \kappa^{-1}.
\]
The arguments in the former two cases and in the latter two cases are each time similar. Thus, we pvovide the details only for $\rho_{0,n}$ and $C_n^{-1}$, respectively. By \Cref{ex:trivialcases}, the above holds, if $\rho_{0,n}\to \rho_0$ in the weak operator topology understanding the corresponding functions as multiplication operators. Thus, by \Cref{exer:1.3}, the assumption on $\sigma(L_\infty(\Omega),L_1(\Omega))$-convergence is, however, sufficient.

For $(C_n^{-1})_n$, we apply \Cref{lem:inv}, to deduce that the needed convergence is equivalent to $(C_n)_n$ to be $\tau(\mathfrak{g}_0(\Omega),\mathfrak{g}_0(\Omega)^\bot)$-convergent to $C$. This, however, follows from \Cref{thm:HnlH}.
\end{proof}

\subsection{Maxwell's equations}

In this section, we consider a homogenisation problem for Maxwell's equations. More precisely, we consider
\[
  \left(\partial_t \begin{pmatrix} \varepsilon & 0 \\ 0 & \mu \end{pmatrix} + \begin{pmatrix} \sigma & 0 \\ 0 & 0 \end{pmatrix} + \begin{pmatrix} 0 & - \curl \\ \curl_0 & 0 \end{pmatrix} \right)\begin{pmatrix} E \\ H\end{pmatrix} = \begin{pmatrix} - J\\ 0 \end{pmatrix},
\]
where $\varepsilon,\mu,\sigma$ are suitable coefficients. We define $\curl$ and $\curl_0$ weakly in a similar way as $\grad$ and $\cgrad$. Note that
both  $\curl$ and $\curl_0$ are extensions of the differential operator $\nabla \times$ on $\Cc(\Omega)$ and that $\curl= \curl_0^*$.
We recall $\div = -\grad_0^*$ and we define $\div_0\coloneqq -\grad^*$.
Again, in order to discuss things properly a standard result on compactness is needed. For this, we say that 
a bounded $\Omega$ is a \emph{Lipschitz domain}, that is, $\partial\Omega$ can be locally written as a graph of a Lipschitz function.

\begin{theorem}[Picard--Weber--Weck selection theorem] Let $\Omega\subseteq\R^3$ be open and bounded. If $\Omega$ is a Lipschitz domain, then both embeddings (with respect to the sum of the graph norms)
\[
   \dom(\curl_0)\cap \dom(\div)\hookrightarrow \Lp{2}(\Omega)^3\text{ and }   \dom(\curl)\cap \dom(\div_0)\hookrightarrow \Lp{2}(\Omega)^3
\]
are compact.
\end{theorem}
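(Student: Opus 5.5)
The plan is a \emph{Helmholtz decomposition plus potentials} argument. For each embedding we take a bounded sequence and show it has an $\Lp{2}$-Cauchy subsequence. Each part of the decomposition is handled by an energy identity of \Cref{lem:dclbeg} type: a weakly convergent factor is paired against a factor that converges strongly by the Rellich--Kondrachov theorem (\Cref{thm:RKT}). This is the same mechanism as in the proof of \Cref{thm:dcl0}.

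\textbf{First embedding.} Let $(u_n)_n$ be bounded in $\dom(\curl_0)\cap\dom(\div)$; pass to a subsequence weakly convergent in that space.

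\emph{Gradient part.} Decompose $u_n=\cgrad\varphi_n+w_n$ with $\varphi_n\in\cH^1(\Omega)$ and $w_n\perp\mathfrak{g}_0(\Omega)$, i.e.\ $\div w_n=0$. By \Cref{prop:Poincare}, $(\varphi_n)_n$ is bounded in $\cH^1(\Omega)$, hence strongly convergent in $\Lp{2}(\Omega)$ after a further subsequence by \Cref{thm:RKT}. Since $\div=-\cgrad^*$, we get
\[
\norm{\cgrad(\varphi_n-\varphi_m)}^2=\langle \cgrad(\varphi_n-\varphi_m),u_n-u_m\rangle=-\langle \varphi_n-\varphi_m,\div(u_n-u_m)\rangle .
\]
The right-hand side tends to $0$ because $(\div u_n)_n$ is bounded in $\Lp{2}(\Omega)$. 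So the gradient parts are Cauchy.

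\emph{Solenoidal part.} Note $w_n\in\dom(\curl_0)$ since $\ran(\cgrad)\subseteq\ker(\curl_0)$, and $\curl_0 w_n=\curl_0 u_n$ is bounded. I would construct vector potentials $\psi_n\in\dom(\curl)\cap\dom(\div_0)$ with $\curl\psi_n=w_n-h_n$. Here $h_n$ is the projection onto the Dirichlet fields $\ker(\curl_0)\cap\ker(\div)$, and the potentials should satisfy
\[
\norm{\psi_n}_{\sobH^1(\Omega)^3}\lesssim\norm{w_n}_{\Lp{2}(\Omega)^3}.
\]
Given this, Rellich--Kondrachov gives $\psi_n\to\psi$ in $\Lp{2}$ along a subsequence. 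Using $\curl=\curl_0^*$,
\[
\norm{(w_n-h_n)-(w_m-h_m)}^2=\langle\psi_n-\psi_m,\curl_0(w_n-w_m)\rangle\to0 .
\]
The $h_n$ form a bounded sequence in the Dirichlet fields. This space must be shown finite-dimensional, which is itself a consequence of the compactness one is proving. So I would first prove the estimate on the orthogonal complement of the Dirichlet fields, and then show finite-dimensionality via a contradiction argument as in \Cref{exer:2.1}.

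\textbf{Main obstacle: the $\sobH^1$ vector potential on a Lipschitz domain.} First I would try extension and a Newton potential. Extend $z_n\coloneqq\curl_0 w_n$ by zero; since $w_n\in\dom(\curl_0)$, the zero extension of $w_n$ lies in $\dom(\curl)$ on $\R^3$, so $\tilde z_n$ is divergence-free on $\R^3$. Then set $\Psi_n\coloneqq\curl(\Phi*\tilde z_n)$ with $\Phi$ the fundamental solution of $-\Delta$. This gives $\Psi_n\in\sobH^1_{\mathrm{loc}}$ with $\curl\Psi_n=\tilde z_n$ and $\norm{\Psi_n}_{\sobH^1(\Omega)}\lesssim\norm{z_n}$, so $(\Psi_n)_n$ is $\Lp{2}$-compact by \Cref{thm:RKT}. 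Then $w_n-\Psi_n$ is curl-free in $\Omega$, so it equals $\grad\chi_n$ plus a finite-dimensional cohomology part. The $\grad\chi_n$ term is treated as in the gradient step, using $\div w_n=0$ and the full-gradient version of Rellich (\Cref{thm:RKT}~\labelcref{thm:RKT2}, valid for Lipschitz $\Omega$). The Lipschitz regularity of $\partial\Omega$ enters exactly here: to control $\chi_n$ through the boundary we need $\ran(\grad)$ closed and $\sobH^1(\Omega)\hookrightarrow\Lp{2}(\Omega)$ compact. If a global argument fails, the fallback is Weck's localisation: use a partition of unity, map each boundary patch bi-Lipschitzly onto a half-cube, where $\curl$ and $\div$ transform into operators of the same form with $\Lp{\infty}$ coefficients, and use reflection there.

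\textbf{Second embedding.} This is symmetric with the roles of boundary conditions exchanged. Decompose $u_n=\grad\varphi_n+w_n$ with $\varphi_n\in\sobH^1(\Omega)\cap\ker(\grad)^\perp$; this uses \Cref{thm:RKT}~\labelcref{thm:RKT2} and \Cref{exer:1.375}. The $\div_0=-\grad^*$ identity replaces the $\div=-\cgrad^*$ identity, and for the solenoidal part one uses potentials in $\dom(\curl_0)$, obtained via a zero extension of $\curl u_n$'s potential. The Neumann fields $\ker(\curl)\cap\ker(\div_0)$ replace the Dirichlet fields.
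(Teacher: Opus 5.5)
The paper does not prove this theorem; it quotes it from the literature. Your sketch therefore has to stand on its own, and as written it has a genuine gap at exactly the point where the difficulty lies.

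\textbf{The gap.} Several pieces are fine: the gradient step, the Newton-potential construction of $\Psi_n$ (including that the zero extension of $\curl_0 w_n$ is divergence-free on $\R^3$), and the pairing $\scprod{\psi_n-\psi_m}{\curl_0(w_n-w_m)}$. The problem is the step you pass over. After writing $w_n-\Psi_n=\grad\chi_n+k_n$ with $\chi_n\in\sobH^1(\Omega)$, you say $\grad\chi_n$ is handled ``as in the gradient step''. It is not.

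The identity used there, $\scprod{\cgrad\varphi}{u}=-\scprod{\varphi}{\div u}$, requires $\varphi\in\cH^1(\Omega)$. For $\chi\in\sobH^1(\Omega)$ the analogue $\scprod{\grad\chi}{u}=-\scprod{\chi}{\div_0 u}$ requires $u\in\dom(\div_0)$, i.e.\ a vanishing \emph{normal} component. Your $w_n\in\dom(\curl_0)\cap\ker(\div)$ only has a vanishing \emph{tangential} component. So $\scprod{\grad(\chi_n-\chi_m)}{w_n-w_m}$ is formally the boundary term $\int_{\partial\Omega}\overline{(\chi_n-\chi_m)}\,\nu\cdot(w_n-w_m)$. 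This term is not controlled by $\Lp{2}$-convergence of $\chi_n$, and neither closedness of $\ran(\grad)$ nor compactness of $\sobH^1(\Omega)\hookrightarrow\Lp{2}(\Omega)$ helps.

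This mismatch of boundary conditions is the real content of the theorem, and your sketch never uses the Lipschitz property to overcome it. The Weck-localisation fallback is only named; after a bi-Lipschitz flattening you would also need that compactness is insensitive to the resulting $\Lp{\infty}$-coefficient in the divergence constraint.

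\textbf{Circularity.} Your treatment of the harmonic fields is circular. On $\mathcal{H}_D(\Omega)$ the graph norm is the $\Lp{2}$-norm, so compactness there \emph{is} $\dim\mathcal{H}_D(\Omega)<\infty$. Compactness on $\mathcal{H}_D(\Omega)^\perp$ says nothing about it, and \Cref{exer:2.1} produces closed ranges, not finite-dimensional kernels. Likewise, your ``finite-dimensional cohomology part'' presupposes $\dim\mathcal{H}_N(\Omega)<\infty$, which the paper derives \emph{from} this theorem in \Cref{thm:Helmholtz}.

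\textbf{How to repair the first embedding.} You need a scalar potential in $\cH^1(\Omega)$, i.e.\ a regular decomposition $u_n=\Psi_n'+\cgrad\eta_n$ with $(\Psi_n')_n$ bounded in $\sobH^1(\Omega)^3$ and $(\eta_n)_n$ bounded in $\cH^1(\Omega)$. Your gradient-step computation, applied directly to $u_n$, then finishes the first embedding, with no Helmholtz splitting and no harmonic fields.

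Your zero-extension idea gives this decomposition if you stay on a ball $B\supseteq\overline\Omega$. Let $\tilde u_n$ be the zero extension of $u_n$ and build $\Psi_n$ from $\curl_0 u_n$.
\begin{enumerate}
\item The field $\tilde u_n-\Psi_n$ is curl-free on all of $B$, hence equals $\grad\chi_n$ with $\chi_n\in\sobH^1(B)$ of mean zero. There is no cohomology on a ball.
\item On $B\setminus\overline\Omega$ one has $\grad\chi_n=-\Psi_n$, so $\chi_n$ is bounded in $\sobH^2(B\setminus\overline\Omega)$.
\item Let $E$ be a Stein extension operator for the Lipschitz set $B\setminus\overline\Omega$. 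Then $\chi_n-E\chi_n\in\sobH^1(B)$ vanishes on $B\setminus\overline\Omega$, so $\eta_n\coloneqq(\chi_n-E\chi_n)|_\Omega\in\cH^1(\Omega)$.
\item Set $\Psi_n'\coloneqq(\Psi_n+\grad E\chi_n)|_\Omega$; it is bounded in $\sobH^1(\Omega)^3$.
\end{enumerate}
This is exactly where the Lipschitz property enters: through the exterior extension operator and the characterisation of $\cH^1(\Omega)$.

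\textbf{Second embedding.} It is not symmetric to the first. For $u\in\dom(\curl)$ the zero extension of $\curl u$ is not divergence-free on $\R^3$, since its normal component need not vanish. So you first need an extension operator for $\dom(\curl)$ on Lipschitz domains. It yields $u_n=\Psi_n+\grad\chi_n$ with $\Psi_n\in\sobH^1(\Omega)^3$ and $\chi_n\in\sobH^1(\Omega)$. Now the boundary conditions do match ($\div_0=-\grad^*$), and the same computation together with \Cref{thm:RKT}~\labelcref{thm:RKT2} concludes.
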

\begin{remark} $\dom(\curl)\cap \dom(\div_0) \hookrightarrow \Lp{2}(\Omega)^3$ compact implies that also $\dom(\curl)\cap \ker(\curl)^\bot \hookrightarrow \Lp{2}(\Omega)^3$ is compact. Indeed, this follows from $\ker(\curl)^\bot=\overline{\ran}(\curl_0)\subseteq \ker(\div_0)\subseteq \dom(\div_0)$ as one quickly verifies by Schwarz' lemma and elementary formulas for smooth, compactly supported vector fields (also, cf.~\Cref{thm:Helmholtz} below). Thus, for Lipschitz domains $\Omega$, both $\ran(\curl)$ and $\ran(\curl_0)$ are closed by \Cref{exer:2.1}.
\end{remark}
The main result is the following:
\begin{theorem}\label{thm:homomaxwell} Let $0<\alpha\leq\beta$, $\Omega\subseteq \R^3$ be an open, bounded Lipschitz domain. Let $\lambda>0$ and $(\varepsilon_n)_{n\in\N}, (\mu_n)_{n\in\N}$, $(\sigma_n)_{n\in\N}$ in $\Lp{\infty}(\Omega)^{3\times 3}$ be uniformly bounded. If $\lambda \varepsilon_n+ \sigma_n,\mu_n \in M(\alpha,\beta;\Omega)$ for $n\in\N$ and, as $n\to\infty$,
\[
    \lambda \varepsilon_n+ \sigma_n \stackrel{\Htopo}{\to}\varepsilon(\lambda),\text{ and } \mu_n \stackrel{\Htopo}{\to} \mu.
\]
Then,
\[
\left(  \lambda \begin{pmatrix} \varepsilon_n & 0 \\ 0 & \mu_n \end{pmatrix} + \begin{pmatrix} \sigma_n & 0 \\ 0 & 0 \end{pmatrix} + \begin{pmatrix} 0 & - \curl \\ \curl_0 & 0 \end{pmatrix} \right)^{-1}\to
\left(   \begin{pmatrix} \varepsilon(\lambda) & 0 \\ 0 & \mu \end{pmatrix}+\begin{pmatrix} 0 & - \curl \\ \curl_0 & 0 \end{pmatrix} \right)^{-1}
\]in the weak operator topology.
\end{theorem}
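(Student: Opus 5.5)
Following the thermoelasticity example, we apply \Cref{thm:abstractSchur} with $\mathcal{H}\coloneqq \Lp{2}(\Omega)^3\oplus\Lp{2}(\Omega)^3$, the skew-selfadjoint operator
\[
A\coloneqq \begin{pmatrix} 0 & -\curl \\ \curl_0 & 0\end{pmatrix},
\]
and the coefficients $T_n\coloneqq \diag(\lambda\varepsilon_n+\sigma_n,\mu_n)\in\mathcal{F}(\alpha,\beta;\mathcal{H})$. Since $M(\alpha,\beta;\Omega)$ consists of multiplication operators with $\Re a\geq\alpha$ and $\Re a^{-1}\geq 1/\beta$, the membership $T_n\in\mathcal{F}(\alpha,\beta;\mathcal{H})$ is immediate. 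Skew-selfadjointness of $A$ follows from \Cref{prop:adjblock} together with $\curl=\curl_0^*$, which gives $A^*=\begin{psmallmatrix} 0 & \curl_0^* \\ -\curl^* & 0\end{psmallmatrix}=-A$. Compactness of $\dom(A)\cap\ker(A)^\bot\hookrightarrow\mathcal{H}$ splits componentwise, because
\[
\ker(A)=\ker(\curl_0)\oplus\ker(\curl),\qquad \ker(A)^\bot=\overline{\ran}(\curl)\oplus\overline{\ran}(\curl_0).
\]
The first component $\dom(\curl_0)\cap\ker(\curl_0)^\bot$ lies in $\dom(\curl_0)\cap\ker(\div)$, since $\ker(\curl_0)^\bot=\overline{\ran}(\curl)\subseteq\ker(\div)$. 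The second component lies in $\dom(\curl)\cap\ker(\div_0)$, exactly as in the remark after the Picard--Weber--Weck theorem. Both are therefore compactly embedded by that theorem. Finally, $\ran(A)=\ran(\curl)\oplus\ran(\curl_0)$ is closed.

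By \Cref{thm:abstractSchur}, with $T\coloneqq\diag(\varepsilon(\lambda),\mu)$, it now suffices to show $T_n\to T$ in $\tau(\ker(A),\ran(A))$. Since everything is block diagonal, this reduces to the two convergences
\[
\lambda\varepsilon_n+\sigma_n\to\varepsilon(\lambda)\ \text{in}\ \tau(\ker(\curl_0),\ran(\curl)),\qquad \mu_n\to\mu\ \text{in}\ \tau(\ker(\curl),\ran(\curl_0)).
\]
To justify the reduction, note that the off-diagonal blocks of $T_n$ vanish, so $T_{n,00}$ and $T_{n,11}$ are block diagonal. Hence $T_{n,00}^{-1}$, $T_{n,00}^{-1}T_{n,01}$, $T_{n,10}T_{n,00}^{-1}$ and the Schur complement are the direct sums of the corresponding objects for each component. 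Weak operator convergence of a direct sum is equivalent to that of its summands.

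The crucial identification is the Helmholtz-type decomposition for Lipschitz domains:
\[
\ker(\curl_0)=\mathfrak{g}_0(\Omega)\oplus\mathcal{K}_0,\qquad \ker(\curl)=\ran(\grad)\oplus\mathcal{K},
\]
where $\mathcal{K}_0$ and $\mathcal{K}$ are the finite-dimensional spaces of harmonic Dirichlet and Neumann fields. I expect this to be the main obstacle. Both the decomposition and the topological issue it raises need care. If $\Omega$ is topologically trivial, then $\ker(\curl_0)=\mathfrak{g}_0(\Omega)$ and \Cref{thm:HnlH} directly yields the first convergence from $\Htopo$-convergence. For the second convergence one needs the analogue of \Cref{thm:HnlH} for $\grad$ in place of $\grad_0$, that is, a Neumann-type $\Htopo$-convergence.

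I would try to deduce that Neumann version from the Dirichlet one via the localisation in \Cref{thm:dcl0}\labelcref{thm:dcl02}. The proof would repeat the argument of \Cref{thm:HnlH} and \Cref{thm:Hcon2}, with $\ran(\grad)$ replacing $\mathfrak{g}_0(\Omega)$. Here $\ran(\grad)$ is closed by \Cref{thm:RKT}, and the limit is identified locally because $\Htopo$-limits are local. For nontrivial topology, the finite-dimensional harmonic fields would be handled by a compactness and subsequence argument using \Cref{thm:compSchur} and \Cref{prop:subseq}. On finite-dimensional blocks, weak and strong convergence coincide, so the identification of the limit again follows from the local limit.
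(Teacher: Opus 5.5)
Your architecture is the paper's. You apply \Cref{thm:abstractSchur} and split into the $E$- and $H$-components. You use the Helmholtz decomposition $\ker(\curl_0)=\mathfrak g_0(\Omega)\oplus\mathcal H_D(\Omega)$, $\ker(\curl)=\ran(\grad)\oplus\mathcal H_N(\Omega)$, and \Cref{thm:HnlH} for the Dirichlet part. For $\mu$ you need a Neumann analogue, which the paper gets from \Cref{thm:indbdycdt} and \Cref{thm:indbdycdt2}. The harmonic fields need a finite-dimensional perturbation step, which is the paper's \Cref{lem:finitdim}.

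Your route to the Neumann analogue is sound. Keep the Dirichlet test functions $v_n$ of \Cref{thm:Hcon2}. Pair $\grad u_n\in\ran(\grad)$ with $\mu_n^*\cgrad v_n$ via \Cref{thm:dcl0}\labelcref{thm:dcl02}, and pair $q_n$ with $\cgrad v_n$ via \Cref{thm:dcl0}\labelcref{thm:dcl01}. The divergence test applies because, for $\phi\in\cH^1(\Omega)$, subtracting the locally constant part of $\phi$ lands in $\sobH^1_\bot(\Omega)$ without changing $\cgrad\phi$; hence $\divcon q_n$ is independent of $n$. The proof of \Cref{thm:HnlH} then carries over with $\grad\restriction_{\sobH^1_\bot(\Omega)}$ in place of $\cgrad$.

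The step that is not yet an argument is the harmonic fields. Suppose a subsequence converges to some $\tilde a$ in $\tau(\ker(\curl_0),\ran(\curl))$. Local information, or the $\tau(\mathfrak g_0(\Omega),\mathfrak g_0(\Omega)^\bot)$-limit, does not by itself identify $\tilde a$. Schur limits for different splittings generally differ: compare $\tau(\mathcal H,\{0\})$ and $\tau(\{0\},\mathcal H)$ in \Cref{ex:trivialcases}, which yield harmonic and arithmetic means. What you need is exactly \Cref{lem:finitdim}, and it needs no compactness or subsequences.

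Write $\mathcal H=\mathcal H_0\oplus\mathcal K\oplus\mathcal H_2$ with $\mathcal H_2\coloneqq\mathcal H_1\ominus\mathcal K$ and index blocks by $0,K,2$. Let $S\coloneqq a_{11}-a_{10}a_{00}^{-1}a_{01}$, with blocks $S_{KK},S_{K2},S_{2K},S_{22}$. Let $b$ be the compression of $a$ to $\mathcal H_0\oplus\mathcal K$. Then
\[
b^{-1}=\begin{pmatrix} a_{00}^{-1}+a_{00}^{-1}a_{0K}S_{KK}^{-1}a_{K0}a_{00}^{-1} & -a_{00}^{-1}a_{0K}S_{KK}^{-1}\\ -S_{KK}^{-1}a_{K0}a_{00}^{-1} & S_{KK}^{-1}\end{pmatrix},\qquad b^{-1}\begin{pmatrix} a_{02}\\ a_{K2}\end{pmatrix}=\begin{pmatrix} a_{00}^{-1}a_{02}-a_{00}^{-1}a_{0K}S_{KK}^{-1}S_{K2}\\ S_{KK}^{-1}S_{K2}\end{pmatrix}.
\]
The formula for $(a_{20}\ a_{2K})\,b^{-1}$ is symmetric, and the new Schur complement is $S_{22}-S_{2K}S_{KK}^{-1}S_{K2}$.

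By \Cref{lem:schurpd}, $\Re S_{n,KK}\geq\alpha$. Since $\dim\mathcal K<\infty$, this gives $S_{n,KK}\to S_{KK}$ and hence $S_{n,KK}^{-1}\to S_{KK}^{-1}$ in norm. Every other term is either one of the given Schur data or a product $X_nM_nY_n$ that factors through $\mathcal K$. Such a product converges in the weak operator topology, since for an orthonormal basis $(e_j)$ of $\mathcal K$,
\[
\langle x,X_nM_nY_ny\rangle=\sum_{j,k}\langle x,X_ne_j\rangle\langle e_j,M_ne_k\rangle\langle e_k,Y_ny\rangle .
\]
This is the precise form of your remark that weak and strong convergence coincide on finite-dimensional blocks. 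It gives $T_n\to T$ in $\tau(\ker(A),\ran(A))$ directly, with the limit automatically equal to $\diag(\varepsilon(\lambda),\mu)$.
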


In order to apply our main abstract theorem, we require some results from the literature. 

First of all, we need the \emph{Helmholtz decomposition}, an observation which basically is a consequence of $\nabla\cdot(\nabla\times \Phi)=0$ and $\nabla\times (\nabla\phi) =0$ for smooth vector-fields $\Phi$ and smooth functions $\phi$.

\begin{theorem}\label{thm:Helmholtz}Let  $\Omega\subseteq \R^3$ be an open, bounded  Lipschitz domain. Then, there exist finite-dimensional spaces $\mathcal{H}_N(\Omega)$ and $\mathcal{H}_D(\Omega)$ such that
\[
     \Lp{2}(\Omega)^3 = \ran(\grad) \oplus \ran(\curl_0)\oplus \mathcal{H}_N(\Omega) = \ran(\grad_0) \oplus \ran(\curl)\oplus \mathcal{H}_D(\Omega).
\]
\end{theorem}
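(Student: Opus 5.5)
The plan is to obtain both decompositions as orthogonal decompositions of $\Lp{2}(\Omega)^3$. The basic tool is the identity $\overline{\ran}(B)\oplus\ker(B^*)=\mathcal{H}_1$ for a densely defined closed $B\colon\dom(B)\subseteq\mathcal{H}_0\to\mathcal{H}_1$. I will combine it with closedness of the relevant ranges and with the Picard--Weber--Weck selection theorem, which gives finite-dimensionality of the leftover spaces. I describe the first decomposition in detail; the second is obtained by swapping the roles of the operators with and without boundary conditions.

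\emph{Step 1 (orthogonality).} First show $\ran(\grad)\perp\ran(\curl_0)$, equivalently $\ran(\curl_0)\subseteq\ker(\div_0)=\ker(\grad^*)=\ran(\grad)^\perp$. Since $\curl_0$ is the closure of $\nabla\times$ on $\Cc(\Omega)^3$, it suffices to check $\langle \grad u,\nabla\times\Phi\rangle_{\Lp{2}(\Omega)^3}=0$ for $u\in\sobH^1(\Omega)$ and $\Phi\in\Cc(\Omega)^3$. Each term $\langle\partial_j u,\partial_k\Phi^{(l)}\rangle$ equals $-\langle u,\partial_j\partial_k\Phi^{(l)}\rangle$ by the definition of weak derivatives. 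The sum then vanishes by Schwarz' lemma ($\nabla\cdot(\nabla\times\Phi)=0$). The passage from $\Cc(\Omega)^3$ to $\dom(\curl_0)$ uses continuity in the graph norm.

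\emph{Step 2 (closed ranges and the complement).} By \Cref{thm:RKT}~\labelcref{thm:RKT2}, $\ran(\grad)$ is closed, since a Lipschitz domain has continuous boundary. By the remark following the Picard--Weber--Weck theorem, $\ran(\curl_0)$ is closed. Hence $\ran(\grad)\oplus\ran(\curl_0)$ is a closed subspace. Set
\[
\mathcal{H}_N(\Omega)\coloneqq\bigl(\ran(\grad)\oplus\ran(\curl_0)\bigr)^\perp=\ker(\grad^*)\cap\ker(\curl_0^*)=\ker(\div_0)\cap\ker(\curl),
\]
using $\div_0=-\grad^*$ and $\curl=\curl_0^*$. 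This gives the first decomposition, provided $\mathcal{H}_N(\Omega)$ is finite-dimensional.

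\emph{Step 3 (finite dimension).} On $\mathcal{H}_N(\Omega)\subseteq\dom(\curl)\cap\dom(\div_0)$, the sum of the graph norms coincides with the $\Lp{2}$-norm, because $\curl$ and $\div_0$ vanish there. The compact embedding $\dom(\curl)\cap\dom(\div_0)\hookrightarrow\Lp{2}(\Omega)^3$ therefore makes the identity on the Hilbert space $\mathcal{H}_N(\Omega)$ compact. So its closed unit ball is compact, and $\mathcal{H}_N(\Omega)$ is finite-dimensional.

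\emph{Second decomposition.} Here I show $\ran(\grad_0)\subseteq\ker(\curl_0)$. For $\phi\in\Cc(\Omega)$ one has $\grad\phi\in\dom(\curl_0)$ with $\curl_0\grad\phi=0$. Closedness of $\curl_0$, together with density of $\Cc(\Omega)$ in $\cH^1(\Omega)$, extends this to all of $\cH^1(\Omega)$. Taking orthogonal complements (using $\curl=\curl_0^*$) gives $\ran(\curl)\subseteq\ran(\grad_0)^\perp$. Closedness of $\ran(\grad_0)$ follows from \Cref{prop:Poincare} via \Cref{exer:1.2}, and closedness of $\ran(\curl)$ again from the remark. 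The complement is $\mathcal{H}_D(\Omega)=\ker(\div)\cap\ker(\curl_0)$, and it is finite-dimensional by the first compact embedding $\dom(\curl_0)\cap\dom(\div)\hookrightarrow\Lp{2}(\Omega)^3$, exactly as in Step 3.

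The step I expect to need the most care is Step 1 and its analogue: the inclusions between ranges and kernels for non-compactly supported $u\in\sobH^1(\Omega)$, respectively for $\Phi$ in $\dom(\curl)$. These are handled by working with the operator that carries the boundary condition, reducing to test functions by density, and then transferring through adjoints.
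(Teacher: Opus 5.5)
Your proposal is correct and follows essentially the same route as the paper. Both arguments establish $\ran(\curl_0)\subseteq\ker(\div_0)$ and $\ran(\grad_0)\subseteq\ker(\curl_0)$ on smooth compactly supported fields, pass to orthogonal complements via the adjoint relations, and deduce finite-dimensionality of $\ker(\curl)\cap\ker(\div_0)$ and $\ker(\curl_0)\cap\ker(\div)$ from the Picard--Weber--Weck embeddings, since the graph norms reduce to the $\Lp{2}$-norm there. In addition, you state explicitly the closedness of the four ranges (via \Cref{thm:RKT}, \Cref{prop:Poincare} and the remark after the selection theorem), which the paper's proof uses only implicitly.
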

\begin{proof} By considering smooth, compactly supported vector fields only, it is not difficult to see that
\[
   \ran(\curl_0)\subseteq \ker(\div_0)\text{ and }   \ran(\grad_0)\subseteq \ker(\curl_0).
\]
Thus, for the orthogonal complements,
\[
   \ran(\grad)\subseteq \ker(\curl)\text{ and }\ran(\curl)\subseteq \ker(\div).
\]
Hence, from $\Lp{2}(\Omega)^3=\ran(\grad_0)\oplus \ker(\div) = \ran(\grad)\oplus \ker(\div_0)$, it follows that
\begin{align*}
   \Lp{2}(\Omega)^3 & = \ran(\grad_0) \oplus \ker(\div) \\
   & = \ran(\grad_0) \oplus \ran(\curl)\oplus \ker(\curl_0)\cap \ker(\div), \\
   \Lp{2}(\Omega)^3 & =\ran(\grad) \oplus \ker(\div_0) \\
      & = \ran(\grad) \oplus \ran(\curl_0)\oplus \ker(\curl)\cap \ker(\div_0).
\end{align*}
Since $\ker(\curl)\cap \ker(\div_0)\hookrightarrow \Lp{2}(\Omega)^3$ is compact if the former space is endowed with the sum of the graph norms of $\curl$ and $\div_0$, which, on $\ker(\curl)\cap \ker(\div_0)$, is equivalent to the $\Lp{2}(\Omega)^3$-norm, $\mathcal{H}_N(\Omega)\coloneqq \ker(\curl)\cap \ker(\div_0)$ is finite-dimensional. The same observation applies to $\mathcal{H}_D(\Omega)\coloneqq \ker(\curl_0)\cap \ker(\div)$. 
\end{proof}

For the proof of the main theorem of this section, we require three additonal results of differing complexity. The following result is a rather lengthy but elementary computation of block-operator matrices.

\begin{lemma}\label{lem:finitdim} Let $0<\alpha\leq\beta$ and $\mathcal{H}$ a Hilbert space. Let $(T_n)_{n\in\N},T$ in $\mathcal{F}(\alpha,\beta;\mathcal{H})$, $\mathcal{H}_0\subseteq \mathcal{H}$ a closed subspace, $\mathcal{K}\subseteq \mathcal{H}_1\coloneqq \mathcal{H}_0^\bot$. If $\mathcal{K}$ is finite-dimensional, then
\[
   [T_n \to T \text{ in }\tau(\mathcal{H}_0,\mathcal{H}_1)]\iff    [T_n \to T \text{ in }\tau(\mathcal{H}_0\oplus \mathcal{K} ,\mathcal{H}_1\cap \mathcal{K}^{\bot})]
\]
\end{lemma}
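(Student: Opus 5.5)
The plan is to reduce both Schur topologies to ordinary weak operator convergence of a single operator, the \emph{principal pivot transform}, and then show that pivoting additionally on the finite-dimensional space $\mathcal{K}$ is continuous for the weak operator topology. Write $\mathcal{H}_2\coloneqq\mathcal{H}_1\cap\mathcal{K}^\bot$, so that $\mathcal{H}=\mathcal{H}_0\oplus\mathcal{K}\oplus\mathcal{H}_2$ and $\mathcal{H}_1=\mathcal{K}\oplus\mathcal{H}_2$. For $T\in\mathcal{M}(\mathcal{H}_0,\mathcal{H}_1)$ set
\[
  P_{\mathcal{H}_0}(T)\coloneqq\begin{pmatrix} T_{00}^{-1} & -T_{00}^{-1}T_{01}\\ T_{10}T_{00}^{-1} & T_{11}-T_{10}T_{00}^{-1}T_{01}\end{pmatrix}\in\Lb(\mathcal{H}_0\oplus\mathcal{H}_1).
\]
By definition of the initial topology, $T_n\to T$ in $\tau(\mathcal{H}_0,\mathcal{H}_1)$ if and only if $P_{\mathcal{H}_0}(T_n)\to P_{\mathcal{H}_0}(T)$ in the weak operator topology; the only facts needed are that the weak operator topology on a block operator is the product of the weak operator topologies of its blocks, and that the sign in front of a block is irrelevant. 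The same holds for $\tau(\mathcal{H}_0\oplus\mathcal{K},\mathcal{H}_2)$ with $P_{\mathcal{H}_0\oplus\mathcal{K}}$.

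The algebraic step is the composition rule for pivots:
\[
P_{\mathcal{H}_0\oplus\mathcal{K}}(T)=P_{\mathcal{K}}\bigl(P_{\mathcal{H}_0}(T)\bigr),
\]
where $P_{\mathcal{K}}$ denotes the pivot on the $\mathcal{K}$-block inside $\mathcal{H}_0\oplus\mathcal{K}\oplus\mathcal{H}_2$. I would verify this with the same block elimination as in the proof of \Cref{thm:HnlH}, using the $2\times2$ inversion formula for $T_{00}\oplus$-blocks. The pivot block that appears is the $\mathcal{K}\mathcal{K}$-compression of the Schur complement $T_S=T_{11}-T_{10}T_{00}^{-1}T_{01}$. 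By \Cref{lem:schurpd}, $T_S\in\mathcal{F}(\alpha,\beta;\mathcal{H}_1)$, so $\Re (T_S)_{\mathcal{K}\mathcal{K}}\geq\alpha$. \Cref{lem:pda} then gives invertibility with $\|(T_{n,S})_{\mathcal{K}\mathcal{K}}^{-1}\|\leq1/\alpha$ uniformly in $n$. Since $P_{\mathcal{K}}$ is an involution (up to signs), $P_{\mathcal{H}_0}(T)=P_{\mathcal{K}}(P_{\mathcal{H}_0\oplus\mathcal{K}}(T))$ as well, and the $\mathcal{K}$-block being inverted there is $(T_{S})_{\mathcal{K}\mathcal{K}}^{-1}$. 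Its inverse is again bounded by $\beta$ because $\Re (T_S^{-1})\geq1/\beta$ gives $\Re (T_S)_{\mathcal{K}\mathcal{K}}^{-1}\geq$ a positive bound, or directly by \Cref{lem:pda}. So both directions reduce to the same continuity statement.

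The continuity statement is: if $B_n\to B$ in the weak operator topology on $\mathcal{H}_0\oplus\mathcal{K}\oplus\mathcal{H}_2$ and the blocks $(B_n)_{\mathcal{K}\mathcal{K}}$ are invertible with uniformly bounded inverses, then $P_{\mathcal{K}}(B_n)\to P_{\mathcal{K}}(B)$ in the weak operator topology. Since $\mathcal{K}$ is finite-dimensional, weak operator convergence of $(B_n)_{\mathcal{K}\mathcal{K}}$ is norm convergence. The uniform bound then yields norm convergence of the inverses, $(B_n)_{\mathcal{K}\mathcal{K}}^{-1}\to B_{\mathcal{K}\mathcal{K}}^{-1}$, and in particular $B_{\mathcal{K}\mathcal{K}}$ is invertible.

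The remaining blocks have the forms $X_nY_n$ and $X_nZ_nY_n$, with $Y_n$ mapping into $\mathcal{K}$ and $X_n$ mapping out of $\mathcal{K}$. For fixed $x$, $Y_nx$ converges weakly in the finite-dimensional $\mathcal{K}$, hence in norm. A norm-convergent input combined with a weak-operator-convergent, uniformly bounded $X_n$ gives weak convergence of $X_nY_nx$; this is the weak analogue of \Cref{rem:fafact}, with the uniform bound supplied by the uniform boundedness principle. The middle factor $Z_n$ converges in norm on $\mathcal{K}$, so the same argument handles the three-factor products.

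I expect the main obstacle to be the block bookkeeping behind the composition identity $P_{\mathcal{H}_0\oplus\mathcal{K}}=P_{\mathcal{K}}\circ P_{\mathcal{H}_0}$, and checking that the pivot blocks are uniformly boundedly invertible in both directions. I would settle the first by computing both sides through the factorisation used in the proof of \Cref{thm:HnlH}, applied once with the splitting $\mathcal{H}_0\oplus\mathcal{H}_1$ and once with $(\mathcal{H}_0\oplus\mathcal{K})\oplus\mathcal{H}_2$.
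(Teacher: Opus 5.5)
Your argument is correct. The paper does not prove \Cref{lem:finitdim}. It only calls it a lengthy but elementary block-operator computation and defers to the literature. The natural comparison is therefore a direct computation: write out the four Schur maps for the splitting $(\mathcal{H}_0\oplus\mathcal{K})\oplus(\mathcal{H}_1\cap\mathcal{K}^\bot)$ in terms of those for $\mathcal{H}_0\oplus\mathcal{H}_1$, and then derive the reverse expressions separately.

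Your pivot transform packages the four Schur maps into one operator, so Schur convergence becomes plain weak operator convergence. The identities $P_{\mathcal{H}_0\oplus\mathcal{K}}=P_{\mathcal{K}}\circ P_{\mathcal{H}_0}$ and $P_{\mathcal{K}}\circ P_{\mathcal{K}}=\mathrm{id}$ encode exactly those explicit formulas. They let one continuity statement deliver both implications. Finite dimensionality of $\mathcal{K}$ then enters at a single point: weak operator convergence on $\Lb(\mathcal{K})$ is norm convergence, so the pivot block can be inverted in the limit, and the products converge by the weak analogue of \Cref{rem:fafact}.

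The step you flag as the main obstacle is immediate from the exchange description. $P_{\mathcal{H}_0}(T)$ is the operator sending $(y_0,x_1)$ to $(x_0,y_1)$ whenever $y=Tx$. Pivoting it on $\mathcal{K}$ additionally swaps $x_{\mathcal{K}}$ and $y_{\mathcal{K}}$, which is the defining property of $P_{\mathcal{H}_0\oplus\mathcal{K}}(T)$. Every triple $(y_0,y_{\mathcal{K}},x_2)$ actually occurs, because the compression of $T$ to $\mathcal{H}_0\oplus\mathcal{K}$ is invertible by \Cref{lem:pda}. The same description shows the pivot is an exact involution, so the hedge ``up to signs'' is unnecessary.

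For the reverse direction, the uniform bound on the inverted $\mathcal{K}$-block is best justified by $\|(T_{n,S})_{\mathcal{K}\mathcal{K}}\|\leq\|T_{n,S}\|\leq\beta$. This follows from $T_{n,S}\in\mathcal{F}(\alpha,\beta;\mathcal{H}_1)$ (\Cref{lem:schurpd}) via $\tfrac{1}{\beta}\|T_{n,S}\psi\|^2\leq\Re\langle\psi,T_{n,S}\psi\rangle\leq\|\psi\|\,\|T_{n,S}\psi\|$. Your sentence about $\Re(T_S)_{\mathcal{K}\mathcal{K}}^{-1}$ is muddled, but the claimed bound is right.
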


Finally, with a suitable localisation argument one also obtains the following theorem, where we use
\begin{multline*}
\sobH^1_\bot(\Omega)\coloneqq\{u\in \sobH^1(\Omega): \langle u,\chi\rangle_{ \Lp{2}(\Omega)}=0 \\\text{ for all }
  \chi\in \Lp{2}(\Omega)\text{ being constant on every connected component of }\Omega\}\text{.}
\end{multline*}
\begin{theorem}\label{thm:indbdycdt} Let $\Omega\subseteq\R^3$ be an open, bounded  Lipschitz domain, $0<\alpha\leq\beta$. Let $(a_n)_{n\in\N},a\in M(\alpha,\beta;\Omega)$. Then, the following conditions are equivalent:
\begin{enumerate}
  \item $a_n$ $\Htopo$-converges to $a$.
  \item for all $f\in \sobH^{-1}_{\bot}(\Omega)\coloneqq \sobH^1_\bot(\Omega)'$ and $u_n\in \sobH^1_\bot(\Omega)$ given as unique solutions of
\[
    \forall\phi \in \sobH^1_\bot(\Omega): \langle a_n \grad u_n, \grad\phi \rangle_{\Lp{2}(\Omega)^3} = f(\phi)
 \]
for $n\in\N$, we have $u_n\rightharpoonup u\in \sobH^1_\bot(\Omega)$ weakly and $a_n \grad u_n \rightharpoonup  a \grad u\in\Lp{2}(\Omega)^3$ weakly where $u\in \sobH_\bot^1(\Omega)$ satisfies
\[
     \forall\phi \in \sobH^1_\bot(\Omega):\langle a\grad u, \grad\phi \rangle_{\Lp{2}(\Omega)^3} = f(\phi).
 \] 
\end{enumerate}
\end{theorem}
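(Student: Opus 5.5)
The plan is to translate both conditions into Schur-topology statements and then compare the two decompositions using \Cref{lem:finitdim}. By \Cref{thm:HnlH}, (i) is equivalent to $a_n\to a$ in $\tau(\mathfrak{g}_0(\Omega),\mathfrak{g}_0(\Omega)^\perp)$. Condition (ii) should, by the same argument as in the proof of \Cref{thm:HnlH}, be equivalent to convergence in $\tau(\ran(\grad),\ran(\grad)^\perp)$.

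\textbf{Step 1.} Set $C\coloneqq\grad\restriction_{\sobH^1_\bot(\Omega)}$. We have $\ker(\grad)=\{\text{locally constant functions}\}$; on a bounded Lipschitz domain there are only finitely many components, so $\sobH^1_\bot(\Omega)=\dom(\grad)\cap\ker(\grad)^\perp$.
\begin{itemize}
\item By \Cref{thm:RKT} (Lipschitz boundary is continuous), $\ran(\grad)$ is closed.
\item By \Cref{rem:topiso} applied to $C$, the maps $\iota^*C\colon\sobH^1_\bot(\Omega)\to\ran(\grad)$ and $C^\diamond\iota$ are topological isomorphisms.
\item By \Cref{cor:TW14}, the solution of the problem in (ii) is $u_n=(\iota^*C)^{-1}(\iota^*a_n\iota)^{-1}(C^\diamond\iota)^{-1}f$, where $\iota$ is the embedding of $\ran(\grad)$.
\end{itemize}
With this formula, the computation in the proof of \Cref{thm:HnlH} goes through verbatim with $\mathfrak{g}_0$ replaced by $\ran(\grad)$. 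Condition (ii) is therefore equivalent to weak operator convergence of $a_{n,00}^{-1}$ and $a_{n,10}a_{n,00}^{-1}$ for the decomposition $\ran(\grad)\oplus\ran(\grad)^\perp$.

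\textbf{Step 2.} The remaining Schur components for this decomposition are $a_{n,00}^{-1}a_{n,01}$ and the Schur complement $a_{n,11}-a_{n,10}a_{n,00}^{-1}a_{n,01}$. For these I would replicate the second half of the proof of \Cref{thm:HnlH}, which needs three analogues:
\begin{itemize}
\item \Cref{thm:Hcon2}, with $\cH^1$ replaced by $\sobH^1_\bot$;
\item \Cref{prop:equivprobl2}, with $\mathfrak{g}_0^\perp$ replaced by $\ran(\grad)^\perp=\ker(\div_0)$;
\item \Cref{cor:Hconadj}.
\end{itemize}
These proofs use only the abstract structure together with the $\div$-$\curl$ lemma. \Cref{thm:dcl0} (ii) covers $q_n\in\ran(\grad)$, and a divergence test for $\ran(\grad)$ follows as in \Cref{thm:divtest} from \Cref{rem:topiso} applied to $C$. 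So (ii) is equivalent to $a_n\to a$ in $\tau(\ran(\grad),\ran(\grad)^\perp)$.

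\textbf{Step 3.} By \Cref{thm:Helmholtz}, $\ran(\grad)=\ran(\grad_0)\oplus\mathcal{K}$ with $\mathcal{K}\coloneqq\ran(\grad)\cap\ran(\grad_0)^\perp\subseteq\mathfrak{g}_0(\Omega)^\perp$. Indeed, $\ran(\grad)\ominus\ran(\grad_0)\subseteq\ker(\div)\cap\ker(\curl)$ and $\ker(\curl_0)\cap\ker(\div)$ is finite-dimensional, so one shows $\mathcal{K}\subseteq\ker(\curl)\cap\ker(\div)$ sits inside $\mathcal{H}_D(\Omega)$ up to the finite-dimensional part, hence is finite-dimensional. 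Then \Cref{lem:finitdim}, with $\mathcal{H}_0=\mathfrak{g}_0(\Omega)$, gives the equivalence of the two Schur convergences, since $\mathfrak{g}_0(\Omega)\oplus\mathcal{K}=\ran(\grad)$ and $\mathfrak{g}_0^\perp\cap\mathcal{K}^\perp=\ran(\grad)^\perp$.

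The main obstacle is this dimension count. I would bound it instead by the dimension of $\ker(\grad^*)\cap\ran(\grad)$ modulo the closure, or via the boundary components, using compactness from \Cref{thm:RKT}.

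A secondary obstacle is the localisation in Step 2 for the Neumann-type problem. I would handle it with cut-offs $\eta\in\Cc(\Omega)$ as in \Cref{thm:dcl0} (ii).
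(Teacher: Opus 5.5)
Steps~1--2 essentially prove \Cref{thm:indbdycdt2}. Together with \Cref{thm:HnlH}, they reduce the statement to showing that, on $M(\alpha,\beta;\Omega)$, convergence in $\tau(\mathfrak{g}_0(\Omega),\mathfrak{g}_0(\Omega)^\perp)$ and in $\tau(\ran(\grad),\ran(\grad)^\perp)$ coincide. That equivalence \emph{is} the content of the theorem; the paper uses \Cref{thm:indbdycdt} precisely to obtain it in the Maxwell proof. Step~3 does not deliver it, because
\[
\mathcal{K}=\ran(\grad)\ominus\ran(\cgrad)=\ran(\grad)\cap\ker(\div)=\{\grad h:\ h\in\sobH^1(\Omega),\ \Delta h=0\}
\]
is infinite-dimensional. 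The gradients of the harmonic polynomials $\Re\bigl((x_1+\mathrm{i}x_2)^k\bigr)$, $k\in\N$, restricted to $\Omega$, are linearly independent elements of $\mathcal{K}$. Your argument breaks at ``sits inside $\mathcal{H}_D(\Omega)$ up to the finite-dimensional part''. The space $\ker(\curl)\cap\ker(\div)$ carries no boundary condition, unlike $\mathcal{H}_D(\Omega)=\ker(\curl_0)\cap\ker(\div)$. Your fallback via $\ker(\grad^*)\cap\ran(\grad)$ gives no information, since that space is $\{0\}$. So \Cref{lem:finitdim} does not apply. No purely abstract block-operator argument can close the gap either. For general operators, Schur topologies attached to decompositions that differ by an infinite-dimensional subspace are genuinely different; the extreme case is \Cref{ex:trivialcases}, weak operator convergence of $T_n$ versus that of $T_n^{-1}$. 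Here the two topologies coincide only because the coefficients are multiplication operators, hence local, while the difference between $\mathfrak{g}_0(\Omega)$ and $\ran(\grad)$ is a boundary effect.

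The missing idea is the localisation argument the paper refers to (Tartar's Lemma~10.3): compare Dirichlet and Neumann problems directly via the $\div$-$\curl$ lemma, without going through Schur topologies.

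For (i)$\Rightarrow$(ii), let $u_n\in\sobH^1_\bot(\Omega)$ be the Neumann solutions and choose a subsequence with $u_n\rightharpoonup w$ and $p_n\coloneqq a_n\grad u_n\rightharpoonup p$. Since $\grad$ annihilates locally constant functions, the Neumann identity holds for all $\phi\in\sobH^1(\Omega)$ with right-hand side $f(\phi-P\phi)$. Here $P$ is the orthogonal projection onto the locally constant functions. Hence $\divcon p_n$ is independent of $n$, and \Cref{thm:divtest} applies. For $v\in\cH^1(\Omega)$, let $v_n\in\cH^1(\Omega)$ solve $-\divcon a_n^*\cgrad v_n=-\divcon a^*\cgrad v$. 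By \Cref{cor:Hconadj}, $v_n\rightharpoonup v$ and $a_n^*\cgrad v_n\rightharpoonup a^*\cgrad v$, and the divergences are again fixed. Apply \Cref{thm:dcl0}~(i) with $r_n=p_n$ and $\cgrad v_n\in\mathfrak{g}_0(\Omega)$. Apply \Cref{thm:dcl0}~(ii) with $r_n=a_n^*\cgrad v_n$ and $\grad u_n\in\ran(\grad)$; this is where the Lipschitz boundary enters. For real $\phi\in\Cc(\Omega)$ this gives
\[
\int_\Omega\langle p,\cgrad v\rangle\phi=\lim_{n\to\infty}\int_\Omega\langle a_n\grad u_n,\cgrad v_n\rangle\phi=\lim_{n\to\infty}\int_\Omega\langle\grad u_n,a_n^*\cgrad v_n\rangle\phi=\int_\Omega\langle a\grad w,\cgrad v\rangle\phi .
\]
Choosing $v$ with $\cgrad v$ constant on $\supp\phi$ yields $p=a\grad w$. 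Since $\sobH^1_\bot(\Omega)$ is weakly closed, $w\in\sobH^1_\bot(\Omega)$ solves the limit Neumann problem, so $w=u$. \Cref{prop:subseq} then gives convergence of the whole sequence.

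For (ii)$\Rightarrow$(i), swap the roles: test the Dirichlet solutions against Neumann correctors $v_n\in\sobH^1_\bot(\Omega)$ for $a_n^*$. This needs the Neumann analogue of \Cref{cor:Hconadj}, which is the part of your Step~2 that is genuinely required. In the paper, \Cref{lem:finitdim} enters only afterwards, for the finite-dimensional spaces $\mathcal{H}_D(\Omega)$ and $\mathcal{H}_N(\Omega)$.
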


Almost the same way as we established \Cref{thm:Hcon2}, it is possible to show the following result, where $\mathfrak{g}(\Omega)\coloneqq \ran(\grad)$.

\begin{theorem}\label{thm:indbdycdt2} Let $\Omega\subseteq\R^3$ be an open, bounded  Lipschitz domain, and $0<\alpha\leq\beta$. Let $(a_n)_{n\in\N},a\in M(\alpha,\beta;\Omega)$. Then, the following conditions are equivalent:
\begin{enumerate}
  \item $a_n$ $\tau(\mathfrak{g}(\Omega),\mathfrak{g}(\Omega)^\bot)$-converges to $a$.
  \item For all $f\in \sobH^{-1}_{\bot}(\Omega)$ and $u_n\in \sobH^1_\bot(\Omega)$ given as unique solutions of
\[
    \forall \phi \in \sobH^1_\bot(\Omega): \langle a_n \grad u_n, \grad\phi \rangle_{\Lp{2}(\Omega)^3} = f(\phi)
 \]
for $n\in\N$, we have $u_n\rightharpoonup  u\in \sobH^1_\bot(\Omega)$ weakly and $a_n \grad u_n \rightharpoonup  a \grad u\in\Lp{2}(\Omega)^3$ weakly where $u\in \sobH_\bot^1(\Omega)$ satisfies
\[
   \forall \phi \in \sobH^1_\bot(\Omega):  \langle a\grad u, \grad\phi \rangle_{\Lp{2}(\Omega)^3} = f(\phi).
 \] 
\end{enumerate}
\end{theorem}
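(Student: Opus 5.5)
The plan is to copy the proof of \Cref{thm:HnlH} almost verbatim, replacing $\cH^1(\Omega)$, $\cgrad$ and $\mathfrak{g}_0(\Omega)$ by $\sobH^1_\bot(\Omega)$, $\grad$ restricted to $\sobH^1_\bot(\Omega)$, and $\mathfrak{g}(\Omega)=\ran(\grad)$.

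\textbf{Well-posedness and solution formula.} First I would check that the new setting fits \Cref{cor:TW14} and \Cref{rem:topiso}.
\begin{itemize}
\item $\ker(\grad)$ consists of the functions that are constant on each connected component, so $\sobH^1_\bot(\Omega)=\sobH^1(\Omega)\cap\ker(\grad)^\perp$.
\item By \Cref{thm:RKT}, $\ran(\grad)$ is closed, since Lipschitz domains have continuous boundary.
\item Hence $G\coloneqq \iota^*\grad\restriction_{\sobH^1_\bot(\Omega)}\colon\sobH^1_\bot(\Omega)\to\mathfrak{g}(\Omega)$ is a topological isomorphism, with $\iota\colon\mathfrak{g}(\Omega)\hookrightarrow\Lp{2}(\Omega)^3$.
\item $G^\diamond\iota\colon\mathfrak{g}(\Omega)\to\sobH^{-1}_\bot(\Omega)$ is a topological isomorphism as well.
\end{itemize}
With $a_{00}\coloneqq\iota^*a\iota$, which is invertible by \Cref{lem:pda}, I obtain
\[
u_n=G^{-1}a_{n,00}^{-1}(G^\diamond\iota)^{-1}f .
\]
Decomposing with $\begin{pmatrix}\iota_0&\iota_1\end{pmatrix}$, where $\iota_0\coloneqq\iota$ and $\iota_1\colon\mathfrak{g}(\Omega)^\perp\hookrightarrow\Lp{2}(\Omega)^3$, the block computation from the proof of \Cref{thm:HnlH} gives
\[
a_n\grad u_n=\iota_0 g+\iota_1 a_{n,10}a_{n,00}^{-1}g,\qquad g\coloneqq (G^\diamond\iota)^{-1}f .
\]
Since $f$ ranges over all of $\sobH^{-1}_\bot(\Omega)$, $g$ ranges over all of $\mathfrak{g}(\Omega)$. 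Therefore condition (ii) is equivalent to the weak operator convergences
\[
a_{n,00}^{-1}\to a_{00}^{-1},\qquad a_{n,10}a_{n,00}^{-1}\to a_{10}a_{00}^{-1}.
\]
This already shows (i)$\Rightarrow$(ii).

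\textbf{(ii)$\Rightarrow$(i).} It remains to get the other two Schur components, $a_{n,00}^{-1}a_{n,01}$ and the Schur complement $a_{n,S}$. For this I would reproduce, in the present setting, the three auxiliary results used for \Cref{thm:HnlH}.
\begin{itemize}
\item The analogue of \Cref{prop:equivprobl2} is purely algebraic: for $z\in\mathfrak{g}(\Omega)^\perp$, the solution of $\langle a(\grad u+z),\grad\phi\rangle=0$ for all $\phi\in\sobH^1_\bot(\Omega)$ satisfies $p=a(\grad u+z)\in\mathfrak{g}(\Omega)^\perp$, and $p$ solves the dual problem with $a^{-1}$ on $\mathfrak{g}(\Omega)^\perp$.
\item The analogue of \Cref{thm:Hcon2} (with a shift $z$, and assuming convergence of the adjoints) would be proved with the same subsequence argument. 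The local identification of the limit flux comes from \Cref{thm:dcl0}\labelcref{thm:dcl02}, which applies because $\grad u_n\in\ran(\grad)$ and $\Omega$ has continuous boundary. The compactness $\iota_0^*q_n\to\iota_0^*q$ is needed in the form of the $\mathfrak{g}_0$-divergence test \Cref{thm:divtest}; it holds because $-\divcon q_n=f$ restricted to $\cH^1(\Omega)$ is fixed. The limit $u$ is then identified by testing with $\sobH^1_\bot(\Omega)$, exactly as in \Cref{thm:Hcon2}.
\item The analogue of \Cref{cor:Hconadj}, namely that (ii) for $(a_n)$ implies (ii) for $(a_n^*)$, follows from the previous item with $z=0$, using the symmetry of the statement.
\end{itemize}
With these in hand, the final part of the proof of \Cref{thm:HnlH} carries over verbatim:
\begin{itemize}
\item setting $p_n=\iota_1a_{n,S}z$ gives $a_{n,S}\to a_S$ weakly;
\item the identity $\grad u_n=-\iota_0a_{n,00}^{-1}a_{n,01}z$ gives $a_{n,00}^{-1}a_{n,01}\to a_{00}^{-1}a_{01}$ weakly.
\end{itemize}

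\textbf{Main obstacle.} I expect the difficulty to be the adjoint step inside the analogue of \Cref{thm:Hcon2}. The test functions $v_n$ solve Neumann-type problems in $\sobH^1_\bot(\Omega)$, and I need $\iota_0^*a_n^*\grad v_n\to\iota_0^*a^*\grad v$ to apply the div-curl lemma. I would handle this by observing that $-\divcon a_n^*\grad v_n$, tested only against $\cH^1(\Omega)$, is independent of $n$. This gives strong convergence of the $\mathfrak{g}_0$-projection by \Cref{thm:divtest}, and localising with $\phi\in\Cc(\Omega)$ removes any boundary issues.
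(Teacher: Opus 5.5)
Your proposal is correct and follows the route the paper intends. The paper only says the result follows ``almost the same way'' as \Cref{thm:Hcon2} and leaves it as \Cref{exer:3.3}; that is, one redoes the proof of \Cref{thm:HnlH} with $\sobH^1_\bot(\Omega)$, $\grad$ and $\mathfrak{g}(\Omega)$ in place of $\cH^1(\Omega)$, $\cgrad$ and $\mathfrak{g}_0(\Omega)$, obtaining the shifted Neumann analogue of \Cref{thm:Hcon2} via \Cref{thm:divtest} and \Cref{thm:dcl0}~\labelcref{thm:dcl02}, exactly as you do. One phrasing needs care: since $\cH^1(\Omega)\not\subseteq\sobH^1_\bot(\Omega)$, the $n$-independence should be stated as $(-\divcon q_n)(\psi)=f(\psi-\pi\psi)$ for $\psi\in\cH^1(\Omega)$, with $\pi$ the $\Lp{2}$-orthogonal projection onto the locally constant functions, which is what your final paragraph effectively uses.
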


Finally we can prove the homogenisation result for Maxwell's equations.

\begin{proof}[Proof of \Cref{thm:homomaxwell}]
 By \Cref{thm:abstractSchur} we need to show that 
 \[
    T_n(\lambda) \coloneqq \begin{pmatrix} \lambda \varepsilon_n +
    \sigma_n & 0 \\ 0 & \mu_n \end{pmatrix} \to T\coloneqq \begin{pmatrix}  \varepsilon(\lambda)& 0 \\ 0 & \mu \end{pmatrix}
 \]in $\tau(\ker(A),\ran(A))$ for $A = \begin{psmallmatrix} 0 & -\curl \\ \curl_0 & 0 \end{psmallmatrix} $, which satifsfies the compactness assumption by the Picard--Weber--Weck selection theorem and is skew-selfadjoint by \Cref{prop:adjblock}. Next, by \Cref{thm:Helmholtz},
 \begin{multline*}
    \ker(A) = \ker(\curl_0)\oplus \ker(\curl) = \ran(\curl)^\bot \oplus \ran(\curl_0)^\bot \\ = \Big(\ran(\grad_0)\oplus \mathcal{H}_D(\Omega)\Big)\oplus \Big(\ran(\grad)\oplus \mathcal{H}_N(\Omega)\Big).
 \end{multline*}
 By \Cref{thm:indbdycdt}, (sequential) convergence in $\tau(\ker(A),\ran(A))$ is equivalent to the one in $\tau(\mathfrak{g}_0(\Omega)\oplus \mathfrak{g}(\Omega),\mathfrak{g}_0(\Omega)^\bot\oplus \mathfrak{g}(\Omega)^\bot)$. Hence, the assertion follows from \Cref{thm:indbdycdt}, \Cref{thm:indbdycdt2}, and \Cref{lem:finitdim}.
\end{proof}

\section{Comments}
 We refer to \cite[Chapter~2]{SeTrWa22} for a quick round-up concerning the adjoint operator for unbounded operators. \Cref{thm:abstractSchur} and its proof are based on
 \cite[Theorem~3.8]{BuErWa24} and \cite[Lemma~6.4]{BuSkWa24}. Schur compact sets are discussed in \cite[Section~5]{Wa18}.
  The operator-theoretic setting for partial differential equations used in \Cref{sec:ex} is the one of evolutionary equations as introduced by Picard in 2009, see \cite{Pi09}. A relatively slow paced introduction to this field with many examples and further aspects can be found in the monograph \cite{SeTrWa22}.  \Cref{thm:FourLapWOT} is a part of \cite[Theorem~1.3]{BuErWa24}, and the thermoelastic model presented here a simplified version of \cite[Section 4.3]{Pi09}. For (weak) Lipschitz domains, the Picard--Weber--Weck theorem (including detailed proofs), and the Helmholtz decomposition see, e.g., \cite{Pi82,Pi84,BPS16,PW22}.
 \Cref{lem:finitdim} is one of the main results in \cite{Wau25}. \Cref{thm:indbdycdt} and \Cref{thm:indbdycdt2}
 fundamentally rely on \cite[Lemma~10.3]{Ta09}, see also \cite[Section~4]{BFSW24}.
\begin{takehomes}
 \begin{enumerate}
   \item[(a)] Many PDEs can be written in the form $(T+A)u=f$ for some skew-selfadjoint $A$ and bounded $T$ with $\Re T\geq c$.
      \item[(b)] Convergence results for operator equations require a compactness property for $A$. Thus, for a nice representation of the limit, bounded domains are required -- `No general homogenisation of the universe' (quote by R.~Picard).
         \item[(c)] $\Htopo$-convergence is independent of the boundary conditions.
            \item[(d)] Abstract results can be easily applied to Thermoelasticty and Maxwell's equations.
               \item[(e)] The Schur topology offers an abstract concept simplifying the approach to concrete PDEs without unnecessary clutter.
 \end{enumerate}
\end{takehomes}

\section{Exercises}

\begin{exercise}\label{exer:3.0}
Show that the product topology on the countable product of metrisable spaces is metrisable again.
\end{exercise}
\begin{exercise}\label{exer:3.05}
Let $\mathcal{H}$ be a Hilbert space, $\mathcal{H}_0\subseteq \mathcal{H}$ a closed subspace, $\mathcal{H}_1\coloneqq \mathcal{H}_0^\bot$, and $0<\alpha\leq\beta$.
Show that there exists a $\gamma\in (0,\infty)^{2\times 2}$ such that $\mathcal{F}(\alpha,\beta;\mathcal{H})\subseteq\mathcal{F}(\gamma; \mathcal{H}_0,\mathcal{H}_1)$.
\end{exercise}
\begin{exercise}\label{exer:3.1} Prove \Cref{prop:adjblock}.
\end{exercise}

\begin{exercise}\label{exer:3.2} Prove \Cref{lem:inv}.

Hint: Use the expression of the inverse of an operator as a block matrix operator from the proof of \Cref{thm:HnlH}.
\end{exercise}

\begin{exercise}${}^*$\label{exer:3.3} Prove \Cref{thm:indbdycdt2}.
\end{exercise}
\begin{exercise}
 Let $\mathcal{H}$ be a Hilbert space, $\mathcal{H}_0\subseteq \mathcal{H}$ a closed subspace, $\mathcal{H}_1\coloneqq \mathcal{H}_0^\perp$, $(T_n)_{n\in\N}$, $T$ in $\mathcal{M}(\mathcal{H}_0,\mathcal{H}_1)$ with $\Re T_n\geq \alpha$ and $\Re T_n^{-1}\geq 1/\beta$ for all $n\in \N$ and some $0<\alpha\leq\beta$. If $T_n \to T$ in the strong operator topology, show that then $T_n\to T$ in $
\tau(\mathcal{H}_0,\mathcal{H}_1)$.
\end{exercise}

\end{document}